\def\ps@pprintTitle{%
 \let\@oddhead\@empty
 \let\@evenhead\@empty
 \def\@oddfoot{}%
 \let\@evenfoot\@oddfoot}
\DeclareFontFamily{U}{matha}{\hyphenchar\font45}
\DeclareFontShape{U}{matha}{m}{n}{
  <-6> matha5 <6-7> matha6 <7-8> matha7
  <8-9> matha8 <9-10> matha9
  <10-12> matha10 <12-> matha12
  }{}
\DeclareSymbolFont{matha}{U}{matha}{m}{n}
\DeclareMathSymbol{\Lt}{3}{matha}{"CE}\usepackage{bbm}
\newtheorem{theorem}{Theorem}[section]
\newtheorem{hypothesis}{Hypothesis}[section]
\newtheorem{observation}{Observation}[section]
\newtheorem{corollary}{Corollary}[theorem]
\newtheorem{definition}{Definition}[section]
\newtheorem{example}{Example}[section]
\newtheorem{lemma}{Lemma}[section]
\newtheorem{proposition}{Proposition}[section]
\begin{document}

\begin{frontmatter}

\title{Abstract Integration in Net Convergence Structures}

\author[1]{Alexandre Reggiolli Teixeira}
\ead{riccimath123@gmail.com, a163407@dac.unicamp.br}
\affiliation[1]{organization={Departamento de Estatística, Universidade Estadual de Campinas},
            addressline={Rua Sérgio Buarque de Holanda, 651},
            postcode={13083-970},
            city={Campinas-SP},
            country={Brazil}}

\begin{abstract}
   In this article, we propose a general theory of integration of the Riemann and Lebesgue types with respect to arbitrary measures and functions, connected by a continuous bilinear product, with values in abstract vector spaces endowed with a convergence structure given by nets. This covers both the topological and order based convergences in the literature. We then show that this integral satisfies most of the common properties of the objects that comprises integration theory. By establishing a generalized notion of summability on vector spaces with a convergence structure and an integral built upon countable partitions of the base space, we then stablish some uniform, monotone and dominated convergence theorems for the refereed integrals, as well as a non-topological or order based Henstock Lemma and a general convergence theorem based on the notion of conjugated lattice seminorms. An application of these theorems is made to prove various equivalences concerning the Lebesgue, for which we give a brief survey, Saks and Riemann type integrals in partially ordered and topological vector spaces presented in the literature, for which we also make a thorough review. We finish the article with a possible way of classifying general integration procedures  defined in abstract convergence structures, and pose some open problems based on them. 
   
\end{abstract}

\begin{keyword}
Bilinear Riemann integral \sep Net convergence structure \sep Riesz Spaces \sep Convergence theorems \sep Comparison of integrals 
\end{keyword}

\end{frontmatter}

\tableofcontents

\makeatletter
\renewcommand\@biblabel[1]{[#1]}
\makeatother

\section{Introduction, Historical Context and Some Notation.}

Since the pioneering works on integration theory of \cite{pettisoriginalarticle}, \cite{bochneroriginalarticle}, \cite{birkhofforiginalarticle}, \cite{bartle1956general} and others, in the beginning to mid of the last century, a myriad of different integration procedures, with varying generality with respect to algebraic, set and convergence theoretic notions, have been proposed, as can be gauge from the vast bibliography in the classical work of \cite{henstockmagnum}. In almost all cases, these works are constructed on the basis of algebraic and convergence operations on a fixed vector (group/semigroup) space, with the approximation procedure necessary for each integration operation being taken on a fixed topology or order structure. Then, elementary properties of these integrals, such as additivity, preservation of order (isotonocity), subset integration, and others, are obtained using the special approximation procedures - Riemann sums or Lebesgue-type limit arguments - in each case. Further, and more complicated, continuity properties of these are usually constructed by restricting the space of integrable functions or set functions involved, with a special use of convergence sequences of functions or other objects. These are the basic elements of integration theory outlined by \cite{dinculeanustochastic} in his vast treatise on bilinear vector integration, but as can be seen from the recent surveys of \cite{boccutoxenofon} and \cite{boccutovrabelovariecan}, the partially ordered spaces construction is no different. 

In contrast to these early (and modern) trends, we propose to work with general archetypes of integration procedures, such as Lebesgue and Riemann ones, to find general patterns in the definition/approximation procedures of each of these types (of integration) in such a way that the properties cited in the previous paragraphs are natural consequences, in a very general context, of simple and unifying principles having to do with the process itself, and not deriving from simplifying assumptions related to topology or measure theory. 

The first step in this program is to notice that many of the most elementary of these properties, such as additivity of integration or isotonicity in the presence of partial orders, or even the definition of the integration procedure itself, are not dependent on any type of topological or order based convergence, being common to all abstract limiting procedures. In this sense, the most general context of integration theory is one where we have an algebraic object, a vector space for example, endowed with an operation which can be called "limit", and which satisfies some mild restrictions related to the everyday use of such objects. This is the context of general convergence structures, on algebraic objects, in the sense of \cite{doleckiconvergence}, and recently studied by \cite{vanderwaldlocallysolid} and \cite{convergencestructuresvanderwalt}, but already present in some aspects in \cite{Higgs}.

By using these ideas and the fact that the important element in every integration procedure is the way we set approximating nets of varying objects (such as sets and functions or/and combinations of them), the main proposals of integration procedures in this work, which are called the (abstract) net Riemann, abstract Lebesgue and Saks integrals, contained in Definitions \ref{netriemannpartiallyordered}, \ref{lebesgueintegral} and \ref{saksintegral}, are constructed in full generality with respect to their characteristic approximation procedures, generalizing earlier work on the Lebesgue case by \cite{boccutoxenofon}, \cite{sambuciniconvergence}, \cite{ballveeconvergence}. Also, the net Riemann, our main focus, and Saks integrals are, as far as we know, new is this generality, the first one being based on more restrictive constructions of \cite{szaznetintegralconvergence} and \cite{fleischersemigroup}.

In this context, we prove general additivity and isotonicity of the net Riemann integral in Theorem \ref{propertiesnetriemannintegral}, as well as properties having to do with subset integration in Theorem \ref{subsetintegration}, in which only convergence related properties are used (i.e, no topology or order), and integration of simple functions, all new in this generalized framework, and which are established in a way such that most, or all, known integration procedures satisfies them. We also develop, using elements of duality of convergence vector spaces as in \cite{butzmann}, a very general and complete Pettis integration procedure, and prove that it has a Riemmanian representation in terms of the net Riemann integral in Theorem \ref{pettisriemann}, thus generalizing recent results in the stochastic integration literature as in \cite{gelfandstochastic}, and the classical results of \cite{sionsemigroup}. The same is done for the Lebesgue and Saks constructions in Sections \ref{pavlakosintegralsection}, \ref{topologicallebesgueintegralssection} and \ref{saksintegralsection}. 

As a byproduct of such general definitions and properties, we show on Section \ref{examples} that most of the Riemann type constructions of integrals in the literature, such as the classical Henstock-Kurzweil integral in division spaces of \cite{henstockmagnum} and the refinement integrals of \cite{kolmogorovuntersuchungen} and \cite{sionsemigroup}, are special cases of our constructions. On the same section, we also provide a review of these integration procedures in a very general context, and make a list of well-known and also non-classical integration procedures that are contained in our framework, thus showing the (initial, as we shall see) potential of such ideas to unify and generalize integration methods contained in the literature.  

Nevertheless these general constructions, some examples presented in the classical literature of integration theory exemplified by \cite{kolmogorovuntersuchungen}, \cite{goguadzebook}, \cite{deleanu} and \cite{tulceaensemble}, are not contained in them, and necessitate integrals based on infinite Riemann sums. Using the initial attempt of \cite{Higgs} as an inspiration, we develop, in Section \ref{sectionsummabilitykolmogorov}, an abstract summability (infinite series) theory on vector spaces equipped with net convergence structures of the sort cited above, and show not only that the classical dichotomy of unconditional/conditional summability and properties of these series are valid in our abstract work, but also that a Riemann type-integral based on infinite sums, called the $S^{*}$-integral, can be defined in this setting with its usual properties, and which contains the others cited. Relations with the net Riemann integral are also proved.  

To obtain further properties of all these abstract integration procedures, and connect them with more commonly used ones, we also have aimed to prove general continuity of these integrals in terms of different convergence results. Following the usual schemes in the literature, we first separate these theorems in two types: the ones based on order, and the other on topology. For the first case, we obtain a sweeping abstraction of the Lebesgue measure Kurzweil integral convergence theorem of \cite{monteirouniform}, contained in our Theorem \ref{uniformconvergencetheoremriemann}, showing that it is valid for a very general version of the net Riemann integral based on non-negative set functions with values in Riesz Spaces. Similar, we obtain an outside of null sets convergence result for the $S^{*}$, and a restricted version of the net Riemann, integrals that are not contained in the previous literature, and allows us to connect these integration procedures with the Lebesgue construction of \cite{pavlakosintegration} and others presented in Section \ref{pavlakosintegralsection}, which also presents a wide list of integration procedures of such (Lebesgue) type that can be compared to ours in the partially ordered case. In the same context of Riesz spaces, we also obtain very general monotone and dominated convergence theorems for the net Riemann integrals, getting explicit restrictions of our general integration procedure that allows us to obtain these theorems. Similarly, in the topological case, we prove a type of (Egorov) quasi-uniform convergence theorem in the context of the net Riemann integral, generalizing earlier work of \cite{sionsemigroup}, \cite{trombettalimit} and \cite{millingtonproduct}, and comment on further results contained in the literature. 

In the same backdrop of continuity of abstract integration, we prove two further results that are of special interest: the Henstock lemma in Theorem \ref{henstocklemma} and the general convergence theorem in Theorem \ref{latticenormconvergencetheorem}. Both of these are proved for functions and measures with values in S-metric, or lattice normed, spaces, a generalization of topological and partially ordered Riesz spaces with general convergences. These are, as far as we can tell, not present in any other work of integration theory, and manifest the fact that convergence theorems can be obtained even if we have no topology or order involved in our integration procedure, and contains seeds for further and more general convergence theorems of this non topological/order type. 

Finally, we make connections between the main integral archetypes presented, and specific examples in the literature. In fact, we solve a series of open problems in the comparison of integration procedures that were left open in the literature, from which we highlight the following:

\begin{enumerate} [(i)]
    \item In the AMS review (MR0450503) of \cite{sionsemigroup} book, Z. Lipecki cited the open problem of comparing Sion integral with the non-locally convex construction of \cite{Turpin2}. In Section \ref{bdsintegralsection}, we show that the more general integral of \cite{drewlabuda} is a special case of Sion integral, thus answering Lipecki's question.
    \item In a similar context, \cite{pavlakosintegration} and \cite{popescurickart} both left open the question of comparing Lebesgue integration on partially ordered spaces with Riemannian constructions in the same setting. We do this comparison in Section \ref{pavlakosintegralsection} using the convergence theorems cited above. In particular, we prove that both integrals are a special case of the $S^{*}$ integral in the context of infinite summability, which uses in the proof Theorem \ref{convergenceoutsidenullset}, not obtained previously in the literature.
    \item In his book \cite{sionsemigroup}, M. Sion left open the question of comparing a version of his truncation/refinement integral with the $S^{*}$ integral. We show in Section \ref{sectionsummabilitykolmogorov} that in the topological and partially ordered cases, Sion's procedure is more general, and contains the refereed infinite Riemann sums integral. 
    \item Lastly, \cite{massesemigroup} left open the question of comparing his semigroup submeasure integral with \cite{sionsemigroup} integration procedure. We solve this problem partially in Theorem \ref{sionandmasseintegrals}, showing that in the presence of a control measure in a metric linear space, the two integrals are equivalent. 

\end{enumerate}

We finish the work by showing that our general definition of the Saks integration procedure contains not only the net Riemann integral, and therefore all of the integration theories of above, but also incorporates non-additive integration procedures such as the ones in \cite{boccutoriecanconcave}. A list of open problems follow, which shows that further work on our general integration procedure can lead to interesting new ideas not only on measure and integration, but also on topology and related areas. 

We now pass to some formal comments on references and notation.

For information about the topological, algebraic and measure-theoretic concepts that will be used in the context of integration theory in this article, we cite, for the reader convenience, \cite{mchsaneorder}, \cite{fremlintopologicalrieszspaces}, \cite{vulikh}, \cite{zaanenluxemburg1}, \cite{zaanen2}, \cite{gahler1}, \cite{gahler2}, \cite{boccutorieszabstract}, \cite{boccutosambucini2023}, \cite{boccutocandeloro2003a} and, specially for integration, the more recent publications of
\cite{lvaluedintegrationvanderwalt}, \cite{orderintegrals} and \cite{jeujiang}.

In in this work, we will, in general, denote spaces of any type with uppercase Latin letters $X,Y, Z$, or $\mathcal{X}$, $\mathcal{Y}$, $\mathcal{Z}$ when there is duplicity. As a rule, all partial orders in the refereed spaces, when they have one, will be denoted by $\leq$, following the custom of classical and recent works on spaces equipped with partial orders, as \cite{vulikh}, \cite{zaanenluxemburg1}, \cite{popovriesz}, and \cite{prerieszspaces}, as we will not work with more than one partial order, in a fixed space, at a time (therefore, minimizing the possibility of confusion). Differentiation between order notations will be made only when necessary. As usual, we use the notation $\max(x,-x) = x \vee -x = |x|, x^{+} = \max(x,0), x^{-} = \max(-x,0)$ for the, in order, absolute value, positive and negative parts of an element $x$ of a Riesz space (vector lattice). It will also be supposed that every partially ordered space below is Archimedean, and that every topological space is Hausdorff, and we only cite explicitly these properties again in case we need to stress their presence - for more details on these notions, see \cite{zaanenluxemburg1} and \cite{kelley}. Finally, all algebraic operations shall be supposed to be commutative, and all vector spaces will be assume to be real (i.e, vector spaces with respect to the field $\mathbb{R}$).

With regards to notation, for a general set $A$, following \cite{bourbaki2013general}, we denote by $\mathcal{F}(A)$ the collection of all its finite subsets (which is a directed set by inclusion $\subseteq$). We also have the following set-theoretic definition (see, for example, \cite{rao2018measure}):

\begin{definition} [Paved Space]

We say that a pair $(\Omega, \mathcal{H})$ consisting of an arbitrary (non-empty) set and a collection of its subsets is a Paved Space if $\emptyset \in \mathcal{H}$.
    
\end{definition}

For the measure and integration results below, we shall need a variety of different convergence notions used in the literature, sometimes not equivalently defined in all works, and even not equivalent in general. In this case, comparison of our results with the literature should be done considering in detail the definitions below. We usually adopt the most general one to avoid problems of compatibility, and point out equivalences (and differences) wherever possible. As the topological notions are fairly unified (see, for example, the classics by \cite{bourbaki2013general} and \cite{kelley}), we focus on the partially ordered spaces case, for which convergences are more numerous in the literature. 

\subsection{Partially Ordered Spaces and Order Convergence.}

Considering the context above, we begin by the following definition given in \cite{pavlakosintegration} and \cite{prerieszspaces}, which originally is due to  \cite{vulikh}, called $(o_{3})$-convergence:

\begin{definition} [$(o_{3})$- Convergence of Nets] \label{pavlakosorderconvergence}

Let $X$ be a partially ordered set and $I$, $C$, and $D$ directed sets. We say that a net $\{x_{i}\}_{i \in I}$ in $X$ $(o_{3})$-converges to $x$ in $X$, which we denote by:

\[(o_{3})-lim_{i}x_{i} = x,\]

if there are directed sets $C,D$ and an increasing net $\{z_{c}\}_{c \in C}$ and a decreasing net $\{y_{d}\}_{d \in D}$ in $X$ such that:

\begin{enumerate} [(i)]
    \item $\sup\{ z_{c}:c \in C\} = x = \inf\{y_{d}:d \in D\}$ 
    \item For every $(c,d) \in C \times D$ there exist $i^{*} \in I$ such that:

    \[z_{c} \leq x_{i} \leq y_{d}, \ \text{whenever} \ i \geq i^{*}\]
\end{enumerate}

\end{definition}

Sometimes we may get a simplification by obtaining increasing and decreasing nets in the definition above with same indexing directed set, and then get the following definition of \cite{convergencestructuresvanderwalt} and \cite{vanderwaldlocallysolid} (see also \cite{prerieszspaces}):

\begin{definition} [Order Convergence of Nets] \label{usualorderconvergence}

Let $X$ be a partially ordered set and $I$ a directed set. We say that a net $\{x_{i}\}_{i \in I}$ in $X$ order converges, or $(o)$-converges, to $x$ in $X$, which we denote by:

\[(o)-lim_{i}x_{i} = x,\]

if there is a directed set $C$ and an increasing net $\{z_{c}\}_{c \in C}$ and a decreasing net $\{y_{c}\}_{c \in C}$ in $X$ such that:

\begin{enumerate} [(i)]
    \item $\sup\{ z_{c}:c \in C\} = x = \inf\{y_{c}:c \in C\}$ 
    \item For every $(c,c) \in C \times C$ there exist $i^{*} \in I$ such that:

    \[z_{c} \leq x_{i} \leq y_{c}, \ \text{whenever} \ i \geq i^{*}\]
\end{enumerate}

\end{definition}

For the rest of this work, when we say that a net is bounded in a partially ordered space, we mean:

\begin{definition} [Bounded Nets and Absolute value of Functions]

Let $X$ be a Riesz Space and $\{x_{i}\}_{i \in I}$  a net in $X$. We say that this net is bounded if and only if there exists an $y \in X$ such that:

\[|x_{i}| \leq y,\]

for every $i \in I$.

Also, for a function $f: \Omega \rightarrow X$, where $\Omega$ is a non-empty set, we shall denote by $|f|: \Omega \rightarrow X^{+}$ the function given by:

\[|f|(\omega) = |f(\omega)|,\]

for each $\omega \in \Omega$.
    
\end{definition}

The following characterization of order convergent nets will be useful later (see \cite{monteirouniform} and \cite{mcgillitnegrationinvectorlattices}):

\begin{proposition} \label{orderconvergenceboundednets}
    Let $X$ be a Riesz Space. Then, a net $\{x_{i}\}_{i \in I}$ is $(o)$-convergent to $x \in X$ if and only if it is bounded and:

    \[\limsup_{i}x_{i}= \bigwedge_{i \in I} \bigvee_{i^{'} \geq i}x_{i^{'}} = \bigvee_{i \in I} \bigwedge_{i^{'} \geq i} x_{i^{'}} = \liminf_{i}x_{i}.\]

\end{proposition}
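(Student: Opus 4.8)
The plan is to prove the two implications separately, writing $u_i := \bigvee_{i' \geq i} x_{i'}$ and $l_i := \bigwedge_{i' \geq i} x_{i'}$ for the tail supremum and infimum, so that $\limsup_i x_i = \bigwedge_{i} u_i$ and $\liminf_i x_i = \bigvee_{i} l_i$. The reverse implication is essentially constructive, whereas the forward one reduces to two symmetric order estimates together with the elementary inequality $\liminf_i x_i \leq \limsup_i x_i$. Throughout I would use that passing to a larger index shrinks the tail $\{i' : i' \geq i\}$, so that $\{l_i\}_{i \in I}$ is increasing and $\{u_i\}_{i \in I}$ is decreasing in $i$.

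For the reverse direction, I would assume the net is bounded and that $\bigvee_i l_i = \bigwedge_i u_i = x$, noting that the very assertion of these equalities already presupposes the existence of the tail infima/suprema and of the outer sup and inf. I would then take $C = I$ and set $z_i := l_i$ and $y_i := u_i$: condition (i) of Definition \ref{usualorderconvergence} holds by hypothesis, and for (ii), given $i_0 \in I$ the choice $i^{*} = i_0$ yields $l_{i_0} \leq x_i \leq u_{i_0}$ for all $i \geq i_0$ directly from the meaning of tail inf and tail sup. This exhibits the required increasing/decreasing witnessing nets and gives $(o)$-$\lim_i x_i = x$.

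For the forward direction, I would start from the witnessing nets $\{z_c\}$ increasing and $\{y_c\}$ decreasing with $\sup_c z_c = x = \inf_c y_c$ and, for each $c$, an index $i^{*}(c)$ with $z_c \leq x_i \leq y_c$ whenever $i \geq i^{*}(c)$; fixing one $c_0$ confines the tail from $i^{*}(c_0)$ to $[z_{c_0}, y_{c_0}]$, which supplies order boundedness. Granting for the moment that the tail suprema exist (the point addressed below), $y_c$ is an upper bound of the tail from $i^{*}(c)$, so $u_{i^{*}(c)} \leq y_c$ and hence $\limsup_i x_i = \bigwedge_i u_i \leq u_{i^{*}(c)} \leq y_c$; taking the infimum over $c$ gives $\limsup_i x_i \leq \inf_c y_c = x$, and symmetrically $\liminf_i x_i \geq \sup_c z_c = x$. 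I would close the loop with the general bound $\liminf_i x_i \leq \limsup_i x_i$, proved by choosing, for arbitrary $i,j$, some $k \geq i$ and $k \geq j$ by directedness so that $l_i \leq x_k \leq u_j$, then taking the supremum over $i$ and the infimum over $j$. Combining the three inequalities forces all four quantities to equal $x$.

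The hard part will be the existence of the iterated suprema and infima, since a lattice-ordered group need not be Dedekind complete and so $u_i = \bigvee_{i' \geq i} x_{i'}$ need not exist a priori. In the reverse direction this difficulty is absorbed into the hypothesis, but in the forward direction it must be extracted from order convergence itself: the witnessing nets give explicit order bounds $z_c, y_c$ for each tail, and the crux is to show that $u_{i^{*}(c)}$ genuinely exists (being squeezed between $z_c$ and $y_c$) rather than merely being bounded above. I expect this least-upper-bound step, not the monotonicity or the sandwiching, to carry the real weight, and it is precisely where the lattice-group structure together with the order boundedness established above must be invoked, appealing to the structural results of the references cited for Proposition \ref{orderconvergenceboundednets}.
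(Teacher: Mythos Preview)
The paper does not give a proof of this proposition at all: it simply records the statement and points to \cite{monteirouniform} and \cite{mcgillitnegrationinvectorlattices}. So there is nothing to compare your argument against on the paper's side; effectively you are supplying what the paper omits.

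Your reverse implication is clean and correct: reading the hypothesis as asserting that the iterated infima and suprema exist, the tail nets $l_i, u_i$ serve directly as the witnessing nets in Definition~\ref{usualorderconvergence}.

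For the forward implication there are two genuine issues, one of which you already flag. First, your boundedness step only shows that a tail of the net lies in some order interval $[z_{c_0},y_{c_0}]$, i.e.\ eventual boundedness; the paper's Definition of a bounded net requires $|x_i|\leq y$ for \emph{every} $i\in I$, and for general nets (unlike sequences) there is no way to control the finitely-many-or-not pre-tail terms. This is arguably a defect of the proposition as stated rather than of your argument, and indeed every application the paper makes of this proposition (the corollary that follows it, and Proposition~\ref{monteirofernandez1}) is either to sequences or under an explicit Dedekind-completeness assumption on the ambient space. Second, and more seriously, the existence of $u_i=\bigvee_{i'\geq i}x_{i'}$ cannot be manufactured from mere order boundedness in a lattice-ordered group that is not Dedekind complete: being squeezed between $z_c$ and $y_c$ gives an upper bound, not a least upper bound. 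You are right to isolate this as the crux, but deferring to ``the structural results of the references'' is not a proof; in the generality stated, the forward direction appears to need a conditional completeness hypothesis that the proposition omits but the paper tacitly imposes wherever it invokes the result.
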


For Riesz Spaces, we have the following result of \cite{pavlakosintegration} and \cite{papangelouorder}, which is also given in \cite{convergencestructuresvanderwalt} and \cite{vanderwaldlocallysolid} for Riesz spaces:

\begin{proposition}
Let $X$ be a Riesz Space and $(I, \alpha)$ a directed set. A  net $\{x_{i}\}_{i \in I}$ is  $(o)$-convergent in $X$ if and only if it is $o_{3}$-convergent, which occurs if and only if there is a decreasing net $\{y_{d}\}_{d \in D}$ in $X$, with $D$ a directed set, such that $y_{d} \downarrow 0$ and for every $d \in D$ there exists $i^{*} \in I$ such that:

\[|x_{i}-x| \leq y_{d} \ ,\]

for every $i \geq i^{*}$.
\end{proposition}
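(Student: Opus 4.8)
The plan is to establish that the three conditions are equivalent by proving the cycle of implications
\[(o)\text{-convergence} \;\Longrightarrow\; (o_3)\text{-convergence} \;\Longrightarrow\; (\ast) \;\Longrightarrow\; (o)\text{-convergence},\]
where $(\ast)$ denotes the final characterization by a single decreasing net $\{y_d\}_{d\in D}$ with $y_d \downarrow 0$ and $|x_i - x| \le y_d$ holding eventually for each $d$. The first implication is immediate: an $(o)$-convergent net is, by Definition \ref{usualorderconvergence}, witnessed by an increasing net $\{z_c\}_{c\in C}$ and a decreasing net $\{y_c\}_{c\in C}$ sharing the same index set, and this is precisely the special case $C = D$ of Definition \ref{pavlakosorderconvergence}, so $(o_3)$-convergence follows with no work.

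For $(o_3) \Rightarrow (\ast)$, the plan is to collapse the two independent index sets $C$ and $D$ into the product directed set $C \times D$ (ordered coordinatewise). Given the witnessing nets $\{z_c\}$ and $\{y_d\}$ from Definition \ref{pavlakosorderconvergence}, I would first note that $z_c \le \sup_c z_c = x$ and $y_d \ge \inf_d y_d = x$, so that $y_d - x \ge 0$ and $x - z_c \ge 0$. I then define
\[w_{(c,d)} \coloneqq (y_d - x) \vee (x - z_c), \qquad (c,d) \in C \times D.\]
This is a net of positive elements, antitone in $(c,d)$ because $x - z_c$ decreases as $c$ grows and $y_d - x$ decreases as $d$ grows. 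The eventual bound is direct: for each $(c,d)$ pick $i^\ast$ with $z_c \le x_i \le y_d$ for $i \ge i^\ast$; then $x_i - x \le y_d - x \le w_{(c,d)}$ and $x - x_i \le x - z_c \le w_{(c,d)}$, whence $|x_i - x| \le w_{(c,d)}$ for $i \ge i^\ast$.

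The main obstacle is verifying that $w_{(c,d)} \downarrow 0$, i.e.\ $\inf_{(c,d)} w_{(c,d)} = 0$; this is the one place where the full lattice-ordered group structure is genuinely used. Writing $p_d \coloneqq y_d - x$ and $q_c \coloneqq x - z_c$, both positive with $\inf_d p_d = \inf_c q_c = 0$, I would use the inequality $a \vee b \le a + b$ valid for positive $a,b$ (a consequence of $a+b = a\vee b + a\wedge b$ with $a \wedge b \ge 0$), together with the translation-invariance of infima in a group, to estimate
\[0 \le \inf_{(c,d)}(p_d \vee q_c) \le \inf_{(c,d)}(p_d + q_c) = \inf_d p_d + \inf_c q_c = 0,\]
where the separation of the infimum over the independent indices $c,d$ relies on $\inf_c(p_d + q_c) = p_d + \inf_c q_c$. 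This pins the infimum at $0$ and delivers $(\ast)$.

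Finally, for $(\ast) \Rightarrow (o)$-convergence, the plan is to read the single net off as symmetric bounds around $x$. Given $\{y_d\}_{d\in D}$ with $y_d \downarrow 0$ and $|x_i - x| \le y_d$ eventually, I set $u_d \coloneqq x - y_d$ and $v_d \coloneqq x + y_d$ on the common index set $D$; then $\{u_d\}$ is increasing, $\{v_d\}$ is decreasing, and $\sup_d u_d = x - \inf_d y_d = x = x + \inf_d y_d = \inf_d v_d$. Since $|x_i - x| \le y_d$ unpacks to $u_d \le x_i \le v_d$ for $i \ge i^\ast$, this exhibits exactly the data required by Definition \ref{usualorderconvergence}, closing the cycle and proving all three notions equivalent.
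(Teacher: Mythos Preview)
Your proof is correct. The paper itself does not supply a proof of this proposition; it simply attributes the result to Pavlakos, Papangelou, and the net-convergence framework of van der Walt and collaborators, so there is nothing in the text to compare against directly. Your cycle of implications is the standard route found in those references: the passage $(o)\Rightarrow(o_3)$ is tautological, the step $(\ast)\Rightarrow(o)$ is the usual symmetric unpacking $x\pm y_d$, and the only place with real content is $(o_3)\Rightarrow(\ast)$, where you correctly exploit the product directed set $C\times D$ and the lattice-group identities $a\vee b\le a+b$ (for $a,b\ge 0$) together with translation invariance of infima to force $\inf_{(c,d)}\big((y_d-x)\vee(x-z_c)\big)=0$. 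One cosmetic remark: your displayed chain $\inf_{(c,d)}(p_d+q_c)=\inf_d p_d+\inf_c q_c$ presupposes the left-hand infimum exists, which is not automatic in an arbitrary lattice ordered group; the argument is cleaner if phrased as ``any lower bound $r$ of $\{p_d\vee q_c\}$ satisfies $r\le p_d+q_c$, hence $r-p_d\le\inf_c q_c=0$, hence $r\le\inf_d p_d=0$'', which avoids asserting existence of intermediate infima. This is purely expository and does not affect validity.
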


Most of the time we will work with objects less general than lattice ordered groups, e.g. Riesz spaces, in such a way that all of the previous convergence definitions, including the one given in the proposition above, are equivalent. Also, we can show, at least for sequences, that the following result is true (see \cite[Section 3.7, p. 184]{prerieszspaces}):

\begin{proposition}
    Let $X$ be a Riesz Space. A sequence $\{x_{n}\}_{n \in \mathbb{N}}$ in $X$ is $(o_{3}),o$-convergent to $x$ in $X$ if and only if there is a non-increasing sequence $\{u_{n}\}_{n \in \mathbb{N}}$ in $X$ such that $u_{n} \downarrow 0$ and:

    \[|x_{n}-x| \leq u_{n},\]

    for every $n \in \mathbb{N}$.
\end{proposition}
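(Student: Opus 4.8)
The plan is to prove both implications separately, treating the easy direction first and reserving the genuine work for the construction of the dominating sequence. Throughout I write $a_{n} := |x_{n}-x|$, so that the claim becomes: $a_{n} \to 0$ (in the $o_{3}$/$o$ sense) if and only if there is a non-increasing $\{u_{n}\}$ with $u_{n}\downarrow 0$ and $a_{n}\leq u_{n}$ for all $n$.

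For the direction ($\Leftarrow$), suppose such a $\{u_{n}\}$ exists. I would simply feed $\{u_{n}\}$ itself, viewed as a decreasing net indexed by $\mathbb{N}$, into the preceding proposition: given the index $k$, taking $i^{*}=k$ yields $|x_{n}-x| = a_{n} \leq u_{n} \leq u_{k}$ for every $n \geq k$, since $\{u_{n}\}$ is non-increasing. This is exactly the criterion of the preceding proposition for $o_{3}$- (hence $o$-) convergence, so $\{x_{n}\}$ converges to $x$.

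For the direction ($\Rightarrow$), first note that order convergence of $\{x_{n}\}$ to $x$ forces $a_{n}\to 0$: by the preceding proposition there is a net $\{y_{d}\}\downarrow 0$ with $|x_{n}-x| = a_{n} \leq y_{d}$ eventually, and since $|a_{n}-0| = a_{n}$ this is precisely the statement that $a_{n}\to 0$. I would then apply Proposition \ref{orderconvergenceboundednets} to the order-null sequence $\{a_{n}\}$, obtaining $\limsup_{n} a_{n} = \bigwedge_{n}\bigvee_{m\geq n} a_{m} = 0$. The crucial consequence is that each supremum $\bigvee_{m\geq n} a_{m}$ exists, which lets me define $u_{n} := \bigvee_{m\geq n} a_{m}$. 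By construction $a_{n}\leq u_{n}$; the family $\{u_{n}\}$ is non-increasing because the index sets $\{m : m\geq n\}$ shrink with $n$; and $\bigwedge_{n} u_{n} = \limsup_{n} a_{n} = 0$, so being non-increasing with infimum $0$ gives $u_{n}\downarrow 0$, completing the construction.

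The main obstacle is precisely the existence of the tail-suprema $\bigvee_{m\geq n} |x_{m}-x|$ used to define $\{u_{n}\}$: in an arbitrary lattice ordered group such suprema need not exist, so one cannot merely write them down. What rescues the argument is that order convergence is strong enough, via Proposition \ref{orderconvergenceboundednets}, to guarantee their existence together with the vanishing of their infimum; this is exactly where the sequential (countable) nature of the index and the $\limsup$ characterization do the essential work. The remaining verifications, namely monotonicity of $\{u_{n}\}$ and the domination $a_{n}\leq u_{n}$, are then immediate from the definition.
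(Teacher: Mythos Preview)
Your proof is correct. The paper does not supply its own argument for this proposition, deferring instead to the cited reference; your route via the tail suprema $u_{n}=\bigvee_{m\geq n}|x_{m}-x|$, with both their existence and $\bigwedge_{n}u_{n}=0$ secured through Proposition~\ref{orderconvergenceboundednets}, is the standard one and is precisely what the reference does.
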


The sequential nature of the result above is essential, as "tail" type convergence results for nets are not usually not true. For more about this subject, see Section 3.7 of \cite{prerieszspaces}.

For various proofs in the analytical theory of partially ordered spaces, one hard to work with property of the previous convergences is that $\varepsilon$-type arguments have no analogue. This complicates some proofs related to Cauchy-type conditions and countable sequence and subsequence arguments. To remediate this fact, a different type of convergence, usually spoken as a type of "regulator" or $(D)$-convergence was created, and plays a major role in classical texts, such as  \cite{zaanen2} and \cite{zaanenluxemburg1}, as well as in the vast literature in integration and convergence theory in partially ordered spaces, as given by the surveys of \cite{boccutoxenofon}, \cite{boccutovrabelovariecan}, and the recent works of \cite{convergencestructuresvanderwalt} and \cite{vanderwaldlocallysolid}, and their collaborators.

In this case we introduce the  following structure associated with $(D)$-convergence, as in \cite{boccutovrabelovariecan}:

\begin{definition} [$(D)$-sequence or Regulator]

Let $X$ be a Riesz space. We say that a double sequence $\{a_{ij}\}_{(i,j) \in \mathbb{N} \times \mathbb{N}}$\footnote{From now on, except when strictly needed, we will represent a regulator only as $\{a_{ij}\}_{ij}$ or $\{a_{ij}\}$.} in $X$ is a $(D)$-sequence, or regulator, if $a_{ij} \geq a_{ij+1}$ for every $i,n \in \mathbb{N}$ and:

\[ \bigwedge_{j=1}^{\infty} a_{ij} =0, \ \forall i \in \mathbb{N}\]
    
\end{definition}

It follows that every $(D)$-sequence is non-negative: i.e, for every $i,j$, $a_{ij} \geq 0$.

Then, we have the following definition:

\begin{definition} [(D)-convergence] \label{dconvergence}

Let $X$ be a Riesz Space and $I$ a directed set. We say that a net $\{x_{i}\}_{i \in I}$ in $X$ is $(D)$-convergent to an element $x \in X$ if there exists a $(D)$-sequence $\{a_{ij}\}$ in $X$ such that for every $\varphi \in \mathbb{N}^{\mathbb{N}}$ there exists $i_{0} \in I$ such that:

\[|x_{i}-x| \leq \bigvee_{j=1}^{\infty}a_{j\varphi(j)},\]

for every $i \geq i_{0}$.

\end{definition}

In general, we have the following relations between this convergence and the previously defined ones (see Chapter 2 of \cite{boccutovrabelovariecan}):

\begin{proposition} \label{(D)ando}
    Let $X$ be an Riesz Space. Then, a $(o)$-convergent sequence $\{x_{n}\}_{n \in \mathbb{N}}$ in $X$ is $(D)$-convergent.  
\end{proposition}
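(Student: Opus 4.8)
The plan is to reduce the statement to the sequential characterization of order convergence recorded just above, and then to exhibit an explicit regulator witnessing $(D)$-convergence. Since $\{x_n\}_{n \in \mathbb{N}}$ is assumed $(o)$-convergent to some $x \in X$, that characterization furnishes a non-increasing sequence $\{u_n\}_{n \in \mathbb{N}}$ with $u_n \downarrow 0$ and $|x_n - x| \leq u_n$ for every $n$. It is important that we invoke the sequential version here: the analogous \emph{tail}-type bound fails for general nets (as remarked in the text), so the hypothesis that we are dealing with a sequence is exactly what makes a single dominating sequence $\{u_n\}$ available.

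My first step is to build a $(D)$-sequence out of $\{u_n\}$ in the simplest possible way, namely by setting $a_{ij} := u_j$ for all $i,j \in \mathbb{N}$ (constant in the first index). I would then verify the two defining properties of a regulator: monotonicity in the second index, $a_{ij} = u_j \geq u_{j+1} = a_{i,j+1}$, which is immediate from the monotonicity of $\{u_n\}$; and $\bigwedge_{j} a_{ij} = \bigwedge_{j} u_j = 0$ for each fixed $i$, which is precisely the statement $u_n \downarrow 0$. Hence $\{a_{ij}\}_{ij}$ is a $(D)$-sequence.

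Next I would fix an arbitrary $\varphi \in \mathbb{N}^{\mathbb{N}}$ and analyze the associated dominating element $\bigvee_{j=1}^{\infty} a_{j\varphi(j)} = \bigvee_{j=1}^{\infty} u_{\varphi(j)}$. The key observation is that, because $\{u_n\}$ is non-increasing and $\varphi$ takes values in $\mathbb{N}$, the set $\{\varphi(j) : j \in \mathbb{N}\}$ has a least element $m = \min_{j} \varphi(j)$ by well-ordering, and every term satisfies $u_{\varphi(j)} \leq u_m$, with equality attained at the minimizing index. Consequently the supremum exists and equals $u_m$. This reduction of an a priori infinite supremum to an attained maximum is the one point that genuinely uses the order structure, and is where I would be most careful, since infinite suprema need not exist in a general lattice ordered group; here existence is guaranteed only because the candidate value is itself one of the terms of the family.

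Finally, with $i_0 := m$ I would close the argument: for every $i \geq i_0$ the monotonicity of $\{u_n\}$ gives $u_i \leq u_m$, so $|x_i - x| \leq u_i \leq u_m = \bigvee_{j=1}^{\infty} a_{j\varphi(j)}$. Since $\varphi$ was arbitrary, this is exactly the definition of $(D)$-convergence of $\{x_n\}$ to $x$. I do not anticipate any serious obstacle beyond the existence-of-supremum subtlety noted above; once the dominating sequence is in hand, the construction is essentially forced.
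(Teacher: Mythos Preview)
Your argument is correct: the reduction to the sequential characterization $|x_n - x| \leq u_n$ with $u_n \downarrow 0$, followed by the constant-in-$i$ regulator $a_{ij} = u_j$ and the observation that $\bigvee_j u_{\varphi(j)} = u_{\min_j \varphi(j)}$ is attained, is exactly the standard proof. The paper itself does not supply a proof of this proposition but merely cites Chapter~2 of \cite{boccutovrabelovariecan}; your write-up matches the argument one finds there.
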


In general, the reverse statement of convergence is not true - see, for example, \cite{zaanenluxemburg1}.  The result for nets is also not, in general, true - see again Chapter 2 of \cite{boccutovrabelovariecan}. 

As indicated earlier, $(D)$-convergence can give, in some cases, a substitute for $\varepsilon \setminus 2^{n}$-type arguments in terms of (one version of) the Fremlin Lemma (see \cite{integralmeasureandordering}), for which we need the next notions, beginning with the essential concept from the integration theory of \cite{matthesdaniell} and \cite{wrightextension}.

\begin{definition} [Weak $\sigma$-Distributivity]

Let $X$ be a Riesz Space. We say that $X$ is weakly $\sigma$-distributive if and only if:

\[\bigwedge_{\varphi \in \mathbb{N}^{\mathbb{N}}}(\bigvee_{i=1}^{\infty}a_{i\varphi(i)}) = 0,\]

for every $(D)$-sequence $\{a_{ij}\}$ in $X$.

\end{definition}

As an interesting consequence (which we shall use later in the context of integration) of this property, we have (see \cite[Proposition 2.22, p. 31]{boccutovrabelovariecan}):

\begin{proposition} \label{equivalenceoand(D)convergence}
    Let $X$ be an Archimedean  and weakly $\sigma$-distributive Riesz Space. Then, a $(D)$-convergent sequence $\{x_{n}\}_{n \in \mathbb{N}}$ in $X$ is $(o)$-convergent.  
\end{proposition}

The last definition we shall need for the Fremlin Lemma is the next classical concept, which can be found in most books about partially ordered spaces (see, for example, \cite{vulikh} or \cite{zaanenluxemburg1}):

\begin{definition} [Dedekind Completeness] \label{dedekindcomplete}

Let $X$ be a partially ordered space. We say that $X$ is Dedekind complete if every upper bounded set has a supremum. Equivalently, if every increasing bounded net has an $(o)$-limit (given by its supremum). 
    
\end{definition}

For further properties relating to Dedekind completeness, see Chapter 4 of \cite{zaanenluxemburg1}.

We now have the main lemma we alluded to, and which finishes this section:

\begin{theorem} [Fremlin Lemma]

Let $X$ be a Dedekind $\sigma$-complete weakly $\sigma$-distributive Riesz space. If $\{a_{nij}\}_{nij}$ is a triple sequence of elements of $X$ such that, for each $n \in \mathbb{N}$, $\{a_{nij}\}_{ij}$ is a $(D)$-sequence, then there is a double sequence $\{b_{ij
}\}_{ij}$ of elements of $X$ such that, for all $L \in X^{+}\setminus \{0\}$ and $\varphi \in \mathbb{N}^{\mathbb{N}}$, 

\[L \wedge (\sum_{n=1}^{k} \bigvee_{i=1}^{\infty} a_{ni\varphi(i+n)})\leq \bigvee_{j=1}^{\infty}(L \wedge b_{j\varphi(j)}),\]

for all $k \in \mathbb{N}$.

Moreover, $\{S \wedge b_{ij}\}_{ij}$ is a $(D)$-sequence for all $L \in X^{+}\setminus \{0\}$.

\end{theorem}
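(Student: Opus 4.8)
The plan is to build the double sequence $\{b_{ij}\}$ by amalgamating the countably many regulators $\{a_{nij}\}_{ij}$ along the diagonals fixed by the shift $i+n$, and then to establish the two assertions separately: the displayed domination inequality and the moreover clause. First I would exploit the shift. For each fixed $n$ the family $\{a_{nij}\}_{ij}$ is a $(D)$-sequence, so $a_{nij}$ is non-increasing in $j$; hence in $a_{ni\varphi(i+n)}$ the last index is $\varphi$ evaluated at $i+n\geq n+1$, and on the diagonal $\{(n,i):i+n=j\}$ exactly the indices $n=1,\dots,j-1$ are active. This makes the grouping by the value $j=i+n$ natural and suggests the first candidate $b_{jm}:=\sum_{n=1}^{j-1}a_{n,\,j-n,\,m}$, which, being a finite sum of sequences each decreasing to $0$ in $m$, is already a $(D)$-sequence by order-continuity of finite sums.

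The difficulty is that the two requirements pull the construction in opposite directions. Meeting each summand with $L$ and using $a_{ni\varphi(i+n)}\leq a_{ni,\,m}$ whenever $\varphi(i+n)\geq m$, one checks that a global-supremum amalgamation of the form $\sum_{n}\bigvee_i a_{ni,\,m}$ dominates the left-hand side after the meet with $L$, the supremum over $j$ selecting through the shift the most favourable evaluation of $\varphi$; this direction is soft and needs only monotonicity in the last index together with Dedekind $\sigma$-completeness to place the relevant countable suprema inside $[0,L]$. However, such an amalgamation is in general not a $(D)$-sequence, since the fixed-index infima $\bigwedge_m\bigvee_i a_{ni,\,m}$ need not vanish; and the finite-diagonal candidate above has the opposite defect, being a genuine regulator but failing to capture the full sum when the peaks of distinct regulators sit on distinct diagonals.

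Reconciling these is the crux, and I expect it to be the main obstacle; it is here that weak $\sigma$-distributivity is indispensable. The key is that weak $\sigma$-distributivity controls exactly the diagonal behaviour $\bigwedge_{\psi}\bigvee_i a_{ni\psi(i)}=0$ that the fixed-index suprema fail to control. Since the right-hand side probes $b$ only along $j\mapsto b_{j\varphi(j)}$, the quantity $\bigvee_j(L\wedge b_{j\varphi(j)})$ reconstructs diagonal suprema of the $a_{nij}$ rather than fixed-index ones, and these do vanish in the limit. I would therefore define $b$ as an interleaving of the diagonal sums with variable-index suprema, arranged so that probing by $\varphi$ recovers a dominating diagonal, and then invoke weak $\sigma$-distributivity to conclude $\bigwedge_j(L\wedge b_{ij})=0$, i.e. that $\{L\wedge b_{ij}\}_{ij}$ is a $(D)$-sequence for every $L\in X^{+}\setminus\{0\}$. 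The meet with $L$ is used throughout for two reasons: in a merely Dedekind $\sigma$-complete space it confines every auxiliary supremum and sum to the order interval $[0,L]$ where countable suprema exist, and, by order-continuity of $u\mapsto L\wedge u$ on decreasing nets, it lets the diagonal vanishing provided by weak $\sigma$-distributivity pass to the infimum over the second index, yielding the moreover clause. The inequality for all $k$ then follows by taking the supremum over $k$ of the finite partial sums and comparing them termwise against the selected diagonal of $b$, with monotonicity in the last index handling the bookkeeping.
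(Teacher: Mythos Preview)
The paper does not supply its own proof of this lemma; immediately after the statement it writes ``A detailed proof of this important result can be found in \cite{monteirodissertação}.'' There is therefore nothing in the paper to compare your argument against.

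Evaluating your proposal on its own terms: you correctly extract the shift structure $j=i+n$, you write down the natural diagonal candidate $b_{jm}=\sum_{n<j}a_{n,j-n,m}$, and you are right that it fails the domination inequality---already in $X=\mathbb{R}$ with $a_{nij}=1/j$, $k=2$, $\varphi(j)=j^{2}$ one gets $L\wedge(1/4+1/9)=13/36$ on the left but only $\sup_{j}(j-1)/j^{2}=1/4$ on the right for any $L\geq 13/36$. You also correctly flag weak $\sigma$-distributivity and the role of the meet with $L$ as the essential ingredients. The gap is that you never actually define the $b_{ij}$ you intend to use. Your resolution---``define $b$ as an interleaving of the diagonal sums with variable-index suprema, arranged so that probing by $\varphi$ recovers a dominating diagonal''---is a specification of desiderata, not a construction; and since the entire content of Fremlin's Lemma is precisely the existence of such a double sequence, the proof has not begun until $b_{ij}$ is written down explicitly and both the inequality and the moreover clause are verified for it. As it stands you have an accurate diagnosis of why the obvious attempt fails and a correct inventory of the tools needed, but the constructive step that carries the weight of the lemma is absent.
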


A detailed proof of this important result can be found in \cite{monteirodissertação}.

\subsection{Lattice Normed Spaces.}

For the development of our integration theory, besides the partially ordered and topological cases, we shall also need some basic elements of convergence in spaces equipped with abstract distance functions that are valued in (a collection of) partially ordered vector spaces. This idea has been considered in various forms in the literature, and we shall drawn mainly from \cite{kusraevdominated} and \cite{kusraevmalyugin}, but generalizing the construction to cover non-locally convex and other types of spaces. A similar notion in the topological literature is the category of S-metric spaces, studied mainly by \cite{reichel1977distanzfunktionen} and \cite{reichel2006nearness}.

We begin with the following definition:

\begin{definition} [Lattice Norm]

Let $X$ be a (real) vector space and $\mathcal{X}$ a Riesz space and $\mathcal{X}^{+}$ its positive cone. Then, a mapping $||\cdot||: X \rightarrow \mathcal{X}^{+}$ is a lattice norm of $X$ with values in $\mathcal{X}$ if it satisfies the following for every $x,y \in X$:

\begin{enumerate}[(i)]
    \item $||x|| = 0 \iff x =0$
    \item $||x+y|| \leq ||x||+||y||$
\end{enumerate}

\end{definition}

With this, we now define the main structure associated to a lattice norm:

\begin{definition} [Lattice Normed Space]

We say that a triple $(X, \{||\cdot||_{l \in L}, \{\mathcal{X}\}_{l \in L})$, where $L$ is an index set, $||\cdot||_{l}: X \rightarrow \mathcal{X}^{+}_{l}$ a lattice norm for every $l \in L$ and $\mathcal{X}_{l}$ a Riesz space for every $l \in L$, is a lattice normed space, or only $X$ if no confusion arises.
\end{definition}

As in Chapter 2 of \cite{kusraevdominated}, this construction wields a natural notion of convergence in $X$:

\begin{definition} [Convergence in Lattice Normed Spaces]

We say that a net $\{x_{i}\}_{i}$ of elements of a lattice normed space $(X, \{||\cdot||_{l \in L}, \{\mathcal{X}\}_{l \in L})$ is convergent to an element $x \in X$ if and only if for each $l \in L$:

\[||x - x_{i}||_{l} \rightarrow 0,\]

where this convergence is $(o)$ or $(D)$-convergence in $\mathcal{X}_{l}$\footnote{Any other net convergence structure is admitted as well, see the next section.}.

We then say that the lattice norm $||\cdot||_{l}$ is continuous if:

\[||x||_{l} = \lim_{I} ||x_{i}||_{l},\]

with $\lim$ being taken in the sense above. This convergence will also be said to be complete if any Cauchy net for this convergence is convergent in $X$\footnote{See also the next section for more details about Cauchy concepts in general convergences.}.
    
\end{definition}

For the fact that these two constructions generalize topological as well as Riesz space convergences, we refer the reader to \cite{kusraevdominated} and the paper \cite{reichel2006nearness}.

We shall now pass to a discussion of a general structure that unifies all the presented convergence concepts so far. 

\subsection{Net Convergence Structures in General Spaces.}

To unify the different notions of order-based and topological convergences above, we shall need a concept that abstracts the notion of convergence independently of the presence of a topology or order in the space in question. Various notions of this idea exists, with varying generality, in the works \cite{gahler1}, \cite{gahler2}, \cite{dolecki}, \cite{doleckiconvergence}, \cite{Higgs} and others. Usually, this notion of "convergence structure" is presented in the language of filters, which are more adapted to topology. As our focus is integration, nets are more intuitive and useful. Therefore, we shall use the recent contributions of \cite{vanderwaldlocallysolid} and \cite{convergencestructuresvanderwalt}, which developed this theory in the context of nets. 

In this sense, following \cite{convergencestructuresvanderwalt}, we fix the space $\mathcal{N}(X)$ as the set of all admissible nets in an abstract (non-empty) space $X$. The concept of an admissible net is of a technical nature (based on the axioms of ZFC, as explained in \cite{convergencestructuresvanderwalt}), which we are not going to emphasize much - it is always possible to find, for each net, an admissible net such that the two have equal limit in the sense of the next definition. So, we shall work only with this class (i.e, identify nets with its admissible class) and comment no further.

In the rest of this work $I$ and $L$ will, in general, denote general directed (upwards or downwards) sets that are to serve as the domain of the nets. We shall not refrain to use other (Latin) indexes when these two are already in use in the same context.

We begin with the following preliminary concept (see Page 4 of \cite{convergencestructuresvanderwalt}):

\begin{definition} [Quasi-Subnet]

Given two nets $\{x_{i}\}_{i \in I}$ and $\{y_{i^{'}}\}_{I^{'}}$, we say that the latter is a quasi-subnet of the former if for every $i_{0} \in I$ there exists $i^{'}_{0} \in I^{'}$ such that:

\[\{y_{i^{'}}\}_{i^{'}\geq i^{'}_{0}} \subseteq \{x_{i}\}_{i \geq i_{0}}\]

\end{definition}

Then, we have the following main definition from \cite{convergencestructuresvanderwalt}. 

\begin{definition} [Net Convergence Structure] \label{netconvergencestructure}

A net convergence structure on $X$ is a function

\[\eta: X \rightarrow 2^{\mathcal{N}(X)},\]

which is defined by the relation $\{x_{i}\}_{i \in I} \in \eta(x)$, written sometimes as the limit notation $\{x_{i}\}_{i \in I} \rightarrow x$,  $\lim_{(I, \eta)}x_{i} = x$ or $\lim_{i}x_{i} = x$, such that $x$ is called the limit of the net, and:

\begin{enumerate} [(i)]
    \item If a net $\{x_{i}\}_{i \in I}$ is constant and equals to a $x \in X$, then $ \{x_{i}\}_{i \in I} \in \eta(x)$ and its limit is $x$;
    \item If a net converges to $x$, every quasi-subnet of it converges to $x$;
    \item Let $\{z_{i}\}_{i \in I} $ be a net such that $z_{i} \in \{x_{i},y_{i}\}$, for every $i \in I$, with nets $\{x_{i}\}_{i \in I}$ and $\{y_{i}\}_{i \in I}$ such that $\{x_{i}\}_{i \in I} \rightarrow x$ and $\{y_{i}\}_{i \in I} \rightarrow x$. Then $\{z_{i}\}_{i \in I} \rightarrow x$.
\end{enumerate}

We call $(X, \eta)$, or only $X$ if the convergence is implicit, a net convergence space.
    
\end{definition}

If the space in question has a partial order, we also have the following analogous construction of \cite{vanderwaldlocallysolid}, which is a generalization of the various order convergences above, as well as of  relative uniform, $(uo), (un)$ and $(uaw)$ convergences:

\begin{definition} [Locally Solid or Partially Ordered Net Convergence Structure]

Given a net convergence space $(X, \eta)$ where $X$ is a Riesz space, we say that $\eta$ is locally solid if, besides the axioms of Definition \ref{netconvergencestructure}, it also satisfies:

\begin{enumerate}
    \item If $\{y_{i}\}_{i \in I}$ converges to $y$ in $X$ and $\{x_{i}\}_{i \in I}$ is a net such that:

    \[|x_{i}| \leq |y_{i}|,\]

    for every $i \in I$, then $\{x_{i}\}_{i \in I}$ converges to $y$.
    \item If $\{y_{l}\}_{l \in L}$ converges to $y$ in $X$ and $\{x_{i}\}_{i \in I}$ is a net such that for every $l_{0} \in I$ there exists $i_{0} \in L$ such that for every $i \geq i_{0}$ there exists $l \geq l_{0}$ with $|x_{i}| \leq |y_{l}|$, then $\{x_{i}\}_{i \in I}$ converges to $y$. 
\end{enumerate}

We shall also say that $(X,\eta)$ is, in this case, a partially ordered net convergence structure.
    
\end{definition}

We now have an analogous definition of the Hausdorff separation property:

\begin{definition} [Hausdorff Net Convergence Spaces]
    We say that a net convergence space $(X, \eta)$ is Hausdorff if each net has only one limit. 
\end{definition}

We now add the following hypothesis for the rest of this work:

\begin{hypothesis} [Hausdorff Hypothesis]

From now, all the net convergence spaces in this work shall be supposed, or proved, to be Hausdorff. 
    
\end{hypothesis}

For further comparison with other convergences, the following result of \cite{convergencestructuresvanderwalt} is of interest:

\begin{proposition}
    The Axiom 13.2.1 (ii) is equivalent to the following two conditions:

    \begin{enumerate} 
        \item If a net converges to $x$, then so does each of its subnets and,
        \item If $\{x_{i}\}_{i \in I} $ converges to $x$ and $\{y_{i^{'}}\}_{i^{'} \in I^{'}}$ is tail equivalent to $\{x_{i}\}_{i \in I}$, then $\{y_{i^{'}}\}_{i^{'} \in I^{'}}$ converges to $x$.
    \end{enumerate}
\end{proposition}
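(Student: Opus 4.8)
The plan is to prove the two implications separately, observing that essentially all the content lies in the reverse direction. Throughout I read ``tail equivalent'' in the sense standard to this circle of ideas: two nets are tail equivalent precisely when each is a quasi-subnet of the other.

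For the direction ``Axiom 13.2.1 (ii) implies (1) and (2)'', I would first note that every ordinary subnet of $\{x_{i}\}_{i \in I}$, and every net tail equivalent to it, is in particular a quasi-subnet of it. Indeed, if $\{x_{\phi(j)}\}_{j \in J}$ is a subnet via a monotone cofinal map $\phi \colon J \to I$, then for a given $i_{0} \in I$ cofinality yields $j_{0}$ with $\phi(j_{0}) \geq i_{0}$, and monotonicity gives $\phi(j) \geq i_{0}$ for all $j \geq j_{0}$, whence $\{x_{\phi(j)}\}_{j \geq j_{0}} \subseteq \{x_{i}\}_{i \geq i_{0}}$, which is exactly the quasi-subnet condition; and tail equivalence already contains the quasi-subnet relation by definition. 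Both (1) and (2) then follow immediately from the quasi-subnet axiom.

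The reverse direction ``(1) and (2) imply Axiom 13.2.1 (ii)'' is the substantive part. Given a quasi-subnet $\{y_{i'}\}_{i' \in I'}$ of a convergent net $\{x_{i}\}_{i \in I} \to x$, the idea is to factor the quasi-subnet relation through a genuine subnet. I would set $K := \{(i', i) \in I' \times I : y_{i'} = x_{i}\}$, ordered componentwise, and define $z_{(i',i)} := x_{i} = y_{i'}$. The first step is to check that $K$ is directed: given two pairs, I pick an upper bound $i_{0} \in I$ for their $I$-components, use the quasi-subnet property to produce $i'_{*} \in I'$ whose tail lands inside $\{x_{i}\}_{i \geq i_{0}}$, choose $i'$ above both $I'$-components and above $i'_{*}$, and read off a matching $i \geq i_{0}$ from the equality $y_{i'} = x_{i}$. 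The first projection $(i', i) \mapsto i$ is then monotone and, by the same mechanism, cofinal, so $\{z_{k}\}_{k \in K}$ is an honest subnet of $\{x_{i}\}$; by (1) it converges to $x$.

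It then remains to verify that $\{z_{k}\}_{k \in K}$ is tail equivalent to $\{y_{i'}\}_{i' \in I'}$, after which (2) yields $\{y_{i'}\} \to x$ and closes the argument. Both quasi-subnet inclusions reduce once more to the defining property of the original quasi-subnet: for a prescribed threshold I select an index $i'$ simultaneously above the required $I'$-level and above the $i'_{*}$ furnished by the quasi-subnet condition, and the equality $y_{i'} = x_{i}$ then places the relevant value in the demanded tail. I expect the main obstacle to be bookkeeping rather than depth: because the quasi-subnet condition is stated as a set inclusion of tails of \emph{values} rather than through an index map, one must repeatedly translate ``the value $y_{i'}$ lies in a tail of $\{x_{i}\}$'' into ``there is a pair $(i', i) \in K$ with $i$ large'', while keeping the two projections of $K$ straight. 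Establishing directedness and cofinality of $K$ carefully is where the quasi-subnet hypothesis is genuinely invoked, and is the step most prone to error.
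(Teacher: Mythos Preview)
Your argument is correct. The paper does not supply its own proof of this proposition; it simply records it as a result taken from \cite{convergencestructuresvanderwalt} and moves on, so there is no in-paper proof to compare against. Your reading of ``tail equivalent'' as mutual quasi-subnet is the intended one, and the factorisation through the directed set $K=\{(i',i):y_{i'}=x_{i}\}$ is exactly the standard mechanism used in the cited reference: the second projection exhibits $\{z_{k}\}$ as a subnet of $\{x_{i}\}$, and the first projection (together with the quasi-subnet hypothesis) produces the two tail inclusions needed for tail equivalence with $\{y_{i'}\}$. Your remark that the difficulty is purely bookkeeping is accurate; nothing substantive is missing.
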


Thus, this result shows that this general convergence concept includes the net convergence definition of \cite{Higgs}. It can also be seen that it includes the net convergences of \cite{boccutoxenofon}, based on axioms of pages 306-307 from it, and \cite{boccutocandeloro2003a}, as well as the other convergences cited above, which also have filter analogues.

\begin{definition} [Cauchy Nets and Complete Net Convergence Structures]

Let $(X, \eta)$ be a net convergence space. We say that a net $\{x_{i}\}_{i \in I}$ is Cauchy if its associated difference

\[\{x_{i}-x_{j}\}_{(i,j) \in I \times I},\]

is convergent to $0$ in the net convergence structure $\eta$.

If each Cauchy net in $X$ associated to $\eta$ is convergent\footnote{Notice that every convergent sequence is Cauchy in an arbitrary net convergence structure. For details, see \cite{convergencestructuresvanderwalt}.}, the convergence structure $\eta$ is called Cauchy or, equivalently, $X$ is called complete (in reference to the net convergence structure $\eta$).
\end{definition}

Continuity is an important concept, and it can be generalized to this case (see \cite{convergencestructuresvanderwalt} and \cite{vanderwaldlocallysolid}):

\begin{definition} [Continuous Maps Between Convergent Spaces]

Let $X$ and $Y$ be net convergence spaces with net convergence structures $\eta_{1}$ and $\eta_{2}$ respectively. We say that a map $f: X \rightarrow Y$ is continuous if and only if:

\[\{x_{i}\}_{i \in I} \xrightarrow{\eta_{1}} x \implies \{(f(x_{i})\}_{i \in I} \xrightarrow{\eta_{2}} f(x), \]

for every net convergent net $\{x_{i}\}_{i \in I}$ in $X$.
\end{definition}

With regards to this last definition, we shall add to the above the following two hypothesis for the rest of this work:

\begin{hypothesis} [Hypothesis and Definition - Continuity of Addition for Vector Spaces and Product Convergence Structure] \label{productconvergence}

Let $(X, \eta)$ be a net convergence space with $X$ being a vector space. For the rest of this work, we shall suppose that the operation of addition and multiplication by real numbers for each fixed $r \in \mathbb{R}$ in $X$ are continuous. That is:

\[
\setlength\arraycolsep{0pt}
+\colon \begin{array}[t]{ >{\displaystyle}r >{{}}c<{{}}  >{\displaystyle}l } 
          X \times X &\to& X \\ 
          (x,y) &\mapsto& x+y 
         \end{array}
\]

\[
\setlength\arraycolsep{0pt}
\cdot_{r}\colon \begin{array}[t]{ >{\displaystyle}r >{{}}c<{{}}  >{\displaystyle}l } 
          X &\to& X \\ 
          x &\mapsto& r 
         \end{array}
\]    

are both continuous, where $X \times X$ is equipped with what we call the product convergence structure, that is: a net $\{x_{i}\}_{i \in I}$ of the form $x_{i} = (x_{i}^{1},x_{i}^{2})$ is convergence to $x = (x_{1},x_{2})$ in $X$ if and only if the marginal nets $\{x_{i}^{1}\}_{i \in I}$ and $\{x_{i}^{2}\}_{i \in I}$ converge to $x_{1}$ and $x_{2}$ in $X$ with respect to $\eta$ respectively. The product convergence structure in an infinite (possibly uncountable) product is defined in the analogous (i.e, convergence in each projection) way, with addition and multiplication by (real) constants understood in the coordinate-to-coordinate sense.

This is related to the concept of linear convergence structure as in \cite{convergencestructuresvanderwalt}.
\end{hypothesis}

\begin{hypothesis} [Hypothesis - Locally Solid Convergence]

Let $(X, \eta)$ be a net convergence space with $X$ being a Riesz space. For the rest of this work, we shall suppose that the the convergence structure $\eta$ in $X$ is locally solid.

\end{hypothesis}

To connect these concepts with the ones in the previous section, we note the next result due to \cite{mcgillitnegrationinvectorlattices}:

\begin{proposition} [Order Convergence is Cauchy]
    In a Dedekind complete Riesz space $X$, $(o)$-convergence is Cauchy. 
\end{proposition}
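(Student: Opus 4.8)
The plan is to use the regulator-type characterization of order convergence recorded above: in a lattice ordered group, a net $\{x_{i}\}_{i \in I}$ order converges to $x$ if and only if there is a decreasing net $\{y_{d}\}_{d \in D}$ with $y_{d} \downarrow 0$ such that, for each $d \in D$, there exists $i^{*}(d) \in I$ with $|x_{i}-x| \leq y_{d}$ for all $i \geq i^{*}(d)$. Since a net is Cauchy precisely when its difference net $\{x_{i}-x_{j}\}_{(i,j) \in I \times I}$ order converges to $0$, it suffices to exhibit such a regulator for the difference net. I would start from an order convergent net $\{x_{i}\}_{i \in I}$ with limit $x$ and fix the associated data $\{y_{d}\}_{d \in D}$ and the selectors $i^{*}(d)$.

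The key estimate is the triangle inequality in the lattice ordered group: for all $i,j \in I$,
\[
|x_{i}-x_{j}| \leq |x_{i}-x| + |x_{j}-x|.
\]
Given $d \in D$, set $(i_{0},j_{0}) = (i^{*}(d), i^{*}(d)) \in I \times I$. For every $(i,j) \geq (i_{0},j_{0})$ in the product directed order we have $i \geq i^{*}(d)$ and $j \geq i^{*}(d)$, hence $|x_{i}-x| \leq y_{d}$ and $|x_{j}-x| \leq y_{d}$, so that
\[
|x_{i}-x_{j}| \leq y_{d}+y_{d}.
\]
Thus the decreasing net $\{y_{d}+y_{d}\}_{d \in D}$ eventually dominates the difference net, index by index.

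It then remains to verify that $\{y_{d}+y_{d}\}_{d \in D}$ is itself a valid regulator, i.e. that $y_{d}+y_{d} \downarrow 0$; this is the only genuinely non-formal step, and I expect it to be the main obstacle. The net is clearly decreasing and bounded below by $0$, but in the lattice ordered group setting there is no scalar multiplication, so ``$2y_{d}$'' must be read as $y_{d}+y_{d}$ and the identity $\bigwedge_{d}(y_{d}+y_{d}) = 0$ cannot be obtained by factoring out a scalar as one would in a Riesz space. The way around this is that translations are order automorphisms of a lattice ordered group: for fixed $e$, the map $t \mapsto t + y_{e}$ preserves infima, so $\bigwedge_{d}(y_{d}+y_{e}) = (\bigwedge_{d} y_{d}) + y_{e} = y_{e}$, whence the double net satisfies $\bigwedge_{(d,e)}(y_{d}+y_{e}) = \bigwedge_{e} y_{e} = 0$. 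Since the diagonal $\{(d,d) : d \in D\}$ is cofinal in $D \times D$ and the double net is monotone, restricting to the diagonal leaves the infimum unchanged, giving $\bigwedge_{d}(y_{d}+y_{d}) = 0$. With $y_{d}+y_{d} \downarrow 0$ established, the regulator characterization yields $\{x_{i}-x_{j}\}_{(i,j) \in I \times I} \to 0$, i.e. $\{x_{i}\}_{i \in I}$ is Cauchy. In the Riesz space case the same argument applies verbatim, with the verification that $2y_{d} \downarrow 0$ being immediate from Archimedean scaling.
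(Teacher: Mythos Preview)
There is a direction error. In the paper's terminology (see the definition immediately preceding the proposition), a convergence structure $\eta$ is called \emph{Cauchy} when every $\eta$-Cauchy net is $\eta$-convergent; the paper explicitly says this is synonymous with $X$ being \emph{complete} with respect to $\eta$. Thus the proposition asserts that every order-Cauchy net is order convergent. Your argument does the opposite: you start from an order convergent net and show that its difference net order converges to $0$, i.e.\ you prove ``convergent $\Rightarrow$ Cauchy''. That implication is correct, but the paper already records it in a footnote as holding in any net convergence structure, and it is not what is being claimed here.

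The paper does not give its own proof of the proposition; it only cites a reference. To attack the intended direction you would have to begin with a net $\{x_i\}$ for which $\{x_i-x_j\}_{(i,j)\in I\times I}$ order converges to $0$ and manufacture an order limit for $\{x_i\}$ itself. The standard route is to build the candidate limit from suprema and infima of tails and then squeeze; note that this step genuinely needs the relevant suprema/infima to exist, which is why the companion statement for $(D)$-convergence that the paper proves right afterward carries an explicit Dedekind completeness hypothesis. Your regulator computation (including the $y_d+y_d\downarrow 0$ verification) is fine for what it shows, but it does not touch this construction.
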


Similarly, we have:

\begin{proposition} [(D)-convergence is Cauchy]
        In a Riesz space $X$ that is weakly $\sigma$-distributive and Dedekind complete, (D)-convergence is Cauchy. 

        In other words, a net $\{x_{i}\}_{i \in I}$ in $X$ is $(D)$-convergent if and only if there exists a non-negative bounded double sequence $\{a_{nk}\}_{nk}  \subseteq X$ such that $a_{nk} \downarrow 0$ ($k \rightarrow \infty, \forall n \in \mathbb{N}$) and for every $\varphi \in \mathbb{N}^{\mathbb{N
        }}$ $\exists i_{0} \in I$ such that:

        \[|x_{i_{1}}-x_{i_{2}}| \leq \bigvee_{n=1}^{\infty}a_{i\varphi(i)},\]

        for every $i_{1},i_{2} \geq i_{0}$.
\end{proposition}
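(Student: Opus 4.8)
The plan is to prove both implications of the stated equivalence, noting that the headline assertion ``$(D)$-convergence is Cauchy'' is precisely the completeness half, i.e.\ that every $(D)$-Cauchy net is $(D)$-convergent. The forward implication (convergent $\Rightarrow$ Cauchy) is routine and holds without the two hypotheses; the reverse implication (Cauchy $\Rightarrow$ convergent) is the substantive statement, and is where Dedekind completeness and weak $\sigma$-distributivity enter. I would first record that the regulator in Definition \ref{dconvergence} may always be taken bounded in the present setting: for the constant map $\varphi \equiv 1$ the element $\bigvee_{n} a_{n1}$ must exist in order to bound $|x_i - x|$, and since $0 \leq a_{nk} \leq a_{n1} \leq \bigvee_{m} a_{m1}$ for all $n,k$, the whole double sequence is order bounded.

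For the forward direction, suppose $\{x_i\}_{i \in I}$ is $(D)$-convergent to $x$ with $(D)$-sequence $\{a_{nk}\}$. Given $\varphi \in \mathbb{N}^{\mathbb{N}}$, pick $i_0 = i_0(\varphi)$ with $|x_i - x| \leq \bigvee_{n} a_{n\varphi(n)}$ for $i \geq i_0$. Then for $i_1, i_2 \geq i_0$ the triangle inequality gives
\[|x_{i_1} - x_{i_2}| \leq |x_{i_1} - x| + |x - x_{i_2}| \leq 2\bigvee_{n} a_{n\varphi(n)} = \bigvee_{n} 2 a_{n\varphi(n)},\]
where the last equality is the distributivity of positive scalar multiplication over suprema, valid in any Riesz space and in any Archimedean lattice ordered group. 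Setting $b_{nk} := 2 a_{nk}$ yields a bounded non-negative $(D)$-sequence (decreasing in $k$ with $\bigwedge_k b_{nk} = 2 \bigwedge_k a_{nk} = 0$), which exhibits $\{x_{i_1} - x_{i_2}\}_{(i_1,i_2) \in I \times I}$ as $(D)$-convergent to $0$; this is exactly the Cauchy condition.

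For the reverse direction, assume the bounded-regulator Cauchy condition and construct the limit. Fixing any $\varphi$ shows the tail $\{x_i : i \geq i_0(\varphi)\}$ is order bounded, so by Dedekind completeness (Definition \ref{dedekindcomplete}) the nets $w_i := \bigvee_{i' \geq i} x_{i'}$ and $v_i := \bigwedge_{i' \geq i} x_{i'}$ exist on the cofinal tail, with $w_i$ decreasing, $v_i$ increasing and $v_i \leq x_i \leq w_i$. The key estimate is that, for $i_0 = i_0(\varphi)$,
\[0 \leq w_{i_0} - v_{i_0} = \bigvee_{i', i'' \geq i_0} (x_{i'} - x_{i''}) \leq \bigvee_{n} a_{n\varphi(n)},\]
using the lattice identity $\bigvee_{i'} x_{i'} - \bigwedge_{i''} x_{i''} = \bigvee_{i',i''}(x_{i'} - x_{i''})$ together with the Cauchy bound on each difference. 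Since $\limsup_i x_i - \liminf_i x_i \leq w_{i_0} - v_{i_0}$, we obtain $0 \leq \limsup_i x_i - \liminf_i x_i \leq \bigwedge_{\varphi} \bigvee_{n} a_{n\varphi(n)}$, and weak $\sigma$-distributivity applied to the $(D)$-sequence $\{a_{nk}\}$ forces the right-hand side to be $0$. Hence $x := \limsup_i x_i = \liminf_i x_i$ is well defined. Finally, for $i \geq i_0(\varphi)$ both $x_i$ and $x$ lie in $[v_i, w_i]$, so $|x_i - x| \leq w_i - v_i \leq w_{i_0} - v_{i_0} \leq \bigvee_{n} a_{n\varphi(n)}$, which is precisely $(D)$-convergence of $\{x_i\}$ to $x$ in the sense of Definition \ref{dconvergence}.

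The main obstacle I anticipate is organizing the order-theoretic manipulations rigorously rather than the overall strategy: justifying that all the suprema and infima involved actually exist (combining tail boundedness with Dedekind completeness, and taking $\limsup$ and $\liminf$ over the cofinal bounded tail of the directed set $I$), and verifying the distributive identities $\bigvee_{i',i''}(x_{i'}-x_{i''}) = \bigvee_{i'} x_{i'} - \bigwedge_{i''} x_{i''}$ and $2\bigvee_n c_n = \bigvee_n 2c_n$ in the lattice ordered group case. The decisive step, and the only place the hypotheses are essential, is the use of weak $\sigma$-distributivity to collapse $\bigwedge_{\varphi}\bigvee_n a_{n\varphi(n)}$ to $0$; everything else is bookkeeping with the regulator.
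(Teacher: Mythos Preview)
Your proposal is correct and follows essentially the same strategy as the paper: for the substantive direction you bound the tail, form the $\limsup$/$\liminf$ via Dedekind completeness, control their difference by $\bigvee_n a_{n\varphi(n)}$, and invoke weak $\sigma$-distributivity to identify them and produce the limit with the same regulator. Your presentation is in fact tidier than the paper's, which works through an auxiliary index set $J=\{i(\varphi):\varphi\in\mathbb{N}^{\mathbb{N}}\}$ rather than directly with $\limsup$ and $\liminf$, but the underlying argument is the same.
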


As we have not found in the literature a proof for this result for general nets, we supply a proof inspired in the result for integrals in \cite{riecanvrabelovaoperator}. 

\begin{proof}
    The necessity is clear.

    For the sufficiency, let $\{a_{nk}\}_{nk}  \subseteq X$ be a non-negative bounded double sequence  such that $a_{nk} \downarrow 0$ ($k \rightarrow \infty, \forall n \in \mathbb{N}$) and for every $\varphi \in \mathbb{N}^{\mathbb{N
        }}$ $\exists i_{0} \in I$ such that:

        \[|x_{i_{1}}-x_{i_{2}}| \leq \bigvee_{n=1}^{\infty}a_{n\varphi(n)},\]

        for every $i_{1},i_{2} \geq i_{0}$.

    Define the following index set:

    \[J = \{i \in I: \exists \varphi \in \mathbb{N}^{\mathbb{N}}, i=i(\varphi)\}.\]

    By definition and construction, for $i^{*} \in J$, the set

    \[\{x_{i}: i \geq i^{*}\},\]

    is a bounded set, as for $i \geq i^{*}$,

    \[x_{i} \leq \bigvee_{n=1}^{\infty}a_{nk}+|x_{i^{*}}|,\]

    which shows the boundedness.

    As $X$ is Dedekind complete, there exists the following quantities, following again the notation above:

    \[a_{i^{*}} = \inf_{i \geq i^{*}}x_{i},\]

    \[b_{i^{*}}= \inf_{i \geq i^{*}}x_{i}.\]

    Put, then, for arbitrary $i_{1},i_{2} \in J$, $i^{*} = \max(i_{1},i_{2})$ (note that it satisfies the construction above). By construction, $\{x_{i}: i \geq i^{*}\}$ is bounded and:

    \[a_{i_{1}} = \inf_{i \geq i_{1}}x_{i} \leq \inf_{i \geq i^{*}}x_{i} \leq x_{i} \leq \sup_{i \geq i^{*}}x_{i} \leq \sup_{i \geq i_{2}}x_{i} = b_{i_{2}},\]

    Therefore, 

    \[\sup_{i \in I}a_{i} \leq \inf_{i \in I}b_{i}.\]

    Consequently, there exists $x \in X$ such that

    \[a_{i} \leq y \leq b_{i},\]

    for every $i \in J$. Now, let $\varphi \in \mathbb{N}^{\mathbb{N}}$ arbitrary. By construction, there exists $i^{*}=i^{*}(\varphi) \in I$ such that

    \[x_{i_{1}} \leq x_{i_{2}} + \bigvee_{n=1}^{\infty}a_{n\varphi(n)},\]

    for every $i_{1},i_{2} \geq i^{*}$. Fix such $i_{2}$. Then, taking the supremum with respect to $i_{1}$,

    \[b_{i^{*}(\varphi)} \leq x_{i_{2}} + \bigvee_{n=1}^{\infty}a_{n\varphi(n)}.\]

    As a consequence of the weak $\sigma$-distributivity, $\bigwedge_{\varphi \in \mathbb{N}^{\mathbb{N}}}\bigvee_{n=1}^{\infty}a_{n\varphi(n)} = 0$, and therefore

    \[\bigwedge_{\varphi \in \mathbb{N}^{\mathbb{N}}}b_{i^{*}(\varphi)} - \bigvee_{\varphi \in \mathbb{N}^{\mathbb{N}}}a_{i^{*}(\varphi)} = \bigwedge_{\varphi \in \mathbb{N}^{\mathbb{N}}}(b_{i^{*}(\varphi)}-a_{i^{*}(\varphi)}) = 0.\]

    On that account, 

    \[x = \bigwedge_{\varphi \in \mathbb{N}^{\mathbb{N}}}b_{i^{*}(\varphi)} = \bigvee_{\varphi \in \mathbb{N}^{\mathbb{N}}}a_{i^{*}(\varphi)}.\]

    As a consequence, for every $i \geq i^{*}(\varphi)$,

    \[x_{i} - x \leq b_{i^{*}(\varphi)} - a_{i^{*}(\varphi)}  \leq \bigvee_{n=1}^{\infty}a_{n\varphi(n)}.\]

    Similarly for the opposing inequality, in such a way that we obtain, finally, 

    \[|x_{i} - x| \leq \bigvee_{n=1}^{\infty}a_{n\varphi(n)},\]

    which is the convergence of $x_{i}$ to $x$, with respect to $(D)$-convergence, in $X$.

\end{proof}

This gives all we need to define the main integration procedure of this work, which we now do. 

\section{The Abstract Net Riemann Integral.} \label{sectionnetriemannintegral}

The convergence ideas above can give rise to a very general integration procedure when combined with an algebraic element of some sort. Basically, this combination of an algebraic operation and a convergence concept encapsulates the notion that defining a Riemann type integral is a matter of having a notion of well defined sums (or products) and an approximation procedure. That is, fixing $(X, \eta, \star)$ a net convergence structure with an operation $\star$ compatible with the convergence (in some sense), we may consider integration (of a function against a measure) as a limit of a net/filter of formal sums of the form:

\[(f, \mu) \rightarrow \sum_{i \in I_{\gamma}}f(\tau)\mu(\sigma),\]

where $f$ and $\mu$ have values in spaces that can be connected with $X$ by some map (not necessarily linear).

We now follow this intuition to define a very general integral that has some interesting properties, and which was considered, in a less general setting and with a narrower definition in \cite{szaznetintegralconvergence} and \cite{fleischerchange}. After defining our general integration procedure, we shown that most of the integrals in the literature are special cases of ours, and we also generalize specific convergence/additivity theorems of these integrals to the present abstraction. We present most of the results in the language of nets, but the dual filter notation is also be useful. 

As we will be dealing with different types of integrals in this work, we identify them by an superscript consisting of an upper case letter with a direct summary of the name, e.g. $\int^R$ for the Riemann integral. The only exception will be the main integral of this section, which will be denoted without superscript. 

In general, we will be integrating measures and functions with values in different spaces in conjunction to net convergence structures in each of these. Therefore, a concatenation structure that connects the two to a third space equipped with a convergence structure of the cited type, with the additional requirement of continuity of the algebraic operations, is needed. More precisely, we suppose the existence of a "bilinear product" as in \cite{pavlakosintegration}, \cite{dinculeanu2014vector}, \cite{dinculeanustochastic} and other classics of vector integration (see also \cite{boccutorieszabstract}, \cite{boccutovrabelovariecan}, \cite{filtermultiplicative} and \cite{pavlakosrepresentation}):

\begin{definition} [Bilinear Product in Net Convergence Spaces] \label{bilinearproduct}

let $(X,\eta_{1})$, $(Y,\eta_{2})$ and $(Z,\eta_{3})$ be three vector spaces with respective net convergence structures $\eta_{i}$, $i=1,2,3$.

A map

\[\bullet: X \times Y \rightarrow Z,\]

which we denote by juxtaposition, such that it is:

\begin{enumerate} [(i)]
    \item Bilinear in each argument,
    \item And such that the partial maps induced by fixing each argument:

    \[\bullet_{y}: X \rightarrow Z\]

    \[\bullet_{x}: Y \rightarrow Z\]

    are continuous in the associated net convergence structure of the spaces in each case, including multiplication by (fixed) real constants. 
\end{enumerate}

is called a bilinear product.

If $X,Y,Z$ are Riesz spaces, we also assume that the bilinear product satisfies:

(iii) It is isotone on the positive cones of $X$ and $Y$, $X^{+}$ and $Y^{+}$ respectively.

(iv) If $\{x_{n}\}_{n \in \mathbb{N}}$ and $\{y_{n}\}_{n \in \mathbb{N}}$ are non-negative sequences in $X$ and $Y$ respectively such that their supremum exists (in $X$ and $Y$ respectively), then:

\[\sup_{n \in \mathbb{N}}(x_{n} \bullet y_{i}) = (\sup_{n \in \mathbb{N}}x_{n}) \bullet y_{i}\]

\[\sup_{n \in \mathbb{N}}(x_{i} \bullet y_{n}) = x_{i} \bullet (\sup_{n \in \mathbb{N}} y_{n}),\]

for each fixed $i \in \mathbb{N}$.

\end{definition}

For the measure-theoretic structures which the integration theory in the present work will be reduced to, and were first defined by \cite{szazdefiningnets} and \cite{szaznetintegralconvergence} in a less general setting, we use the next construction:

\begin{definition} [Nets For Integration]

Let $(\Omega, \mathcal{H})$ be a fixed paved space. A net:

\[\mathfrak{N} = \{(\sigma_{\gamma},\tau_{\gamma})\}_{\gamma \in \Gamma},\]

where $\Gamma$ is a directed set and such that:

\[\sigma_{\gamma} = \{\sigma_{\gamma_{i}}\}_{i \in I_{\gamma}},\]

\[\tau_{\gamma} = \{\tau_{\gamma_{i}}\}_{i \in I_{\gamma}},\]

are finite families in $\mathcal{H}$ and $\Omega$ respectively, is called a Net for Integration or, in another terminology, a Fleischer-Szabó-Száz net.  
    
\end{definition}

Given these two definitions, we have the main structural object of the present section, defined in analogy to Definition 1.5 of \cite{szaznetintegralconvergence}:

\begin{definition} [Integration Structure] \label{integrationstructure}

An ordered triple:

\[((\Omega, \mathcal{H}), \mathfrak{N},((X,\eta_{1}), (Y,\eta_{2}), (Z,\eta_{3}); \bullet)),\]

consisting of 

\begin{enumerate} [(i)]
    \item $(\Omega, \mathcal{H})$, a paved space;
    \item $\mathfrak{N}$, a Fleischer-Szabó-Száz net defined on (i);
    \item $(X,\eta_{1}), (Y,\eta_{2}) ,(Z,\eta_{3})$, vector spaces with (partially ordered, if Riesz spaces) net convergence structures connected by a bilinear product $\bullet$;
\end{enumerate}

will be called an integration structure. If $X,Y,Z$ are Riesz spaces, we shall call the integration structure partially ordered. 
\end{definition}

With these definitions, we may state the main object of our integration theory:

\begin{definition} [Net Riemann Integral] \label{netriemannpartiallyordered}

Let $f: \Omega \rightarrow X$ be a function and $\mu: \mathcal{H} \rightarrow Y$ be an arbitrary set function, both given with respect to an integration structure

\[((\Omega, \mathcal{H}), \mathfrak{N},((X,\eta_{1}), (Y,\eta_{2}), (Z,\eta_{3}); \bullet)).\]

Then, if for each $A \in \mathcal{H}$ we have a Fleischer-Szabó-Száz net 

\[\mathfrak{N}_{A} = \{(\sigma_{\gamma},\tau_{\gamma})\}_{\gamma \in \Gamma_{A}},\] 

and

\[S^{A}_{\gamma}(f,\mu) = \sum_{i \in I_{\gamma}}f(\tau_{\gamma_{i}})\mu(\sigma_{\gamma_{i}}),\]

an abstract Riemann sum in $Z$ defined on $\mathfrak{N}_{A}$, such that the following limit exists, 

\[\int_{A} f d\mu = \lim_{(\Gamma_{A}, \eta_{3})} S^{A}_{\gamma}(f,\mu), \]

it will be called the net Riemann integral of $f$ with respect to $\mu$ on $A$, or only the  net Riemann integral of $f$ if the set function is fixed and the limit exists for every $A \in \mathcal{H}$.

\end{definition}

When the context is clear, we shall omit the index $A$ in the Riemann sum $S^{A}_{\gamma}$ for simplicity. 

The first relevant comment about this definition is that it is a strict extension of the integral of \cite{szaznetintegralconvergence}, allowing integration of domains different than the whole domain $\Omega$, incorporating insights from \cite{fleischerinterchange}, which we also extend, as he imposed further conditions on the integration structure that are not needed in defining the integral. 

We also note that this integral is a special case of the following more general construction, inspired by \cite{mcshaneriemann}:

\begin{definition} [Abstract Net Riemann Integral] \label{abstractnetriemannpartiallyordered}

Let $U: X \times \mathcal{H} \rightarrow Z$ be a function for $Z$ a vector space equipped with a net convergence structure, and, for each $A \in \mathcal{H}$ and Fleischer-Szabó-Száz net 

\[\mathfrak{N}_{A} = \{(\sigma_{\gamma},\tau_{\gamma})\}_{\gamma \in \Gamma_{A}},\] 

put

\[S^{U,A}_{\gamma} = \sum_{i \in I_{\gamma}}U(\tau_{\gamma_{i}},\sigma_{\gamma_{i}}),\]

an abstract Riemann sum in $Z$ defined on $\mathfrak{N}_{A}$, with respect to $U$. If the following limit exists, 

\[\int_{A} U = \lim_{(\Gamma_{A}, \eta_{3})} S^{U,A}_{\gamma}, \]

it will be called the abstract net Riemann integral of $U$ on $A$, again omitting $A$ and $U$ in the Riemann sum when it causes no confusion.

\end{definition}

We now have the next immediate observation.

\begin{observation} [Net Riemann Integral from the Abstract Net Riemann Integral]
    The net Riemann integral of $f$ with respect to $\mu$ appears as a special case if we pick $U(x, A) = x\mu(A)$, for $A \in \mathcal{A}$, $x$ in the image of $f$ for some $\omega \in \Omega$, and consider the bilinear product of the associated integration structure. 
\end{observation}

We these constructions in mind, we now prove some general properties of such integrals, focusing on the net Riemann integral. As the first property is quite generic, we prove it for the abstract net Riemann integral. For the next result, we shall need some definitions of convergence structures in product spaces. For that,  see \cite{convergencestructuresvanderwalt} and the comments in Hypothesis \ref{productconvergence}. Then, we have, as an immediate consequence of Definition \ref{abstractnetriemannpartiallyordered}:

\begin{theorem} 

let $\{(Z_{l}, \eta_{l})\}_{l \in L}$ be a collection of net convergence spaces and $Z$ its product equipped with a product convergence structure $\eta$. Then, a function:

\[U: X \times \mathcal{H} \rightarrow Z,\]

is abstract net Riemann integrable on $A \in \mathcal{H}$ if and only if each $U^{l}: X \times \mathcal{H} \rightarrow Z_{l}$ is net Riemann integrable on $A$ and:

\[\int_{A}U = z \iff \int_{A}U^{l} = z_{l},\]

where $z \in Z$, $z_{l} = \pi_{l}(z) \in Z_{l}$. 
    
\end{theorem}

Similarly, from this last result:

\begin{theorem} [Homomorphic Properties of the abstract net Riemann integral]

Let  $\varphi: Z \rightarrow Z^{'}$ be a continuous homomorphism between the vector spaces $Z,Z^{'}$ such that $(Z, \eta)$ and $(Z^{'}, \eta^{'})$ are net convergence spaces. Suppose that $U: X \times \Omega \rightarrow Z$ is abstract net Riemann integrable on $A \in \mathcal{H}$ with respect to a (fixed) net convergence structure. Then, $\varphi \circ U: X \times \Omega \rightarrow Z^{'}$ is abstract net Riemann integrable on $A$ and

\[\int_{A} \varphi \circ U = \varphi (\int_{A}U)\]
\end{theorem}

\begin{proof}
    Without loss of generality, we prove this result for $\Omega$ denoting its directed set by $\Gamma$. In this case, Let $S^{U}_{\gamma}$ the corresponding Riemann sum of $U$ for $\gamma \in \Gamma$. As $\varphi$ is a (vector space) homomorphism, we have that:

    \[S_{\gamma}^{\varphi \circ U} = \sum_{i \in I_{\gamma}}\varphi(U(\tau_{\gamma_{i}},\sigma_{\gamma_{i}}))= \varphi(S_{\gamma}^{U}),\]

    for each $\gamma \in \Gamma$. As the net $\{S^{U}_{\gamma}\}_{\gamma \in \Gamma}$ has a well defined limit in $Z$ and $\varphi$ is continuous, the right side has a limit in $Z^{'}$. By the last equality, we then have:

    \[\int_{\Omega} \varphi(U) = \lim_{(\Gamma, \eta)}S^{\varphi \circ U}_{\gamma} = \lim_{(\Gamma, \eta)}\varphi(S^{U}_{\gamma}) = \varphi(\int_{\Omega}U),\]

    which is the desired result.
\end{proof}

From these two general results, and with the choice in the observation above, we get:

\begin{corollary} [Linearity of the net Riemann integral] \label{linearityofnetriemannintegral}

If $f_{1},f_{2}$ are net Riemann integrable functions on each $A \in \mathcal{H}$, and with values in a (real) vector space $X$, then $af_{1}+bf_{2}$ is a net Riemann integrable function on each $A \in \mathcal{H}$ for each real constants $a,b \in \mathbb{R}$ and:

\[\int_{A}(af_{1}+bf_{2})d\mu = a\int_{A}f_{1}d\mu+b\int_{A}f_{2}d\mu,\]

for each $A \in \mathcal{H}$.
\end{corollary}

To give further properties of the integral in Definition \ref{netriemannpartiallyordered}, we denote by $L_{NR}(\mu)$ the space of all functions $f: \Omega \rightarrow X$ that are net Riemann integrable, on each $A \in \mathcal{H}$, with respect to the (fixed) set function $\mu: \Omega \rightarrow Y$ in the context of a (fixed) integration structure. 

\begin{theorem} [Further Properties of the net Riemann integral] \label{propertiesnetriemannintegral}

The net Riemann integral has the following properties compatible with a partially ordered integration structure in Riesz spaces $X, Y, Z$:

\begin{enumerate} [(i)]
    \item Let $f \in L_{NR}(\mu)$ such that $f \geq 0$ and $\mu \geq 0$. Then, 

    \[\int_{A}f d\mu \geq 0,\]

    for each $A \in \mathcal{H}$, i.e, the integral is a positive map on the space of integrable functions.
    \item Let $\mu \geq 0 $ and $f,g \in L_{NR}(\mu)$ such that:

    \[g \geq f.\]

    Then,

    \[\int_{A}gd\mu \geq \int_{A}fd\mu,\]

     for each $A \in \mathcal{H}$, i.e, the integral is an isotone map on $L_{NR}(\mu)$ equipped with the partial ordering inherited from $X$.

    \item Let $\mu \geq 0$ and $f \in L_{NR}(\mu)$ such that $|f| \in L_{NR}(\mu)$, then:

    \[|\int_{A}fd\mu| \leq \int_{A}|f|d\mu,\]

     for each $A \in \mathcal{H}$.
\end{enumerate}
    
\end{theorem}

\begin{proof}

    (i) Let $f \in L_{NR}(\mu)$ such that it, and $\mu$, are non-negative (in $X$ and $Y$ respectively). Let the Riemann sum of $f$ in the fixed integration structure given by $S_{\gamma}(f,\mu)$. By the isotone property of the product and the non-negativity of both $f$ and $\mu$, it follows that:

    \[S_{\gamma}(f,\mu) = \sum_{i \in I_{\gamma}}f(\tau_{\gamma_{i}})\mu(\sigma_{\gamma_{i}}) \geq 0, \]

    i.e, the sum is non-negative in $Z$ for every $I_{\gamma}$ and $\gamma \in \Gamma$. Therefore, by taking the limit with respect to the partially ordered net convergence structure $\eta_{3}$, and  using that it is locally solid,  it follows that:

    \[\lim_{(\Gamma, \eta_{3})} S_{\gamma}(f,\mu) \geq 0,\]

    which is equivalent to

    \[\int_{\Omega}fd\mu \geq 0.\]

    (ii) Let $f, g \in L_{NR}(\mu)$ with corresponding Riemann sums in the fixed convergent structure $S_{\gamma}(f,\mu), S_{\gamma}(g,\mu)$. As the product is isotone, we have that, for every $\gamma \in \Gamma$ and $i \in I_{\gamma}$, 

    \[f(\tau_{\gamma_{i}})\mu(\sigma_{\gamma_{i}}) \geq  g(\tau_{\gamma_{i}})\mu(\sigma_{\gamma_{i}}).\]

    As the bilinear product is isotone, and $Z$ is a Riesz space (therefore, the sum is an isotone map in $Z$ partial ordering), it follows that:

    \[ \sum_{i \in I_{\gamma}}g(\tau_{\gamma_{i}})\mu(\sigma_{\gamma_{i}}) \geq  \sum_{i \in I_{\gamma}}f(\tau_{\gamma_{i}})\mu(\sigma_{\gamma_{i}}),\]

    for every $\gamma \in \Gamma$ and $i \in I_{\gamma}$. 

    Therefore, by taking the limit with respect to the partially ordered net convergence structure $\eta_{3}$, and using that it is locally solid, it follows that:

    \[\lim_{(\Gamma, \eta_{3})} S_{\gamma}(g,\mu) \geq \lim_{(\Gamma, \eta_{3})} S_{\gamma}(f,\mu),\]

    which is the same as

    \[\int_{\Omega}gd\mu \geq \int_{\Omega}fd\mu\]

    (iii) Follows from the same previous techniques and the use of axioms $(i)$ and $(ii)$ of the partially ordered net convergence structure of $Z$.

\end{proof}

These results give the basic properties of the abstract integral defined in this section that are valid in full generality. We now aim to prove some specific results concerning hereditary (i.e subset) integration, that is: if a function $f$ is Net Riemann integrable on $B \in \mathcal{H}$ and $A \in \mathcal{H}$ is such that $A \subseteq B$, does it implies that $f$ is integrable on $A$ ? As the example of the Riemann-Stieltjes integral of \cite{mcshaneriemann} shows, this cannot be positively answered in general. To obtain such result, we impose further conditions on the Szábo-Száz net that gives the integration procedure (which are satisfied for general integration procedures such as \cite{mcshaneriemann} and \cite{sionsemigroup}). 

\begin{theorem} [Subset Integration] \label{subsetintegration}

Suppose that $Z$ is complete in its net convergence structure. Fix $A,B \in \mathcal{H}$ such that $A \subseteq B$ and suppose that either $\mathcal{H}$ is closed under taking complements of its members or that $A^{c} \in \mathcal{H}$ . Consider the following hypotheses:

\begin{enumerate} [(i)]
    \item For each $\gamma_{B}, \gamma_{B}^{'} \in \Gamma(B)$, there exists $\gamma_{A}, \gamma^{'}_{A} \in \Gamma(A)$ such that, for each $\gamma^{1}_{A}, \gamma^{2}_{A} \in \Gamma(A)$ that satisfies $\gamma^{1}_{A} \geq \gamma_{A}, \gamma^{2}_{A} \geq \gamma^{'}_{A}$, we can find $\gamma_{A^{c}} \in \Gamma(A^{c})$ in such a way that there exists $\gamma^{1}_{B}, \gamma^{2}_{B} \in \Gamma(B)$, such that $\gamma^{1}_{B} \geq \gamma_{B}, \gamma^{2}_{B} \geq \gamma_{B}$,  with corresponding Fleischer-Szabó-Száz nets

  \[\gamma^{1}_{B} \rightarrow ((\sigma_{\gamma^{1}_{A}} \cup \sigma_{\gamma_{A^{c}}}), (\tau_{\gamma^{1}_{A}} \cup \tau_{\gamma_{A^{c}}})),\]

  \[\gamma^{2}_{B} \rightarrow  ((\sigma_{\gamma^{2}_{A}} \cup \sigma_{\gamma_{A^{c}}}), (\tau_{\gamma^{2}_{A}} \cup \tau_{\gamma_{A^{c}}})),\]

  \item For each $\gamma_{B} \in \Gamma(B)$ there exists $\gamma_{A} \in \Gamma(A)$ and $\gamma_{A^{c}} \in \Gamma(A^{c})$ such that for each $\gamma^{'} \in \Gamma(A)$ and $\gamma^{''} \in \Gamma(A^{c})$, there exists $\gamma \in \Gamma(B)$ of the form

  \[\gamma_{B} \rightarrow ((\sigma_{\gamma^{'}} \cup \sigma_{\gamma_{''}}), (\tau_{\gamma^{'}} \cup \tau_{\gamma_{''}})),\]

  and $\gamma \geq \gamma_{B}$.

\end{enumerate}

Then, if $f$ is net Riemann integrable on $B$, it is net Riemann Integrable on $A$ and $A^{c}$ (the complement relative to $B$) and:

    \[\int_{B}fd\mu = \int_{A}fd\mu + \int_{A^{c}}fd\mu\]

\end{theorem}

\begin{proof}
    As $f$ is net Riemann integrable on $B$, the net of Riemann sums differences formed on $\Gamma(B) \times \Gamma(B)$, with values in $Z$, 

    \[\{\sum_{i \in I_{\gamma^{'}}}f(\tau_{\gamma^{'}_{i}})\mu(\sigma_{\gamma^{'}_{i}}) - \sum_{i \in I_{\gamma}}f(\tau_{\gamma_{i}})\mu(\sigma_{\gamma_{i}})\}_{(\gamma, \gamma^{'}) \in \Gamma(B) \times \Gamma(B) },\]

    is Cauchy in the net convergence structure of $Z$. Fix $\gamma_{B}, \gamma^{'}_{B} \in \Gamma(B)$. Now, by the first hypothesis (i), there exists $\gamma_{A}, \gamma^{'}_{A} \in \Gamma(A)$ such that, for each $\gamma^{1}_{A}, \gamma^{2}_{A} \in \Gamma(A)$ that satisfies $\gamma^{1}_{A} \geq \gamma_{A}, \gamma^{2}_{A} \geq \gamma^{'}_{A}$, we can find $\gamma_{A^{c}} \in \Gamma(A^{c})$ in such a way that there exists $\gamma^{1}_{B}, \gamma^{2}_{B} \in \Gamma(B)$, such that $\gamma^{1}_{B} \geq \gamma_{B}, \gamma^{2}_{B} \geq \gamma_{B}$,  with corresponding Fleischer-Szabó-Száz nets

  \[\gamma^{1}_{B} \rightarrow ((\sigma_{\gamma^{1}_{A}} \cup \sigma_{\gamma_{A^{c}}}), (\tau_{\gamma^{1}_{A}} \cup \tau_{\gamma_{A^{c}}})),\]

  \[\gamma^{2}_{B} \rightarrow  ((\sigma_{\gamma^{2}_{A}} \cup \sigma_{\gamma_{A^{c}}}), (\tau_{\gamma^{2}_{A}} \cup \tau_{\gamma_{A^{c}}})),\]

  Which gives the following decomposition of Riemann sums:

  \[\sum_{i \in I_{\gamma^{1}_{B}}}f(\tau_{\gamma^{1,B}_{i}})\mu(\sigma_{\gamma^{1,B}_{i}}) = \sum_{j \in I_{\gamma^{1}_{A}}}f(\tau_{\gamma^{1,A}_{j}})\mu(\sigma_{\gamma^{1,A}_{j}}) + \sum_{k \in I_{\gamma_{A^{c}}}}f(\tau_{\gamma^{A^{c}}_{k}})\mu(\sigma_{\gamma^{A^{c}}_{k}}),\]

  \[\sum_{i \in I_{\gamma^{2}_{B}}}f(\tau_{\gamma^{2,B}_{i}})\mu(\sigma_{\gamma^{2,B}_{i}}) = \sum_{j \in I_{\gamma^{2}_{A}}}f(\tau_{\gamma^{2,A}_{j}})\mu(\sigma_{\gamma^{2,A}_{j}}) + \sum_{k \in I_{\gamma_{A^{c}}}}f(\tau_{\gamma^{A^{c}}_{k}})\mu(\sigma_{\gamma^{A^{c}}_{k}}).\]

  Therefore, $\{\sum_{i \in I_{\gamma^{1}_{A}}}f(\tau_{\gamma^{1,A}_{i}})\mu(\sigma_{\gamma^{1,A}_{i}}) - \sum_{i \in I_{\gamma^{2}_{A}}}f(\tau_{\gamma^{2,A}_{i}})\mu(\sigma_{\gamma^{2,A}_{i}})\}_{(\gamma^{1}_{A}, \gamma^{2}_{A}) \geq (\gamma_{A}, \gamma^{'}_{A}) \in \Gamma(A) \times \Gamma(A)}$ is contained in $\{\sum_{i \in I_{\gamma_{B}^{1}}}f(\tau_{\gamma^{1,B}_{i}})\mu(\sigma_{\gamma^{1,B}_{i}}) - \sum_{i \in I_{\gamma^{2}_{B}}}f(\tau_{\gamma^{2,B}_{i}})\mu(\sigma_{\gamma^{2,B}_{i}})\}_{(\gamma^{*}_{B}, \gamma^{**}_{B}) \geq (\gamma_{B}, \gamma^{'}_{B}) \in \Gamma(B) \times \Gamma(B) }$, which shows that the net of differences of Riemann sums in $\Gamma(A) \times \Gamma(A)$ is a quasi-subnet of net of differences of Riemann sums in $\Gamma(B) \times \Gamma(B)$ . By the first paragraph (completeness of the convergence structure of $Z$) and axiom (ii) of Definition \ref{netconvergencestructure}, we conclude that $f$ is integrable on $A$. A similar argument gives that $f$ is integrable on $A^{c}$. 

  Similarly, by (ii), it follows $\{\sum_{j \in I_{\gamma_{A}}}f(\tau_{\gamma^{A}_{j}})\mu(\sigma_{\gamma^{A}_{j}}) + \sum_{k \in I_{\gamma_{A^{c}}}}f(\tau_{\gamma^{A^{c}}_{k}})\mu(\sigma_{\gamma^{A^{c}}_{k}})\}_{(\gamma_{A},\gamma_{A^{c}}) \in \Gamma(A) \times \Gamma(A^{c})}$ is a quasi-subnet of $\{\sum_{j \in I_{\gamma_{B}}}f(\tau_{\gamma^{B}_{j}})\mu(\sigma_{\gamma^{B}_{j}})\}_{\gamma_{B} \in \Gamma(B)}$, in such a way that by axiom (ii) of Definition \ref{netconvergencestructure}, they convergence to the same limit, $\int_{B}fd\mu$. Now, by the double indexing technique used in \cite[Remark 2.5, p. 7]{convergencestructuresvanderwalt}, we may regard the Riemann sums of $f$ in $A$ and $A^{c}$ as convergent nets indexed in $\Gamma(A) \times \Gamma(A^{c})$, converging to $\int_{A}fd\mu$ and $\int_{A^{c}}fd\mu$ respectively. Combining this results, we have:

  \[\int_{B}fd\mu = \lim_{(\Gamma(A) \times \Gamma(A^{c}), \eta_{3})}(\sum_{j \in I_{\gamma_{A}}}f(\tau_{\gamma^{A}_{j}})\mu(\sigma_{\gamma^{A}_{j}}) + \sum_{k \in I_{\gamma_{A^{c}}}}f(\tau_{\gamma^{A^{c}}_{k}})\mu(\sigma_{\gamma^{A^{c}}_{k}}))\]
  
  \[= \lim_{(\Gamma(A) \times \Gamma(A^{c}), \eta_{3})}\sum_{j \in I_{\gamma_{A}}}f(\tau_{\gamma^{A}_{j}})\mu(\sigma_{\gamma^{A}_{j}}) + \lim_{(\Gamma(A) \times \Gamma(A^{c}), \eta_{3})}\sum_{k \in I_{\gamma_{A^{c}}}}f(\tau_{\gamma^{A^{c}}_{k}})\mu(\sigma_{\gamma^{A^{c}}_{k}})= \int_{A}fd\mu + \int_{A^{c}}fd\mu\]

\end{proof}

Therefore, from this last result, we get the following additivity property of the net Riemann integral:

\begin{corollary} [Additivity of the Integral] \label{additivitynetriemannintegral}

Let $f$ be an element of $L_{NR}(\mu)$. Then, under the hypotheses of Theorem \ref{subsetintegration}, the set function:

\[A \in \mathcal{H} \rightarrow \eta_{f}(A) =\int_{A}fd\mu,\]

with values in $Z$, is an (finitely) additive set function defined on $\mathcal{H}$.
    
\end{corollary}

Specific applications of these results in the topological case can be found in the integrals of \cite[Theorem 4.2, p. 18]{sionsemigroup} and \cite{millingtonruns}, and in the partially ordered case (in the Henstock-Kurzweil case) of \cite[Proposition 5.2.10, p. 79]{integralmeasureandordering}. One other possibility to obtain the same result is to ask, as did \cite{fleischerinterchange}, that the directed sets $\Gamma(A \cup B)$ and $\Gamma(A)$ (and $\Gamma(B)$), for $A,B$ disjoint, are isomorphic, in the sense of always being possible to obtain a partition from the union of disjoint sets to each one individually, and vice versa. The previously cited integrals satisfy this property, and it implies the conditions in Theorem \ref{subsetintegration}. Therefore, our result is more general.

Corollary \ref{additivitynetriemannintegral} notwithstanding, the behavior of the set function induced by a net Riemann integrable functions is not necessarily simple. An interesting property would be that the function $f = \mathbbm{1}_{A}$, for $A \in \mathcal{H}$, is integrable with respect to $\mu$ and:

\[\nu_{f}(A) = \mu(A).\]

In general, that is not true for integrals such as the Henstock-Kurzweil integral for vector-valued (specially operator-valued) measures (see the discussion in Chapter 5 of \cite{integralmeasureandordering}), except under regularity conditions of the Radon type. As our integral contains this one as a special case (which well be shown shortly), the net Riemann integral will not have this property. Having this problem in mind, \cite{szaznetintegralconvergence} isolated the integration of indicator functions as a formal property and defined the next concept:

\begin{definition} [Regular Integrators]

We say that a set function $\mu: \mathcal{H} \rightarrow Y$ in the context of the net Riemann integral in Definition \ref{netriemannpartiallyordered} is a regular integrator if:

\[\int_{A} \mathbbm{1}_{\Omega}d\mu = \mu(A),\]

and,

\[\int_{\Omega} \mathbbm{1}_{A}d\mu = \mu(A)\]

for each $A \in \mathcal{H}$.
\end{definition}

It follows directly from the linearity properties of the net Riemann integral in Corollary \ref{additivitynetriemannintegral} that simple functions (to be defined next) are then net Riemann Integrable and the usual additivity property holds, that is:

\begin{proposition} [Integration of Simple Functions] \label{integrationofsimplefunctions}

Let $f: \Omega \rightarrow X$ be a $\mathcal{H}$-simple function, i.e:

\[ f = \sum_{i=1}^{n}x_{i}\mathbbm{1}_{A_{i}},\]

with $\{x_{i}\}_{i=1}^{n}$ a finite sequence of elements of $X$ and $\{A_{i}\}_{i=1}^{n}$ a finite (not necessarily disjoint) sequence of elements of $\mathcal{H}$. Then, if $\mu: \mathcal{H} \rightarrow Y$ is a regular integrator, $f$ is net Riemann Integrable and:

\[\int_{\Omega}fd\mu = \sum_{i=1}^{n}x_{i}\mu(A_{i})\]
\end{proposition}

The next strengthening of these two last properties, hereditary/additivity of integration and regular integrators, would be to prove that, if $\mu$ is $\sigma$-additive in some appropriate sense, then the set function induced by a net Riemann integrable function $f$, given by $\nu_{f}$, is $\sigma$-additive in $\mathcal{H}$. As the case of the Henstock-Kurzweil integral of a Banach-valued function with respect to a scalar $\sigma$-additive measure shows (see \cite[Proposition 5.4.1, p. 159]{schwabikgouju} or \cite{serieshenstock}), that is not true in general. This is related to the fact that we may construct integrals by the procedure of Definition \ref{netriemannpartiallyordered} that includes refinements and gauge integrals using finite partitions, and therefore based only on strictly finite Riemann sums, and that are also of non-absolute type. That is in contrast with integrals such as \cite{rickartintegralarticle}, \cite{kolmogorovuntersuchungen} and \cite{sionsemigroup} integrals, which are based on the refinement and truncation order in countable partitions/coverings of the base space, having the $\sigma$-additivity of the integral-induced measure naturally (a consequence of the unconditional summability of the approximating sums involved in the limit process, inducing an absolute integral). As we will see in the section about a generalization of \cite{kolmogorovuntersuchungen} $S^{*}$ integral to net convergence structures, obtaining a property of this type ($\sigma$-additive measure implies $\sigma$-additive integral-induced measure) is possible even without imposing a topology or order in the image space $Z$.

We now pass to a brief discussion of the properties of the space $L_{NR}(\mu)$. An immediate consequence of  Corollary \ref{additivitynetriemannintegral} and Proposition \ref{integrationofsimplefunctions} is the vector space structure of this space:

\begin{proposition} [$L_{NR}(\mu)$ is a Vector Space]

Let $L_{NR}(\mu)$ be the space of net Riemann integrable functions with respect to $\mu$ in a fixed integration structure. Then, $(L_{NR}(\mu),+)$ is a semigroup additively written with respect to the $+$ operation of $X$, and $(L_{NR}(\mu), +, (\mathbb{R}, \cdot))$ is a vector space with pointwise product by real numbers \footnote{As $X$ is a vector space and the bilinear product is compatible with pointwise multiplication in $X$ by real numbers.}. If $\mu$ is a regular integrator, $L_{NR}(\mu)$ contains the space of $X$-valued simple functions as a vector subspace. 

In the case of $X,Y,Z$ have a fixed partially ordered net convergence structure, $L_{RM}(\mu)$ is a partially ordered vector space with the pointwise order for functions: that is, $f,g \in L_{RM}(\mu)$ are such that $f \leq g$ if and only if $f(\omega) \leq g(\omega)$ (in $X$) for each $\omega \in \Omega$.

\end{proposition}

At this level of generality, not much more interesting can be said about this space. For example, we were unable to prove continuity properties of the integration functional for $L_{NR}(\mu)$ without imposing some more conditions, not necessarily of topological or order-convergence properties, on the convergence structures of $X, Y$ and $Z$ (see the next sections). The action of (real) linear functions on $L_{NR}(\mu)$, related to a type of Pettis integral, can be elucidated by some analogue constructions as in topological/partially ordered vector spaces, but we will comment about this aspect in the last part of the present section.  

In the further case of $X,Y,Z$ on a fixed partially ordered net convergence structure, two interesting questions should be asked about $L_{NR}(\mu)$:

\begin{enumerate}
    \item Is it, or can it be made, a Lattice ?
    \item In general, if $f: \Omega \rightarrow X$ is a function such that $|f| \leq |g|$ in the pointwise partial order (as in the definition above), and $g \in L_{NR}(\mu)$, does it implies that $f \in L_{RM}(\mu)$ ? In other words, is $L_{NR}(\mu)$ solid in the space of all functions of the type $f: \Omega \rightarrow X$ ?
\end{enumerate}

As is expected, the Henstock-Kurzweil integral provides a counterexample to these claims for the net Riemann integral. As this integral is non-absolute, if $X = Z$ a Banach lattice and $Y = \mathbb{R}_{+}$ and $\mu$ the Lebesgue measure, its space of integrable functions, equipped with the Alexiewicz norm, is not solid in the sense above (see, for example, \cite{federsonpeculiar}). Also, by the same reason, it is not a lattice. Some results in the lattice direction seem possible by using a modification of the lattice-integration results of \cite{hayesstieltjes} to the present case, but we leave the details for further work.

We now discuss the Pettis integral aspect of the net Riemann integral, the last property in the general context of net convergence structures. For this, we mainly use the material in Chapters 3-5 of \cite{butzmann}, but adapted to nets. Related material (in the filter language) can be found in \cite{lucyshynintegralconvergencespaces} and \cite{lucysshynthesis}.

We begin with the following definition:

\begin{definition} [Dual of a Net Convergence Space]
    Let $(X, \eta)$ be a topological vector space with a net convergence structure $\eta$. Then, the space of all linear continuous real valued functionals $\psi: X \rightarrow \mathbb{R}$, where $\mathbb{R}$ is equipped with its canonical (euclidean) topology, is called the continuous dual of $X$, which we denote by $\mathcal{L}(X)$.
\end{definition}

We assume, tacitly, that $\mathcal{L}(X)$ is non-empty in each statement involving it. Its now easy to see that:

\begin{proposition} [Continuous Convergence]
    Let $\mathcal{L}(X)$ be the continuous dual of the net convergence space $(X, \eta)$, where $X$ is a topological vector space. Consider the next definition of convergence, which we denote by $\eta_{\mathcal{L}(X)}$: a net $\{\psi_{i}\}_{i \in I}$ of elements of $\mathcal{L}(X)$ converges to $\psi \in \mathcal{L}(X)$ if and only if for every $x \in X$ fixed, $\{\psi_{i}(x)\}_{i \in I}$ converges to $\psi(x)$. Then, $(\mathcal{L}(X), \eta_{\mathcal{L}(X)})$ is a net convergence space.  
\end{proposition}

For the rest of this section, we assume that the continuous dual of every net convergence space is equipped with the convergence of the last proposition. 

For the definition of the Pettis integral in this context, we shall need the next condition (see \cite[Lemma 4.2.5, p. 127]{butzmann}):

\begin{definition} [Separated Continuous Dual]

Let $\mathcal{L}(X)$ be the continuous dual of the net convergence space $(X, \eta)$, where $X$ is a topological vector space. We say that $\mathcal{L}(X)$ separates points of $X$ if:

\[\bigcap\{ker(\psi): \psi \in \mathcal{L}(X)\} = \{0\}\]
\end{definition}

By \cite[Lemma 4.2.5, p. 127]{butzmann}, this is equivalent to ask that the canonical embedding of $X$ into its (continuous convergence) double dual is injective. For now on in this section, we shall assume that each $\mathcal{L}(X)$ is separated. 

The last convergence-theoretic notion is the following:

\begin{definition} [Weak Convergence in $X$]

Let $\mathcal{L}(X)$ be the continuous dual of the net convergence space $(X, \eta)$, where $X$ is a topological vector space. Then, we say that $X$ is equipped with the weak net convergence structure $\eta_{w}$ when a net $\{x_{i}\}_{i \in I}$ converges to $x \in X$ if and only if for each $\psi \in \mathcal{L}(X)$, the (real valued) net $\{\psi(x_{i})\}_{i \in I}$ converges to $\psi(x)$ in the euclidean topology of $\mathbb{R}$. We denote this net convergence space by $(X, \eta_{w})$.
\end{definition}

Suppose now that $(\Omega, \mathcal{H},\mu)$ is a fixed complete finite measure space, and that $(X, \eta)$ is a net convergence space, where $X$ is a topological vector space. As said previously, we shall also suppose that $\mathcal{L}(X)$ is non-empty\footnote{This is not automatic, as the case of non-locally convex topological vector spaces show. See \cite{adasch}.} and separated.

We now have the next essential definition:

\begin{definition} [Weakly Measurable Function]

We say that a function $f: \Omega \rightarrow X$ is weakly measurable if for every $\psi \in \mathcal{L}(X)$, the function $\psi(f): \Omega \rightarrow \mathbb{R}$, defined by $\psi(f)(\omega) = \psi(f(\omega))$ for each $\omega \in \Omega$, is a Borel measurable function. 
    
\end{definition}

We also have, associated to this last definition:

\begin{definition} [Scalarly Integrable Function]
    We say that a weakly measurable function $f: \Omega \rightarrow X$ is scalarly integrable if:

    \[\int_{\Omega}^{(L)}\psi(f)d\mu < \infty,\]

    for each $\psi \in \mathcal{L}(X)$, where this last integral is taken in the Lebesgue sense. 
\end{definition}

We now state the definition of Pettis mode of integration (for the locally convex topological vector space case, see \cite{pettischakraborty} or \cite{musialhandbookpettis}):

\begin{definition} [Pettis Integral]

Let $f: \Omega \rightarrow X$ be a scalarly integrable function. Then, we say that $f$ if Pettis integrable on  $A \in \mathcal{H}$ with Pettis integral $x_{A}$ if there exists an element $x_{A}$ in $X$ such that,

\[\psi(x_{A}) = \int_{A}^{(L)}\psi(f)d\mu,\]

for every $\psi \in \mathcal{L}(X)$ and the integral in the right-hand side taken in Lebesgue sense.

If f is Pettis integrable on every $A \in \mathcal{H}$, then we say that $f$ is Pettis integrable. 
\end{definition}

In general, we denote the Pettis integral of a map $f$ by:

\[\int^{(P)}_{\cdot} fd\mu,\]

with the integrating set omitted when no confusion is possible. 

We point out now that this notion of "weak integration" contains not only the classical concept of \cite{pettisoriginalarticle}, but also its partially ordered analogue in \cite{siposintegrationpariallyordered}, and the Radon type construction of \cite{lucyshynintegralconvergencespaces}.

To guarantee that the Pettis integral is well defined, we need the next uniqueness result for this integral. As it is straightforward,  $\mathcal{L}(X)$ being supposed to be separated,  we skip it. 

\begin{proposition} [Uniqueness of the Pettis Integral]
    Let $f: S \rightarrow X$ be a Pettis integrable function. Then, its Pettis integral is unique.
\end{proposition}

We now prove the main result we alluded before, we the cumulative notation fixed until now:

\begin{theorem} [Net Riemann Integral and Pettis Integration] \label{pettisriemann}

Let $f: S \rightarrow X$ be a scalarly integrable function, where $X$ is equipped with its weak convergence structure $\eta_{w}$. Suppose also that if $g: \Omega \rightarrow \mathbb{R}$ is a real valued function (Lebesgue) integrable with respect to $\mu$, then its net Riemann integrable exists coincides with its Lebesgue integral, and conversely\footnote{That includes most of the integrals that we shall study. See the next section. Note the hypothesis of weak measurability, which is essential here for this hypothesis too.}. Then, we have:

\begin{enumerate} [(i)]
    \item $f$ is Pettis integrable if and only if $\int_{A} fd\mu \in X$ for each $A \in \mathcal{H}$, where this last integral is the net Riemann integral of $f$;
    \item f is Pettis integrable on every $A \in \mathcal{H}$ if and only if $f$ is net Riemann integrable;
    \item We have the identification:

    \[\int_{A}^{(P)}fd\mu = \int_{A} fd\mu,\]

    for each $A \in \mathcal{H}$.
\end{enumerate}
    
\end{theorem}

\begin{proof}
    The hearth of the result is part $(ii)$ and $(iii)$, which we now prove. 

    For that, first assume that, given an arbitrary (fixed) set $A \in \mathcal{H}$, $f$ is Pettis integrable on $A$ with (Pettis) integral $x_{A} \in X$. That means that we have, for each $\psi \in \mathcal{L}(X)$,

    \[\psi(x_{A}) = \int^{(P)}_{A} \psi(f) d\mu,\]

    where the last integral is a Lebesgue integral of the (scalarly measurable) function $\psi(f)$ with respect to $\mu$. Therefore, by hypothesis, $\psi(f)$ is net Riemann integrable with respect to $\mu$ and:

    \[\int_{A}^{(L)}\psi(f)d\mu = \int_{A}\psi(f) d\mu,\]

    where the integral in the left side of the equality is a Lebesgue integral, and:

    \[\int_{A}\psi(f)d\mu = \lim_{(\Gamma(A), \mathbb{R})}S_{\gamma}(\psi(f),\mu),\]

    where this last limit is taken in the euclidean topology of $\mathbb{R}$, which we indicate by the $\mathbb{R}$ is the subscript. 

    Now, we prove that the net Riemann integral of $f$ on $A$, in the weak net convergence structure $\eta_{w}$ of $X$, exists and equals $x_{A}$. In this case, we have to prove that, for each $\psi \in \mathcal{L}(X)$, there exists $y_{A} \in X$ such that the following holds:

    \[y_{A} = \lim_{(\Gamma(A), \eta_{w})}S_{\gamma}(f,\mu),\]

    that is, 

    \[\psi(y_{A}) = \lim_{(\Gamma(A), \mathbb{R})}\psi(S_{\gamma}(f,\mu)),\]

    for each $\psi \in \mathcal{L}(X)$. But, from the development above, we have that, as $\psi \in \mathcal{L}(X)$ is linear and the sum occurring in the definition of net Riemann integral is finite\footnote{Compare this with the integral is Section \ref{sectionsummabilitykolmogorov}.},

    \[\psi(S_{\gamma}(f,\mu)) = S_{\gamma}(\psi(f),\mu), \]

    for each $\gamma \in \Gamma(A)$, and the (net) limit of the last sum, in the appropriate convergence structure as above, is exactly $\psi(x_{A})$. Therefore, if we put $y_{A} = x_{A}$, then the net Riemann integral of $f$ with respect to $\mu$ (in the weak net convergence structure of $X$) exists and equals $s_{E}$, where the limit is taken in the weak topology of $X$. Therefore, as $\psi$ and $A$ above were arbitrary, the net Riemann integral in the weak net convergence structure of $X$ exists and is equal to $x_{A}$, for each $A \in \mathcal{H}$. This finishes the first part.

    For the converse, assume that, given an arbitrary (fixed) set $A \in \mathcal{H}$, $f$ is net Riemann integrable on $A$ with integral $y_{A} \in X$, where the net limit is taken in the weak net convergence structure of $X$. 

    Fix $\psi \in \mathcal{L}(X)$ a (continuous) linear functional on $X$. By its continuity, we have that the following limit exists, as consequence of the existence of the net Riemann integral:

    \[\psi(y_{A}) =  \lim_{\Gamma(A), \mathbb{R})}\psi(S_{\gamma}(f,\mu)), \]

    where we identify the right hand side of the expression to be, by definition, 

    \[\int_{A}\psi(f)d\mu,\]

    which is also, by hypothesis, the Lebesgue integral of $L$ with respect to $\tau$. Therefore, as $\psi$ was arbitrary, if we put $x_{A} = y_{A}$, we have that, for each $\psi \in \mathcal{L}(X)$,

    \[\psi(y_{A}) = \int_{A}\psi(f)d\mu = \int_{A}^{(L)}\psi(f)d\mu,\]

    which, by uniqueness, proves that $f$ is Pettis integrable on $A$ with Pettis integral equal to the net Riemann integral $y_{A}$. As $A \in \mathcal{H}$ was arbitrary, we conclude that $f$ is Pettis integrable and its Pettis integral equals the net Riemann integral. This finishes the proof, as the other points are immediate.  
\end{proof}

This shows not only that a general Pettis integration theory can be developed in the net convergence spaces, but that we also may get a Riemmanian representation of such integral. Similar constructions can be made for the Dunford integral in the double dual of net convergence spaces (for the Dunford integral, see Chapter 2 of \cite{sionsemigroup}), but we are not going to develop it. Some elements of this discussion with respect to the Dunford integral can also be found in \cite{lucysshynthesis}.

Having now constructed the basic theory of the net Riemann integral, we pass to describe some specific examples of it, and argue that it indeed represents a very interesting generalization of a wide range of Riemann type integrals in the literature (later, we will see the same applied to Lebesgue ones). 

\subsection{Examples and Further properties of the Net Riemann Integral.} \label{examples}

We now present some special cases of the general net Riemann integral of Definition \ref{netriemannpartiallyordered}. The first three of them can be found, in the topological case of Banach spaces in \cite{szazdefiningnets}. We point out that all of the examples can be defined in general net convergence monoids, with a partial order or not, but we leave such considerations for a further work, and focus in the vector space case.  

Notice also that to give examples of the general net Riemann Integration it is only necessary to construct examples of approximating sums in the (directed) sets $\Gamma_{A}$, which should be defined in each procedure for each $A \in \mathcal{H}$. For simplicity, we will generally omit $A$ and work directly with $\Omega$, as the statement for each $A \in \mathcal{H}$ being the same. We thus begin with a simple (abstract) example:

\begin{example} [Kolmogorov S-Integral] \label{KolmogorovSintegralconvergencestructures}

Let $\mathcal{P}^{f}(\Omega)$ be the set of all finite coverings of a set $\Omega$ in the context of an integration structure as in Definition \ref{netriemannpartiallyordered}. Suppose also that the paved space in the definition, $(\Omega, \mathcal{H})$, is a ring paved space such that $\Omega$ is a disjoint union of elements of $\mathcal{H}$ \footnote{This is to guarantee that $\Gamma$ in this example is non-empty and directed. We can weaken these and get the same results. This comment is also valid for the next examples.}. Then, if we define the refinement partial order  $\leq$ on $\mathcal{P}^{f}(\Omega) \times \mathcal{F}^{\mathcal{P}}(\Omega)$ as:

\[(P, \{t_{i}\}_{i=1}^{m}) \leq (Q, \{s_{i}\}_{j=1}^{n}) \iff Q \ \text{refines} \ P,\]

for $P,Q \in \mathcal{P}^{f}(\Omega)$,  $m,n \in \mathbb{N}$, and $t_{i} \in P$ , $s_{j} \in Q$, $(\mathcal{P}^{f}(\Omega) \times \mathcal{F}^{\mathcal{P}}(\Omega), \leq)$ becomes a (downwards) directed set (see \cite{szazdefiningnets}). 

Then, putting:

\[\Gamma = (\mathcal{P}^{f}(\Omega) \times \mathcal{F}(\Omega), \leq),\]

and considering the net formed by the identity, we get that the resulting Net Riemann integral is a refinement net limit (with respect to the net convergence structure of $Z$) of finite sums based on the whole covering $P = \sigma \in \Gamma$. This is a generalization (as we consider coverings, not partitions) of the S-integral of Kolmogorov \cite{kolmogorovuntersuchungen}.
\end{example}

A non-finite partition version of the following example is given next:

\begin{example} [Sion Integral] \label{sionintegralpartiallyorderedconvergence}

Considering different types of refinement type integrals, we may also modify  \cite{sionsemigroup} integral to our setting. Fix now the notation as in  Definition \ref{netriemannpartiallyordered}.

In this setting, consider $\mathcal{D}(\Omega)$ the set formed by all pairs $(P, \Delta)$ such that:

\begin{enumerate} [(i)]
    \item $P \in \mathcal{P}(\Omega)$ is, as above, a disjoint covering of $\Omega$.
    \item $\Delta: \mathcal{P}(\Omega) \rightarrow \mathcal{F}(\mathcal{P}(\Omega))$, defined by $\Delta(P) = \mathcal{I} \in \mathcal{F}(P)$, is given, and called a truncation.
\end{enumerate}

Suppose also that the paved space in the definition, $(\Omega, \mathcal{H})$, is a ring-paved space and that $\Omega$ is a disjoint union of elements of $\mathcal{H}$. Now, define a partial order in $\mathcal{D}(\Omega)$, denoted by $>>$, as:

\[(Q, \Delta_{2}) >> (P, \Delta_{1}) \iff Q \ \text{refines} \ P \ \text{and} \ \Delta_{1}(M) \subseteq \Delta_{2}(M), \forall M \in \mathcal{P}(\Omega) \ \text{such that} \ M \ \text{refines} \ Q\]

Then, putting:

\[\Gamma = \{(P, \tau, \Delta): (P, \Delta) \in \mathcal{D}(\Omega) \ \text{and} \  \tau = \{\tau_{k}\}_{k \in K}\ \text{countable such that} \ \tau_{k} \in \sigma \subseteq P\},\]

and considering it equipped with the partial ordering above independent of the chosen points as in Example \ref{KolmogorovSintegralconvergencestructures} (i.e, the net limit has to be the same for all points or choice functions in the terminology of \cite{szazdefiningnets}), it becomes a directed set and the net:

\[\mathfrak{N}(P, \tau, \Delta) = (\{\sigma_{i}\}_{i \in \Delta(P)},\{\tau_{i}\}_{i \in \Delta(P)}),\]

is a Fleischer-Szabó-Száz net defined on it - for more details, such as the proof that $\Gamma$ is a directed set with the partial orderings above, see \cite{szazdefiningnets}.

In this case, the Net Riemann integral obtained is the integral from \cite{sionsemigroup} which we call the Sion Integral, which in turn constitutes a generalization of the classical Rickart and Phillips integrals of \cite{rickartintegralarticle} and  \cite{phillipsintegratonarticle}.

\end{example}

We now give an abstract version of the Henstock-Kurzweil (see \cite{henstockmagnum}) integral as in \cite{szazfundamentaltheorem}.

\begin{example} [Száz Gauge Integral]

We now take $\Omega$ to be a topological space and $\mathcal{R}$ the set of all relations $R \subseteq \Omega \times \Omega$ such that $t \in int(R(t))$ for all $t \in \Omega$ and let $\mathcal{H}$ be a family of sets in $\Omega$ such that:

\begin{enumerate} [(i)]
    \item For each $A, B \in \mathcal{H}$, there exists a disjoint finite sequence $\{\sigma_{i}\}_{i=1}^{n}$ in $\mathcal{H}$ such that $A \setminus B = \cup_{i=1}^{n}\sigma_{i}$.
    \item For each $A \in \mathcal{H}$ and $R \in \mathcal{R}$, there exists finite sequences $\sigma = \{\sigma_{i}\}_{i=1}^{n}$ and $\tau= \{\tau_{i}\}_{i=1}^{m}$ in $\mathcal{H}$ and $\mathcal{R}$ respectively such that $(\sigma,\tau,R) \in \Gamma$ and $\cup_{i=1}^{n} \sigma_{i} = A$.
\end{enumerate}

Where $\Gamma$ is, in the examples above, the domain of the Fleischer-Szabó-Száz net and is given by the collection of all triples $(\sigma,\tau,R)$ such that $\sigma = \{\sigma_{i}\}_{i \in I}$ and $\tau = \{\tau_{i}\}_{i \in I}$ are finite families in $\mathcal{H}$ and $\Omega$ respectively, and $R \in \mathcal{R}$ is a relation, such that $\sigma$ is a disjoint collection and:

\[\tau_{i} \in cl(\sigma_{i}) \ \text{and} \ \sigma_{i} \subseteq R(\tau_{i}),\]

for all $i \in I$.

We then define a (downwards) directed partial order in $\Gamma$ by: for any $(\sigma,\tau,R)$ and $(\rho, \nu, U)$ in $\Gamma$ we say that

\[(\sigma,\tau,R) \leq (\rho, \nu, U),\]

if and only if $\sigma$ is a refinement of $\rho$ and $U \subseteq S$.

By the properties $(i)$ and $(ii)$ of the collection $\mathcal{H}$, the directness of $(\Gamma, \leq)$ is a consequence - see \cite{szazfundamentaltheorem}.

The net defined by these structures:

\[\mathfrak{N}(\sigma,\tau,R) = (\sigma, \tau),\]

is a Fleischer-Szabó-Száz net which gives the abstract gauge integral of \cite{szazfundamentaltheorem}.

In general, the relations in $\mathcal{R}$ are given, in specific examples, by gauges (see \cite{henstockmagnum}) in the collection of open sets of the topology of $\Omega$. Thus, the classical gauge integral is contained in this example, as are the integrals of \cite{boccutovrabelovariecan}, \cite{riecanvrabelovaoperator}, and other integrals of gauge type by specifying the spaces $X,Y,Z$ in Definition \ref{netriemannpartiallyordered}.
\end{example}

We now give an abstract generalization of the construction given in the last example. This is based on the concept of derivation basis in gauge integration, for which our exposition will be based  on \cite{skvortsov2025} (see also \cite{boccutoskvortsovabstract}, \cite{boccutoskvortsovwalsh} and the real line case in \cite{detivationbasis1} and \cite{derivationbasis2}). In Henstock's and Mcgill's terminologies of \cite{henstockmagnum}  and \cite{mcgillitnegrationinvectorlattices} respectively, we are going to integrate interval functions on division spaces (therefore, the lattice valued integral of  \cite{mcgillitnegrationinvectorlattices} is contained in the Net Riemann integral).

\begin{example} [Derivation Basis Integral]

We consider now the paved space $(\Omega, \mathcal{H})$ equipped with a measure $\mu$ such that $(\Omega, \mathcal{H}, \mu)$ is a measure space - i.e, $\mathcal{H}$ is a $\sigma$-algebra and $\mu$ is a real, positive $\sigma$-additive set function \footnote{We can actually weaken these hypothesis and consider the general group valued integral in Chapter 2 of \cite{henstockmagnum}. Nevertheless, the procedure is the same, so we omit further details.}. 

We call, following \cite{skvortsov2025}, a filterbase $\mathcal{B}$ in the product space $\mathcal{I} \times \Omega$, where $\mathcal{I}$ is a family of sets in $\mathcal{H}$ of positive measure, a derivation basis. We assume now the following conditions on $\mathcal{B}$, which we refer to \cite{skvortsov2025} for details:

\begin{enumerate} [(i)]
    \item The derivation basis $\mathcal{B}$ ignores no points;
    \item The derivation basis $\mathcal{B}$ has the partitioning property.
\end{enumerate}

In this case, we denote $P_{\beta}(L) $ the set of all $\beta$-partitions of a fixed $\mathcal{B}$-interval $L$. Then, by the partitioning property and the filterbase condition on $\mathcal{B}$, we have that $\{P_{\beta}(L)\}_{\beta \in \mathcal{B}}$ is a filterbase on the product $\mathcal{P}^{f}(L) \times \mathcal{F}(\Omega)$ -  \cite{skvortsov2025} again contains all necessary details.

Now, we associate to $\{P_{\beta}(L)\}_{\beta \in \mathcal{B}}$ a Fleischer-Szabó-Száz net. For that, we follow the procedure given in \cite{priolanetsandfilters} (see also \cite{bartlenetsandfilters} and \cite{keneddynetsandfilters}) for general nets and filterbasis. That is, let

\[D(\{P_{\beta}(L)\}_{\beta \in \mathcal{B}}) = \{\alpha = (\pi, P_{\beta}(L)): \pi \in P_{\beta}(L) \ \text{and}  \ (\pi, P_{\beta}(L)) \in \{P_{\beta}(L)\}_{\beta \in \mathcal{B}}\},\]

which is directed (as $\{P_{\beta}(L)\}_{\beta \in \mathcal{B}}$ is a filterbase) by the relation:

\[(\pi_{1}, P_{\beta_{1}}(L)) \leq (\pi_{2}, P_{\beta_{2}}(L)),\]

if and only if $P_{\beta_{2}}(L) \subseteq P_{\beta_{1}}(L)$. Define now the net $\{x_{\gamma}\}_{\gamma \in D(\{P_{\beta}(L)\}_{\beta \in \mathcal{B}})} $ given by:

\[\gamma = (\pi, P_{\beta}(L)) \mapsto x_{\gamma} =  \pi,\]

which has, by definition, values in $\mathcal{P}^{f}(L) \times \mathcal{F}(\Omega)$. Then, if we pick, using the same notation as in the examples above,

\[\Gamma = D(\{P_{\beta}(L)\}_{\beta \in \mathcal{B}}), \]

the net:

\[\mathfrak{N}(\pi, P_{\beta}(L)) = x_{(\pi, P_{\beta}(L))} = \pi,\]

where $\pi = (\sigma_{(\pi,P_{\beta}(L))},\tau_{(\pi,P_{\beta}(L))})$ is, by definition, a finite collection of $\beta$-intervals and points, is a Fleischer-Szabó-Száz net for the set $L$, for which the Riemann sums based on a set function $\eta$ as in Definition \ref{netriemannpartiallyordered} are given by:

\[S_{(\pi, P_{\beta}(L))}(f,\eta) = \sum_{(x,I) \in \pi} f(x)\eta(I),\]

which can be seen to be the same as the ones given in Definition 1 of \cite{skvortsov2025} for the integral (in his notation) $\int_{L}\Phi$. Therefore, his integral is contained in the Net Riemann integral.

Finally, we point out that a similar construction taking $\Gamma = \mathcal{B}$ as the (directed set) domain of the Fleischer-Szabó-Száz net can be made. From the point of view of the convergence structures used, it gives the same results as above. 

As a last comment for this example, we point out that the by using the inverse construction of associating a net to a filterbase, it should be possible, by taking a specific $\Gamma$, to obtain the equivalence of the two integrals of the present example in this restricted case, but we will not pursue this route here.

\end{example}

We finish this stream of examples with Millington integral of \cite{millingtonruns} (see also \cite{millingtonriesz}), constructed as an abstract integration in terms of runs, an alternative concept to nets and filterbasis created by \cite{runs} (in fact, with integration theory in mind!). As to not introduce even more concepts in this example, we will translate the results of Millington's to the language of nets, and also work with a more restricted set of coverings and refer to Millington's original article for more details concerning the definitions involved here. 

\begin{example} [Millington Process Integral] \label{millingtonintegral}

For \cite{millingtonruns} construction, consider $D$ a directed (downwards or upwards) set, $(\Omega,\mathcal{H})$ as in Examples \ref{KolmogorovSintegralconvergencestructures} and \ref{sionintegralpartiallyorderedconvergence}, $A (\subseteq \Omega) \in \mathcal{H}$ a fixed set, and let $\mathcal{T}$ be the set of all truncations, as in Example \ref{sionintegralpartiallyorderedconvergence}, in $\mathcal{P}(A)$, and $\mathcal{C}$ the set of all choice functions in $\mathcal{H}$ (i.e, $h(C) = c \in C$, for $h: \mathcal{H} \rightarrow \Omega$). Now, consider the net 

\[\mathfrak{M}: D \times \mathcal{T} \rightarrow \mathcal{P}(A) \times \mathcal{C} \times \mathcal{T},\]

defined by:

\[\mathfrak{M}_{1}(d, \Delta) = (P_{d}, h_{d}, \Delta),\]

where we endow $D \times \mathcal{T}$ and $\mathcal{P}(A) \times \mathcal{C} \times \mathcal{T}$ with the product directed partial order specific to each case, with the individual directions of the components of each product being the same as in Example \ref{sionintegralpartiallyorderedconvergence}.

Now, with the same structures, consider a second net:

\[\mathfrak{M}_{2}: \mathcal{P}(A) \times \mathcal{C} \times \mathcal{T} \rightarrow \mathcal{F}(\mathcal{P}(A)) \times \mathcal{F}(\Omega),\]

defined by:

\[\mathfrak{M}_{2}(P, h, \Delta) = (\Delta(P), h_{\Delta(P)}),\]

where $h_{\Delta(P)}$ is the choice function $h$ applied in each member of $\Delta(P)$, i.e, $h_{\Delta(P)}$ is a finite collection of points from $\Omega$.

In this case, take $\Gamma = D \times \mathcal{T}$ with the aforementioned partial directed order. Then, the net (with values in the product of finite collection of sets and points from $\mathcal{H}$ and $\Omega$ respectively):

\[\mathfrak{N}: \Gamma \mapsto (\sigma, \tau),\]

defined by:

\[\mathfrak{N}(d, \Delta) = (\mathfrak{M}_{2} \circ \mathfrak{M}_{1})(d, \Delta), \]

that is, $(\sigma, \tau) = (\Delta(P_{d}), h_{d,\Delta(P_{d})})$, is a Fleischer-Szabó-Száz net such that the Riemann sums corresponding to it are equal to the second component of the integration run in Definition 2.3 of \cite{millingtonruns}.  

As in the Derivation Basis integral, a suitable choice of a Fleischer-Szabó-Száz net should give rise to a Millington Process integral in general, but we will not pursue this here. 

\end{example}

Further examples of the Net Riemann integral can be constructed readily by the same methods above. Indeed, we shall enumerate only those integration procedures that can be fitted, in a method of construction, in some of the examples above. We also stress that, in Example \ref{sionintegralpartiallyorderedconvergence}, we noticed the fact that the net Riemann integral contain the Lebesgue type integrals of \cite{phillipsintegratonarticle} and \cite{rickartintegralarticle}. Some more examples of the Lebesgue case will be given later.

\begin{enumerate}
    \item The Riemann-Stieltjes, Moore-Pollard, Young and Young-Stieltjes type integrals of \cite{candelororiemann}, \cite{hayesstieltjes}, \cite{gowurin} and \cite{dudleynorvaisa}, as well as the finite partition refinement integral of \cite{edwardswaymentvintegral} (which they call the $v$-integral).
    \item The multilinear integrals of Young, Stieltjes and Moore-Pollard in the sense of \cite{halilovicthesis}, developed further in \cite{halilovicmultilinear}, and more recently in \cite{halilovicnewest}.
    \item The Burkill-Cesari-Henstock integrals of \cite{henstockburkillthesis}, \cite{sworowskiburkill} 
    \cite{boccutosambuciniburkill} and \cite{skvortsovburkill}. In fact, \cite{chernyavskii1} and \cite{chernyavskii2} obtains a result that the Kolmogorov $S$-integral (i.e, for finite partitions) contains the group-valued integral of Burkill.
    \item The finite partition refinement type integral of \cite{saeksgoldsteinintegral}, called the Cauchy integral. 
    \item The (SL), metric-semigroup valued and topological Henstock-Kurzweil integrals of \cite{slintegralboccutoriecan}, \cite{abstractboccutocandelororiecan}, \cite{abstracttopologicalboccutoriecan}, \cite{improperboccutoriecan} and \cite{vrabelevrabelova} (omitting some more publications of the same authors, but with the same integrals).
    \item The Saks type integral of \cite{bochnerkyfanintegral}.
    \item The integral of \cite{robdera2025}, which is based on nets of partitions (i.e $\Gamma$ is a directed set based on partitions). 
    \item The Riemann partition integral of Erben and Grimeisen presented in \cite{erberthesis}, \cite{grimeisenerben1}, \cite{grimeisenerben2}.
    \item The general Riemann integral of \cite{spaltenstein}, which covers the Denjoy integral and  other generalized (Riemann) integrals.
    \item The BV-sets and geometric theory of Riemann integration of \cite{pfeffer1993riemann}, as well as the locally compact Hausdorff space integrals of \cite{ahmed1986riemann}, \cite{pfeffer1969integral} and \cite{pfeffer1970integral}.
    \item The non-tagged gauge integral of \cite{mcshaneriemann} and \cite{mcgillelementary}, as well as the Mcshane integral in quasi-Radon measure spaces of \cite{rodriguezdobrakov}, \cite{fremlin1994integration}, \cite{reynolds1997generalized}, \cite{fremlin1995generalized} and \cite{fremlin1994henstock} with a textbook treatment of the Banach case contained in \cite{schwabikgouju}. Also, the $H$-integral of Ridder, which is strictly general than the Henstock-Kurzweil integral, is contained in our integral by the results of \cite{leeridderintegral}.
    \item The totalization integral of \cite{totalizations}.
    \item The topological semigroup-valued truncation integrals of \cite{goguadze1}, \cite{goguadze2}, \cite{Impens} and \cite{delangheimpens}, which are constructed in the same way as in \cite{sionsemigroup}.
    \item The product Kolmogorov integrals of \cite{chernyavskii3}, \cite{areshkinkoroleva} and \cite{ferreira2004vector} in the form of the abstract Riemann net integral for (commutative) products in groups.
\end{enumerate}

Therefore, the net Riemann integral provides a uniquely and very general integration procedure in integration theory literature, which can be used in its various specializations to solve diverse problems in analysis (as can be gauged from the references just cited). A cursory look at the vast bibliography and examples of \cite{henstockmagnum} will also produce many more examples, as his division space integral is included in ours. 

We finish this section by exposing two shortcomings of these examples:

\begin{enumerate} [(i)]
    \item The relation between them and the Lebesgue, and other types of non-Riemann integration procedures, has not been fully clarified. 
    \item There are integrals, such as \cite{kolmogorovuntersuchungen} type $II$ integral, \cite{birkhofforiginalarticle} and \cite{goguadzebook} integrals, the integrals of \cite{popescurickart}, \cite{popescu1}, \cite{popescu2}, \cite{popescu3}, \cite{popescu4} and \cite{tulceaadditive}, \cite{tulceaensemble}, \cite{tulceaordone} and \cite{deleanu} that use infinite Riemann sums in the approximating scheme, and therefore are not contained ("directly") in the examples above and haven't been studied much in the literature. 
\end{enumerate}

We study first point (ii), and then prove convergence theorems for the net Riemann integral and the infinite sums Riemann type-integrals to obtain a possible solution to (i).

\section{Summability in Riesz Spaces and Integrals Based on Infinite Riemann Sums.} \label{sectionsummabilitykolmogorov}

In this section, we shall study a type of integral that uses infinite Riemann sums to base its approximation. Various integrals in the literature use this approach (see point (ii) in the ending of last section), but only is some restricted settings such as locally convex spaces (see \cite{popescurickart}) or real functions and measures (see \cite{goguadzebook} for an in depth study of this case). To obtain such integration procedure in our case, we have to first develop the rudiments of the concept of conditionally and unconditionally summable series, as well as their properties, in convergence spaces. Besides the references cited in this section, we draw the reader attention to the concept of abstract axiomatic sums in \cite{freni2023vector}, \cite{nielsen2025algebraization}, \cite{katz1965infinite} and \cite{andres2025categories}.

To begin the preparations to the integral, we note that, as said above, we have two notions of converging series, one corresponding to unconditional convergence, and other to conditional convergence, as in the classical reference of  \cite{bourbaki2013general}. In this context, consider the next definition:

\begin{definition} [Unconditionally Convergent Series] \label{UnconditionallyConvergentSeries}

Let $(X, \eta)$ be a vector space equipped with a net convergent structure $\eta$. Then, we say that a net $\{x_{i}\}_{i \in I}$ in $X$, with partial (unordered) sums of elements (in $X$)

\[\{\sum_{j \in J}x_{j}\}_{J \in \mathcal{F}(I)},\]

has an unconditionally summable (or convergent) series to $x \in X$, which we represent by $x = \sum_{i \in I}x_{i}$, if:

\[x = \lim_{(\mathcal{F}(I),\subseteq ,\eta)}\sum_{j \in J}x_{j}\]
    
\end{definition}

Similarly, we have the notion of conditionally convergence of series, which will be developed only for sequences:

\begin{definition} [Conditionally Convergent Series]

Let $(X, \eta)$ be a vector space equipped with a net convergent structure $\eta$. Then, we say that a sequence of elements $\{x_{n}\}_{n \in \mathbb{N}}$, with partial (ordered) sums of elements (in $X$)

\[\{\sum_{i=1}^{n}x_{i}\},\]

has a conditionally summable (or convergent) series to $x \in X$, which we represent by $x = \sum_{i=1}^{\infty}x_{i}$, if:

\[x = \lim_{(\mathbb{N},\eta)}\sum_{i=1}^{n} x_{i}\]
    
\end{definition}

Similar notions, specifically the first one, can be found in \cite{Higgs}, \cite{brunker} and \cite{hebisch}. We now prove some connections between these two concepts in general net convergence structures. 

\begin{proposition} [Unconditional Convergence implies Conditional Convergence] \label{unconditionalimpliesconditional}

Let $(X, \eta)$ be a vector space equipped with a net convergent structure $\eta$. and $\{x_{n}\}_{n \in \mathbb{N}}$ a sequence with unconditionally convergent series. Then, it has a conditionally convergent series with same limit.
    
\end{proposition}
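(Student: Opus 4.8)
The plan is to exhibit the sequence of ordered partial sums as a quasi-subnet of the net of unordered finite sums, and then to invoke axiom (ii) of Definition \ref{netconvergencestructure} (equivalently, axiom (vi) of the Boccuto-Candeloro structure of Definition \ref{boccutocandeloronetconvergencestructure}), which guarantees that every quasi-subnet of a convergent net inherits the same limit. This is exactly the ``quasi-subnet argument'' alluded to in the statement, and it replaces the cofinal-map reasoning of \cite{Higgs}.

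First I would set up the two nets explicitly. By hypothesis the net of unordered partial sums $\{\sum_{j \in J}x_{j}\}_{J \in \mathcal{F}(\mathbb{N})}$, indexed by the directed set $(\mathcal{F}(\mathbb{N}), \subseteq)$, converges to $x$ in $\eta$. The candidate for the conditionally convergent series is the sequence of ordered partial sums $\{s_{n}\}_{n \in \mathbb{N}}$ with $s_{n} = \sum_{i=1}^{n}x_{i}$, indexed by $(\mathbb{N}, \leq)$. The crucial observation is that each $s_{n}$ coincides with the value of the first net at the index $J = \{1, \dots, n\} \in \mathcal{F}(\mathbb{N})$; that is, the sequence is obtained from the net by restricting attention to the family of initial segments of $\mathbb{N}$.

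Next I would verify the quasi-subnet condition directly. Fix an arbitrary $J_{0} \in \mathcal{F}(\mathbb{N})$, playing the role of the index $i_{0}$ in the definition of quasi-subnet. Since $J_{0}$ is finite, set $n_{0} = \max J_{0}$. Then for every $n \geq n_{0}$ one has $J_{0} \subseteq \{1, \dots, n\}$, so the value $s_{n} = \sum_{j \in \{1,\dots,n\}}x_{j}$ belongs to the tail $\{\sum_{j \in J}x_{j} : J \supseteq J_{0}\}$. This is precisely the defining inclusion $\{s_{n}\}_{n \geq n_{0}} \subseteq \{\sum_{j \in J}x_{j}\}_{J \supseteq J_{0}}$, whence $\{s_{n}\}_{n \in \mathbb{N}}$ is a quasi-subnet of $\{\sum_{j \in J}x_{j}\}_{J \in \mathcal{F}(\mathbb{N})}$. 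Applying axiom (ii) of Definition \ref{netconvergencestructure} to the convergent net then yields $\{s_{n}\}_{n \in \mathbb{N}} \to x$, which by definition means $x = \sum_{i=1}^{\infty}x_{i}$, the desired conditional convergence with identical limit.

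The only point requiring care is confirming that the assignment $n \mapsto \{1, \dots, n\}$ is genuinely cofinal in $(\mathcal{F}(\mathbb{N}), \subseteq)$, i.e.\ that every finite subset of $\mathbb{N}$ is eventually absorbed by an initial segment; this is exactly where the finiteness of each $J_{0}$ is used, through the choice $n_{0} = \max J_{0}$. Beyond this elementary combinatorial verification, the argument is a pure translation of the quasi-subnet axiom and requires no estimates specific to the Riesz space $X$ or to the structure $\eta$, so I do not anticipate any genuine obstacle; the delicacy lies entirely in phrasing the tail-inclusion correctly.
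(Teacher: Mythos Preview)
Your argument is correct and is precisely the ``quasi-subnet argument'' the paper explicitly points to (without spelling it out): the paper gives no detailed proof, only the remark that a cofinal-map or quasi-subnet argument establishes the result, and your proposal carries this out verbatim. The one cosmetic omission is the degenerate case $J_{0}=\emptyset$, where $\max J_{0}$ is undefined, but any $n_{0}\in\mathbb{N}$ then works trivially.
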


\begin{proof}
    We prove this result using a quasi-subnet argument. In fact, suppose that $\{x_{n}\}_{n \in \mathbb{N}}$ has a unconditionally convergent series, and take an arbitrary $J \in \mathcal{F}(\mathbb{N})$. Choose $N_{0} = \{1, \cdots, \max(J)\}$. Then, if we take $n \in \mathbb{N}$ with $n \geq \max(J)$, it is clear that:

    \[\{\sum_{i=1}^{n}x_{i}\}_{n \geq \max(J)} \subseteq \{\sum_{k \in K }x_{k}\}_{J \subseteq K},\]

    which shows that $\{\sum_{i=1}^{n}x_{i}\}_{n \in \mathbb{N}}$ is a quasi-subnet of $\{\sum_{j \in J}x_{j}\}_{J \in \mathcal{F}(\mathbb{N})}$. As this last one is convergent to $x \in X$, it follows that, by axiom (ii) of Definition \ref{netconvergencestructure}, the sequence has a conditionally convergent series to $x$. 
\end{proof}

Regarding to the tail of a series, we have, by a similar argument:

\begin{proposition} [Tail of Series] \label{tailofseries}

Let $(X, \eta)$ be a vector space equipped with a net convergent structure $\eta$. and $\{x_{n}\}_{n \in \mathbb{N}}$ a sequence with a conditionally convergent series. Then, the tail of the series, i.e., the sequence:

\[\{\sum_{i=n}^{\infty}x_{i}\}_{n \in \mathbb{N}},\]

is convergent (with respect to $\eta$) to $0$.
    
\end{proposition}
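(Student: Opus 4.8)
The plan is to reduce everything to the convergence of the partial sums. By definition of a conditionally convergent series, writing $s_n = \sum_{i=1}^n x_i$ (with $s_0 = 0$), we are given $\lim_{(\mathbb{N},\eta)} s_n = x$ for some $x \in X$. The whole argument then rests on the single finite identity $\sum_{i=n}^m x_i = s_m - s_{n-1}$, valid for all $m \geq n$, which expresses every tail as a limit of differences of partial sums. Accordingly, I would first produce the tails as honest limits, then show the resulting sequence converges to $0$.

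First I would establish that each tail $\sum_{i=n}^\infty x_i$ exists and equals $x - s_{n-1}$. Fixing $n$ and letting $m \to \infty$, the net $\{s_m\}_m$ converges to $x$ while the constant net $\{-s_{n-1}\}_m$ converges to $-s_{n-1}$ by axiom (v). Adding these two nets by axiom (i) of the Boccuto--Candeloro structure, and then passing to the diagonal (a quasi-subnet, so axiom (vi) applies), yields $s_m - s_{n-1} \to x - s_{n-1}$. Hence $\sum_{i=n}^\infty x_i$ exists and $\sum_{i=n}^\infty x_i = x - s_{n-1}$. Note that this step only negates the fixed vector $s_{n-1}$, which is a harmless Riesz space operation and requires nothing of the convergence $\eta$.

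It remains to show $\{x - s_{n-1}\}_n \to 0$. The shifted sequence $\{s_{n-1}\}_n$ is a quasi-subnet of $\{s_n\}_n$ — for any threshold $n_0$, the tail $\{s_{n-1} : n \geq n_0+1\}$ is contained in $\{s_n : n \geq n_0\}$ — so by axiom (vi) it again converges to $x$. Writing $x - s_{n-1}$ as the sum of the constant net $\{x\}_n$ (converging to $x$ by axiom (v)) and the net $\{-s_{n-1}\}_n$, axiom (i) (after restricting the resulting product net to its diagonal) delivers $x - s_{n-1} \to x + \lim_n(-s_{n-1}) = x - x = 0$, which is the assertion.

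The main obstacle is precisely this last identification: applying axiom (i) requires that $\{-s_{n-1}\}_n$ converge to $-x$, i.e. that $u_n \to u$ imply $-u_n \to -u$. This continuity of negation is not listed among the seven defining axioms, and none of them (the modulus axiom (ii), the squeeze axiom (iii), the $\tfrac{1}{n}$ axiom (iv)) yields it in a negation-free way — every route I tried to bound $t_n$ reintroduces the same need. I would therefore invoke the linearity of the Boccuto--Candeloro structures in use (compatibility with the vector operations of the Riesz space, equivalently $\eta$-continuity of the affine map $y \mapsto x - y$), and apply it to $s_{n-1} \to x$. It is worth recording that for each concrete convergence of Section~2 this is immediate: for $(o_3)$ and $(o)$ convergence one interchanges the dominating increasing and decreasing nets and flips signs, and for $(D)$ convergence one uses $|-u_n - (-u)| = |u_n - u|$ with the same regulator.
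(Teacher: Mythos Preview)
Your argument is correct and is precisely the quasi-subnet argument the paper gestures at; the paper gives no detailed proof of this proposition, only the remark that it follows ``by using a cofinal map \dots\ or a quasi-subnet argument,'' so there is nothing further to compare.

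Your closing caveat about continuity of negation is well taken and, in fact, sharper than the paper: the seven axioms of Definition~\ref{boccutocandeloronetconvergencestructure} as literally stated do not include $u_n \to u \Rightarrow -u_n \to -u$, and the paper implicitly uses this throughout (e.g.\ already in deriving subtraction from axiom~(i)). Your reduction to the case $y_i \to 0 \Rightarrow -y_i \to 0$ is the right normal form; one cannot close it from (ii) and (iii) alone, since the squeeze $-|y_i| \le -y_i \le |y_i|$ still needs $-|y_i| \to 0$. So invoking compatibility of $\eta$ with the vector-space operations --- which holds in every concrete instance the paper considers, as you note --- is the honest move, and arguably should have been recorded as an axiom.
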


Both results, specially Proposition \ref{unconditionalimpliesconditional}, are interesting in the context of measure theory in partially ordered spaces with net convergence structures compatible with order (in the sense of the last section). Regarding this context, the next definition is also useful:

\begin{definition} [Monotonically Convergence Property]

Let $(X, \eta)$ be a Riesz space equipped with a partially ordered net convergent structure $\eta$. Then, we say that $(X, \eta)$ has the monotonically convergence property (MCP) if, given an increasing bounded above net with a supremum $\{x_{i}\}_{i \in I}$, it converges to its supremum in $\eta$, i.e.

\[ \lim_{(I, \eta)} x_{i} = \sup_{i \in I}x_{i}\]
\end{definition}

This is motivated by the following classical result, which can be found in various references, such as  \cite{pavlakosintegration}, \cite{papangelouorder}, \cite{zaanenluxemburg1}, \cite{boccutoxenofon}, \cite{boccutovrabelovariecan} (for further examples, see specially \cite{boccutoxenofon}):

\begin{proposition} [Spaces with the MCP]

If $X$ is a Dedekind complete Riesz space with partially ordered net convergence structure $\eta$ being $(o_{3})$ or $(o)$ convergence, then it has the MCP property.
\end{proposition}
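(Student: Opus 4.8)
The plan is to prove the statement directly from the definition of $(o)$- (respectively $(o_{3})$-) convergence by exhibiting explicit witnessing nets, since the Boccuto-Candeloro structure here is by assumption exactly one of these order convergences and no further machinery is needed. Let $\{x_{i}\}_{i \in I}$ be an increasing, bounded-above net, and let $x = \sup_{i \in I} x_{i}$, which exists by hypothesis (and is in any case guaranteed by Dedekind completeness, Definition \ref{dedekindcomplete}). I want to show that $\lim_{(I,\eta)} x_{i} = x$.

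First I would treat the $(o)$-convergence case (Definition \ref{usualorderconvergence}), which requires an increasing net and a decreasing net sharing a common index set $C$. The natural choice is to take $C = I$, let the increasing net be the net itself, $z_{c} = x_{c}$, and let the decreasing net be the constant net $y_{c} = x$. Condition (i) is then immediate: $\sup_{c} z_{c} = \sup_{i} x_{i} = x$ by definition of the supremum, while $\inf_{c} y_{c} = x$ since $\{y_{c}\}$ is constant. For condition (ii), given $(c,c) \in C \times C$, I would take $i^{*} = c$; then for every $i \geq c$ monotonicity of $\{x_{i}\}$ gives $z_{c} = x_{c} \leq x_{i}$, while the supremum property gives $x_{i} \leq x = y_{c}$, so the sandwich $z_{c} \leq x_{i} \leq y_{c}$ holds on the tail $i \geq i^{*}$, exactly as required.

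For the $(o_{3})$-convergence case (Definition \ref{pavlakosorderconvergence}) the argument is identical except that the increasing and decreasing nets may use different index sets, so I would keep $C = I$, $z_{c} = x_{c}$, and take $D$ to be a one-point directed set carrying the constant value $y = x$; the verification of (i) and (ii) is unchanged, with $i^{*} = c$ again performing the sandwiching. Since the entire construction is purely order-theoretic and uses neither the scalar multiplication nor any finer structural property, it applies verbatim to lattice ordered groups as well, thereby covering both cases in the statement.

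There is no genuine obstacle here; the only points requiring a moment's care are formal ones, namely that a constant net counts as (non-strictly) decreasing and that a singleton is an admissible directed set for $D$, both of which hold under the definitions adopted in Section 2. Once the net has been shown to $(o)$- or $(o_{3})$-converge to its supremum $x$, this is by definition precisely the assertion that $(X,\eta)$ has the MCP, which completes the proof.
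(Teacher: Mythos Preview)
Your argument is correct and is exactly the standard verification: use the net itself as the lower witness and the constant supremum as the upper witness. The paper does not give its own proof here, explicitly deferring to the literature as ``standard,'' so there is nothing to compare against beyond noting that what you wrote is the expected direct argument.
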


We also have the following example of a topological convergence with the MCP, showing that the concept is not restricted to spaces with a "purely" order based convergence.

\begin{example}
    Let $(X, \eta)$ be the space $L^{0}(\Omega, \Sigma, \mathbb{P})$ of (equivalence classes of) real Borel-measurable functions defined on a probability space, equipped with convergence in probability, and the partial ordering of almost sure pointwise inequality. It is not difficult to check that this convergence is topological and gives a partially ordered net convergence structure in $X = L^{0}(\Omega, \Sigma, \mathbb{P})$. In this case, taking $\{X_{n}\}_{n \in \mathbb{N}}$ a monotone sequence of random variables in $X$ bounded above (by a random variable $Y$), it follows from Theorem 3.5 of \cite{gut} that this sequence has a limit in probability given by its supremum. It follows that the present $(X, \eta)$ has the MCP.
\end{example}

In this case, as we will use repeatedly, we have the following result, which can be proved as in Lemma 1.2 of \cite{pavlakosintegration} using a quasi-subnet argument:

\begin{proposition} [Non-negative Series Convergence] \label{unconditionalconvergencegeneral}

Let $(X, \eta)$ be a Riesz space equipped with a (locally solid) net convergent structure $\eta$ with MCP property and $\{x_{n}\}_{n \in \mathbb{N}}$ a non-negative sequence with conditionally convergent series and partial sums $\{\sum_{i=1}^{n}x_{i}\}_{n \in \mathbb{N}}$ bounded above in $X$. Then, the series associated to  $\{x_{n}\}_{n \in \mathbb{N}}$ is unconditionally convergent and:

\[\sum_{i=1}^{\infty}x_{i} = \lim_{(\mathcal{F}(\mathbb{N}), \subseteq, \eta)} \sum_{j \in J} x_{j} = \sup_{n \in \mathbb{N}} \sum_{j=1}^{n}x_{i}\]
    
\end{proposition}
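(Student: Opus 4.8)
The plan is to exploit that non-negativity makes every relevant family of partial sums monotone, so that the conditional limit can be identified with a supremum and then transported from the ordered net to the unordered net by a single application of the MCP. Write $s_n = \sum_{i=1}^{n} x_i$ for the ordered partial sums and $t_J = \sum_{j \in J} x_j$ for the unordered partial sums indexed by $(\mathcal{F}(\mathbb{N}), \subseteq)$. Since each $x_n \geq 0$, the sequence $\{s_n\}$ is increasing and the net $\{t_J\}$ is increasing (enlarging $J$ only adds non-negative terms).

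First I would identify the conditional limit with $\sup_n s_n$. Let $s$ denote the conditional sum, so $s_n \to s$ in $\eta$. Fixing $m$ and comparing the constant net $s_m$ with $\{s_n\}$, and using that $s_m \leq s_n$ holds on the cofinal set $\{n : n \geq m\}$, the order-preservation clause of axiom (i) of the Boccuto--Candeloro structure yields $s_m \leq s$; hence $s$ is an upper bound of $\{s_n\}$. Comparing an arbitrary upper bound $u$ (as a constant net) with $\{s_n\}$ the same axiom gives $s \leq u$. Therefore $\sup_n s_n$ exists and equals $s$, which already secures the right-hand equality and shows the partial sums are bounded above by $s$.

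Next I would pin down the supremum of the unordered net. Each finite $J$ satisfies $J \subseteq \{1, \dots, \max J\}$, so $t_J \leq s_{\max J} \leq s$, showing $s$ is an upper bound for $\{t_J\}$. Conversely, the sets $\{1, \dots, n\}$ are cofinal in $(\mathcal{F}(\mathbb{N}), \subseteq)$, and since $\{t_J\}$ is increasing its supremum over this cofinal family equals its supremum over the whole net, so $\sup_J t_J = \sup_n t_{\{1,\dots,n\}} = \sup_n s_n = s$. Thus $\{t_J\}$ is an increasing, bounded-above net whose supremum is $s$. Applying the MCP to this net gives $\lim_{(\mathcal{F}(\mathbb{N}), \subseteq, \eta)} t_J = s$, which is precisely unconditional convergence of the series to $s$; collecting the three computations yields all the stated equalities.

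I expect the main subtlety to be the identification of the conditional limit with $\sup_n s_n$: because Dedekind completeness is not assumed here, one cannot take the supremum for granted and must manufacture it from convergence through the order-preservation axiom, and one must then verify that the unordered net is bounded above with the \emph{same} supremum before the MCP may be invoked. The cofinality of $\{\{1,\dots,n\}\}_{n}$ in $\mathcal{F}(\mathbb{N})$ is the clean device that ties the ordered and unordered pictures together and reduces the whole argument to one monotone limit.
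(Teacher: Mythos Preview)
Your proof is correct. The paper does not give its own argument but refers the reader to Lemma~1.2 of Pavlakos~\cite{pavlakosintegration}; your route---identifying the conditional limit with $\sup_n s_n$ via the order-preservation axiom, showing the unordered net shares this supremum through the cofinality of $\{\{1,\dots,n\}\}_n$ in $\mathcal{F}(\mathbb{N})$, and then invoking the MCP once---is exactly the natural line and matches what that reference does in the order-convergence setting.
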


\begin{observation} [Ideal Infinite Element]

We can dispense with the existence of the supremum, or the MCP property, in the results above by annexing an ideal element $\infty$ to $X$ in such a way that every increasing sequence has a supremum equal to it. Another alternative would be to work with the Dedekind completion of such space (see \cite{fleischer1988convergence} for a general treatment of this case).  
    
\end{observation}

For $(D)$-convergence, which in general has the MCP property only for sequences, we have:

\begin{proposition} \label{unconditionalconvergence(D)}
    Let $(X, \eta)$ be a Dedekind complete Riesz space where $\eta$ is the partially ordered net convergence structure given by $(D)$-convergence. Then, a non-negative sequence $\{x_{n}\}_{n \in \mathbb{N}}$ has a conditionally convergent series if and only if it has an unconditionally convergent series with the same limit.
\end{proposition}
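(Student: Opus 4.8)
The plan is to prove the two implications separately, observing that the forward direction is essentially free while the reverse carries all the difficulty. For the direction ``unconditional $\Rightarrow$ conditional,'' I would simply invoke Proposition \ref{unconditionalimpliesconditional}, which holds for every Boccuto-Candeloro net convergence structure and hence for $(D)$-convergence in particular; it already yields conditional convergence to the same limit, so no further work is needed there.

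The substance is the reverse direction. Assume $\{x_{n}\}$ is non-negative with a conditionally convergent series to some $x$, i.e. the ordered partial sums $s_{n} = \sum_{i=1}^{n} x_{i}$ are $(D)$-convergent to $x$. My first step would be to identify $x$ with $\sup_{n} s_{n}$. Since the terms are non-negative, $\{s_{n}\}$ is increasing; comparing the constant net $s_{n}$ (which converges to $s_{n}$ by the constant-net axiom) against the net $\{s_{m}\}_{m}$ through the order-preservation clause of the Boccuto-Candeloro axioms, using that $s_{n} \leq s_{m}$ holds cofinally in $m$, gives $s_{n} \leq x$ for every $n$. Thus $\{s_{n}\}$ is bounded above, so by Definition \ref{dedekindcomplete} the supremum $s := \sup_{n} s_{n}$ exists and the increasing bounded sequence $(o)$-converges to $s$; Proposition \ref{(D)ando} then upgrades this to $(D)$-convergence to $s$. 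Since $\eta$ is Hausdorff, uniqueness of the $(D)$-limit forces $s = x$, so $x = \sup_{n} s_{n}$.

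The second step promotes this to unconditional convergence, and here lies the genuine obstacle: $(D)$-convergence enjoys the monotone convergence property only for sequences, never for arbitrary nets, so I cannot directly assert that the increasing net $\{t_{J}\}_{J \in \mathcal{F}(\mathbb{N})}$, with $t_{J} = \sum_{j \in J} x_{j}$, converges to its supremum. The idea to circumvent this is to reuse the very $(D)$-sequence $\{a_{ij}\}$ witnessing the sequential convergence $s_{n} \to x$, exploiting that the initial segments $\{1,\dots,n\}$ are cofinal in $\mathcal{F}(\mathbb{N})$. Concretely, I first note $\sup_{J} t_{J} = \sup_{n} s_{n} = x$, so $t_{J} \leq x$ for all $J$. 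Given any $\varphi \in \mathbb{N}^{\mathbb{N}}$, I choose $n_{0}$ with $x - s_{n} \leq \bigvee_{j} a_{j\varphi(j)}$ for all $n \geq n_{0}$, and set $J_{0} = \{1,\dots,n_{0}\}$. For any finite $J \supseteq J_{0}$, non-negativity yields $t_{J} \geq s_{n_{0}}$, whence
\[
0 \leq x - t_{J} \leq x - s_{n_{0}} \leq \bigvee_{j=1}^{\infty} a_{j\varphi(j)}.
\]

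This is precisely the defining inequality of $(D)$-convergence of the net $\{t_{J}\}$ to $x$, so the series is unconditionally convergent with the same limit, completing the argument. The only delicate point to verify carefully is that the domination $t_{J} \geq s_{n_{0}}$ for $J \supseteq J_{0}$ correctly reverses the inequality into $x - t_{J} \leq x - s_{n_{0}}$; everything else is a direct transcription of the sequential data into the net indexed by $\mathcal{F}(\mathbb{N})$.
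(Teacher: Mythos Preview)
Your proposal is correct and follows essentially the same route as the paper: identify the conditional $(D)$-limit with $\sup_{n} s_{n}$ via monotonicity and Dedekind completeness, then transfer the witnessing $(D)$-sequence to the unordered partial sums using that initial segments $\{1,\dots,n\}$ are cofinal in $\mathcal{F}(\mathbb{N})$. The paper differs only cosmetically, extracting a decreasing sequence $p_{n}\downarrow 0$ from the $(o)$-convergence and packaging it into a new $(D)$-sequence $a_{ij}=p_{j}$, whereas you reuse the original $(D)$-sequence directly; your bookkeeping is in fact cleaner, since you correctly let $J_{0}$ depend on $\varphi$, which the paper's phrasing obscures.
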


\begin{proof}
    The converse is valid is general by Proposition \ref{unconditionalimpliesconditional}, so we prove the first implication. As the series of $\{x_{n}\}_{n \in \mathbb{N}}$ is conditionally convergent and $X$ is Dedekind complete, it follows that, by the proposition above and Proposition \ref{(D)ando}:

    \[\sum_{n=1}^{\infty}x_{n} = (D) - \lim_{n \rightarrow \infty}\sum_{i=1}^{n}x_{i} = (o) - \lim_{n \rightarrow \infty}\sum_{i=1}^{n}x_{i} = \sup_{n \in \mathbb{N}} \sum_{i=1}^{n}x_{i},\]

    which means that we can find a non-negative sequence $\{p_{n}\}_{n \in \mathbb{N}}$ in $X$ such that $p_{n} \downarrow 0$ and:

    \[\sup_{n \in \mathbb{N}} \sum_{i=1}^{n}x_{i} - \sum_{i=1}^{n}x_{i} \leq p_{n}, \]

    for each $n$. Define $a_{ij} = p_{j}$, for every $i \in \mathbb{N}$. Then, by the definition of $\{p_{n}\}_{n \in \mathbb{N}}$, $\{a_{ij}\}$ is a $(D)$-sequence. Fix now any $\varphi \in \mathbb{N}^{\mathbb{N}}$ and take, independent of this $\varphi$, $J_{0} = \{1, \cdots, k\} \in \mathcal{F}(\mathbb{N})$, where $k \in \mathbb{N}$, $k > 1$, is fixed. Then, if $J \in \mathcal{F}(\mathbb{N})$ is such that $J_{0} \subseteq J$,

    \[\sum_{j \in J_{0}}x_{j} = \sum_{i=1}^{k}x_{i} \leq \sum_{j \in J}x_{j} .\]

    Therefore, 

    \[\sup_{n \in \mathbb{N}} \sum_{i=1}^{n}x_{i} - \sum_{j \in J}x_{j} \leq  \sup_{n \in \mathbb{N}} \sum_{i=1}^{n}x_{i} - \sum_{j \in J_{0}}x_{j} \leq p_{k} \leq \bigvee_{i=1}^{\infty}a_{i\varphi(i)} =  \bigvee_{i=1}^{\infty}p_{\varphi(i)}, \]

    which shows that $\{\sum_{j \in J}x_{j}\}_{J \in \mathcal{F}(\mathbb{N})}$ is $(D)$-convergent to $\sup_{n \in \mathbb{N}} \sum_{i=1}^{n}x_{i}$, and therefore that the sequence has a unconditionally convergent series with the same (conditional) limit.

\end{proof}

These results allow us to construct a theory of $\sigma$-additive measures with values in spaces with a general net convergence structure, and having further interesting properties in presence of a partial order. In this regard,  we now state a fundamental definition:

\begin{definition} [$\sigma$-additive Measure] \label{sigmaadditivemeasurenetconvergence}

Let $(\Omega, \mathcal{H})$ a paved space and $(Y, \eta)$ be a set equipped with a net convergence structure $\eta$. Then, we say that a set function:

\[\mu: \mathcal{H} \rightarrow Y,\]

is $\sigma$-additive, or a $\sigma$-additive measure, if for every disjoint sequence $\{A_{n}\}_{n \in \mathbb{N}}$ in $\mathcal{H}$ such that $\bigcup_{n \in \mathbb{N}}A_{n} \in \mathcal{H}$, then 

\[\mu(\bigcup_{n \in \mathbb{N}}A_{n}) = \lim_{(\mathcal{F} (\mathbb{N}),\subseteq, \eta)}\sum_{j \in J}\mu(A_{j})\]
\end{definition}

Its important to notice that the series in the definition above is a unconditional one, because the use of the (commutative operation of) countable union in its definition - a point forcefully put in the topological context by \cite{fleischertraynorequicontinuity}, and which gives interesting consequences in case we consider measure theory in non-commutative structures (see \cite{papnull}). 

By using Propositions \ref{unconditionalimpliesconditional} and \ref{tailofseries}, its possible to prove the following fact directly (as it depends only in the series definition):

\begin{proposition} [Monotone Continuity and $\sigma$-Additivity]

Let $(\Omega, \mathcal{H})$ a paved space by a ring and $(Y, \eta)$ be a set with partially ordered net convergence structure $\eta$. Then,  a finitely additive set function:

\[\mu: \mathcal{H} \rightarrow Y^{+},\]

is $\sigma$-additive if and only if for every decreasing sequence (in $\mathcal{H}$) $\{A_{n}\}_{n \in \mathbb{N}} \downarrow \emptyset$,

\[\lim_{(\mathbb{N}, \eta)}\mu(A_{n}) = 0\]

If $Y$ is a Riesz space equipped with a partially ordered net convergence structure and $\mu$ assumes values in the non-negative cone of $Y$, the limit is decreasing in $n$.
    
\end{proposition}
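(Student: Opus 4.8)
The plan is to prove the classical equivalence between $\sigma$-additivity and continuity from above at $\emptyset$, transported to the net-convergence setting by means of the summability results of Propositions \ref{unconditionalimpliesconditional} and \ref{tailofseries}. Throughout I will use that a non-negative finitely additive set function on a ring is monotone: for $A \subseteq B$ in $\mathcal{H}$ one has $B = A \cup (B \setminus A)$ as a disjoint union, so $\mu(B) = \mu(A) + \mu(B\setminus A) \geq \mu(A)$. This both makes the relevant sequences monotone and secures the comparisons needed for the squeeze arguments below.

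For the forward implication, assume $\mu$ is $\sigma$-additive and let $\{A_n\}_{n} \downarrow \emptyset$ in $\mathcal{H}$. I would disjointify: since $\mathcal{H}$ is a ring, $D_k := A_k \setminus A_{k+1} \in \mathcal{H}$, the $D_k$ are pairwise disjoint, $A_1 = \bigcup_{k=1}^{\infty} D_k$ disjointly (because $\bigcap_n A_n = \emptyset$), and more generally $A_n = \bigcup_{k \geq n} D_k$. Applying Definition \ref{sigmaadditivemeasurenetconvergence} to $A_1$ gives the unconditional convergence of $\sum_k \mu(D_k)$ to $\mu(A_1)$; Proposition \ref{unconditionalimpliesconditional} then yields conditional convergence with the same limit, and Proposition \ref{tailofseries} shows its tails $\sum_{k \geq n} \mu(D_k)$ converge to $0$. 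Since $\sigma$-additivity applied to $A_n = \bigcup_{k\geq n} D_k$ (again together with Proposition \ref{unconditionalimpliesconditional}) identifies this tail with $\mu(A_n)$, we obtain $\mu(A_n) \to 0$; monotonicity makes the sequence $\{\mu(A_n)\}$ decreasing, as required.

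For the reverse implication, assume the continuity condition and let $\{B_n\}$ be a disjoint sequence in $\mathcal{H}$ with $B := \bigcup_n B_n \in \mathcal{H}$. Put $R_n := B \setminus \bigcup_{k=1}^n B_k = \bigcup_{k>n} B_k \in \mathcal{H}$; then $R_n \downarrow \emptyset$, so by hypothesis $\mu(R_n) \to 0$. To verify the unconditional convergence demanded by Definition \ref{sigmaadditivemeasurenetconvergence}, write $S_J := \sum_{j\in J}\mu(B_j)$ and $C_J := B \setminus \bigcup_{j\in J} B_j$, so that finite additivity gives $S_J + \mu(C_J) = \mu(B)$; it suffices to show $\{\mu(C_J)\}_{J\in\mathcal{F}(\mathbb{N})} \to 0$. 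The key step: for $J \in \mathcal{F}(\mathbb{N})$ let $\rho(J)$ be the largest $m$ with $\{1,\dots,m\}\subseteq J$; then every index missing from $J$ exceeds $\rho(J)$, whence $C_J \subseteq R_{\rho(J)}$ and $0 \leq \mu(C_J) \leq \mu(R_{\rho(J)})$. The net $J \mapsto \mu(R_{\rho(J)})$ is a quasi-subnet of $\{\mu(R_n)\}_n$ (given $n_0$, take $J_0 = \{1,\dots,n_0\}$), so by axiom (vi) of Definition \ref{boccutocandeloronetconvergencestructure} it converges to $0$; squeezing between the constant net $0$ and this net via axiom (iii) gives $\mu(C_J)\to 0$, and hence $S_J = \mu(B) - \mu(C_J) \to \mu(B)$ by the additive continuity encoded in axiom (i).

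The main obstacle is exactly this last passage in the converse direction: Definition \ref{sigmaadditivemeasurenetconvergence} requires unconditional ($\mathcal{F}(\mathbb{N})$-net) convergence, whereas the continuity hypothesis is inherently sequential, so one cannot simply invoke sequential continuity at $\emptyset$. The quasi-subnet comparison $C_J \subseteq R_{\rho(J)}$ is what reduces the unordered net back to the sequence $\{\mu(R_n)\}$ and makes the squeeze legitimate; designing the monotone cofinal reindexing $\rho$ correctly, so that this inclusion (and not its reverse, which the naive choice $\rho(J)=\max J$ would give) holds, is the delicate point. A minor subtlety is the use of subtraction of limits in $\mu(B)-\mu(C_J)\to\mu(B)$, which is justified because the Boccuto--Candeloro structures considered here are compatible with the Riesz-space linear operations, as in all the governing examples, the $(o)$, $(o_{3})$, $(D)$ and topological convergences.
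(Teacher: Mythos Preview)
Your proof is correct and is precisely the argument the paper gestures at when it says the result follows ``directly'' from Propositions~\ref{unconditionalimpliesconditional} and~\ref{tailofseries}: you use both propositions in the forward direction exactly as intended, and your quasi-subnet/squeeze argument for the converse is the natural way to upgrade the sequential hypothesis to the $\mathcal{F}(\mathbb{N})$-net convergence required by Definition~\ref{sigmaadditivemeasurenetconvergence}. The subtraction subtlety you flag at the end is real but affects the paper's own implicit argument equally, and holds in all the convergence structures under consideration.
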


It follows that our definition of $\sigma$-additive measure is the same as the one used in the theory of non-negative $\sigma$-additive measures in Riesz spaces equipped with order or (D)-convergence - see Section 2.3 of \cite[Chapter 2]{boccutoxenofon} for details. 

Having a notion of series in spaces with abstract convergence structures allow us to define a different type of integral that is different in nature compared to the Net Riemann integral. 

Before the definition itself, consider the following auxiliary object and convention:

\begin{definition} [Choice Function]

Let $\Omega$ be an abstract (i.e, general) set. We say that a function $\delta: 2^{\Omega} \rightarrow \Omega$ is a choice function if, for each $A \in 2^{\Omega}$, $\delta(A) = \delta_{A}  \in A$.
    
\end{definition}

Also, for a countable collection of sets $P = \{\alpha\}_{\alpha \in P}$, we denote for $J \in \mathcal{F}(\mathbb{N})$,

\[\sum_{j \in J, \alpha \in P}g(\alpha_{j}),\]

where $g$ is a set function, to be the sum of the elements of $P$ indexed by $J$.

Now we may define a new integral, which in its original form is due to  \cite{kolmogorovuntersuchungen}, but the modern usage of it is due (initially) to  \cite{deleanu}, and Ionescu-Tulcea in \cite{tulceaadditive} and \cite{tulceaensemble}, called the $S^{*}$-integral:

\begin{definition} [Kolmogorov $S^{*}$ integral] \label{kolmogorovS*integralconvergencestructure}

Given and ordered pair:

\[((\Omega, \mathcal{H}), ((X,\eta_{1}), (Y,\eta_{2}), (Z,\eta_{3}); \bullet)),\]

consisting of 

\begin{enumerate} [(i)]
    \item $(\Omega, \mathcal{H})$, a paved space $\mathcal{H}$;
    \item $(X,\eta_{1}), (Z,\eta_{2}) (Z,\eta_{3})$,  spaces with (partially ordered, if Riesz spaces) net convergence structures connected by a bilinear product $\bullet$;
\end{enumerate}

we say that a function $f: \Omega \rightarrow X$ is $S^{*}$ integrable on $A \in \mathcal{H}$ with respect to a set function $\mu: \mathcal{H} \rightarrow Y$ if the following iterated limit exists and is independent of each choice function $\delta$:

\[\int_{A}^{S^{*}}fd\mu = \lim_{(\mathcal{P}(A), \leq_{ref},\eta_{3})}\lim_{(\mathcal{F}(\mathbb{N}), \subseteq,\eta_{3})}\sum_{j \in J, \alpha \in P}f(\delta_{\alpha_{j}})\mu(\alpha_{j}),\]

where $P = \{\alpha_{j}\}_{j \in \mathbb{N}} \in \mathcal{P}(A)$, the set of all disjoint countable coverings of $A$, equipped by the refinement partial ordering $\leq_{ref}$\footnote{Or only $\leq$ if the context is clear.}, which is assumed non-empty and directed\footnote{In general, that requires more hypothesis on the paving $\mathcal{H}$.}.

In general, we denote the second limit as the series $\sum_{\alpha \in P}f(\delta_{\alpha})\mu(\alpha)$.
\end{definition}

If in the previous definition we consider $\mathcal{P}(A)$, for $A \in \mathcal{H}$, consisting only of partitions of the set $A$, we call the resulting integral the $S^{*}$-partition Integral, for which the next results and examples continues to be valid. 

The integral in Definition \ref{kolmogorovS*integralconvergencestructure} may be paraphrased as follows: 

\begin{definition} [$S^{*}$-integral Unconditional Series] \label{S*integralassumofseries}

With the same structures of Definition \ref{kolmogorovS*integralconvergencestructure}, an map $f$ is $S^{*}$-integrable on $A \in \mathcal{H}$ if and only if the following limit exists:

\[\int_{A}^{S^{*}}fd\mu = \lim_{(\mathcal{P}(A), \leq_{ref},\eta_{3})}\sum_{\alpha \in P}f(\delta_{\alpha})\mu(\alpha),\]

where the series is understood as in Definition \ref{UnconditionallyConvergentSeries}, and $\mathcal{P}(A)$ is assumed non-empty and directed.

\end{definition}

In other words, whenever the $S^{*}$-integral exists, we assume the existence of the series on the integral for partitions at least cofinal for the convergence (i.e, which refines a fixed element of $\mathcal{P}(\Omega)$). We also comment that its possible to generalize the partitions above to specific classes generated by other means (i.e, gauges and similar techniques) and obtain integrals in the sense of \cite{duboisdefinition1} and \cite{duboisdefinition2}.

For the rest of the text, we denote by $L_{S^{*}}(\mu)$ the space of all functions $f: \Omega \rightarrow X$ that are $S^{*}$-integrable with respect to the (fixed) set function $\mu: \Omega \rightarrow Y$ on each $A \in \mathcal{H}$ with the integration structures of Definition \ref{kolmogorovS*integralconvergencestructure}. 

We now summarize some properties of the $S^{*}$-integral in this abstract setting. As the proofs are exactly the same as in the net Riemann integral, we omit them. Some of these may be found (in a different and less general form) in \cite{deleanu}.

\begin{proposition} [Properties of the $S^{*}$-Integral] \label{Properties oftheS*}

The $S^{*}$-Integral has the following properties, where we assume $X,Y,Z$ Riesz spaces in items with  inequalities:

\begin{enumerate} [(i)]
    \item Let $f,g \in L_{S^{*}}(\mu)$. Then, for each fixed $A \in \mathcal{H}$

    \[\int^{S^{*}}_{A}(f+g)d\mu = \int^{S{*}}_{A}fd\mu + \int^{S{*}}_{A}gd\mu,\]

    i.e, the integral is additive on the space $L_{S^{*}}(\mu)$;
    \item Let $f \in L_{S^{*}}(\mu)$ such that $f \geq 0$ and $\mu \geq 0$. Then, for each fixed $A \in \mathcal{H}$

    \[\int^{S^{*}}_{A}f d\mu \geq 0,\]

    i.e, the integral is a positive map on the space of integrable functions.
    \item Let $\mu \geq 0 $ and $f,g \in L_{S^{*}}(\mu)$ such that:

    \[g \geq f.\]

    Then, for each fixed $A \in \mathcal{H}$

    \[\int^{S^{*}}_{A}gd\mu \geq \int^{S^{*}}_{A}fd\mu,\]

    i.e, the integral is an isotone map on $L_{S^{*}}(\mu)$ equipped with the partial ordering inherited from $X$.

    \item Let $\mu \geq 0$ and $f \in L_{S^{*}}(\mu)$ such that $|f| \in L_{S^{*}}(\mu)$, then for each fixed $A \in \mathcal{H}$:

    \[|\int^{S^{*}}_{A}fd\mu| \leq \int^{S^{*}}_{A}|f|d\mu.\]

    \item Let $f: \Omega \rightarrow X$ and $\mathcal{H}$ an algebra, such that, given $A,B \in \mathcal{H}$ disjoint, $f$ is $S^{*}$-integrable on $A$ and $B$. Then, $f$ is integrable on $A \cup B$ and:

    \[\int^{S^{*}}_{A \cup B}fd\mu = \int^{S^{*}}_{A}fd\mu+\int^{S^{*}}_{B}fd\mu\]

    \item If $(Z, \eta_{3})$ is complete and $\mathcal{H}$ an algebra, given $A,B \in \mathcal{H}$ such that $A \subseteq B$ in $\mathcal{H}$, $f$ being $S^{*}$-integrable on $B$ implies $S^{*}$-integrability on $A$.

    \item Let $f: \Omega \rightarrow X$ be constant function equal to $c \in X$ and $\mu: \mathcal{H} \rightarrow Y^{+}$ $\sigma$-additive. Then f is $S^{*}$-partition integrable on each $A \in \mathcal{H}$ and:

    \[\int_{A}^{S^{*}}fd\mu = c\mu(A)\]
\end{enumerate}
    
\end{proposition}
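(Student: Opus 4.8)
The plan is to establish the four order-algebraic properties (i)--(iv) by the same two-stage template used in Theorem~\ref{propertiesnetriemannintegral}: first verify the statement for the inner unconditionally convergent series attached to a fixed covering $P \in \mathcal{P}(A)$ and a fixed choice function $\delta$, then transport it through the outer refinement limit along $(\mathcal{P}(A), \leq_{ref})$ using the Boccuto--Candeloro axioms. For (i), bilinearity of the product gives, on every finite partial sum, $\sum_{j \in J, \alpha \in P}(f+g)(\delta_{\alpha_j})\mu(\alpha_j) = \sum_{j \in J, \alpha \in P} f(\delta_{\alpha_j})\mu(\alpha_j) + \sum_{j \in J, \alpha \in P} g(\delta_{\alpha_j})\mu(\alpha_j)$, so continuity of addition (axiom (i)) applied first to the net over $\mathcal{F}(\mathbb{N})$ and then to the net over $\mathcal{P}(A)$ yields additivity. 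Property (ii) is the identical template with ``$=$'' replaced by ``$\geq 0$'': isotonicity of the product on the positive cones makes each summand non-negative, and the order-closedness of limits (second clause of axiom (i)) propagates non-negativity through both limits. Property (iii) follows verbatim from the pointwise inequality $g(\delta_\alpha)\mu(\alpha) \geq f(\delta_\alpha)\mu(\alpha)$, valid because $\mu \geq 0$ and the product is isotone. For (iv) the only extra ingredient is the elementary estimate $|x\,y| \leq |x|\,y$ for $y \geq 0$, obtained by writing $x = x^{+}-x^{-}$ and using bilinearity together with $|a-b| \leq a+b$ for $a,b \geq 0$ in $Z$; combined with axiom (ii) (continuity of the modulus) this lets one pass the modulus through both limits.

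Property (vii) is comparatively direct. Working with a partition $P = \{\alpha_j\}_j \in \mathcal{P}(A)$, so that $\bigcup_j \alpha_j = A$ with the $\alpha_j$ disjoint, and with $f \equiv c$, bilinearity pulls $c$ out of each summand. Since the partial map $\bullet_{c} \colon Y \to Z$ of Definition~\ref{productinrieszspace} is linear and continuous, it commutes with the unconditional limit, giving $\sum_{\alpha \in P} c\,\mu(\alpha) = c\,\bigl(\sum_{\alpha \in P}\mu(\alpha)\bigr)$; $\sigma$-additivity of $\mu$ (Definition~\ref{sigmaadditivemeasurenetconvergence}) then identifies $\sum_{\alpha \in P}\mu(\alpha)$ with $\mu(A)$. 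Hence the Riemann sum equals $c\,\mu(A)$ for \emph{every} partition and every $\delta$, so the outer net is constant and converges to $c\,\mu(A)$ by axiom (v), which simultaneously gives integrability, independence of $\delta$, and the stated value.

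The two set-theoretic clauses (v) and (vi) carry the real content. For finite additivity over disjoint sets (v), the key structural fact is that, because $\mathcal{H}$ is an algebra, the coverings of $A \cup B$ \emph{adapted} to the pair $\{A,B\}$ --- those each of whose members lies entirely in $A$ or entirely in $B$ --- are cofinal in $(\mathcal{P}(A\cup B), \leq_{ref})$, since any covering is refined by intersecting its members with $A$ and with $B$. Along this cofinal family a covering splits as $P = P_A \sqcup P_B$ with $P_A \in \mathcal{P}(A)$ and $P_B \in \mathcal{P}(B)$, and the grouping property of unconditional series (all three series converging because $f$ is integrable on $A$ and on $B$) gives $\sum_{\alpha \in P} f(\delta_\alpha)\mu(\alpha) = \sum_{\alpha \in P_A} f(\delta_\alpha)\mu(\alpha) + \sum_{\alpha \in P_B} f(\delta_\alpha)\mu(\alpha)$. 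Restricting the limit to this cofinal family by the quasi-subnet axiom, and adding the two limits by axiom (i) as $P_A$ and $P_B$ run cofinally through $\mathcal{P}(A)$ and $\mathcal{P}(B)$, produces integrability on $A \cup B$ with value $\int^{S^{*}}_{A} f\,d\mu + \int^{S^{*}}_{B} f\,d\mu$.

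I expect (vi) to be the main obstacle, and it is the sole place where completeness of $(Z,\eta_3)$ is used. Since $\mathcal{H}$ is an algebra and $A \subseteq B$, we have $B \setminus A \in \mathcal{H}$; applying the adapted-covering cofinality of (v) to $B = A \cup (B\setminus A)$, the convergent --- hence Cauchy --- net of Riemann sums over $\mathcal{P}(B)$ restricts to a Cauchy net indexed by $\mathcal{P}(A)\times\mathcal{P}(B\setminus A)$, on which the sum splits as $S_{P}^{A}+S_{R}^{B\setminus A}$. Forming differences along coverings that share the \emph{same} $B\setminus A$-part cancels the $S_{R}^{B\setminus A}$ contribution and leaves exactly $S_{P}^{A}-S_{Q}^{A}$, and because these differences form a quasi-subnet --- in the value-set sense of the definition, the cancellation being what makes the tail value-sets coincide --- of the Cauchy difference net over $B$, they converge to $0$. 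Thus the net of Riemann sums over $\mathcal{P}(A)$ is Cauchy and, by completeness, converges. The delicate points I would have to verify are precisely that the value-set quasi-subnet relation holds (this is where the independence of the $A$-differences from the $B\setminus A$-part is essential) and that the resulting limit does not depend on $\delta$, the latter obtained by running the argument for two choice functions at once and using their agreement already at the level of $B$.
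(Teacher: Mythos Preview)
The paper does not supply its own proof of this proposition; it simply writes ``For the proof of these results, see \cite{deleanu}.'' Your sketch is essentially the classical Deleanu argument --- the two-stage passage to the limit for (i)--(iv), the direct computation for (vii), the cofinality of adapted coverings for (v), and the Cauchy/completeness argument for (vi) --- so there is nothing substantive to compare. One small point worth tightening in (vi): to obtain $\delta$-independence of the limit on $A$, make explicit that the two choice functions are extended to $B$ by the \emph{same} choice on $B\setminus A$, so that the difference of the two $B$-sums is exactly the difference of the two $A$-sums and inherits convergence to $0$ from the $B$-level agreement.
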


What is interesting about this integral is that it gives $\sigma$-additivity automatically in the presence of a summability condition, which \cite{deleanu} (in section
$4$ of the article, for which we refer the reader for details) calls the $(*)$-condition, and is satisfied for any complete (Hausdorff) topological group or/and Dedekind complete lattice ordered group. In fact, he obtain the following result:

\begin{theorem} [$\sigma$-additivity of the $S^{*}$-integral] \label{sigmaadditives*integral}

Let $f: \Omega \rightarrow X$ be a $S^{*}$-integrable function with respect to a(n) (arbitrary) set function $\mu: \mathcal{H} \rightarrow Y$, where $\mathcal{H}$ is a $\sigma$-ring of subsets of $\Omega$ and the integrability occurs with respect to a fixed net convergence structure where $Z$ satisfies the $(*)$-condition. Then, the set function defined on $\mathcal{H}$:

\[\nu^{S^{*}}_{f}({A})= \int_{A}^{S^{*}}fd\mu, \]

is a $\sigma$-additive measure with values in $Z$.
    
\end{theorem}

We refer the reader to \cite[Page 33]{deleanu} for a proof. 

We now formulate a result to connect the net Riemann integral and the $S^{*}$-integral. This aforementioned equivalence is of a restricted nature because it shows 
that the existence of the $S^{*}$-integral implies the existence of a particular example of the Net Riemann integral when $Z$ equipped with a net convergence structure that is topological (notice that $X$ and $Y$ need not be of this type). In fact, we have the following result that proves a fact cited by \cite{sionsemigroup}, on Page 16, without proof:

\begin{proposition} \label{sandsionintegrals}
    Let $f \in L_{S^{*}}(\mu)$ be a $S^{*}$-integrable function in the context of conditions (i) and (ii) of Definition \ref{kolmogorovS*integralconvergencestructure}, where $Z$ is equipped with a net convergence structure which is generated by a vector space topology. Then, $f$ is Sion integrable\footnote{Here and thereafter, we denote the Sion integral of $f$ on $\Omega$ by $\int_{\Omega}^{(Si)}fd\mu$}.
\end{proposition}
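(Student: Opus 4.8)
The plan is to exhibit the Sion integral as a \emph{joint} limit of finite partial sums and to recognise the $S^{*}$-integral as the corresponding \emph{iterated} limit of the very same sums, so that the statement reduces to a Moore--Osgood type comparison made possible by the topology on $Z$. Fixing a choice function $\delta$ and setting, for a countable covering $P \in \mathcal{P}(A)$ and a finite subcollection $J \in \mathcal{F}(P)$,
\[
S(P,J) = \sum_{\alpha \in J} f(\delta_{\alpha})\mu(\alpha), \qquad \Sigma(P) = \sum_{\alpha \in P}f(\delta_{\alpha})\mu(\alpha) = \lim_{(\mathcal{F}(P),\subseteq,\eta_{3})} S(P,J),
\]
the $S^{*}$-integral is $L = \lim_{(\mathcal{P}(A),\leq_{ref},\eta_{3})}\Sigma(P)$, whereas the Sion Riemann sum attached to a state $(P,\tau,\Delta)\in\Gamma$ is exactly the truncated partial sum $S(P,\Delta(P))$. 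Thus the truncation function $\Delta$ plays the role of the finite-subset index in the unconditional series, and moving upward in the order $>>$ simultaneously refines $P$ and enlarges $\Delta(P)$. The decisive structural feature is that $\Delta$ may be chosen \emph{separately for each covering}, so that no uniformity across coverings is needed.

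First I would fix the value $L = \int_{A}^{S^{*}} f\,d\mu$, which exists and is independent of $\delta$ by hypothesis, and recall from Definition~\ref{S*integralassumofseries} that $\Sigma(P)$ exists for every $P$ refining some fixed $P_{1} \in \mathcal{P}(A)$. Since $\eta_{3}$ is generated by a vector space topology, given a neighbourhood $U$ of $0$ in $Z$ I would select a symmetric neighbourhood $V$ with $V+V \subseteq U$; this additive splitting is precisely the substitute --- available here but absent in pure order convergence --- for the classical $\varepsilon/2$ argument. Using the refinement limit I obtain $P_{0} \in \mathcal{P}(A)$, which I may take to refine $P_{1}$, such that $\Sigma(Q) - L \in V$ for every $Q$ refining $P_{0}$. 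Then, for each such $Q$, the unconditional convergence of $\Sigma(Q)$ furnishes a finite $J_{Q} \in \mathcal{F}(Q)$ with $S(Q,J) - \Sigma(Q) \in V$ whenever $J_{Q} \subseteq J$.

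The key step is to encode all of these per-covering truncations into a single admissible truncation function: I define $\Delta_{0}$ by $\Delta_{0}(Q) = J_{Q}$ for $Q$ refining $P_{0}$ and arbitrarily (say $\Delta_{0}(Q)=\emptyset$) otherwise, and I fix any points $\tau_{0}$, producing a threshold state $(P_{0},\tau_{0},\Delta_{0})\in\Gamma$. For any $(Q,\tau,\Delta) >> (P_{0},\tau_{0},\Delta_{0})$ the definition of $>>$ forces $Q$ to refine $P_{0}$ and $\Delta_{0}(M) \subseteq \Delta(M)$ for every $M$ refining $Q$; taking $M = Q$, and using that $\leq_{ref}$ is reflexive, yields $\Delta(Q) \supseteq \Delta_{0}(Q) = J_{Q}$. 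Consequently $S(Q,\Delta(Q)) - \Sigma(Q) \in V$ and $\Sigma(Q) - L \in V$, so $S(Q,\Delta(Q)) - L \in V+V \subseteq U$. This shows that the Sion Riemann sums $\eta_{3}$-converge to $L$; since the construction involves $\delta$ only through the choice-independent value $L$, the Sion limit is likewise independent of the selected points, and hence $f$ is Sion integrable with $\int_{A}f\mu = L$.

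I expect the delicate point to be not the limit comparison itself but the bookkeeping of the order $>>$: one must verify that membership above $(P_{0},\tau_{0},\Delta_{0})$ genuinely transmits the lower bound $\Delta(Q)\supseteq J_{Q}$ on the \emph{terminal} covering $Q$, which rests on reflexivity of $\leq_{ref}$ together with the universal quantifier ``for all $M$ refining $Q$'' in the definition of $>>$. The role of the topological hypothesis deserves emphasis: it is exactly the neighbourhood splitting $V+V\subseteq U$ that welds the refinement limit and the series (truncation) limit into a single net limit, which is why the equivalence is asserted only in the topological case and cannot, in general, be carried out for order or $(D)$-convergence.
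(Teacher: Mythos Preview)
Your proof is correct and follows essentially the same approach as the paper: both arguments use the additive splitting $V+V\subseteq U$ afforded by the vector topology, fix $P_{0}$ so that $\Sigma(Q)-L\in V$ for $Q$ refining $P_{0}$, choose per-covering finite sets $J_{Q}$ from the unconditional series, encode these as the truncation $\Delta_{0}$, and then read off $\Delta(Q)\supseteq J_{Q}$ from the definition of $>>$ to conclude. Your version is in fact slightly more careful in making explicit the appeal to reflexivity of $\leq_{ref}$ at $M=Q$ and in routing through a $P_{1}$ guaranteeing existence of $\Sigma(P)$.
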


\begin{proof}

As the convergence in $Z$ is topological, we may find a topology $\tau$ for which each convergence net in $Z$ is convergent in this vector topology. Denote by $\mathcal{U}$ the filter of absorbent and symmetric neighborhoods of $\tau$. In this case, as $f$ is $S^{*}$-integrable on $\Omega$ (the proof for other sets is immediate), fix an arbitrary $U \in \mathcal{U}$ and take symmetric and absorbent $V \in \mathcal{U}$ such that $V + V \subseteq U$ and $P_{0} \in \mathcal{P}(\Omega)$ such that, denoting by $z = \int_{\Omega}^{S^{*}}fd\mu$,

\[\sum_{\alpha \in P}f(\delta_{\alpha})\mu(\alpha) - z \in V,\]

for every $P \geq P_{0}$ in $\mathcal{P}(\Omega)$ and independent of the choice function $\delta$.

By definition of the unconditional summability of each series above, for the given $V \in \mathcal{U}$ and $P$ above we may find $J(P) = J_{P} \in \mathcal{F}(\mathbb{N})$ such that, for every $J \in \mathcal{F}(\mathbb{N})$ such that $J_{P} \subseteq J$,

\[\sum_{j \in J, \alpha \in P}f(\delta_{\alpha_{j}})\mu(\alpha_{j}) - \sum_{\alpha \in P}f(\delta_{\alpha})\mu(\alpha) \in V .\]

Now, choose $(P_{0}, \Delta_{0}) \in \mathcal{D}(\Omega)$ such that the truncation is given by:

\[\Delta(Q) = \{\{\alpha_{j}\}\}_{j \in J_{Q}},\]

where $Q \in \mathcal{P}(\Omega)$ is such that $Q \geq P_{0}$. In this case, for each $(P, \Delta) >> (P_{0}, \Delta_{0})$, each sum:

\[\sum_{\alpha \in \Delta(P)}f(\delta_{\alpha})\mu(\alpha),\]

is of the form $\sum_{j \in J, \alpha \in P}f(\delta_{\alpha_{j}})\mu(\alpha_{j}) $, where $J_{P} \subseteq J$. Therefore, by the choice above, 

\[\sum_{\alpha \in \Delta(P)}f(\delta_{\alpha})\mu(\alpha) \in z+U.\]

As the choice of $U \in \mathcal{U}$ is arbitrary, it follows that $f$ is Sion integrable on $\Omega$ and:

\[\int_{\Omega}^{S^{*}}fd\mu = z = \int_{\Omega}^{(Si)}fd\mu\]

\end{proof}

In the partially ordered case, we can prove a similar statement, but with a further restriction. To motivate this restriction, we note that several limit theorems for lattice-group/Riesz space-valued measures can be proved only if we choose a $(D)$-sequence for convergence that is uniform in some sense: either the same for a family of measures or for a collection of sets. This is, for example, the technique of \cite{boccutocandelorovitalihahnsaks} to prove a type of Vitali-Hahn-Saks theorem for lattice group-valued measures, the same occurs in \cite{boccutodimitriou2011}. Intuitively, we need this to control simultaneously an uncountable number of nets, and reduce the case to an application of Fremlin's lemma. In this setting, then, we have:

\begin{proposition}
 Let $f \in L_{S^{*}}(\mu)$ be a $S^{*}$-integrable function in the context of conditions (i) and (ii) of Definition \ref{kolmogorovS*integralconvergencestructure}, where $Z$ is equipped with the partially ordered net convergence structure of $(D)$-convergence. Suppose that, given $P_{0} \in \mathcal{P}(\Omega)$, for each $P \in \mathcal{P}(\Omega)$ such that $P \geq P_{0}$, we may find a common (unique) regular for the convergence of the series $\{\sum_{\alpha \in P}f(\delta_{\alpha})\mu(\alpha)\}_{\alpha \in P}$ (for every choice function $\delta$), which is assumed convergent for each $P$ chosen this way. Then, $f$ is Sion integrable. 
\end{proposition}

\begin{proof}
    By definition, as $f$ is $S^{*}$-integrable on $\Omega$, there exists a $(D)$-sequence $\{a_{ij}\}_{ij}$ in $Z$ such that, for each $\varphi \in \mathbb{N}^{\mathbb{N}}$ we may choose $P_{0} \in \mathcal{P}(\Omega)$ such that, for each $P \geq P_{0}$ in $\mathcal{P}(\Omega)$,

    \[|\sum_{\alpha \in P}f(\delta_{\alpha})\mu(\alpha) - z| \leq \bigvee_{i=1}^{\infty}a_{i\varphi(i)},\]

    where $z \in Z$ is the $S^{*}$-integral of $f$ on $\Omega$. 

    Choose a common $(D)$-sequence for the convergence of the nets $\{\sum_{j \in J, \alpha \in P}f(\delta_{\alpha_{j}})\mu(\alpha_{j})\}_{J \in \mathcal{F}(\mathbb{N})}$. That is, there exists, for all $P \in \mathcal{P}(\Omega)$, a $(D)$-sequence $\{b_{ij}\}_{ij}$ in $Z$ such that, for each $\varphi \in \mathbb{N}^{\mathbb{N}}$ we may choose $J = J(P) \in \mathcal{F}(\mathbb{N})$ such that, for each $J^{'} \in \mathcal{F}(\mathbb{N})$ which contains $J$,

\begin{equation} \label{eq*}
    |\sum_{j \in J^{'}, \alpha \in P}f(\delta_{\alpha_{j}})\mu(\alpha_{j}) - \sum_{\alpha \in P}f(\delta_{\alpha})\mu(\alpha)| \leq \bigvee_{i=1}^{\infty}b_{i\varphi(i)}.
\end{equation}

   By Fremlin lemma (in this setting, the Dedekind completeness and weak-$\sigma$-distributiveness are not needed), there exists $\{c_{ij}\}_{ij}$ a $(D)$-sequence in $Z$ such that:
   
\begin{equation} \label{eq**}
    \bigvee_{i=1}^{\infty}a_{i\varphi(i)} + \bigvee_{i=1}^{\infty}b_{i\varphi(i)} \leq \bigvee_{i=1}^{\infty}c_{i\varphi(i)}.
\end{equation}

Now, fix $P_{0}^{(Si)} = P_{0}$ and define a truncation $\Delta_{0}$ by:

\[\Delta_{0}(Q) = \{\alpha_{j}\}_{j \in J(Q)},\]

where $J(Q) \in \mathcal{F}(\mathbb{N})$ is given in the convergence statement above for $Q \geq P_{0}$. Therefore, if $(P, \Delta) >> (P_{0}^{(Si)}, \Delta_{0})$, we have that, by equations \ref{eq*} and \ref{eq**},

\[|x - \sum_{\alpha \in \Delta(P)}f(\delta_{\alpha})\mu(\alpha)| \leq \bigvee_{i=1}^{\infty}c_{i\varphi(i)},\]

for each $\varphi \in \mathbb{N}^{\mathbb{N}}$ as above. That is, $f$ is Sion integrable on $\Omega$ and:

\[\int_{\Omega}^{S^{*}}fd\mu = z = \int_{\Omega}^{(Si)}fd\mu\]

\end{proof}

An interesting open question is if we may prove the same without supposing the existence of the unique $(D)$-sequence for each series of refining partitions. Maybe a possibility would be a conjunction of $Z$ being super Dedekind complete with another type of "order separability" property, but we leave this as an open problem.  

We finish this section now, and consider some convergence theorems for the integrals introduced so far, and justifications for them.

\section{Convergence Theorems for Net Riemann and Related Integrals.}

To obtain more refined properties of the net Riemann integral and correlated concepts introduced in previous sections, we need some convergence theorems, specially (order and topological) uniform ones. The purpose of these convergence theorems are twofold:

\begin{enumerate} [(i)]
    \item Obtaining uniform convergence theorems for the Net Riemann integral, and specially the $S^{*}$(-partition) integral allows us to compare them to topological and order based Lebesgue integrals in the literature, such as \cite{rodriguezsalazar}, \cite{fidelgeneral1}, \cite{fidelgeneral2}, \cite{haluskaintegrable}, \cite{pavlakosintegration}, \cite{wrightintegralmeasure} and
    to other order/topological based and sequentially/net defined integrals in the literature, unifying an and extending a wide range of integration procedures;
    \item Based on the comparisons contained in item (i), we may obtain  monotone and dominated convergence theorems and other limit type properties for these integrals, as well as initial steps for a general classification of such integration procedures.
\end{enumerate}

To obtain these convergence theorems, we would like to make as few assumptions as possible on the integrability of the sequence of functions and (order/topological based) limits involved. For the order case, we will adopt a generalization of the uniform convergence theorem of \cite{monteirouniform}, which is based on the proof of a related theorem in \cite{vrabelovariecanlast}, to the Henstock-Kurzweil integral (with respect to the Lebesgue measure) which, compared to \cite{integralmeasureandordering} - which extends the partially ordered operator-valued framework of \cite{riecanvrabelovaoperator} - posits less strong integrability conditions on the limit of the approximating functions. This theorem is given in the cited paper for the Lebesgue measure and the Kurzweil integral. We shown in this section that this theorem is actually valid for a wide class of net Riemann integrals, and even for a outside of null sets version of $S^{*}$-integral, which is necessary to compare it to the order-convergence integrals of \cite{pavlakosintegration} and \cite{pavlakosrepresentation} (originated in \cite{pavlakosthesis}). 

We then prove a general Henstock lemma and obtain from it monotone and dominated convergence theorems in the setting or Riesz spaces, and survey some topological versions of convergence theorems, such as the convergence theorems of \cite{fleischercontent}, \cite{fleischerinterchange}, which generalized \cite{sionsemigroup} dominated convergence theorem, and the outside of null sets topological convergence theorem of \cite{millingtonproduct}, which we generalize for a specific class of partition refinement integrals of the net Riemann integral, which contains more integrals than the one exposed by Millington (which is a variant of the Sion integral of \cite{sionsemigroup}). This last result in particularly important to obtain equivalences with Lebesgue integrals defined by a type of Egorov procedure of local uniform convergence. Finally, we prove a general convergence theorem in the setting of lattice normed spaces. 

We now fix some conventions for this section (and subsections):

\begin{enumerate} [(i)]
    \item The class of sets $\mathcal{H}$ will be a $\sigma$-algebra. Nevertheless, many results are valid supposing that this class is only a $\delta$-ring. 
    \item The existence of any integral with values on $Z$, when it is a Riesz space, will be understood to take place in the net convergence structure given by $(D)$-convergence, unless stated to the contrary. All other types of limits will be specified. 
    \item For $Z$ a Riesz space, the series appearing in the $S^{*}$-partition below will be understood to be taken in order convergence when the issue of existence arises. That will cause no problems with the requirement of $(D)$-convergence in $(i)$, as these will appear only when the functions and measures involved are non-negative  and $Z$ weakly $\sigma$-distributive and Dedekind complete: this gives a increasing sequence (or net, in the $\mathcal{F}(\mathbb{N})$ case) of (unordered) sums which, by Propositions \ref{equivalenceoand(D)convergence}, \ref{unconditionalconvergencegeneral} and \ref{unconditionalconvergence(D)} gives a well defined (and equal) limit in $(D)$-convergence as well. Therefore, we have compatibility with Definition \ref{kolmogorovS*integralconvergencestructure} considering (i), where $\eta_{3}$ is $(D)$-convergence.
    \item Wherever we use the net Riemann integral, we assume that the Szabó-Száz-Fleischer net is composed of sets such that:

    \[\sum_{i \in I_{\gamma}}\mu(\sigma_{\gamma_{i}}) = \mu(\bigcup_{i \in I_{\gamma}}\sigma_{\gamma_{i}}),\]

    for $\mu$ finitely additive, which is not the same as asking that the sequence of sets is disjoint, as it can have overlapping null sets - that is $A,B \in \mathcal{H}$ are not disjoint, but $A \cap B \in \mathcal{H}$ is such that $\mu(A \cap B) = 0$, which does not alter any of the problems below. 
\end{enumerate}

We begin by the following useful concept of \cite{monteirouniform} for the order convergence theorems:

\begin{definition} [Uniform Convergence in Order]

We say that a sequence of functions $f_{n}: \Omega \rightarrow X$ converges order uniformly to $f: \Omega \rightarrow X$ is there exists a non-negative and non-increasing sequence $\{u_{n}\}_{n \in \mathbb{N}}$ in $X$  such that $u_{n} \downarrow 0$ and:

\[|f_{n}(\omega)-f(\omega)| \leq u_{n},\]

for every $\omega \in \Omega$.

\end{definition}

One important comment is that we may modify the definition above to make a "tail based" convergence concept (i.e, making the inequality valid for a "sufficient large $n$", and not every $n$) based on $p_{n}$, but this modification will not cause any changes in the arguments below, as we can always make "tail convergence" modifications of these (that would not be the case if we consider uniform convergence based on nets of functions). 

We also have the following Cauchy definition from \cite{monteirouniform}:

\begin{definition} [Cauchy Sequence Uniformly in Order]

We say that a sequence of functions $f_{n}: \Omega \rightarrow X$ is uniformly Cauchy in order, or only uniformly Cauchy, if is there exists a non-negative and non-increasing sequence $\{u_{n}\}_{n \in \mathbb{N}}$ in $X$ such that $u_{n} \downarrow 0$ and for each $n \in \mathbb{N}$:

\[|f_{i}(\omega)-f_{j}(\omega)| \leq u_{n},\]

for every $\omega \in \Omega$ and $i,j \geq n$.

\end{definition}

It is straightforward to check that every uniformly order convergent sequence of functions is uniformly Cauchy. 

We enounce the following direct consequence of Definition \ref{netriemannpartiallyordered} and Proposition \ref{orderconvergenceboundednets}, which will be useful next.

\begin{proposition} \label{limsupconvergenceintegral}

Suppose $(Z, \eta_{3})$ is a Dedekind Complete Riesz space with $\eta_{3}$ being given by order convergence. Then the net Riemann integral of a function $f: \Omega \rightarrow X$ with respect to a set function $\mu: \mathcal{H} \rightarrow Y$ exists with respect to $\eta_{3}$ if and only if:

    \begin{enumerate} [(i)]
        \item The net $\{S_{\gamma}(f,\mu)\}_{\gamma \in \Gamma}$ is bounded in $Z$ and,
        \item The quantities $\limsup_{\gamma}S_{\gamma}(f,\mu)$ and $\liminf_{\gamma}S_{\gamma}(f,\mu)$ both exists, and

    \[\limsup_{\gamma}S_{\gamma}(f,\mu) = \bigwedge_{\gamma \in \Gamma} \bigvee_{\gamma^{'} \geq \gamma}S_{\gamma^{'}}(f,\mu) = \bigvee_{\gamma \in \Gamma} \bigwedge_{\gamma^{'} \geq \gamma} S_{\gamma^{'}}(f,\mu) = \liminf_{\gamma}S_{\gamma}(f,\mu).\]
    \end{enumerate}
\end{proposition}
    
For the main convergence theorem, we need the following analogue of Proposition 3.2 of \cite{monteirouniform}:

\begin{proposition} \label{monteirofernandez1}
    Let $\mu: \mathcal{H} \rightarrow Y^{+}$ be a regular integrator and $\{f_{n}\}_{n \in \mathbb{N}}$ a sequence of Net Riemann Integrable functions, with respect to $\mu$, defined on $\Omega$. If $\{f_{n}\}_{n \in \mathbb{N}}$ is uniformly Cauchy in order, then the following limit exists:

    \[\limsup_{n \rightarrow \infty} \int_{\Omega} f_{n} d\mu = \bigwedge_{m=1}^{\infty} \bigvee_{i=m}^{\infty} \int_{\Omega}f_{i}d\mu,\]

    and,

    \[\int_{\Omega}f_{n}d\mu \xrightarrow[]{o} \bigwedge_{m=1}^{\infty}\bigvee_{i=m}^{\infty} \int_{\Omega}f_{i}d\mu, \]

    where this convergence is in order. 
\end{proposition}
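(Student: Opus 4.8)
The plan is to push the uniform order-Cauchy estimate on the integrands down to an order-Cauchy estimate on the scalar integrals $s_n := \int_\Omega f_n\, d\mu \in Z$, and then to read off order convergence from the $\limsup$–$\liminf$ characterization in Proposition \ref{orderconvergenceboundednets}. First I would record that, by the additivity in part (i) of Theorem \ref{propertiesnetriemannintegral} together with the bilinearity of the product in Definition \ref{productinrieszspace} (which makes $L^{1}(\mu)$ a linear subspace), the difference $f_i - f_j$ lies in $L^{1}(\mu)$ and $s_i - s_j = \int_\Omega (f_i - f_j)\, d\mu$ for all $i,j$.

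Next, using that $\{f_n\}$ is uniformly Cauchy in order, I fix a non-negative non-increasing $\{u_n\}$ with $u_n \downarrow 0$ and $|f_i(x) - f_j(x)| \le u_n$ for all $x \in \Omega$ and all $i,j \ge n$; equivalently $-u_n \le f_i - f_j \le u_n$ pointwise. The crucial point — and exactly where the hypothesis that $\mu$ is a regular integrator enters — is that the constant functions $\pm u_n$ are then Net Riemann integrable on the domain (they are $\mathcal{H}$-simple once the domain lies in $\mathcal{H}$, so the Proposition on Integration of Simple Functions applies), with $\int_\Omega u_n\, d\mu = u_n \mu(\Omega) =: w_n$. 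Isotonicity (part (iii) of Theorem \ref{propertiesnetriemannintegral}) applied to $-u_n \le f_i - f_j \le u_n$ then yields $-w_n \le s_i - s_j \le w_n$, that is $|s_i - s_j| \le w_n$ whenever $i,j \ge n$. This route deliberately avoids having to know $|f_i - f_j| \in L^{1}(\mu)$, which is not available in general, and is the key maneuver of the argument.

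I would then verify that $\{w_n\}$ decreases to $0$ in $Z$: it is non-increasing because the product is isotone on the positive cones and $u_n \downarrow 0$ with $\mu(\Omega) \ge 0$, and $\bigwedge_n w_n = 0$ because the partial map obtained by fixing the second argument at $\mu(\Omega)$ is continuous and $u_n \to 0$, so that the order limit of the monotone sequence $\{w_n\}$ is $0$. Taking $n=1$ in the estimate shows $s_1 - w_1 \le s_i \le s_1 + w_1$ for all $i$, so $\{s_n\}$ is bounded and, by Dedekind completeness of $Z$ (in force throughout this section), the quantities $S_m := \bigvee_{i \ge m} s_i$ and $T_m := \bigwedge_{i \ge m} s_i$ exist.

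Finally, fixing $n$ and taking the supremum over $i \ge n$ and then the infimum over $j \ge n$ in $s_i \le s_j + w_n$ gives $S_n \le T_n + w_n$, whence $\bigwedge_m S_m - \bigvee_m T_m \le S_n - T_n \le w_n$ for every $n$; since $\bigwedge_n w_n = 0$ and always $\liminf_n s_n \le \limsup_n s_n$, I conclude $\limsup_n s_n = \bigwedge_m \bigvee_{i \ge m} s_i = \bigvee_m \bigwedge_{i \ge m} s_i = \liminf_n s_n$, which gives the asserted existence of the $\limsup$. Boundedness of $\{s_n\}$ together with this equality is precisely the hypothesis of Proposition \ref{orderconvergenceboundednets}, yielding order convergence of $\{\int_\Omega f_n\, d\mu\}$ to this common value. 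The main obstacle is the pair of facts underpinning the constant bound, namely the integrability of the majorants $u_n$ (which forces the regular-integrator hypothesis, and tacitly that the domain belongs to $\mathcal{H}$) and the verification $\bigwedge_n w_n = 0$ (which rests on the isotonicity and continuity axioms of the product); everything else is the routine $\limsup$–$\liminf$ bookkeeping in a Dedekind complete Riesz space.
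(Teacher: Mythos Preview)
Your proof is correct and follows essentially the same approach as the paper: you push the pointwise estimate $-u_n \le f_i - f_j \le u_n$ through the integral via isotonicity and the regular-integrator hypothesis to get $|s_i - s_j| \le u_n\mu(\Omega)$, establish boundedness, and then run the $\limsup$/$\liminf$ squeeze exactly as the paper does. Your write-up is in fact more careful than the paper's in two places: you make explicit why the constants $u_n$ are integrable (via the simple-function proposition and the regular-integrator hypothesis) and why $u_n\mu(\Omega)\downarrow 0$ (via isotonicity and continuity of the product), whereas the paper simply asserts these.
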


The proof is the same as in \cite{monteirouniform}, but adapted to our abstract framework. Therefore, to present the modifications and for completeness, we give the full description. 

\begin{proof}
    By Proposition \ref{limsupconvergenceintegral}, it is sufficient to show that $\{\int_{\Omega}f_{n}d\mu\}_{n \in \mathbb{N}}$ is bounded and that the following holds:

    \[\bigwedge_{m=1}^{\infty}\bigvee_{i=m}^{\infty} \int_{\Omega}f_{i}d\mu \leq \bigvee_{m=1}^{\infty}\bigwedge_{i=m}^{\infty}\int_{\Omega}f_{i}d\mu, \]

    as the reverse inequality is always valid for bounded sequences with values in $X$, and the limit in the proposition is a direct consequence of these inequalities. 

    Now, as $\{f_{n}\}_{n \in \mathbb{N}}$ is uniformly Cauchy, there exists a non-increasing sequence $\{u_{n}\}_{n \in \mathbb{N}}$ consisting of elements of $X$ such that $u_{n} \downarrow 0$ and, for each $n \in \mathbb{N}$,

    \[|f_{i}(\omega)-f_{j}(\omega)| \leq u_{n}, \ \forall \omega \in \Omega, \ i,j \geq k.\]

    Therefore, given $n \in \mathbb{N}$ and $i,j \geq n$, we obtain from this last equation that:

    \[f_{i}(\omega) \leq f_{j}(\omega) + u_{n}, \forall \omega \in \Omega,\]

    and, therefore, as these functions are Net Riemann integrable, and the integral in this case is an isotone functional for the given bilinear product, we have that, integrating both sides of this inequality and using the regularity of the integrator:

    \begin{equation} \label{equacao}
        \int_{\Omega}f_{i}d\mu \leq \int_{\Omega}f_{j}d\mu + u_{n}\mu(\Omega).
    \end{equation}

    In particular, from that it follows that, for $j,n=1$:

    \begin{equation} \label{eq4}
        \int_{\Omega}f_{1}d\mu - u_{1}\mu(\Omega) \leq \int_{\Omega}f_{i}d\mu \leq \int_{\Omega}f_{1}d\mu + u_{1}\mu(\Omega),
    \end{equation}

    Thus, $\{\int_{\Omega}f_{n}d\mu\}_{n \in \mathbb{N}}$ is a bounded sequence.

    By (\ref{equacao}), for each $n \in \mathbb{N}$, we have that:

    \[\bigvee_{i=n}^{\infty}\int_{\Omega}f_{i}d\mu \leq \bigwedge_{j=n}^{\infty}\int_{\Omega}f_{j}d\mu + u_{n}\mu(\Omega),\]

    which implies that,

    \[\bigwedge_{m=1}^{\infty}\bigvee_{k=m}^{\infty}\int_{\Omega}f_{k}d\mu - \bigvee_{m=1}^{\infty}\bigwedge_{k=m}^{\infty}\int_{\Omega}f_{k}d\mu \leq \bigvee_{i=n}^{\infty}\int_{\Omega}f_{i}d\mu - \bigwedge_{j=n}^{\infty}\int_{\Omega}f_{j}d\mu \leq u_{n}\mu(\Omega),\]

    from this last equation, and from the properties of the bilinear product, as $u_{n}\mu(\Omega) \downarrow 0$ when $n \rightarrow \infty$, we get the result. 

\end{proof}

The main result of this section, which uses the last result, is the analogue of Theorem 3.3 of \cite{monteirouniform}, but modified for the purposes of the Net Riemann integral, which we now state:

\begin{theorem} [Uniform Converge Theorem for the Net Riemann integral] \label{uniformconvergencetheoremriemann}

Let $Z$ be Dedekind complete and weakly $\sigma$-distributive, $\mu: \mathcal{H} \rightarrow Y^{+}$ a regular integrator and $\{f_{n}\}_{n \in \mathbb{N}}$ a sequence of Net Riemann integrable functions, with respect to $\mu$, defined on $\Omega$. If $f_{n}$ converges order uniformly to $f$, then $f$ is Net Riemann Integrable with respect to $\mu$ and:

\[\int_{\Omega} f_{n}d\mu \xrightarrow[]{o} \int_{\Omega}fd\mu,\]

where this last convergence is in order in $X$.

\end{theorem}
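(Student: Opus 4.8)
The plan is to reduce the whole statement to the convergence of the integrals $\int_\Omega f_n\,d\mu$ that is already available, and then to upgrade it to integrability of the limit $f$. First I would note that an order uniformly convergent sequence is uniformly Cauchy in order, so Proposition \ref{monteirofernandez1} applies and gives that
\[ L := \bigwedge_{m=1}^{\infty}\bigvee_{i=m}^{\infty}\int_\Omega f_i\,d\mu \]
exists with $\int_\Omega f_n\,d\mu \xrightarrow[]{o} L$. Since this is an order convergent \emph{sequence}, the characterization of $(o)$-convergent sequences by a single regulator yields a non-increasing $\{w_n\}$ with $w_n\downarrow 0$ and $|\int_\Omega f_n\,d\mu - L|\le w_n$ for all $n$. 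The entire difficulty is thus to show that $f$ is Net Riemann integrable with $\int_\Omega f\,d\mu = L$; once that is done, the stated order convergence of the integrals is immediate from the first line.

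To control the Riemann sums of $f$ I would record the pointwise product estimate $|a\cdot b|\le |a|\cdot b$ for $a\in X$ and $b\in Y^{+}$, which follows from $a=a^{+}-a^{-}$, bilinearity, and isotonicity on the positive cones (Definition \ref{productinrieszspace}). Applying it termwise to the regulator bound $|f(\tau_{\gamma_i})-f_n(\tau_{\gamma_i})|\le u_n$ and $\mu(\sigma_{\gamma_i})\ge 0$, together with the triangle inequality in the Riesz space $Z$, gives
\[ |S_\gamma(f,\mu)-S_\gamma(f_n,\mu)| \le u_n\cdot\sum_{i\in I_\gamma}\mu(\sigma_{\gamma_i}) = S_\gamma(u_n\mathbbm{1}_\Omega,\mu). \]
The right-hand side is exactly the Riemann sum of the constant function $u_n\mathbbm{1}_\Omega$, which is integrable with integral $u_n\,\mu(\Omega)$ because $\mu$ is a regular integrator; hence for each fixed $n$ it $(D)$-converges to $u_n\,\mu(\Omega)$, and I may replace the unbounded factor by its limit at the cost of one extra $(D)$-sequence.

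The core is then the triangle inequality, valid for every $n$,
\[ |S_\gamma(f,\mu)-L| \le |S_\gamma(f,\mu)-S_\gamma(f_n,\mu)| + |S_\gamma(f_n,\mu)-\int_\Omega f_n\,d\mu| + |\int_\Omega f_n\,d\mu - L|. \]
For each fixed $n$ the first summand is dominated, for large $\gamma$, by $u_n\,\mu(\Omega)$ plus a $(D)$-sequence; the second is dominated by a $(D)$-sequence since $f_n\in L^1(\mu)$; the third is at most $w_n$. Writing $p_n := u_n\,\mu(\Omega)+w_n$, which satisfies $p_n\downarrow 0$ (using continuity of the product to see $u_n\,\mu(\Omega)\downarrow 0$), and merging the finitely many $(D)$-sequences, I obtain for each $n$ and all sufficiently large $\gamma$ an estimate of the form $|S_\gamma(f,\mu)-L|\le p_n + \bigvee_i \hat a^{(n)}_{i\varphi(i)}$ with $\{\hat a^{(n)}_{ij}\}_{ij}$ a $(D)$-sequence.

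The main obstacle is the last, genuinely order-theoretic step: passing from this $n$-indexed family of estimates to a \emph{single} $(D)$-sequence $\{b_{ij}\}$ independent of $n$ and of $\varphi$, which is precisely what replaces the classical $\varepsilon/3$ diagonalization. Here I would invoke the Fremlin Lemma on the triple sequence $\{\hat a^{(n)}_{ij}\}_{nij}$ (the hypotheses that $Z$ is Dedekind $\sigma$-complete and weakly $\sigma$-distributive are in force), obtaining a double sequence $\{b_{ij}\}$ with the shifted-diagonal domination property; the regulator $p_n\downarrow 0$ is absorbed along the same diagonal, and the $L'\wedge(\cdot)$ truncation inherent in the Fremlin inequality is removed by testing against $L'=|S_\gamma(f,\mu)-L|$, while weak $\sigma$-distributivity guarantees $\bigwedge_{\varphi}\bigvee_i b_{i\varphi(i)}=0$. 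This exhibits a single $(D)$-sequence witnessing $S_\gamma(f,\mu)\xrightarrow[]{(D)} L$, i.e. $f\in L^1(\mu)$ with $\int_\Omega f\,d\mu = L$, and the theorem follows. I expect the bookkeeping of the shifted indices and the rigorous elimination of the truncation to be the most delicate points of the argument.
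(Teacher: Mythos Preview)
Your plan is essentially the paper's own argument: reduce to Proposition~\ref{monteirofernandez1}, use the three-term triangle inequality for $|S_\gamma(f,\mu)-L|$, and assemble a single $(D)$-sequence via the Fremlin Lemma. Your treatment of the first summand is in fact more careful than the paper's: you correctly observe that $\sum_{i\in I_\gamma}\mu(\sigma_{\gamma_i})$ need not equal $\mu(\Omega)$ in the general Net Riemann setting and therefore use the regular-integrator hypothesis to control $S_\gamma(u_n\mathbbm{1}_\Omega,\mu)$ by $u_n\mu(\Omega)$ plus an extra $(D)$-sequence, whereas the paper writes the equality $\sum_{i\in I_\gamma}u_n\mu(\sigma_{\gamma_i})=u_n\mu(\Omega)$ directly.

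The one place where your sketch is not quite right is the elimination of the truncation in the Fremlin Lemma. You propose ``testing against $L'=|S_\gamma(f,\mu)-L|$'', but the element $L'$ in the Fremlin inequality must be a \emph{fixed} member of $Z^{+}\setminus\{0\}$: if it varied with $\gamma$ then the $(D)$-sequence $\{L'\wedge b_{ij}\}_{ij}$ would also vary with $\gamma$, and no single regulator for the integrability of $f$ would emerge. The correct move, which the paper carries out, is to first extract from the $n=1$ estimate a \emph{uniform} bound $L':=u_1\mu(\Omega)+w_1+\bigvee_{i,j}a^{(1)}_{ij}$ with $|S_\gamma(f,\mu)-L|\le L'$ for all $\gamma\ge\gamma_1$; then $|S_\gamma(f,\mu)-L|=L'\wedge|S_\gamma(f,\mu)-L|\le L'\wedge(\text{sum})\le\bigvee_j(L'\wedge b_{j\varphi(j)})$, and $\{L'\wedge b_{ij}\}_{ij}$ is the required $(D)$-sequence independent of $\gamma$. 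With this correction your argument goes through along the same lines as the paper's.
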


In the proof below, we follow mainly \cite{monteirouniform} and make some modifications inspired by the analogous result from \cite{vrabelovariecanlast}. In particular, we will assume that $\Gamma$ is upwards directed, the downwards directed case being a straightforward modification.

\begin{proof}

    As $f_{n}$ converges to $f$ order uniformly, there exists a non-increasing sequence $\{u_{n}\}_{n \in \mathbb{N}}$ in $X$ such that $u_{n} \downarrow 0$ and:

    \begin{equation} \label{eq5}
        |f_{k}(\omega)-f(\omega)| \leq u_{n}, \ \forall \omega \in \Omega \ \text{and} \ k \in \mathbb{N},
    \end{equation}

    which implies that $\{f_{n}\}_{n \in \mathbb{N}}$ is Cauchy uniformly.

    Therefore, by Proposition \ref{monteirofernandez1}, there exists $L \in Z$ such that the following order limit statement is valid:

    \[\int_{\Omega}f_{n}d\mu \xrightarrow[]{o} L.\]

    In other words, there exists a non-decreasing sequence $\{w_{n}\}_{n \in \mathbb{N}} \in Z$ such that $w_{n} \downarrow 0$ and:

    \begin{equation} \label{eq6}
        |\int_{\Omega}f_{n}d\mu - L| \leq w_{n}, \ \forall n \in \mathbb{N}.
    \end{equation}

    As each $f_{n}$ is Net Riemann integrable on $\Omega$, for each $n \in \mathbb{N}$, there exists a $(D)$-sequence $\{a^{(n)}_{ij}\}_{ij} \in Z$ such that, $\forall \ \psi \in \mathbb{N}^{\mathbb{N}}$, exists $\gamma_{n} \in \Gamma$ such that for each $\gamma \geq \gamma_{n}$ in $\Gamma$ we have

    \begin{equation} \label{eq7}
        |\sum_{i \in I_{\gamma}}f_{n}(\tau_{\gamma_{i}})\mu(\sigma_{\gamma_{i}}) - \int_{\Omega}f_{n}d\mu| \leq \bigvee_{i=1}^{\infty}a^{(n)}_{i\psi(i)}
    \end{equation}

    Therefore for $\gamma \geq \gamma_{n}$, using (\ref{eq5}),(\ref{eq6}) and (\ref{eq7}), the following inequalities are valid:

    \[|\sum_{i \in I_{\gamma}}f(\tau_{\gamma_{i}})\mu(\sigma_{\gamma_{i}}) - L| \leq |\sum_{i \in I_{\gamma}}f(\tau_{\gamma_{i}})\mu(\sigma_{\gamma_{i}})-\sum_{i \in I_{\gamma}}f_{n}(\tau_{\gamma_{i}})\mu(\sigma_{\gamma_{i}})|\]
    
    \[+ |\sum_{i \in I_{\gamma}}f_{n}(\tau_{\gamma_{i}})\mu(\sigma_{\gamma_{i}}) - \int_{\Omega}f_{n}d\mu| + |\int_{\Omega}f_{n}d\mu - L|\]

    \[ \leq |\sum_{i \in I_{\gamma}}(f(\tau_{\gamma_{i}})-f_{n}(\tau_{\gamma_{i}}))\mu(\sigma_{\gamma_{i}})| + \bigvee_{i=1}^{\infty}a_{i\psi(i)}+w_{n}.\]

Continuing from this last inequality and, using \ref{eq5} in the first term, we get that this last line is
    
    \[\leq \sum_{i \in I_{\gamma}}u_{n}\mu(\sigma_{\gamma_{i}})+\bigvee_{i=1}^{\infty}a^{(n)}_{i\psi(i)}+w_{n} = u_{n}\mu(\Omega)+ \bigvee_{i=1}^{\infty}a^{(n)}_{i\psi(i)}+w_{n}\]

    In resume, until now we have the following inequality:

    \begin{equation} \label{eq8}
        |\sum_{i \in I_{\gamma}}f(\tau_{\gamma_{i}})\mu(\sigma_{\gamma_{i}})-L| \leq u_{n}\mu(\Omega) + w_{n} + \bigvee_{i=1}^{\infty}a^{(n)}_{i\psi(i)}, \ \forall \ \gamma \geq \gamma_{n}.
    \end{equation}

    Now, consider the following equation defining the bounded triple sequence $\{c_{nij}\}_{nij}$ in $Z$:

    $$
c_{nij}=\begin{cases}
			w_{j}+u_{j}\mu(S), & \text{if $n=1$},\\
            a^{(n-1)}_{ij}, & \text{if $n \geq 2,$}
		 \end{cases}
$$

for $ i, j \in \mathbb{N}$. 

In this case, notice that, for each $n \in \mathbb{N}$ fixed, the bounded double sequence $\{c_{nij}\}_{ij}$ is a $(D)$-sequence of elements of $Z$ because, for each $i \in \mathbb{N}$, $\{c_{nij}\}_{j \in \mathbb{N}}$ is non-increasing and $c_{nij} \downarrow 0$, when $j \rightarrow \infty$.

Now, let $L^{'} = u_{1}\mu(\Omega) + w_{1} + \bigvee_{i,j=1}^{\infty}a^{(1)}_{ij}$. If $L^{'}=0$, then $u_{1}=0$ and, by (\ref{eq5}), $f = f_{k}$ $\forall k \in \mathbb{N}$. Assuming, then, that $L^{'} \in Z^{+}\setminus \{0\}$ exists, by Fremlin Lemma, there exists a double sequence $\{b_{ij}\}_{ij}$ in $Z$ such that $\{L^{'} \wedge b_{ij}\}_{ij}$ is a $(D)$-sequence and, $\forall \psi \in \mathbb{N}^{\mathbb{N}}$,

\begin{equation} \label{eq9}
    L^{'} \wedge (\sum_{r=1}^{n}\bigvee_{i=1}^{\infty}c_{ri\psi(i+r)}) \leq \bigvee_{j=1}^{\infty}(L^{'} \wedge b_{j\psi(j)}), \ \forall n \in \mathbb{N}.
\end{equation}

We now show, that, the $(D)$-sequence $\{a_{ij}\}_{ij}$ given by:

\[a_{ij} =  (L^{'} \wedge b_{ij}), \]

is the $(D)$-sequence for the existence of the Net Riemann integral of $f$.

In fact, given $\varphi \in \mathbb{N}^{\mathbb{N}}$, let $p = \min_{i \in \mathbb{N}}\varphi(i+1)$ and $\psi \in \mathbb{N}^{\mathbb{N}}$ the function defined by $\psi(j) = \varphi(j+p+1)$, $\forall j \in \mathbb{N}$. By (\ref{eq8}), for $\gamma \geq \gamma_{p}$, $\gamma_{p}=\gamma_{p}(\varphi)$, all in $\Gamma$, we have that:

\begin{equation} \label{penultimate}
    |\sum_{i \in I_{\gamma}}f(\tau_{\gamma_{i}})\mu(\sigma_{\gamma_{i}})-L| \leq u_{p}\mu(\Omega) + w_{p} + \bigvee_{i=1}^{\infty}a^{(p)}_{i\varphi(i+p+1)}.
\end{equation}

Also, we have, by the properties of the bilinear product:

\[\bigvee_{j=1}^{\infty}(u_{\varphi(j+1)}\mu(\Omega)+w_{\varphi(j+1)}) = u_{p}\mu(\Omega) + w_{p}.\]

Therefore, from (\ref{penultimate}), we may write that:

\[|\sum_{i \in I_{\gamma}}f(\tau_{\gamma_{i}})\mu(\sigma_{\gamma_{i}}) - L| \leq \bigvee_{j=1}^{\infty}(u_{\varphi(j+1)}\mu(\Omega)+w_{\varphi(j+1)})+\bigvee_{i=1}^{\infty}a^{(p)}_{i\varphi(i+p+1)},\]

that is,

\begin{equation} \label{eq10}
    |\sum_{i \in I_{\gamma}}f(\tau_{\gamma_{i}})\mu(\sigma_{\gamma_{i}}) - L| \leq \bigvee_{j=1}^{\infty}c_{ij\varphi(j+1)}+\bigvee_{i=1}^{\infty}c_{(p+1)i\varphi(i+p+1)} \leq \sum_{r=1}^{p+1} \bigvee_{i=1}^{\infty}c_{ri\phi(i+r)}.
\end{equation}

On the other hand, by (\ref{eq8}), and for $n=1$, we have that:

\begin{equation} \label{eq11}
    |\sum_{i \in I_{\gamma}}f(\tau_{\gamma_{i}})\mu(\sigma_{\gamma_{i}}) - L| \leq u_{1}\mu(S) + w_{1} + \bigvee_{i,j=1}^{\infty}a^{(1)}_{ij} = L^{'},
\end{equation}

whenever $\gamma \geq \gamma_{1}$, with, as above, $\gamma_{1} \in \Gamma$.

Now, taking $\gamma_{0}$ an upper bound of $\gamma_{1}$ and $\gamma_{p}$ in $\Gamma$ (which always exists, as $\Gamma$ is upwards directed), for $\gamma \geq \gamma_{0}$, we have that, by (\ref{eq9}), (\ref{eq10}) and (\ref{eq11}), 

\[|\sum_{i \in I_{\gamma}}f(\tau_{\gamma_{i}})\mu(\sigma_{\gamma_{i}}) - L| \leq L^{'} \wedge (\sum_{r=1}^{p+1}\bigvee_{i=1}^{\infty}c_{ri\varphi(i+r)}) \leq \bigvee_{j=1}^{\infty}(L^{'} \wedge b_{j\varphi(j)}) = \bigvee_{j=1}^{\infty}a_{j\varphi(j)},\]

which shows that, as indicated before, that $\{a_{ij}\}_{ij}$ is indeed the $(D)$-sequence for the existence of the Net Riemann integral of $f$, implying that $f$ is Net Riemann integrable and, by an appeal to Proposition \ref{equivalenceoand(D)convergence}:

\[\int_{\Omega}f_{n}d\mu \xrightarrow[]{o} \int_{\Omega}fd\mu,\]

This finishes the proof of the theorem. 

\end{proof}

\subsection{Null sets and a Uniform Convergence Theorem for the S*-partition Integral.} \label{convergenceoutsidenullset}

For this subsection, we will now present an uniform convergence for the $S^{*}$-partition integral outside of (to be defined) nulls sets of $\mu$. In general, this type of result will not be true for the general Net Riemann integral which contains, in its special cases, integrals which behaves quite badly with respect to null sets integration - see, for example, the Henstock-Kurzweil case in \cite{boccutovrabelovariecan}. Nevertheless, we also present such a theorem for the specific case of the Sion integral. 

We begin wit the following classical definition. 

\begin{definition} [Null Set and A.E. Properties]

Given a set function $\mu: \mathcal{H} \rightarrow Y$, we say that a set $N \in \mathcal{H}$ is $\mu$-null if $\mu(N) = 0$.

We then say that some property (based on $\Omega$) is valid $\mu-a.e.$ if  there is some null set $N \in \mathcal{H}$ such that the property is valid outside $N$, i.e in $\Omega \setminus N$.
\end{definition}

Before presenting the next convergence results, we prove three minor results.

\begin{proposition}
    Let $\mu: \mathcal{H} \rightarrow Y^{+}$ be a non-negative (finitely) additive set function. If $B \in \mathcal{H}$ is $\mu$-null and $A \subseteq B$, $A \in \mathcal{H}$, then $B$ is $\mu$-null
\end{proposition}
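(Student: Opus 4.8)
The statement as printed is surely meant to conclude that $A$ is $\mu$-null (the repetition of $B$ in the conclusion being a slip), i.e. that $\mu(A)=0$, given $\mu(B)=0$ and $A\subseteq B$. The plan is to deduce this from monotonicity of positive additive set functions, which rests only on finite additivity together with the positivity $\mu(\mathcal{H})\subseteq Y^{+}$.

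First I would record the monotonicity statement: if $A,B\in\mathcal{H}$ with $A\subseteq B$, then $\mu(A)\leq\mu(B)$. To prove it, write $B$ as the disjoint union $B = A \cup (B\setminus A)$. Here I use that $\mathcal{H}$ is (as in the ring-paved spaces of the examples of this section) closed under proper differences, so that $B\setminus A\in\mathcal{H}$ and the finite additivity of $\mu$ applies, giving $\mu(B)=\mu(A)+\mu(B\setminus A)$. Since $\mu$ takes values in $Y^{+}$ we have $\mu(B\setminus A)\geq 0$, whence $\mu(A)\leq\mu(B)$.

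With monotonicity in hand the proposition is immediate: from $A\subseteq B$ and $\mu(B)=0$ we obtain $0\leq\mu(A)\leq\mu(B)=0$, the left inequality being positivity of $\mu$. By antisymmetry of the partial order on the Riesz space $Y$, this forces $\mu(A)=0$, i.e. $A$ is $\mu$-null. Equivalently, one may argue directly from $\mu(A)+\mu(B\setminus A)=0$ with both summands in $Y^{+}$: in a lattice ordered group $x,y\geq 0$ and $x+y=0$ force $y=-x\leq 0$ and hence $y=0$, so each summand vanishes.

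The only genuine obstacle is the set-theoretic one: the difference $B\setminus A$ must belong to $\mathcal{H}$ for additivity to be invoked, which is guaranteed when $\mathcal{H}$ is a ring but not for a bare paving (where only $\emptyset\in\mathcal{H}$ is assumed). I would therefore either state the result under the ring hypothesis already in force for the additive and $\sigma$-additive measures of this section, or, if one prefers to remain at the level of an arbitrary paving, restrict to those $A,B$ for which $A$ and $B\setminus A$ lie in $\mathcal{H}$ with union $B\in\mathcal{H}$, so that the additivity axiom applies verbatim.
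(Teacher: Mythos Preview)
Your proof is correct and follows the same approach as the paper: establish $\mu(A)\leq\mu(B)$ from positivity (and additivity), then conclude $\mu(A)=0$ from $0\leq\mu(A)\leq\mu(B)=0$. The paper's proof is terser, simply asserting the monotonicity inequality without spelling out the $B\setminus A$ decomposition or the ring hypothesis you carefully flag.
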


\begin{proof}
    In fact, we have that, by the non-negativity and additivity of $\mu$, 

    \[\mu(A) \leq \mu(B),\]

    which shows, as $\mu(B) = 0$ and $\mu(A) \geq 0$, that $\mu(A) = 0$. 
\end{proof}

Also, we shall need the next result for integration on null sets.

\begin{proposition}
    Let $\mu: \mathcal{H} \rightarrow Y^{+}$ (finitely) additive, $N$ a $\mu-null$ set and $f: \Omega \rightarrow X$ a function. Then, $f$ is $S^{*}$-partition integrable on $N$ and:

    \[\int_{N}^{S^{*}}fd\mu = 0\]
\end{proposition}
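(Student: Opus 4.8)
The plan is to show that every Riemann sum appearing in the definition of the $S^{*}$-partition integral on $N$ is already the zero element of $Z$, so that both the inner (series) limit and the outer (refinement) limit collapse to constant nets equal to $0$. Since the statement asks for integrability of an \emph{arbitrary} $f$, the whole argument must avoid any integrability or regularity hypothesis on $f$ and rest solely on the nullity of $N$ and the bilinearity of the product.

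First I would fix an arbitrary partition $P=\{\alpha_{j}\}_{j\in\mathbb{N}}\in\mathcal{P}(N)$ and note that each $\alpha_{j}\in\mathcal{H}$ satisfies $\alpha_{j}\subseteq N$. Because $\mu$ is non-negative and additive, the preceding proposition on hereditary nullity gives $\mu(\alpha_{j})=0$ for every $j$. Next I would invoke the bilinearity of $\bullet$ from Definition \ref{productinrieszspace}: from $x\bullet 0 = x\bullet(0+0)=x\bullet 0 + x\bullet 0$ one deduces $x\bullet 0 = 0$ in $Z$ for every $x\in X$, so that $f(\delta_{\alpha_{j}})\mu(\alpha_{j})=0$ for each $j$ and every choice function $\delta$. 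Consequently every finite partial sum vanishes,
\[
\sum_{j\in J}f(\delta_{\alpha_{j}})\mu(\alpha_{j})=0,\qquad \forall\,J\in\mathcal{F}(\mathbb{N}).
\]
The net of partial sums $\{\sum_{j\in J}f(\delta_{\alpha_{j}})\mu(\alpha_{j})\}_{J\in\mathcal{F}(\mathbb{N})}$ is thus constant and equal to $0$; by axiom (v) of the Boccuto–Candeloro structure it converges to $0$. Hence the unconditional series $\sum_{\alpha\in P}f(\delta_{\alpha})\mu(\alpha)=0$ for every $P$, and — since this holds regardless of $\delta$ — the independence of the choice function demanded in Definition \ref{kolmogorovS*integralconvergencestructure} is automatic.

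Finally, the outer net $\{\sum_{\alpha\in P}f(\delta_{\alpha})\mu(\alpha)\}_{P\in\mathcal{P}(N)}$ indexed by the refinement ordering is again identically $0$, so by the same constant-net axiom it converges to $0$, which yields $\int_{N}^{S^{*}}fd\mu=0$ and simultaneously establishes existence. There is essentially no substantive obstacle here; the only points requiring care are the appeal to the hereditary-nullity proposition (to annihilate $\mu$ on each partition piece, which tacitly uses positivity and additivity of $\mu$), the elementary derivation $x\bullet 0=0$ from bilinearity, and the standing assumption that $\mathcal{P}(N)$ is non-empty and directed so that the nested limits are meaningful.
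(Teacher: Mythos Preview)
Your proof is correct and follows essentially the same route as the paper: both use hereditary nullity to conclude $\mu(\alpha)=0$ for every $\alpha\in P$, observe that all partial (and hence full) sums vanish identically, and then pass to the limit of a constant zero net. The only cosmetic difference is that the paper verifies the final convergence explicitly in the $(D)$-sequence framework adopted in that section, whereas you appeal directly to the constant-net axiom of the Boccuto--Candeloro structure; your version is in fact slightly more explicit about why $x\bullet 0=0$ and why the inner series limit exists.
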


\begin{proof}
    Let $P \in \mathcal{P}(N)$ be any partition of $N$ in $\mathcal{H}$-measurable sets and $\delta$ an arbitrary choice function. By the proposition above, we have the relation

    \[\mu(\alpha) = 0, \ \forall \alpha \in P,\]

    and, as a consequence, we have:

    \[\sum_{\alpha \in P}f(\delta_{\alpha})\mu(\alpha) = 0.\]

    Therefore, considering an arbitrary $(D)$-sequence $\{a^{(n)}_{ij}\}_{ij} \in Z$ we have that, $\forall \ \psi \in \mathbb{N}^{\mathbb{N}}$, there exists $P \in \mathcal{P}(N)$ such that for each $P \geq P_{n}$ in $\mathcal{P}(N)$ the unconditionally convergent series of $f$ and $\mu$ exists in $P$ and:

    \[|\sum_{\alpha \in P}f_{n}(\delta_{\alpha})\mu(\alpha)| \leq \bigvee_{i=1}^{\infty}a^{(n)}_{i\psi(i)},\]

    which shows that $f$ is $S^{*}$-partition integrable on $N$ and its integral is $0$.
\end{proof}

As a corollary that we will use frequently below (without mention) is:

\begin{corollary}
    Let $\mu: \mathcal{H} \rightarrow Y^{+}$ (finitely) additive, $N$ a $\mu-null$ set and $f: \Omega \rightarrow X$ a $S^{*}$-partition integrable function on $\Omega$. Then, 

    \[\int_{\Omega}^{S^{*}}fd\mu = \int_{\Omega \setminus N}^{S^{*}}f\mu.\]
\end{corollary}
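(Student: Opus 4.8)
The plan is to derive this identity as an immediate consequence of the finite additivity and the hereditary (subset) integrability of the $S^{*}$-partition integral, combined with the preceding proposition on integration over null sets. First I would write $\Omega$ as the disjoint union $\Omega = (\Omega \setminus N) \cup N$. Since $N \in \mathcal{H}$ is $\mu$-null and $\mathcal{H}$ is an algebra (as assumed for the additivity and subset results of Proposition \ref{Properties oftheS*}), both $N$ and $\Omega \setminus N$ belong to $\mathcal{H}$, so the decomposition is legitimate and each piece is a valid domain of integration.

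Next I would secure $S^{*}$-integrability on each of the two disjoint pieces. Integrability on $N$, together with $\int_{N}^{S^{*}} f d\mu = 0$, is exactly the content of the preceding proposition. Integrability on $\Omega \setminus N$ is not among the hypotheses and must be imported: since $\Omega \setminus N \subseteq \Omega$ and $f$ is $S^{*}$-integrable on $\Omega$, I would invoke the hereditary property of Proposition \ref{Properties oftheS*}(vi). Its completeness requirement on $(Z,\eta_{3})$ is met under the standing conventions of this section, namely that $Z$ is Dedekind complete and weakly $\sigma$-distributive, whence $(D)$-convergence on $Z$ is Cauchy and $Z$ is complete.

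With integrability on both disjoint parts established, I would apply the finite additivity of Proposition \ref{Properties oftheS*}(v) to the decomposition $\Omega = (\Omega \setminus N) \cup N$, obtaining
\[\int_{\Omega}^{S^{*}} f d\mu = \int_{\Omega \setminus N}^{S^{*}} f d\mu + \int_{N}^{S^{*}} f d\mu = \int_{\Omega \setminus N}^{S^{*}} f d\mu,\]
where the last step uses that the null-set integral vanishes. This is precisely the claimed equality.

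The one delicate step, and the one I would watch most carefully, is establishing integrability on the complement $\Omega \setminus N$, since it is genuinely an extra fact rather than a formal manipulation. A tempting direct route would transfer the refinement limit from $\mathcal{P}(\Omega)$ to $\mathcal{P}(\Omega \setminus N)$ via the embedding $P' \mapsto P' \cup \{N\}$, under which the Riemann sums agree because the cell $N$ contributes $f(\delta_{N})\mu(N)=0$; however, the image of this embedding need not be cofinal in $\mathcal{P}(\Omega)$, as an arbitrary partition of $\Omega$ need not place $N$ inside a single cell. Consequently the quasi-subnet axioms of the convergence structure alone do not transport the limit, which is exactly why the completeness-based hereditary property \ref{Properties oftheS*}(vi) is the appropriate tool to close the argument.
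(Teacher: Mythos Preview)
Your proof is correct and follows essentially the same route as the paper: invoke the preceding proposition for $\int_{N}^{S^{*}}f\,d\mu=0$, use Proposition~\ref{Properties oftheS*}(vi) to obtain integrability on $\Omega\setminus N$, and then apply the additivity in Proposition~\ref{Properties oftheS*}(v) to the disjoint decomposition $\Omega=(\Omega\setminus N)\cup N$. Your explicit verification of the completeness hypothesis needed for (vi), and your remark on why the na\"ive cofinality argument via $P'\mapsto P'\cup\{N\}$ fails, are welcome additions that the paper's own proof leaves implicit.
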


\begin{proof}
    As $f$ is integrable on $\Omega$, by (vi) of Theorem \ref{Properties oftheS*}, we have that it is integrable on $\Omega \setminus N$ and, as $f$ is integrable on $N$ by the last result:

    \[\int^{S^{*}}_{\Omega}fd\mu = \int^{S^{*}}_{N}fd\mu + \int^{S^{*}}_{\Omega \setminus N}fd\mu.\]

    As $\int_{N}^{S{*}}fd\mu = 0 $ by the last cited result, if follows that:

    \[\int^{S^{*}}_{\Omega}fd\mu = \int^{S^{*}}_{\Omega \setminus N}fd\mu\]
\end{proof}

Later, we will prove an stronger result concerning $S^{*}$-partition integration outside of a null set.

We now formulate the following modification of uniform convergence in order:

\begin{definition} [Almost Everywhere Uniform Convergence]

We say that a sequence of functions $f_{n}: \Omega \rightarrow X$ converges uniformly almost everywhere in order on $\Omega$ to a function $f:\Omega \rightarrow X$, written as $(u)-\lim_{n \rightarrow \infty}f_{n} = f \ \mu-\text{a.e.}$, if there exists a null set $N \in \mathcal{H}$ such that $f_{n}$ converges order uniformly to $f$ on $\Omega \setminus N$.
    
\end{definition}

In this case, we have the following modifications of Proposition \ref{monteirofernandez1} and Theorem \ref{uniformconvergencetheoremriemann}:

\begin{proposition} \label{monteirofernandez2}
    Let $X$ be a Dedekind $\sigma$-complete Riesz space that is weakly $\sigma$-distributive and $\{f_{n}\}_{n \in \mathbb{N}}$ a sequence of $S^{*}$-partition integrable functions on $\Omega$. Suppose that $\mu: \mathcal{H} \rightarrow Y^{+}$ is $\sigma$-additive. If $\{f_{n}\}_{n \in \mathbb{N}}$ is Cauchy uniformly almost everywhere in order, then the following limit exists:

    \[\limsup_{n \rightarrow \infty} \int^{S^{*}}_{\Omega \setminus N} f_{n} d\mu = \bigwedge_{m=1}^{\infty} \bigvee_{i=m}^{\infty} \int^{S^{*}}_{\Omega \setminus N}f_{i}d\mu,\]

    and,

    \[\int^{S^{*}}_{\Omega}f_{n}d\mu \xrightarrow[]{o} \bigwedge_{m=1}^{\infty}\bigvee_{i=m}^{\infty} \int^{S^{*}}_{\Omega \setminus N}f_{i}d\mu, \]

    where $N$ is the null set in the  uniformly almost everywhere convergence in order related to $f_{n},f$. 
\end{proposition}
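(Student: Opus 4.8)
The plan is to transcribe the argument of Proposition \ref{monteirofernandez1} to the $S^{*}$-partition integral, replacing the isotonicity and additivity of the Net Riemann integral by the corresponding properties of the $S^{*}$-integral (parts (i) and (iii) of Proposition \ref{Properties oftheS*}), and exploiting the null set to reduce every integral over $\Omega\setminus N$ to one over $\Omega$ via the corollary on integration outside null sets established above. Since the integrals take values in the Dedekind complete space $Z$, Proposition \ref{orderconvergenceboundednets} reduces the task to showing that $\{\int^{S^{*}}_{\Omega\setminus N}f_{n}d\mu\}_{n\in\mathbb{N}}$ is order bounded in $Z$ and that
\[\bigwedge_{m=1}^{\infty}\bigvee_{i=m}^{\infty}\int^{S^{*}}_{\Omega\setminus N}f_{i}d\mu \leq \bigvee_{m=1}^{\infty}\bigwedge_{i=m}^{\infty}\int^{S^{*}}_{\Omega\setminus N}f_{i}d\mu,\]
the reverse inequality being automatic for order bounded sequences; the asserted $\limsup$ and the order limit then follow immediately.

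First I would invoke the hypothesis that $\{f_{n}\}$ is Cauchy uniformly almost everywhere in order to obtain a null set $N\in\mathcal{H}$ and a non-increasing sequence $\{u_{n}\}$ with $u_{n}\downarrow 0$ such that $|f_{i}(x)-f_{j}(x)|\leq u_{n}$ for all $x\in\Omega\setminus N$ and all $i,j\geq n$; in particular $f_{i}(x)\leq f_{j}(x)+u_{n}$ on $\Omega\setminus N$. Because $\mu$ is $\sigma$-additive and non-negative, part (vii) of Proposition \ref{Properties oftheS*} shows that the constant function $u_{n}$ is $S^{*}$-partition integrable on $\Omega\setminus N$ with $\int^{S^{*}}_{\Omega\setminus N}u_{n}d\mu=u_{n}\mu(\Omega\setminus N)$, while the corollary on integration outside null sets gives $\int^{S^{*}}_{\Omega\setminus N}f_{n}d\mu=\int^{S^{*}}_{\Omega}f_{n}d\mu$, so each integral in the statement is well defined (here I use that $\mathcal{H}$ is an algebra, so that $\Omega\setminus N\in\mathcal{H}$, as is implicit in parts (v)--(vii) of Proposition \ref{Properties oftheS*}).

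Integrating the pointwise inequality $f_{i}\leq f_{j}+u_{n}$ over $\Omega\setminus N$ and applying isotonicity and additivity (parts (iii) and (i) of Proposition \ref{Properties oftheS*}) yields, for all $i,j\geq n$,
\[\int^{S^{*}}_{\Omega\setminus N}f_{i}d\mu \leq \int^{S^{*}}_{\Omega\setminus N}f_{j}d\mu + u_{n}\mu(\Omega\setminus N).\]
Specialising to $n=1$ sandwiches every term between $\int^{S^{*}}_{\Omega\setminus N}f_{1}d\mu \pm u_{1}\mu(\Omega\setminus N)$, which gives the required order boundedness. Taking the supremum over $i\geq n$ on the left and the infimum over $j\geq n$ on the right produces
\[\bigvee_{i=n}^{\infty}\int^{S^{*}}_{\Omega\setminus N}f_{i}d\mu \leq \bigwedge_{j=n}^{\infty}\int^{S^{*}}_{\Omega\setminus N}f_{j}d\mu + u_{n}\mu(\Omega\setminus N),\]
whence the difference between the left-hand infimum-of-suprema and the right-hand supremum-of-infima is dominated by $u_{n}\mu(\Omega\setminus N)$ for every $n$.

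To close, I would show that $u_{n}\mu(\Omega\setminus N)\downarrow 0$ in $Z$: the sequence is non-increasing by isotonicity of the product (as $u_{n}$ is non-increasing and $\mu(\Omega\setminus N)\geq 0$), and since $u_{n}\downarrow 0$ order converges to $0$ in $X$, continuity of the partial product map $\bullet_{\mu(\Omega\setminus N)}$ from part (iii) of Definition \ref{productinrieszspace} forces $u_{n}\mu(\Omega\setminus N)\to 0$, so that its infimum is $0$. Combining this with the previous display gives the desired inequality, hence the claimed identity, and then Proposition \ref{orderconvergenceboundednets} delivers
\[\int^{S^{*}}_{\Omega}f_{n}d\mu \xrightarrow[]{o} \bigwedge_{m=1}^{\infty}\bigvee_{i=m}^{\infty}\int^{S^{*}}_{\Omega\setminus N}f_{i}d\mu.\]
The step I expect to be the main obstacle is precisely this vanishing of the error term: unlike in the topological case there is no $\varepsilon$ to spend, so the decay of $u_{n}\mu(\Omega\setminus N)$ to $0$ must be extracted purely from monotonicity together with the continuity of the bilinear product, and one must be careful that the constant-function formula and the subset/additivity properties of the $S^{*}$-integral genuinely apply over $\Omega\setminus N$, which is what forces the algebra and $\sigma$-additivity hypotheses into play.
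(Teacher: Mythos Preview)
Your proposal is correct and follows exactly the route the paper intends: it explicitly says the proof of Proposition \ref{monteirofernandez2} ``is the same for the original one, and requires only the modification of including the null set, which doesn't affect the proof,'' and your write-up is precisely that adaptation of Proposition \ref{monteirofernandez1}, with the regular-integrator hypothesis replaced by $\sigma$-additivity (so that part (vii) of Proposition \ref{Properties oftheS*} supplies the constant-function formula) and the corollary on integration outside null sets used to identify $\int^{S^{*}}_{\Omega}$ with $\int^{S^{*}}_{\Omega\setminus N}$. Your explicit justification of $u_{n}\mu(\Omega\setminus N)\downarrow 0$ via continuity of the partial product is exactly what the paper's phrase ``from the properties of the bilinear product'' abbreviates.
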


\begin{theorem} [Almost Everywhere Uniform Converge Theorem for the $S^{*}$-partition Integral]\label{uniformconvergencetheorem2}
    Let $X$ be a Dedekind complete Riesz space that is weakly $\sigma$-distributive and $\{f_{n}\}_{n \in \mathbb{N}}$ a sequence of non-negative $S^{*}$-partition integrable functions in $S$. Suppose that $\mu: \mathcal{H} \rightarrow Y^{+}$ is $\sigma$-additive. If $f_{n}$ converges uniformly almost everywhere in order to $f$, a non-negative function, then $f$ is $S^{*}$-partition integrable on $\Omega \setminus N$ and:

\[\int^{S^{*}}_{\Omega} f_{n}d\mu \xrightarrow[]{o} \int^{S^{*}}_{\Omega \setminus N}fd\mu,\]

where $N$ is the null set in the  uniformly almost everywhere convergence in order related to $f_{n},f$. 
\end{theorem}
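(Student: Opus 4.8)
The plan is to mirror the proof of Theorem \ref{uniformconvergencetheoremriemann}, replacing the finite Riemann sums $S_\gamma(f,\mu)$ by the unconditional series $\sum_{\alpha \in P} f(\delta_\alpha)\mu(\alpha)$ indexed by the directed set $(\mathcal{P}(\Omega \setminus N), \leq_{ref})$, and carrying every estimate over $\Omega \setminus N$ rather than $\Omega$. First I would extract, from the hypothesis of uniform a.e.\ convergence in order, the null set $N$ together with a non-increasing sequence $\{u_n\}_{n \in \mathbb{N}}$ with $u_n \downarrow 0$ and $|f_k(x) - f(x)| \leq u_n$ for every $x \in \Omega \setminus N$ and $k \geq n$. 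Since this forces $\{f_n\}$ to be Cauchy uniformly a.e., Proposition \ref{monteirofernandez2} supplies an element $L \in Z$ with $\int^{S^*}_\Omega f_n d\mu \xrightarrow{o} L$; recording this order limit gives a sequence $\{w_n\}$ with $w_n \downarrow 0$ and $|\int^{S^*}_{\Omega \setminus N} f_n d\mu - L| \leq w_n$ for all $n$, using that $\int^{S^*}_\Omega f_n d\mu = \int^{S^*}_{\Omega \setminus N} f_n d\mu$ by the corollary on null-set integration.

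Next, the $S^*$-partition integrability of each $f_n$ furnishes, for every $n$, a $(D)$-sequence $\{a^{(n)}_{ij}\}_{ij}$ in $Z$ such that, for each $\psi \in \mathbb{N}^{\mathbb{N}}$, there is a partition $P_n \in \mathcal{P}(\Omega \setminus N)$ with $|\sum_{\alpha \in P} f_n(\delta_\alpha)\mu(\alpha) - \int^{S^*}_{\Omega \setminus N} f_n d\mu| \leq \bigvee_{i=1}^\infty a^{(n)}_{i\psi(i)}$ for every refinement $P \geq P_n$ and every choice function $\delta$. The crucial point, and the reason the integral is taken over $\Omega \setminus N$, is that every tag $\delta_\alpha$ selected from a member $\alpha$ of a partition of $\Omega \setminus N$ lies in $\Omega \setminus N$; hence the uniform estimate $|f(\delta_\alpha) - f_n(\delta_\alpha)| \leq u_n$ is available at every tag. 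Combining the triangle inequality, the isotonicity and bilinearity of the product, and the $\sigma$-additivity of $\mu$ (which yields $\sum_{\alpha \in P} \mu(\alpha) = \mu(\Omega \setminus N) = \mu(\Omega)$ as an order limit of the monotone unordered partial sums), I would bound $\sum_{\alpha \in P} |f(\delta_\alpha) - f_n(\delta_\alpha)| \mu(\alpha)$ by $u_n \mu(\Omega)$, producing the analogue of inequality (\ref{eq8}), namely $|\sum_{\alpha \in P} f(\delta_\alpha)\mu(\alpha) - L| \leq u_n \mu(\Omega) + w_n + \bigvee_{i=1}^\infty a^{(n)}_{i\psi(i)}$ for all $P \geq P_n$.

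From here the argument proceeds essentially verbatim: I would assemble the bounded triple sequence $\{c_{nij}\}_{nij}$ with $c_{1ij} = w_j + u_j \mu(\Omega)$ and $c_{nij} = a^{(n-1)}_{ij}$ for $n \geq 2$, observe that each slice $\{c_{nij}\}_{ij}$ is a $(D)$-sequence, set $L' = u_1\mu(\Omega) + w_1 + \bigvee_{i,j=1}^\infty a^{(1)}_{ij}$, dispose of the degenerate case $L' = 0$ (forcing $u_1 = 0$ and $f = f_k$ a.e.), and otherwise invoke the Fremlin Lemma to obtain a double sequence $\{b_{ij}\}$ with $\{L' \wedge b_{ij}\}$ a $(D)$-sequence. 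The same index surgery, $p = \min_i \varphi(i+1)$ and $\psi(j) = \varphi(j+p+1)$, then shows that $\{L' \wedge b_{ij}\}$ is the $(D)$-sequence witnessing $S^*$-partition integrability of $f$ on $\Omega \setminus N$ with integral $L$, and Proposition \ref{equivalenceoand(D)convergence} upgrades the resulting $(D)$-limit to the claimed order limit.

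The main obstacle I anticipate is the passage from finite sums to countable unordered series: unlike the Net Riemann case, the term-by-term estimate $\sum_{\alpha \in P}|f(\delta_\alpha)-f_n(\delta_\alpha)|\mu(\alpha) \leq u_n\mu(\Omega)$ must be justified at the level of unconditionally convergent series, which relies on the non-negativity of $f$ and $f_n$, the boundedness and monotonicity of the unordered partial sums, and the identification of order and $(D)$-convergence for such series guaranteed by Propositions \ref{unconditionalconvergencegeneral} and \ref{unconditionalconvergence(D)} under the weak $\sigma$-distributivity and Dedekind completeness hypotheses, consistent with the conventions fixed at the start of the section. Verifying that these series-level manipulations are licit, and that the very existence of the series $\sum_{\alpha \in P} f(\delta_\alpha)\mu(\alpha)$ is part of what is being established, is where the care is needed; everything else transfers mechanically from the Riemann-type proof.
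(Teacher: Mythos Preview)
Your proposal is correct and follows essentially the same route as the paper: it too works over $\Omega \setminus N$, invokes Proposition~\ref{monteirofernandez2} for the candidate limit $L$, establishes the analogue of inequality~(\ref{eq8}) at the level of unconditional series, builds the same triple sequence $\{c_{nij}\}$, and closes with the Fremlin Lemma and the identical index surgery. The paper addresses exactly the obstacle you flag by opening with a separate paragraph that proves the existence of $\sum_{\alpha \in P} f(\delta_\alpha)\mu(\alpha)$ via boundedness of the unordered partial sums and Dedekind completeness, and it justifies the passage of the modulus inside the series by appeal to Proposition~1.20 of \cite{popovriesz}; otherwise the arguments coincide.
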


The proof of Proposition \ref{monteirofernandez2} is the same for the original one, and requires only the modification of including the null set, which doesn't affect the proof. Therefore, we omit it. 

For Theorem \ref{uniformconvergencetheorem2}, we need some small, but important modifications of Theorem \ref{uniformconvergencetheoremriemann} relating to the nature of the $S^{*}$-partition integral. With the danger of being a little bit repetitive, we present the proof and the necessary modifications.

\begin{proof}
    First, we prove the infinite summations that occurs in the definition of the $S^{*}$-partition integral of $f$, on $\Omega \setminus N$ exist. Throughout   the proof, consider $\delta$ an arbitrary choice function.
 
    As $f_{n}$ converges to $f$  uniformly almost everywhere in order, there exists a non-increasing sequence $\{u_{n}\}_{n \in \mathbb{N}}$ such that $u_{n} \downarrow 0$ and:

    \[|f_{n}(\omega)-f(\omega)| \leq u_{n}, \ \forall \omega \in \Omega \setminus N \ \text{and} \ n \in \mathbb{N}.\]

    Fix an arbitrary $n \in \mathbb{N}$ such that the inequality above is valid, and let $P \in \mathcal{P}(\Omega \setminus N)$ a partition of in the definition of the $S^{*}$--partition integral of $f_{n}$ (as $f_{n}$ is integrable on $\Omega$, it is on $\Omega \setminus N$ by the results above) such that the following (increasing limit) exists:

    \[\sum_{\alpha \in P}f_{n}(\delta_{\alpha})\mu(\alpha),\]

    and consider an arbitrary, but fixed, $J \in \mathcal{F}(\mathbb{N})$. Then, we have that, for these choices:

    \[|\sum_{j \in J, \alpha \in P}f(\delta_{\alpha_{j}})\mu(\alpha_{j})| \leq \sum_{j \in J, \alpha \in P}|(f(\delta_{\alpha_{j}})-f_{n}(\delta_{\alpha_{j}}))\mu(\alpha_{j})| + \sum_{j \in J, \alpha \in P}f_{n}(\delta_{\alpha_{j}})\mu(\alpha_{j}),\]

    but, from the first inequality above, 

    \[\sum_{j \in J, \alpha \in P}|(f(\delta_{\alpha_{j}})-f_{n}(\delta_{\alpha_{j}}))\mu(\alpha_{j})| \leq u_{n}\mu(\Omega \setminus N),\]

    and,

    \[\sum_{j \in J, \alpha \in P}f_{n}(\delta_{\alpha_{j}})\mu(\alpha_{j}) \leq \sum_{\alpha \in P}f_{n}(\delta_{\alpha})\mu(\alpha),\]

    which implies that, by the previous inequalities:

    \[|\sum_{j \in J, \alpha \in P}f(\delta_{\alpha_{j}})\mu(\alpha_{j})| \leq u_{n}\mu(\Omega \setminus N) + \sum_{\alpha \in P}f_{n}(\delta_{\alpha})\mu(\alpha).\]

    Therefore, the net $\{\sum_{j \in J, \alpha \in P}f(\delta_{\alpha_{j}})\mu(\alpha_{j})\}_{J \in \mathcal{F}(\mathbb{N})}$ is bounded for the given $P$, which shows, as $X$ is Dedekind complete, that the order limit of this net, which is given by:

    \[\sum_{\alpha \in P} f(\delta_{\alpha})\mu(\alpha),\]

    is a well defined quantity (i.e, the order limit exists). By $(ii)$ in the conventions above, this limit can also be taken in the sense of $(D)$-convergence. The restriction put on $P \in \mathcal{P}(\Omega \setminus N)$ is not so restrictive, as we will only need partitions for which the $S^{*}$-partition integrals of $f_{n}$ are well defined, thus fulfilling the argument above. 

    We repeat the first part above with different notation to avoid confusion: as $f_{n}$ converges to $f$ uniformly almost everywhere in order, there exists a non-increasing sequence $\{u_{k}\}_{k \in \mathbb{N}}$ such that $u_{k} \downarrow 0$ and:

    \begin{equation} \label{eq5-2}
        |f_{k}(x)-f(x)| \leq u_{k}, \ \forall x \in \Omega \setminus N \ \text{and} \ k \in \mathbb{N},
    \end{equation}

    which implies that $\{f_{n}\}_{n \in \mathbb{N}}$ is Cauchy uniformly.

    Therefore, by Proposition \ref{monteirofernandez2}, there exists $L \in Z$ such that the following order limit is valid:

    \[\int^{S^{*}}_{\Omega}f_{n}d\mu = \int^{S^{*}}_{\Omega \setminus N}f_{n}d\mu \xrightarrow[]{o} L.\]

    In other words, there exists a non-decreasing sequence $\{w_{n}\}_{n \in \mathbb{N}} \in X$ such that $w_{n} \downarrow 0$ and:

    \begin{equation} \label{eq6-2}
        |\int^{S^{*}}_{\Omega}f_{n}d\mu - L| \leq w_{n}, \ \forall n \in \mathbb{N}.
    \end{equation}

    As each $f_{n}$ is $S^{*}$-partition integrable in on $\Omega \setminus N$, for each $n \in \mathbb{N}$, there exists a $(D)$-sequence $\{a^{(n)}_{ij}\}_{ij} \in Z$ such that, $\forall \ \psi \in \mathbb{N}^{\mathbb{N}}$, there exists $P_{n} = P_{n}(\psi) \in \mathcal{P}(\Omega \setminus N)$ such that for each $P \geq P_{n}$ in $\mathcal{P}(\Omega \setminus N)$ the unconditionally convergent series of $f_{n}$ and $\mu$ exists in $P$ and:

    \begin{equation} \label{eq7-2}
        |\sum_{\alpha \in P}f_{n}(\delta_{\alpha})\mu(\alpha) - \int^{S^{*}}_{\Omega}f_{n}d\mu| \leq \bigvee_{i=1}^{\infty}a^{(n)}_{i\psi(i)}
    \end{equation}

    As $\sum_{\alpha \in P}f(\delta_{\alpha})\mu(\alpha)$ exists, by the argument in the first part of the present proof, for $P \geq P_{n}$, using (\ref{eq5-2}),(\ref{eq6-2}) and (\ref{eq7-2}), the following inequalities are valid:

    \[|\sum_{\alpha \in P}f(\delta_{\alpha})\mu(\alpha) - L| \leq |\sum_{\alpha \in P}f(\delta_{\alpha})\mu(\alpha)-\sum_{\alpha \in P}f_{n}(\delta_{\alpha})\mu(\alpha)|\]
    
    \[+ |\sum_{\alpha \in P}f_{n}(\delta_{\alpha})\mu(\alpha) - \int^{S^{*}}_{\Omega}f_{n}d\mu| + |\int^{S^{*}}_{\Omega}f_{n}d\mu - L|\]

    \begin{equation} \label{eqlastmain}
      \leq |\sum_{\alpha \in P}(f(\delta_{\alpha})-f_{n}(\delta_{\alpha}))\mu(\alpha)| + \bigvee_{i=1}^{\infty}a_{i\phi(i)}+w_{n}.
    \end{equation}

    Now, for each $P$ as above, the net $\{\sum_{j \in J, \alpha \in P}|(f_{n}(\delta_{\alpha_{j}})-f(\delta_{\alpha_{j}}))\mu(\alpha_{j})|\}_{J \in \mathcal{F}(\mathbb{N})}$ is order and $(D)$- convergent to its supremum, as it is increasing and bounded above by $u_{n}\mu(\Omega \setminus N)$ for each $n$ and $X$ is Dedekind complete. Therefore,

    \[\sum_{j \in J, \alpha \in P}|(f_{n}(\delta_{\alpha_{j}})-f(\delta_{\alpha_{j}}))\mu(\alpha_{j})| \leq \sum_{\alpha \in P}|(f_{n}(\delta_{\alpha})-f(\delta_{\alpha}))\mu(\alpha_{j})|,\]

    for each $J \in \mathcal{F}(\mathbb{N})$. Hence, by Proposition 1.20 of \cite{popovriesz} and using that $|\cdot|$ is order continuous:

    \[|\sum_{\alpha \in P}(f_{n}(\delta_{\alpha})-f(\delta_{\alpha}))\mu(\alpha_{j})| \leq \sum_{\alpha \in P}|(f_{n}(\delta_{\alpha})-f(\delta_{\alpha}))\mu(\alpha_{j})|,\]

    Therefore, from (\ref{eqlastmain}):

    \[\leq \sum_{\alpha \in P}|(f_{n}(\delta_{\alpha})-f(\delta_{\alpha}))\mu(\alpha)| + \bigvee_{i=1}^{\infty}a^{(n)}_{i\psi(i)}+w_{n} \]
    
    \[\leq \sum_{\alpha \in P}u_{n}\mu(\alpha)+\bigvee_{i=1}^{\infty}a^{(n)}_{i\psi(i)}+w_{n} = u_{n}\mu(\Omega \setminus N)+ \bigvee_{i=1}^{\infty}a^{(n)}_{i\psi(i)}+w_{n}\]

    In resume, until now we have the following inequality:

    \begin{equation} \label{eq8-2}
        |\sum_{\alpha \in P}f(\delta_{\alpha})\mu(\alpha)-L| \leq u_{n}\mu(S) + w_{n} + \bigvee_{i=1}^{\infty}a^{(n)}_{i\psi(i)}, \ \forall \ P \geq P_{n}.
    \end{equation}

    Now, consider the following equation defining a triple sequence $\{c_{nij}\}_{nij}$ in $Z$:

    $$
c_{nij}=\begin{cases}
			w_{j}+u_{j}\mu(\Omega \setminus N), & \text{if $n=1$},\\
            a^{(n-1)}_{ij}, & \text{if $n \geq 2,$}
		 \end{cases}
$$

for $ i, j \in \mathbb{N}$. 

In this case, notice that, for each $n \in \mathbb{N}$ fixed, the (bounded) double sequence $\{c_{nij}\}_{ij}$ is a $(D)$-sequence of elements of $Z$ because, for each $i \in \mathbb{N}$, $\{c_{nij}\}_{j \in \mathbb{N}}$ is non-increasing and $c_{nij} \downarrow 0$, when $j \rightarrow \infty$.

Now, let $L^{'} = u_{1}\mu(\Omega \setminus N) + w_{1} + \bigvee_{i,j=1}^{\infty}a^{(1)}_{ij}$. If $L^{'}=0$, then $u_{1}=0$ and, by (\ref{eq5-2}), $f = f_{k}$ $\forall k \in \mathbb{N}$ outside of the null set $N$, which finishes the proof. Assuming, then, that $L^{'} \in Z^{+}\setminus \{0\}$ exists, by  Fremlin Lemma, there exists a double sequence $\{b_{ij}\}_{ij}$ in $X$ such that $\{L^{'} \wedge b_{ij}\}_{ij}$ is a $(D)$-sequence and, $\forall \psi \in \mathbb{N}^{\mathbb{N}}$, 

\begin{equation} \label{eq9-2}
    L^{'} \wedge (\sum_{r=1}^{n}\bigvee_{i=1}^{\infty}c_{ri\psi(i+r)}) \leq \bigvee_{j=1}^{\infty}(L^{'} \wedge b_{j}\psi(j)), \ \forall n \in \mathbb{N}.
\end{equation}

We now show, that, the $(D)$-sequence $\{a_{ij}\}_{ij}$ given by:

\[a_{ij} =  (L^{'} \wedge b_{ij}), \]

is the $(D)$-sequence for the existence of the $S^{*}$-partition integral of $f$ on $\Omega \setminus N$.

In fact, given $\varphi \in \mathbb{N}^{\mathbb{N}}$, let $p = \min_{i \in \mathbb{N}}\varphi(i+1)$ and $\psi \in \mathbb{N}^{\mathbb{N}}$ the function defined by $\psi(j) = \varphi(j+p+1)$, $\forall j \in \mathbb{N}$. By (\ref{eq8-2}), for $P \geq P_{p}$, $P_{p}=P_{p}(\varphi)$, all in $\mathcal{P}(\Omega \setminus N)$, we have that:

\begin{equation} \label{penultimate2}
    |\sum_{\alpha \in P}f(\delta_{\alpha})-L| \leq u_{p}\mu(\Omega \setminus N) + w_{p} + \bigvee_{i=1}^{\infty}a^{(p)}_{i\varphi(i+p+1)}.
\end{equation}

Also, we have, by the properties of the bilinear product:

\[\bigvee_{j=1}^{\infty}(u_{\varphi(j+1)}\mu(\Omega \setminus N)+w_{\varphi(j+1)}) = u_{p}\mu(\Omega \setminus N) + w_{p}.\]

Therefore, from (\ref{penultimate2}), we may write that:

\[|\sum_{\alpha \in P}f(\delta_{\alpha})\mu(\alpha) - L| \leq \bigvee_{j=1}^{\infty}(u_{\varphi(j+1)}\mu(\Omega \setminus N)+w_{\varphi(j+1)})+\bigvee_{i=1}^{\infty}a^{(p)}_{i\varphi(i+p+1)},\]

that is,

\begin{equation} \label{eq10-2}
    |\sum_{\alpha \in P}f(\delta_{\alpha})\mu(\alpha) - L| \leq \bigvee_{j=1}^{\infty}c_{ij\varphi(j+1)}+\bigvee_{i=1}^{\infty}c_{(p+1)i\varphi(i+p+1)} \leq \sum_{r=1}^{p+1} \bigvee_{i=1}^{\infty}c_{ri\phi(i+r)}.
\end{equation}

On the other hand, by (\ref{eq8-2}), and for $n=1$, we have that:

\begin{equation} \label{eq11-2}
    |\sum_{\alpha \in P}f(\delta_{\alpha})\mu(\alpha) - L| \leq u_{1}\mu(\Omega \setminus N) + w_{1} + \bigvee_{i,j=1}^{\infty}a^{(1)}_{ij} = L^{'},
\end{equation}

whenever $P \geq P_{1}$, with, as above, $P_{1} \in \mathcal{P}(\Omega \setminus N)$.

Now, taking $P_{0} = P_{1} \wedge_{\mathcal{P}(\Omega \setminus N)} P_{p}$, the refinement of $P_{1}$ and $P_{p}$, for $P \geq P_{0}$, we have that, by (\ref{eq9-2}), (\ref{eq10-2}) and (\ref{eq11-2}), 

\[|\sum_{\alpha \in P}f(\delta_{\alpha})\mu(\alpha) - L| \leq L^{'} \wedge (\sum_{r=1}^{p+1}\bigvee_{i=1}^{\infty}c_{ri\varphi(i+r)}) \leq \bigvee_{j=1}^{\infty}(L^{'} \wedge b_{j\varphi(j)}) = \bigvee_{j=1}^{\infty}a_{j\varphi(j)},\]

which shows that, as indicated before, that $\{a_{ij}\}_{ij}$ is indeed the $(D)$-sequence for the existence of the $S^{*}$partition integral of $f$ on $\Omega \setminus N$, implying that $f$ is $S^{*}$-partition integrable on $\Omega \setminus N$ and by an appeal to Proposition \ref{equivalenceoand(D)convergence}:

\[\int^{S^{*}}_{\Omega}f_{n}d\mu \rightarrow \int^{S^{*}}_{\Omega \setminus N}fd\mu,\]

This finishes the proof.

\end{proof}

We now prove a result to connect the convergence theorems related to the $S^{*}$-partition integral, outside of null sets, to the whole $\Omega$.

\begin{theorem} \label{integrabilityonnullsets}
    Suppose that $Z$ is Dedekind complete and let $f$ be a non-negative function $S^{*}$-partition integrable on $\Omega \setminus N$, where $N$ is a $\mu$-null set, $\mu: \mathcal{H} \rightarrow Y$ non-negative and additive. Then, $f$ is $S^{*}$-partition integrable on $\Omega$ and:

    \[\int^{S^{*}}_{\Omega}fd\mu = \int^{S^{*}}_{\Omega \setminus N}fd\mu\]
\end{theorem}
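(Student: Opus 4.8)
The plan is to reduce the statement to additivity of the $S^{*}$-partition integral over the disjoint decomposition $\Omega = (\Omega \setminus N) \cup N$. First I would record the two elementary facts that drive everything: since $\mathcal{H}$ is an algebra and $N, \Omega \in \mathcal{H}$, we have $\Omega \setminus N \in \mathcal{H}$; and since every $\mathcal{H}$-measurable subset of a null set is null (by the monotonicity proposition proved above), any $\alpha \in \mathcal{H}$ with $\alpha \subseteq N$ satisfies $\mu(\alpha) = 0$. Using finite additivity of $\mu$, this gives $\mu(\alpha) = \mu(\alpha \setminus N)$ for every $\alpha \in \mathcal{H}$, so that cells lying inside $N$ never contribute to a Riemann sum, regardless of the sample points chosen there. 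Conceptually, combined with the earlier result that $f$ is $S^{*}$-partition integrable on the null set $N$ with $\int^{S^{*}}_{N} f\, d\mu = 0$, the claim is exactly Proposition \ref{Properties oftheS*}(v) applied to $(\Omega \setminus N) \cup N$; the work lies in making the partition bookkeeping explicit.

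To carry this out, I would restrict the refinement net to \emph{separating} partitions, i.e.\ those $P \in \mathcal{P}(\Omega)$ each of whose cells lies entirely in $N$ or entirely in $\Omega \setminus N$. These are cofinal in $\mathcal{P}(\Omega)$, and crucially any refinement of a separating partition is again separating, so a whole tail of the net can be made separating. For such a $P$, writing $P'$ for the family of cells contained in $\Omega \setminus N$ (a partition of $\Omega \setminus N$) and discarding the null cells, the Riemann sum collapses to
\[
\sum_{\alpha \in P} f(\delta_{\alpha})\mu(\alpha) = \sum_{\alpha \in P'} f(\delta_{\alpha})\mu(\alpha),
\]
a genuine Riemann sum for $f$ over $\Omega \setminus N$ whose sample points all lie in $\Omega \setminus N$. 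Because $f \geq 0$ and $\mu \geq 0$, the unordered partial sums are increasing and order bounded; the inner series in Definition \ref{S*integralassumofseries} is thus well defined for every separating $P$ --- indeed it coincides with the corresponding series over $\Omega \setminus N$, which exists by hypothesis on cofinal partitions, while non-negativity and Dedekind completeness of $Z$ guarantee existence more generally, exactly as in the proof of Theorem \ref{uniformconvergencetheorem2}.

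It then remains to transfer the outer limit. Fixing $\varphi \in \mathbb{N}^{\mathbb{N}}$ and the $(D)$-sequence witnessing integrability of $f$ on $\Omega \setminus N$, I would take the associated threshold partition $P_0' \in \mathcal{P}(\Omega \setminus N)$ and extend it to a separating $P_0 \in \mathcal{P}(\Omega)$ by adjoining any partition of $N$. Every $P \geq P_0$ is then separating with $P' \geq P_0'$, so the displayed identity places its Riemann sum within the prescribed $(D)$-bound of $\int^{S^{*}}_{\Omega \setminus N} f\, d\mu$; equivalently, the $\Omega$-net of sums is a quasi-subnet of the convergent $(\Omega \setminus N)$-net, and the quasi-subnet axiom of the net convergence structure delivers the limit. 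Independence from the choice function is inherited from that of the integral on $\Omega \setminus N$, giving $S^{*}$-partition integrability on $\Omega$ together with the asserted equality. The main obstacle is purely combinatorial rather than analytic: verifying that separating partitions are cofinal, that refinement preserves the separating property, and that $P \mapsto P'$ maps cofinally onto the partitions of $\Omega \setminus N$, so that the outer refinement limit genuinely transfers; the existence of the inner series is then handled uniformly by non-negativity and Dedekind completeness.
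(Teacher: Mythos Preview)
Your proposal is correct and follows essentially the same route as the paper. The paper also adjoins $\{N\}$ to the threshold partition $P_0$ of $\Omega\setminus N$, observes that any refinement $P'$ of $P_0\cup\{N\}$ is what you call separating, defines $A_{P'}$ (your $P'$) as the cells of $P'$ contained in $\Omega\setminus N$, checks $A_{P'}\in\mathcal{P}(\Omega\setminus N)$ with $A_{P'}\geq P_0$, uses nullity to collapse the Riemann sum, and invokes Dedekind completeness for existence of the inner series; the only stylistic difference is that the paper writes out the $(D)$-bound directly rather than packaging the transfer as a quasi-subnet argument.
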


\begin{proof}
    By hypothesis, following the notation in the theorem, for all  choice functions $\delta$, there exists an $s \in Z$ and a $(D)$-sequence $\{a_{ij}\}$ such that, for every $\varphi \in \mathbb{N}^{\mathbb{N}}$, there exists $P_{0} \in \mathcal{P}(\Omega \setminus N)$ such that:

    \[|\sum_{\alpha \in P}f(\delta_{\alpha})\mu(\alpha) - s| \leq \bigvee_{i=1}^{\infty}a_{i\varphi(i)},\]

    for every $P \geq P_{0}$, $P \in \mathcal{P}(\Omega \setminus N)$.

    Consider now $P_{0}^{'} = P_{0} \cup \{N\}$, which is an element of $\mathcal{P}(\Omega)$ by construction. Take now any $P^{'} \geq P_{0}{'}$ in $\mathcal{P}(\Omega)$. Define the following class of sets in relation to $P^{'}$:

    \[A_{P^{'}} = \{\alpha \in P^{'}: \alpha \subseteq \beta, \text{for some} \ \beta \in P_{0}\},\]

    which satisfies, by definition:

    \[\bigcup_{\alpha \in A_{P^{'}}}\alpha = \bigcup_{\beta \in P_{0}}(\bigcup_{\alpha \in P^{'}, \alpha \subseteq \beta} \alpha) = \bigcup_{\beta \in P_{0}}\beta = \Omega \setminus N,\]

    which gives that $A_{P^{'}} \in \mathcal{P}(\Omega \setminus N)$ and $A_{P^{'}} \geq P_{0}$.

    As $N$ is a $\mu$-null set, we have that, for any $J \in \mathcal{F}(\mathbb{N})$,

    \[\sum_{j \in J,\alpha \in P^{'}}f(\delta_{\alpha_{j}})\mu(\alpha_{j}) = \sum_{j \in J,\alpha \in A_{P^{'}}}f(\delta_{\alpha_{j}})\mu(\alpha_{j}),\]

    but $A_{P^{'}} \in \mathcal{P}(\Omega \setminus N)$ and $A_{P^{'}} \geq P_{0}$, therefore, by the integrability condition on $f$ enounced at the first paragraph of the proof, we get that the series

    \[\sum_{\alpha \in A_{P^{'}}}f(\delta_{\alpha})\mu(\alpha),\]

    exists and as $f$ and $\mu$ are non-negative:

    \[\sum_{j \in J,\alpha \in P^{'}}f(\delta_{\alpha_{j}})\mu(\alpha_{j}) \leq \sum_{\alpha \in A_{P^{'}}}f(\delta_{\alpha})\mu(\alpha),\]

    for every $J \in \mathcal{F}(\mathbb{N})$. As $Z$ is Dedekind complete,

    \[\sum_{\alpha \in P^{'}}f(\delta_{\alpha})\mu(\alpha),\]

    exists for each $P^{'}$ given above. By the same reasoning above, we have not only that the series index on $P^{'}$ exists, but also:

    \[\sum_{\alpha \in P^{'}}f(\delta_{\alpha})\mu(\alpha) = \sum_{\alpha \in A_{P^{'}}}f(\delta_{\alpha})\mu(\alpha),\]

    and using the fact that $A_{P^{'}} \geq P_{0}$, we get by the relation involving $s$, 

    \[|\sum_{\alpha \in P^{'}}f(\delta_{\alpha})\mu(\alpha) - s| \leq \bigvee_{i=1}^{\infty}a_{i\varphi(i)}, \]

    which shows that $f$ is integrable on $\Omega$ and:

    \[s = \int^{S^{*}}_{\Omega \setminus N}fd\mu = \int^{S^{*}}_{\Omega}fd\mu,\]

    which finishes the proof.

\end{proof}

Combining Theorems \ref{integrabilityonnullsets} and \ref{uniformconvergencetheorem2} we get the final result:

\begin{theorem} [Almost Everywhere Uniform Converge Theorem for the $S^{*}$-partition Integral]\label{uniformconvergencetheorem2null}
    Let $X$ be a Dedekind complete Riesz space that is weakly $\sigma$-distributive and $\{f_{n}\}_{n \in \mathbb{N}}$ a sequence of non-negative $S^{*}$-partition integrable functions on $\Omega$. Suppose that $\mu: \mathcal{H} \rightarrow Y^{+}$ is $\sigma$-additive. If $f_{n}$ converges uniformly almost everywhere in order to $f$, a non-negative function, then $f$ is $S^{*}$-partition integrable on $\Omega$ and:

\[\int^{S^{*}}_{\Omega} f_{n}d\mu \xrightarrow[]{o} \int^{S^{*}}_{\Omega}fd\mu,\]

\end{theorem}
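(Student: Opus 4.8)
The plan is to chain together the two preceding theorems: Theorem \ref{uniformconvergencetheorem2} supplies the convergence together with $S^{*}$-partition integrability of the limit off the null set, and Theorem \ref{integrabilityonnullsets} then promotes integrability from $\Omega \setminus N$ to all of $\Omega$ while preserving the value of the integral. No new estimates are required; the argument is a pure concatenation of the two cited results.

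First I would apply Theorem \ref{uniformconvergencetheorem2} verbatim. All of its hypotheses hold under the present assumptions: $X$ is Dedekind complete and weakly $\sigma$-distributive, the $f_{n}$ are non-negative and $S^{*}$-partition integrable, $\mu\colon\mathcal{H}\to Y^{+}$ is $\sigma$-additive, and $f_{n}$ converges uniformly almost everywhere in order to the non-negative function $f$. This produces the null set $N\in\mathcal{H}$ furnished by the a.e. uniform convergence, yields that $f$ is $S^{*}$-partition integrable on $\Omega\setminus N$, and gives
\[\int^{S^{*}}_{\Omega} f_{n}d\mu \xrightarrow[]{o} \int^{S^{*}}_{\Omega \setminus N}fd\mu.\]

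Next I would feed the limit function $f$ into Theorem \ref{integrabilityonnullsets}. Here $f$ is non-negative, it is now known to be $S^{*}$-partition integrable on $\Omega\setminus N$, the set $N$ is $\mu$-null by construction, and $\mu$ is non-negative and additive (being $\sigma$-additive). Assuming $Z$ Dedekind complete—the completeness under which the bounded increasing nets of partial sums used inside Theorem \ref{integrabilityonnullsets} actually admit suprema in $Z$—that theorem gives that $f$ is $S^{*}$-partition integrable on the whole of $\Omega$ and that
\[\int^{S^{*}}_{\Omega}fd\mu = \int^{S^{*}}_{\Omega \setminus N}fd\mu.\]
Substituting this identity into the order limit displayed above immediately yields $\int^{S^{*}}_{\Omega} f_{n}d\mu \xrightarrow[]{o} \int^{S^{*}}_{\Omega}fd\mu$, which is exactly the assertion, since the target of the convergence is literally the element of $Z$ that the second theorem identifies with $\int^{S^{*}}_{\Omega}fd\mu$.

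The only delicate point—and the one I expect to be the main (though minor) obstacle—is the bookkeeping of the completeness hypotheses across the two invoked results: Theorem \ref{uniformconvergencetheorem2} is phrased with $X$ Dedekind complete, whereas Theorem \ref{integrabilityonnullsets} is phrased with $Z$ Dedekind complete, and the target of the order limit lives in $Z$. One must therefore make explicit that both completeness assumptions are in force, so that the suprema of the partial-sum nets defining the $S^{*}$-series in $Z$ exist and the conclusion is meaningful. Once these hypotheses are aligned, the proof is complete with no further computation.
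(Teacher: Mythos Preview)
Your proposal is correct and matches the paper's own proof exactly: the paper simply states that the result follows by combining Theorems \ref{uniformconvergencetheorem2} and \ref{integrabilityonnullsets}, which is precisely the concatenation you carry out. Your remark about aligning the Dedekind completeness hypotheses on $X$ versus $Z$ is a fair observation of a genuine ambiguity in the paper's statements, not a defect in your argument.
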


A almost identical proof (in fact, simpler one) can be made for the Sion integral, when considered with respect to partitions and not coverings, of all the results above for the $S^{*}$-partition integral related to null sets, which leads us to enounce:

\begin{theorem} [Almost Everywhere Uniform Converge Theorem for the Sion Integral]\label{uniformconvergencetheorem2sion}
    Let $X$ be a Dedekind complete Riesz space that is weakly $\sigma$-distributive, $\mu: \mathcal{H} \rightarrow Y^{+}$ a regular integrator, and $\{f_{n}\}_{n \in \mathbb{N}}$ a sequence of Sion (partition) integrable functions on $\Omega$. If $f_{n}$ converges uniformly almost everywhere in order to $f$ then $f$ is Sion (partition) integrable on $\Omega$ and:

\[\int^{Si}_{\Omega} f_{n}d\mu \xrightarrow[]{o} \int^{Si}_{\Omega}fd\mu,\]

\end{theorem}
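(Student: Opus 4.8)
The plan is to transcribe, to the Sion integral of Example~\ref{sionintegralpartiallyorderedconvergence}, the three-result scheme already developed for the $S^{*}$-partition integral — namely Proposition~\ref{monteirofernandez2}, Theorem~\ref{uniformconvergencetheorem2} and Theorem~\ref{integrabilityonnullsets} — combining them exactly as in the derivation of the final $S^{*}$-partition statement. The key simplification, as the author indicates, is that each Sion Riemann sum $S_{\gamma}(f,\mu)=\sum_{i\in\Delta(P)}f(\tau_{\gamma_{i}})\mu(\sigma_{\gamma_{i}})$ is indexed over the \emph{finite} truncation $\Delta(P)$. Consequently the entire first stage of the proof of Theorem~\ref{uniformconvergencetheorem2} — verifying that the defining unconditionally convergent series of $f$ exists via Dedekind completeness — is vacuous here, and the argument collapses to the finite-sum scheme of Theorem~\ref{uniformconvergencetheoremriemann}, decorated only with the null-set bookkeeping. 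Throughout I work with the Sion domain restricted to partitions of $\Omega\setminus N$, where $N$ is the null set furnished by the uniform a.e.\ convergence, and I note that since each tag satisfies $\tau_{\gamma_{i}}\in\sigma_{\gamma_{i}}\subseteq\Omega\setminus N$, the uniform bound applies at every tag point.

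First I would establish the Sion analogue of Proposition~\ref{monteirofernandez2}. From the uniform a.e.\ Cauchy property I obtain $u_{n}\downarrow 0$ with $|f_{i}(x)-f_{j}(x)|\leq u_{n}$ on $\Omega\setminus N$ for $i,j\geq n$; integrating and using isotonicity of the Net Riemann integral (Theorem~\ref{propertiesnetriemannintegral}) together with the regular-integrator hypothesis gives $\int^{Si}_{\Omega\setminus N}f_{i}\,d\mu\leq\int^{Si}_{\Omega\setminus N}f_{j}\,d\mu+u_{n}\mu(\Omega\setminus N)$. Exactly as in Proposition~\ref{monteirofernandez1} this yields boundedness of $\{\int^{Si}_{\Omega\setminus N}f_{n}\,d\mu\}_{n}$ and the coincidence of its $\limsup$ and $\liminf$, hence an $(o)$-limit $L$ with a witnessing sequence $w_{n}\downarrow 0$ satisfying $|\int^{Si}_{\Omega\setminus N}f_{n}\,d\mu-L|\leq w_{n}$. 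Then I run the core estimate: for each $n$, Sion integrability of $f_{n}$ supplies a $(D)$-sequence $\{a^{(n)}_{ij}\}_{ij}$ and, for each $\psi$, an index $\gamma_{n}$ past which $|S_{\gamma}(f_{n},\mu)-\int^{Si}_{\Omega\setminus N}f_{n}\,d\mu|\leq\bigvee_{i}a^{(n)}_{i\psi(i)}$. Splitting $|S_{\gamma}(f,\mu)-L|$ through $S_{\gamma}(f_{n},\mu)$ and $\int^{Si}_{\Omega\setminus N}f_{n}\,d\mu$, and bounding $\sum_{i\in\Delta(P)}|f(\tau_{\gamma_{i}})-f_{n}(\tau_{\gamma_{i}})|\mu(\sigma_{\gamma_{i}})\leq u_{n}\mu(\Omega\setminus N)$ by monotonicity over the disjoint truncation blocks, produces the inequality $|S_{\gamma}(f,\mu)-L|\leq u_{n}\mu(\Omega\setminus N)+w_{n}+\bigvee_{i}a^{(n)}_{i\psi(i)}$ for $\gamma\geq\gamma_{n}$, verbatim with Theorem~\ref{uniformconvergencetheorem2}. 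Assembling the triple sequence $\{c_{nij}\}_{nij}$ with $c_{1ij}=w_{j}+u_{j}\mu(\Omega\setminus N)$ and $c_{nij}=a^{(n-1)}_{ij}$ for $n\geq 2$, and invoking the Fremlin Lemma as before, I extract a $(D)$-sequence $\{a_{ij}\}=\{L'\wedge b_{ij}\}$ certifying that $f$ is Sion integrable on $\Omega\setminus N$ with integral $L$; Proposition~\ref{equivalenceoand(D)convergence} then reconciles the $(D)$- and $(o)$-convergences, giving $\int^{Si}_{\Omega}f_{n}\,d\mu\xrightarrow[]{o}L$.

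Finally I would transfer integrability from $\Omega\setminus N$ to $\Omega$ by the Sion analogue of Theorem~\ref{integrabilityonnullsets}: adjoining $N$ as a single block to any admissible partition $P_{0}$ of $\Omega\setminus N$ yields $P_{0}'\in\mathcal{P}(\Omega)$, and since $\mu(N)=0$ the $N$-block contributes $f(\tau)\mu(N)=0$ to every truncated Riemann sum, so each Sion sum over $\Omega$ past $P_{0}'$ equals the corresponding sum over the induced refinement in $\Omega\setminus N$; the same $(D)$-sequence then witnesses $\int^{Si}_{\Omega}f\,d\mu=\int^{Si}_{\Omega\setminus N}f\,d\mu=L$, which is the asserted convergence. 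I expect the only genuinely delicate point to be the truncation bookkeeping in this last transfer — one must check that refinements of $P_{0}'$ in the Sion domain over $\Omega$, after discarding the null $N$-block, remain cofinal among refinements of $P_{0}$ in the Sion domain over $\Omega\setminus N$, so that convergence past $\gamma_{0}$ on $\Omega\setminus N$ genuinely controls the sums on $\Omega$. Everything else is a line-by-line copy of the already-proved $S^{*}$-partition arguments, with finite truncated Riemann sums standing in place of unconditionally convergent series.
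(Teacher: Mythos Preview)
Your proposal is correct and matches the paper's approach exactly: the paper does not write out a separate proof but simply states that an almost identical (in fact simpler) argument to the $S^{*}$-partition case works, and you have spelled out precisely that transcription, correctly identifying that the finiteness of the truncated Sion sums makes the series-existence stage vacuous and removes the need for non-negativity. Your flagged concern about truncation bookkeeping in the null-set transfer is legitimate but not addressed in the paper either; it is the one place where a careful reader would want a line of justification.
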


This finishes the basic uniform convergence theorems for the net Riemann and $S^{*}$-partition integral. We now obtain some results related to the Henstock lemma and its applications in the form of monotone and dominated convergence theorems for the cited integrals.

\subsection{Henstock's Lemma and the Monotone and Dominated Convergence Theorems.}

The monotone and convergence theorems for Riemann type integrals provide a striking difference from the classical Lebesgue case, as the usual results of the latter theory are not present in full generality in the former Riemmanian case. To obtain some useful results in this direction, we follow the strategy originally due to \cite{henstockfirstbook}, and developed by \cite{mcshaneriemann}, and prove a result now cited and Henstock Lemma to obtain such theorems. 

The classical proof of the Henstock lemma in references such as \cite{haluskacompletevectorlattices}, \cite{henstockgreek}, \cite{mchsaneorder}, \cite{integralmeasureandordering}, and others, is generally divided in the topological and order-based cases. We will provide a proof related to the already introduced case of lattice normed spaces, which unites the topological and Riesz space cases in one proof, and gives a more general view of this result. Also, as expected, to prove this result we need (several) restrictions on the types of Szabó-Száz-Fleischer nets that are given in the integration procedure, as in subset integration of Theorem \ref{subsetintegration}:

\begin{theorem} [Henstock Lemma] \label{henstocklemma}

Let $f: \Omega \rightarrow X$ be an net Riemann integrable function with respect to a set function $\mu: \mathcal{H} \rightarrow Y$ such that:

\begin{enumerate} [(i)]
    \item $f$ is subset integrable: that is, if $f$ is net Riemann integrable on $B \in \mathcal{H}$, then it is net Riemann integrable on each $A \subseteq B$, $A \in \mathcal{H}$;
    \item The net Riemann integral of $f$ is  additive (i.e, if the sets are not disjoint)\footnote{Notice that this not a big restriction, as the disjoint case is covered, and the Henstock-Kurzweil or abstract gauge integration are defined in terms of partitions of sets with measure $0$ intersection.} on the sets (belonging to $\mathcal{H}$) which form (the second component of) the Szabó-Száz-Fleischer nets corresponding to each $\gamma \in \Gamma(B)$, $B \in \mathcal{H}$ on which it is net Riemann integrable;
    \item The second component (i.e, the sets on which we sum $\mu$) of the Szabó-Száz-Fleishcer nets from $\Gamma(B)$ consists of sets (from $\mathcal{H}$) such that their union is $B$, for all $B \in \mathcal{H}$ on which $f$ is net Riemann integrable;
    \item Let $B \in \mathcal{H}$ be such that $f$ is net Riemann integrable on it and let $\sigma = \{B_{j}\}_{i=1}^{n}$ be a finite collection of sets corresponding to a $\gamma \in \Gamma(B)$. Then, for each finite sub-collection $\{B_{j}\}_{j \in J}$ of $\sigma$, given a $\gamma^{0} \in \Gamma(\bigcup_{j \in J}B_{j})$, there exists $\gamma^{*} \in \Gamma(\bigcup_{j \in J}B_{j})$ such that $\gamma^{*} \geq \gamma^{0}$ and there exists $\gamma^{(B)} \in \Gamma(B)$ of the form:

    \[\gamma^{(B)} \rightarrow (\tau^{\gamma^{*}} \cup \tau^{\cup_{k \in \{1, \cdots,n\}\setminus J}B_{k}}, \sigma^{\gamma^{*}} \cup \bigcup_{k \in \{1, \cdots,n\}\setminus J}\{B_{k}\}),\]
    
    where $\tau^{\cup_{k \in \{1, \cdots,n\}\setminus J}B_{k}}$ is the collection of points in the Szabó-Száz-Fleischer net of $\gamma$ corresponding to the sets $\bigcup_{k \in \{1, \cdots,n\}\setminus J}B_{k}$, and such that $\gamma^{B} \geq \gamma$.
    
\end{enumerate}

Let also $Z$ be a lattice normed space with lattice norm $||\cdot||: Z \rightarrow \mathcal{Z}$ equipped with $(D)$-convergence and $\mathcal{Z}^{+}$ weakly-$\sigma$-distributive. Let $B \in \mathcal{H}$ be a fixed set such that $f$ is net Riemann integrable on $B$. Then, given $\{a_{ij}\}$ a regulator in $\mathcal{Z}$ such that, for every $\varphi \in \mathbb{N}^{\mathbb{N}}$, there exists  $\gamma^{0,B} \in \Gamma(B)$ such that:

\[||\int_{B}fd\mu - \sum_{i \in I_{\gamma}}f(\tau_{\gamma_{i}})\mu(\sigma_{\gamma_{i}}) || \leq \bigvee_{i=1}^{\infty}a_{i\varphi(i)},\]

for each $\gamma \geq \gamma^{0,B}$, where $\gamma^{0,B} = (\tau^{0,B}, \sigma^{0,B} = \bigcup_{k=1}^{n}B_{k})$, and for every finite collection $K = \{1, \cdots, n\} \setminus J$, where $J$ if a finite (proper) subset of $\{1, \cdots, n\}$,

\[||\sum_{k \in K}\int_{B_{k}}fd\mu - \sum_{k \in K}f(\tau^{0,B_{k}})\mu(B_{k})|| \leq \bigvee_{i=1}^{\infty}a_{i\varphi(i)},\]

denoting by $\{\tau^{0,B_{k}}\}_{k \in K}$ the collection of points in the Szabó-Száz-Fleischer net of $\gamma^{0,B}$ associated to each set $B_{k}$.
    
\end{theorem}

\begin{proof}

We fix the notation given for the net Riemann integrability of $f$ on $B$ for this proof. Then, by hypothesis $(i)$, we have that $f$ is integrable on $\bigcup_{j \in J}B_{j}$ and therefore we can find $\{b_{ij}\}$ a regulator in $\mathcal{Z}$ such that, for every $\psi \in \mathbb{N}^{\mathbb{N}}$, there exists  $\gamma^{0} \in \Gamma(\bigcup_{j \in J}B_{j})$ such that:

\begin{equation} \label{eq1henstock}
    ||\int_{\bigcup_{j \in J}B_{j}}fd\mu - \sum_{i \in I_{\alpha}}f(\tau_{\alpha_{i}})\mu(\sigma_{\alpha_{i}}) || \leq \bigvee_{i=1}^{\infty}b_{i\psi(i)},
\end{equation}

for each $\alpha \geq \gamma^{0}$.

Now, by hypothesis $(iv)$, there exists $\gamma^{*} \in \Gamma(\bigcup_{j \in J}B_{j})$ such that $\gamma^{*} \geq \gamma^{0}$ and there exists $\gamma^{(B)} \in \Gamma(B)$ of the form:

    \[\gamma^{(B)} \rightarrow (\tau^{\gamma^{*}} \cup \tau^{\cup_{k \in \{1, \cdots,n\}\setminus J}B_{k}}, \sigma^{\gamma^{*}} \cup \bigcup_{k \in \{1, \cdots,n\}\setminus J}\{B_{k}\}),\]

such that $\gamma^{B} \geq \gamma$. For this $\gamma^{*}$ we have, by (\ref{eq1henstock}),

\[||\int_{\bigcup_{j \in J}B_{j}}fd\mu - \sum_{i \in I_{\gamma^{*}}}f(\tau_{\gamma^{*}_{i}})\mu(\sigma_{\gamma^{*}_{i}}) || \leq \bigvee_{i=1}^{\infty}b_{i\psi(i)},\]

and, by definition of $\gamma^{(B)}$,

\[||\int_{B}fd\mu - \sum_{i \in I_{\gamma^{(B)}}}f(\tau_{\gamma^{(B)}_{i}})\mu(\sigma_{\gamma^{(B)}_{i}}) || \leq \bigvee_{i=1}^{\infty}a_{i\varphi(i)}.\]

Now, as a consequence of hypotheses $(ii)$ and $(iii)$,

\[\int_{\bigcup_{j \in J}B_{j}}fd\mu + \int_{\bigcup_{k \in K}B_{k}}fd\mu = \sum_{j \in J} \int_{B_{j}}fd\mu + \sum_{k \in K} \int_{B_{k}}fd\mu = \int_{B}fd\mu.\]

Thus, from these results, we have:

\[||\sum_{k \in K}\int_{B_{k}}fd\mu - \sum_{k \in K}f(\tau^{0,B_{k}})\mu(,B_{k})|| \]

\[ = ||\int_{B}fd\mu - \int_{\bigcup_{j \in J}B_{j}}fd\mu + \sum_{i \in I_{\gamma^{(B)}}}f(\tau_{\gamma^{(B)}_{i}})\mu(\sigma_{\gamma^{(B)}_{i}}) - \sum_{i \in I_{\gamma^{*}}}f(\tau_{\gamma^{*}_{i}})\mu(\sigma_{\gamma^{*}_{i}}) || \]

\[\leq   \bigvee_{i=1}^{\infty}a_{i\varphi(i)} +  \bigvee_{i=1}^{\infty}b_{i\psi(i)}.\]

And using the weakly-$\sigma$-distributivity of $\mathcal{Z}$, from this last inequality we get:

\[||\sum_{k \in K}\int_{B_{k}}fd\mu - \sum_{k \in K}f(\tau^{0,B_{k}})\mu(,B_{k})|| - \bigvee_{i=1}^{\infty}a_{i\varphi(i)} \leq 0,\]

which implies that:

\[||\sum_{k \in K}\int_{B_{k}}fd\mu - \sum_{k \in K}f(\tau^{0,B_{k}})\mu(,B_{k})|| \leq \bigvee_{i=1}^{\infty}a_{i\varphi(i)} ,\]

giving the result.
    
\end{proof}

We now give some comments related to this last result, for which we place special emphasis on $3$.

\begin{enumerate}
    \item The hypotheses $(i) - (iv)$ are not so general as of subset integration, but are satisfied by most of the integrals that consists of nets of finite tagged partitions, such as the Henstock-Kurzweil (or Száz abstract version of it), Kolmogorov $S$ and other related procedures listed above (a general one being \cite{mcshaneriemann}). Also, it contains most of the called Henstock lemmas in the literature, an exception being the result of \cite{boccutoskvortsovabstract}\footnote{It would be interesting to know if the proof can be given for this case.}.
    \item It is not difficult to generalize the last proof for a space, say $Z$, endowed with a collection of lattice (valued) norms $||\cdot||_{l}: Z \rightarrow \mathcal{Z}_{l}$, with $\{Z_{l}\}_{l \in L}$ a possibly uncountable collection of Riesz spaces equipped with $(D)$-convergence and being weakly-$\sigma$-distributive, the "lattice norm convergence" in this case being convergence defined as $(D)$-convergence in each $\mathcal{Z}_{l}$. In this sense, the result above, or its straightforward generalization, contains not only the Henstock lemma for Riesz and topological $F$-normed spaces, but also for topological groups by the uniformization theorem of \cite{bourbaki2013general}.
    \item Another set of hypothesis may be seen to be sufficient to obtain this proof. For example, with the same notation as the theorem, consider the alternative of $(iv)$, which we change $B$ by $\Omega$ for simplicity:

    \begin{enumerate} 
        \item For $F = \Omega \setminus \bigcup_{j \in J}B_{j}$, suppose that given $\gamma^{(F)}_{0} \in \Gamma(F)$ in the net Riemann integrability definition of $f$ on $F$ and $\gamma^{(\Omega)}_{0} \in \Gamma(\Omega)$ in the net Riemann integrability of $f$ on $\Omega$, there exists $\gamma^{(F)}_{00} \in \Gamma(F)$ such that there exists $\gamma^{(\Omega)}_{00} \in \Gamma(\Omega)$ of the form:

            \[\gamma^{(\Omega)}_{00} \rightarrow (\tau^{\gamma^{(F)}_{00}} \cup \tau^{\cup_{j \in J}B_{j}}, \sigma^{\gamma^{(F)}_{00}} \cup \bigcup_{k \in \{1, \cdots,n\}\setminus J}\{B_{k}\}),\]

        such that $\gamma^{(\Omega)}_{00} \geq \gamma^{(\Omega)}_{0}$.    
    \end{enumerate}

    Then, supposing only finite additivity of the integral in place of $(ii)$ above (which we may call "complete additivity"), we obtain the same result by taking complements in the proof above with respect to the fixed collection given in the statement of the Henstock lemma. 

\end{enumerate}

This last result also motivates the next definition to summarize all related conditions for its validity:

\begin{definition} [Henstock Property]

We say that an integration procedure derived from the net Riemann integral (i.e, the Szabó-Száz-Fleischer) satisfies the Henstock property if it satisfies the hypotheses, or the alternatives in $(3)$ above\footnote{The proofs of the results below are the same for each set of hypothesis.},  of the Henstock Lemma, i.e  Theorem \ref{henstocklemma}.
    
\end{definition}

To combine the results and definitions in this section into a monotone and dominated convergence theorem, we need some more results that are modeled (statements and proofs)\footnote{Even though we expand and correct some minor portions of the proofs.} after \cite[Theorem 5.3.3, p. 83]{integralmeasureandordering} and \cite[Theorem 5.4.1, p. 85]{integralmeasureandordering} (see also the analogous results in \cite{riecanvrabelovaoperator}, Chapter 6 of \cite{boccutovrabelovariecan} and \cite{vrabelovariecanlast}). 

We begin with the concept of common regular convergence of \cite[Definition 5.3.2, p. 83]{integralmeasureandordering}:

\begin{definition} [Convergence with Common Regulator]

Given $f: \Omega \rightarrow X$ and $f_{n}: \Omega \rightarrow X$ functions with values in a Riesz space $X$, we say that $f_{n}$ converges to $f$ with a common regulator if there exists a $(D)$-sequence $\{a_{ij}\}$, with values in $X$, such that for every $\varphi \in \mathbb{N}^{\mathbb{N}}$ and every $\omega \in \Omega$ there exists $p = p(\omega) \in \mathbb{N}$ such that:

\[|f_{n}(\omega) - f(\omega)| \leq \bigvee_{i=1}^{\infty}a_{i\varphi(i)},\]

for any $n \geq p$.
    
\end{definition}

With this, we have the analogue of the first theorem of \cite{integralmeasureandordering} cited above, which is a sort of "master convergence theorem":

\begin{theorem} [Main Convergence Theorem] \label{mastertheorem}

Let $f_{n}: \Omega \rightarrow X$ be a sequence of Riemann integrable functions with respect to a finitely additive set function $\mu: \mathcal{H} \rightarrow Y^{+}$ in the context of a partially ordered integration structures (in particular, $X,Y,Z$ are Riesz spaces). Let one of the following hypothesis be satisfied:

\begin{enumerate} [(i)]
    \item The sequence $\{f_{n}\}_{n \in \mathbb{N}}$ has uniformly regulated net Riemann integrals, that is: there exists a non-negative triple sequence $\{a_{nij}\}$ in $Z$ satisfying the following properties
      \begin{enumerate}
          \item $\{a_{nij}\}$ is bounded for every $n$ and $a_{nij} \downarrow 0$, when $j \rightarrow \infty$;
          \item $\{\sum_{n=1}^{m}\bigvee_{i=1}^{\infty}a_{ni\varphi(i+n+1)}\}_{m=1}^{\infty}$ is a bounded sequence in $Z$ for every $\varphi \in \mathbb{N}^{\mathbb{N}}$;
          \item For $\varphi \in \mathbb{N}^{\mathbb{N}}$ and every $n \in \mathbb{N}$ and $A \in \mathcal{H}$ there exists $\gamma_{n} \in \Gamma(A)$ such that:

          \[|\int_{A}f_{n}d\mu - \sum_{i \in I_{\gamma}}f(\tau_{\gamma_{i}})\mu(\sigma_{\gamma_{i}})| \leq \bigvee_{i=1}^{\infty}a_{ni\varphi(i+n+1)},\]

          for each $\gamma \geq \gamma_{n}$, $\gamma \in \Gamma(A)$.
      \end{enumerate}
    \item There is an element $z \in Z$ such that $|\int_{A}f_{n}d\mu - \sum_{i \in I_{\gamma}}f(\tau_{\gamma_{i}})\mu(\sigma_{\gamma_{i}})| \leq z$ for every $\gamma \in \Gamma(A)$ (and every $A \in \mathcal{H}$) and every $n \in \mathbb{N}$.
    
\end{enumerate}

Suppose also that the integration procedure used for $f$ satisfies the Henstock Property. Then, if $f_{n}$ converges to a function $f: \Omega \rightarrow X$ with common regulator, then there exists a regulator $\{b_{ij}\}$ such that for every $\varphi \in \mathbb{N}^{\mathbb{N}}$ there exists a $\gamma_{0} \in \Gamma(A)$ such that:

\[|\int_{A}f_{n}d\mu - \sum_{i \in I_{\gamma}}f(\tau_{\gamma_{i}})\mu(\sigma_{\gamma_{i}})| \leq \bigvee_{i=1}^{\infty}b_{i\varphi(i)} + \sum_{m=l}^{n-1}|\int_{C_{m}}(f_{m}-f_{n})d\mu|,\]

for each $\gamma \geq \gamma_{0}$, $\gamma \in \Gamma(A)$ and $n,l \in \mathbb{N}$, $n > l$, where $C_{m} = \bigcup_{p(t_{k}) = m}B_{k}$, where $B_{k}$ is an element of the collection of sets $\sigma$ in $\gamma$. 
\end{theorem}

For simplicity, in the proof we suppose that $\Gamma(A)$, for each $A \in \mathcal{H}$, is upwards directed (the downwards case gives the same proof) and prove the result assuming the first set of hypothesis (as in \cite{integralmeasureandordering}), the proof for the second being more direct.

\begin{proof}
    As $f_{n}$ converges to $f$ with respect to a common regulator, there is a $(D)$-sequence $\{a_{ij}\}$ in $X$ such that for every $\varphi \in \mathbb{N}^{\mathbb{N}}$ and every $\omega \in \Omega$ there is a $p(\omega) \in \mathbb{N}$ such that:

    \[|f_{n}(\omega) - f_{m}(\omega)| \leq \bigvee_{i=1}^{\infty}a_{i\varphi(i)},\]

    for every $n,m \geq p(\omega)$. Since $f_{n}$ is net Riemann integrable, by hypothesis $(i)$ there is (in $Z$) a non-negative bounded triple sequence $\{a_{nij}\}$ for every $n$ and $a_{nij} \downarrow 0$, when $j \rightarrow \infty$, such that for every $\varphi \in \mathbb{N}^{\mathbb{N}}$ there exists a $\gamma_{n} \in \Gamma(A)$ such that:

    \begin{equation} \label{ultima}
        |\int_{A}f_{n}d\mu - \sum_{i \in I_{\gamma}}f(\tau_{\gamma_{i}})\mu(\sigma_{\gamma_{i}})| \leq \bigvee_{i=1}^{\infty}a_{ni\varphi(i+n+1)},
    \end{equation}

    for each $\gamma \geq \gamma_{n}$, $\gamma \in \Gamma(A)$. As $\Gamma(A)$ is upwards directed, there exists an $\gamma^{*} \in \Gamma(A)$ such that:

    \[\gamma^{*} \geq \gamma_{n}, \ \text{for every} \ n \in \mathbb{N}.\]

    Let now be an $\gamma \in \Gamma(A)$ such that $\gamma \geq \gamma^{*}$, such that the Szabó-Száz-Fleischer net associated to $\gamma$ is given by $(\tau^{\gamma}, \bigcup_{i=1}^{l}{B_{i}})$. Fix $n > l$. By the Henstock Lemma (Theorem \ref{henstocklemma}), 

    \begin{equation} \label{eq531}
        |\sum_{p(t_{k}) \geq n}f_{n}(\tau^{\gamma}_{\gamma_{{k}}})\mu(B_{k}) - \sum_{p(t_{k}) \geq n}\int_{B_{k}}f_{n}d\mu| \leq \bigvee_{i=1}^{\infty}a_{ni\varphi(i+n+1)}.
    \end{equation}

    By applying the Henstock Lemma (Theorem \ref{henstocklemma}) a second time, we get, similarly:

    \[|\sum_{p(t_{k}) = m}f_{n}(\tau^{\gamma}_{\gamma_{{k}}})\mu(B_{k}) - \sum_{p(t_{k}) = m}\int_{B_{k}}f_{n}d\mu| \leq \bigvee_{i=1}^{\infty}a_{ni\varphi(i+n+1)}.\]

    Therefore, we get the following chain of inequalities:

    \[|\sum_{p(t_{k}) < n}f_{n}(\tau^{\gamma}_{\gamma_{{k}}})\mu(B_{k}) - \sum_{p(t_{k}) < n}\int_{E_{k}}f_{n}^d\mu| \leq |\sum_{p(t_{k}) < n}f_{n}(\tau^{\gamma}_{\gamma_{{k}}})\mu(B_{k}) - \sum_{p(t_{k}) < n}f_{p(t_{k})}(\tau^{\gamma}_{\gamma_{{k}}})\mu(B_{k})|  \]

    \[ + \sum_{m=l}^{n-1}|\sum_{p(t_{k}) = m}f_{m}(\tau^{\gamma}_{\gamma_{{k}}})\mu(B_{k}) - \sum_{p(t_{k}) = m}\int_{B_{k}}f_{m}d\mu| + \sum_{m=l}^{n-1}|\int_{C_{m}}(f_{m}-f_{n})d\mu|\]

    \begin{equation} \label{eq52}
        \leq \bigvee_{i=1}^{\infty}a_{i\varphi(i)}\mu(\Omega) + \sum_{m=1}^{n-1}\bigvee_{i=1}^{\infty}a_{mi\varphi(i+m+1)} + \sum_{m=l}^{n-1}|\int_{C_{m}}(f_{m}-f_{n})d\mu|
    \end{equation}

    Now, consider the following equation defining a triple sequence $\{b_{nij}\}_{nij}$ in $Z$:

$$
b_{nij}=\begin{cases}
			a_{ij}\mu(\Omega), & \text{if $n=1$},\\
            a_{(n-1)ij}, & \text{if $n \geq 2,$}
		 \end{cases}
$$

By equations (\ref{eq531}) and (\ref{eq52}), it follows that (for $\gamma \in \Gamma(A)$, $\gamma \geq \gamma^{*}$ fixed above):

\begin{equation} \label{ultimadeste}
    |\int_{A}f_{n}d\mu - \sum_{i \in I_{\gamma}}f(\tau_{\gamma_{i}})\mu(\sigma_{\gamma_{i}})| \leq \sum_{m=1}^{n}\bigvee_{i=1}^{\infty}b_{mi\varphi(i+m+1)} + \sum_{m=l}^{n-1}|\int_{C_{m}}(f_{m}-f_{n})d\mu|.
\end{equation}

By point $(b)$ of hypothesis $(i)$, there exists an $c$ in $Z$ such that, by equation (\ref{ultima}):

\[|\int_{A}f_{n}d\mu - \sum_{i \in I_{\gamma}}f(\tau^{\gamma}_{\gamma_{i}})\mu(\sigma_{\gamma_{i}})| \leq c,\]

for every $n \in \mathbb{N}$. Now, by Fremlin lemma, there exists a $(D)$-sequence in $Z$ denoted by $\{b_{ij}\}$ such that:

\[c \wedge \sum_{m=1}^{\infty}b_{mi\varphi(i+m+1)} \leq \bigvee_{i=1}^{\infty}b_{i\varphi(i)}.\]

By these two last facts, and from equation (\ref{ultimadeste}), we finally get:

\[|\int_{A}f_{n}d\mu - \sum_{i \in I_{\gamma}}f(\tau^{\gamma}_{\gamma_{i}})\mu(\sigma_{\gamma_{i}})| \leq \bigvee_{i=1}^{\infty}b_{i\varphi(i)} + \sum_{m=l}^{n-1}|\int_{C_{m}}(f_{m}-f_{n})d\mu|,\]

which was the desired fact. This finishes the proof. 

\end{proof}

We now present the second main theorem:

\begin{theorem} \label{secondmastertheorem}
    Let $\{f_{n}\}_{n \in \mathbb{N}}$ be a sequence of net Integrable functions with respect to a finitely additive set function $\mu: \mathcal{H} \rightarrow Y^{+}$ in a partially ordered integration structure (in particular, $X,Y,Z$ are Riesz spaces), where $Z$ is weakly $\sigma$-distributive. Suppose that this sequence has uniformly approximable integrals in the following sense: there exists a $(D)$-sequence in $Z$, $\{b_{ij}\}$, such that for every $\varphi \in \mathbb{N}^{\mathbb{N}}$ there exists a $\gamma_{0} \in \Gamma(A)$ such that $|\int_{A}fd\mu - \sum_{i \in I_{\gamma}}f(\tau_{\gamma_{i}})\mu(\sigma_{\gamma_{i}})| \leq \bigvee_{i=1}^{\infty}b_{i\varphi(i)}$ for every $\gamma \geq \gamma_{0}$ and $n \in \mathbb{N}$, with $\gamma \in \Gamma(A)$ and $A \in \mathcal{H}$. Also, suppose that $f_{n}$ converges to $f: \Omega \rightarrow X$ with a common regulator. Then, $f$ is net Riemann integrable and:

    \[\lim_{n \rightarrow \infty}\int_{A}f_{n}d\mu = \int_{A}fd\mu \]
\end{theorem}

\begin{proof}
    By integrability of each $f_{n}$ and the hypothesis of uniformly approximable integrals, there exists a $(D)$-sequence in $Z$, $\{b_{ij}\}$, such that for every $\varphi \in \mathbb{N}^{\mathbb{N}}$ there exists $\gamma_{0}^{1}, \gamma_{0}^{2} \in \Gamma(A)$ such that:

    \[|\int_{A}fd\mu - \sum_{i \in I_{\gamma^{1}}}f(\tau_{\gamma^{1}_{i}})\mu(\sigma_{\gamma^{1}_{i}})| \leq \bigvee_{i=1}^{\infty}b_{i\varphi(i+1)},\]

    \[|\int_{A}fd\mu - \sum_{i \in I_{\gamma^{2}}}f(\tau_{\gamma^{2}_{i}})\mu(\sigma_{\gamma^{2}_{i}})| \leq \bigvee_{i=1}^{\infty}b_{i\varphi(i+2)},\]

    for each $\gamma^{1} \geq \gamma_{0}^{1}$ and $\gamma^{2} \geq \gamma_{0}^{2}$, with $\gamma^{1},\gamma^{2} \in \Gamma(A)$. Without loss of generality, assume $\Gamma(A)$ is upwards directed. Then, there exists $\gamma^{*} \in \Gamma(A)$ such that $\gamma \geq \gamma^{1}$ and $\gamma \geq \gamma^{2}$. Using this fact and the common regulator convergence, we have that: there exists a $(D)$-sequence in $Z$ , $\{a_{ij}\}$, such that for every $\varphi \in \mathbb{N}^{\mathbb{N}}$,

    \[|\sum_{i \in I_{\gamma^{**}}}f_{n}(\tau_{\gamma^{**}_{i}})\mu(\sigma_{\gamma^{**}_{i}})
     - \sum_{i \in I_{\gamma^{**}}}f(\tau_{\gamma^{**}_{i}})\mu(\sigma_{\gamma^{**}_{i}}) | \leq |\sum_{i \in I_{\gamma^{**}}}(f_{n}(\tau_{\gamma^{**}_{i}})-f(\tau_{\gamma^{**}_{i}}))\mu(\sigma_{\gamma^{**}_{i}})| \leq \bigvee_{i=1}^{\infty}a_{i\varphi(i+3)},\]

     for any $\gamma^{**} \geq \gamma^{*}$ and $n \geq n_{1}$, $\gamma^{**} \in \Gamma(A)$. Similarly, 

     \[|\sum_{i \in I_{\gamma^{***}}}f_{n}(\tau_{\gamma^{***}_{i}})\mu(\sigma_{\gamma^{***}_{i}})
     - \sum_{i \in I_{\gamma^{***}}}f(\tau_{\gamma^{***}_{i}})\mu(\sigma_{\gamma^{***}_{i}}) |  \leq \bigvee_{i=1}^{\infty}a_{i\varphi(i+4)},\]

     for any  for any $\gamma^{***} \geq \gamma^{*}$ and $n \geq n_{2}$, $\gamma^{***} \in \Gamma(A)$. By the (elementary) Fremlin lemma, now choose a $(D)$-sequence $\{c_{ij}\}$ in $Z$ such that, for $n \geq \max(n_{1},n_{2})$, 

     \[\bigvee_{i=1}^{\infty}b_{i\varphi(i+1)} + \bigvee_{i=1}^{\infty}b_{i\varphi(i+2)}+ \bigvee_{i=1}^{\infty}a_{i\varphi(i+3)}+ \bigvee_{i=1}^{\infty}a_{i\varphi(i+4)} \leq \bigvee_{i=1}^{\infty}c_{i\varphi(i)},\]

     and, by the equations above, 

     \[|\sum_{i \in I_{\gamma^{**}}}f(\tau_{\gamma^{**}_{i}})\mu(\sigma_{\gamma^{**}_{i}}) - \sum_{i \in I_{\gamma^{***}}}f(\tau_{\gamma^{***}_{i}})\mu(\sigma_{\gamma^{***}_{i}})| \leq \bigvee_{i=1}^{\infty}c_{i\varphi(i)},\]

     for each $\gamma^{**}, \gamma^{***} \geq \gamma$. As $(D)$-convergence is Cauchy, it follows that $f$ is net Riemann integrable on each $A \in \mathcal{H}$. By this integrability, it follows that there exists a $(D)$-sequence $\{d_{ij}\}$ such that for every $\varphi \in \mathbb{N}^{\mathbb{N}}$ there exists a $\gamma_{1} \in \Gamma(A)$ such that:

    \[|\int_{A}fd\mu - \sum_{i \in I_{\gamma}}f(\tau_{\gamma_{i}})\mu(\sigma_{\gamma_{i}})| \leq \bigvee_{i=1}^{\infty}d_{i\varphi(i+1)},\]

    for each $\gamma \geq \gamma_{1}$. By the uniform approximability hypothesis, we now get a $(D)$-sequence $\{g_{ij}\}$ in $Z$ such that for every $\varphi \in \mathbb{N}^{\mathbb{N}}$ there exists a $\gamma_{2} \in \Gamma(A)$ such that 
    
    \[|\int_{A}fd\mu - \sum_{i \in I_{\gamma^{*}}}f(\tau_{\gamma^{*}_{i}})\mu(\sigma_{\gamma^{*}_{i}})| \leq \bigvee_{i=1}^{\infty}g_{i\varphi(i+2)}\]
    
    for every $\gamma^{*} \geq \gamma_{2}$ and $n \in \mathbb{N}$. Using again that $f_{n}$ converges to $f$ with a common regulator, there exists another $(D)$-sequence $\{h_{ij}\}$, with values in $Z$, such that for every $\varphi \in \mathbb{N}^{\mathbb{N}}$ and $\gamma^{*} \geq \gamma$ there exists an $n_{0} \in \mathbb{N}$ such that:

    \[|\sum_{i \in I_{\gamma^{*}}}f_{n}(\tau_{\gamma^{*}_{i}})\mu(\sigma_{\gamma^{*}_{i}}) - \sum_{i \in I_{\gamma^{*}}}f_{n}(\tau_{\gamma^{*}_{i}})\mu(\sigma_{\gamma^{*}_{i}})|,\]

    for every $n \geq n_{0}$. 

    Again by the (elementary) Fremlin lemma, choose a $(D)$-sequence $\{l_{ij}\}$ in $Z$ such that, for $n \geq \max(n_{1},n_{2})$, 

     \[\bigvee_{i=1}^{\infty}d_{i\varphi(i+1)} + \bigvee_{i=1}^{\infty}g_{i\varphi(i+2)}+ \bigvee_{i=1}^{\infty}h_{i\varphi(i+3)}+ \leq \bigvee_{i=1}^{\infty}l_{i\varphi(i)},\]

     which, for an arbitrary $\gamma \geq \gamma^{**}$, with $\gamma^{**} \geq \gamma_{1}, \gamma_{2}$ (which exists, as $\Gamma$ is upwards directed), gives, by the inequalities above:

     \[|\int_{A}fd\mu - \int_{A}f_{n}d\mu| \leq \bigvee_{i=1}^{\infty}l_{i\varphi(i)},\]

     for every $n \geq n_{0}$. As convergence of sequences for $Z$ Dedekind complete is equivalent for $(o)$-convergence and $(D)$-convergence, we get the desired result.      
\end{proof}

The monotone and dominated convergence theorems in the setting of Riesz spaces above now follow easily as in \cite[Theorem 5.4.2, p. 87]{integralmeasureandordering} (and \cite[Theorem 5.4.3, p. 88]{integralmeasureandordering}) and \cite[Theorem 5.4.4, p. 88]{integralmeasureandordering}, from which we use their proofs.

\begin{theorem} [Monotone Convergence Theorem - First Version]

Let $\{f_{n}\}_{n \in \mathbb{N}}$ be an increasing sequence of net Riemann integrable functions with respect to a finitely additive set function $\mu: \mathcal{H} \rightarrow Y^{+}$ in a partially ordered integration structure with $Z$ $\sigma$-Dedekind complete, which satisfies the hypothesis of Theorem \ref{mastertheorem} and Theorem \ref{secondmastertheorem}, and such that, for a fixed $A \in \mathcal{H}$, $\{\int_{A}f_{n}\}_{n \in \mathbb{N}}$ is bounded in $Z$. Suppose that $f_{n}$ converges to $f: \Omega \rightarrow X$ with a common regulator and that it (the sequence) satisfies $(i)$ of Theorem \ref{mastertheorem}. Then, $f$ is net Riemann integrable and:

\[\int_{A}fd\mu = (o)-\lim_{n \rightarrow \infty}f_{n}d\mu = \bigvee_{n=1}^{\infty}\int_{A}f_{n}d\mu.\]

\end{theorem}

\begin{proof}
    By Theorem \ref{mastertheorem}, we have that there exists a regulator $\{b_{ij}\}$ such that for every $\varphi \in \mathbb{N}^{\mathbb{N}}$ there exists a $\gamma_{0} \in \Gamma(A)$ such that:

\[|\int_{A}f_{n}d\mu - \sum_{i \in I_{\gamma}}f(\tau_{\gamma_{i}})\mu(\sigma_{\gamma_{i}})| \leq \bigvee_{i=1}^{\infty}b_{i\varphi(i)} + \sum_{m=l}^{n-1}|\int_{C_{m}}(f_{m}-f_{n})d\mu|,\]

for each $\gamma \geq \gamma_{0}$, $\gamma \in \Gamma(A)$ and $n,l \in \mathbb{N}$, $n > l$, where $C_{m} = \bigcup_{p(t_{k}) = m}B_{k}$, where $B_{k}$ is an element of the collection of sets $\sigma$ in $\gamma$. 

Since $f_{l} \leq f_{m} \leq f_{n}$, we obtain that (using the isotonocity property of the net Riemann integral given in Theorem \ref{propertiesnetriemannintegral}):

\[\sum_{m=l}^{n-1}|\int_{C_{m}}(f_{m}-f_{n})d\mu| \leq \sum_{m=l}^{n-1}\int_{C_{m}}(f_{n}-f_{l})d\mu \leq \int_{A}(f_{n}-f_{l})d\mu = |\int_{A}(f_{n}-f_{l})d\mu|. \]

Since $\{\int_{A}f_{n}d\mu\}_{n \in \mathbb{N}}$ is bounded and increasing, and $Z$ is $\sigma$-Dedekind complete, there exists $\bigvee_{n=1}^{\infty}\int_{A}f_{n}d\mu$.  Therefore, $\int_{A}f_{n}d\mu - \int_{A}f_{l}d\mu$ converges to $0$ in $Z$ as $n,l \rightarrow \infty$, and the conclusion follows from Theorem \ref{secondmastertheorem}, which has the hypothesis verified by the development above. 
    
\end{proof}

Similarly, we can prove:

\begin{theorem} [Monotone Convergence Theorem - Second Version]

Let $\{f_{n}\}_{n \in \mathbb{N}}$ be an increasing sequence of net Riemann integrable functions with respect to a finitely additive set function $\mu: \mathcal{H} \rightarrow Y^{+}$ in a partially ordered integration structure with $Z$ $\sigma$-Dedekind complete, which satisfies the hypothesis of Theorem \ref{mastertheorem} and Theorem \ref{secondmastertheorem}, and such that, for a fixed $A \in \mathcal{H}$, $\{\int_{A}f_{n}\}_{n \in \mathbb{N}}$ is bounded in $Z$. Suppose that $f_{n}$ converges to $f: \Omega \rightarrow X$ with a common regulator. Suppose that $f_{1}$ and $f$ are bounded in $X$. Then, $f$ is net Riemann integrable and:

\[\int_{A}fd\mu = (o)-\lim_{n \rightarrow \infty}f_{n}d\mu = \bigvee_{n=1}^{\infty}\int_{A}f_{n}d\mu.\]

\end{theorem}

Finally, we obtain the dominated convergence theorem, which we omit the proof as it is the same as in the first version of the monotone convergence theorem:

\begin{theorem} [Dominated Convergence Theorem ]
    Let $\{f_{n}\}_{n \in \mathbb{N}}$ be a sequence of net Riemann integrable functions with respect to a finitely additive set function $\mu: \mathcal{H} \rightarrow Y^{+}$ in a partially ordered integration structure with $Z$ $\sigma$-Dedekind complete, which satisfies the hypothesis of Theorem \ref{mastertheorem} and Theorem \ref{secondmastertheorem}. Suppose that $f_{n}$ converges to $f: \Omega \rightarrow X$ with a common regulator and that there exists a bounded function $h: \Omega \rightarrow X$ such that $|f_{n}| \leq h$ pointwise for every $n \in \mathbb{N}$. Then, $f$ is net Riemann integrable and:

\[\int_{A}fd\mu = (o)-\lim_{n \rightarrow \infty}\int_{A}f_{n}d\mu\]

\end{theorem}

It is not difficult to adapt such results for the $S^{*}$-partition integral but, as we are not going to use them, we omit these developments. 

We now turn to some results in the topological case. 

\subsection{Topological Convergence Theorems.} \label{topologicalconvergencesection}

In this section, we aim to prove a type of general uniform convergence theorem for the topological vector-valued net Riemann integral. In particular, this result should be a type of "Egorov" integral limit theorem: that is, the uniform convergence applies in each element of an increasing union of sets that form the base space $\Omega$. The justification for this result is given by some equivalence results for different types of integrals that we are going to prove later: these need this type of result. 

This result is the generalization of the main uniform convergence theorem of \cite{millingtonproduct}, and also generalize the corresponding uniform convergence theorem of \cite{trombettalimit}. We will not formulate the most general version of this theorem, but only what is needed to obtain further comparison results. We begin with some definitions, which we use \cite{millingtonproduct} as a source. 

For the next results, we will suppose that $X,Y,Z$ are topological vector spaces, $Z$ complete, in the context of a integration structure in the sense of Definition \ref{integrationstructure}. 

The first definition is a generalization of the classical case of finite semivariation from vector measure theory (see the classical works of \cite{dinculeanu2014vector}, \cite{diestel} and \cite{dobrakov1}), and is given by \cite{millingtonproduct}:

\begin{definition} [Finite semivariation] \label{finitesemivariationmillington}

Let $\mu: \mathcal{H} \rightarrow Y$ be a finitely additive set function. We say that $\mu$ has finite semivariation if for every neighborhood of $0$ in $Z$, $W$, there exists a neighborhood of $0$ in $X$, $U$, such that all finite disjoint collection of sets $F$,

\[\sum_{\alpha \in F}U\mu(\alpha) \subseteq W,\]

where $U\mu(\alpha)$ is defined to be the set (using the bilinear product $\bullet$):

\[U\mu(\alpha) =\{z \in Z: z = u \bullet \mu(\alpha), u \in U\}\]
    
\end{definition}

We also use the next definition of convergence, defined in \cite{millingtonproduct}:

\begin{definition} [Quasi-uniform Convergence]

We say that a net of functions $\{f_{i}\}_{i \in I}$, where $f_{i}: \Omega \rightarrow X$, for each $i \in I$, is quasi-uniformly convergent to a function $f: \Omega \rightarrow X$ (on $\Omega \in \mathcal{H}$) with respect to a set function $\mu: \mathcal{H} \rightarrow Y$ if: for each neighborhood of $0$ in $X$, $U$, and $W$ in $Z$, there exists an $i_{0} \in I$ and a countable collection of disjoint sets $P$, in $\mathcal{H}$, such that:

\begin{enumerate} [(i)]
    \item For every finite disjoint collection $F$ (of elements of $\mathcal{H}$) finer than $P$,

    \[\sum_{\alpha \in F}g[\alpha]\mu(\alpha) \subseteq W, \ \text{for all} \ g \in \{f\} \cup \{f_{i}: i \geq i_{0}\},\]

    where $g[\alpha]$ is the image set of $\alpha$ by $g$.
    
    \item For all $i \geq i_{0}$:

    \[f(\omega) \subseteq f_{i} + U, \ f_{i} \subseteq f(\omega) + U \ \text{for all} \ \omega \in \Omega \setminus \bigcup_{\alpha \in P}\alpha.\]
\end{enumerate}
    
\end{definition}

For more details about this concept, see Section 1 of \cite{millingtonproduct}. In particular, we have:

\begin{proposition} [Uniform Convergence Implies Quasi-Uniform Convergence]
    If a net of functions $\{f_{i}\}_{i \in I}$, defined by functions $f_{i}: \Omega \rightarrow X$, for each $i \in I$, converges uniformly to a function $f: \Omega \rightarrow X$, then it converges to it quasi-uniformly for every $\mu: \mathcal{H} \rightarrow Y$ finitely additive.
\end{proposition}

We now give the final necessary definition that is needed for the proof, and which connects the concept of the net Riemann integral with the finite semivariation and quasi-uniform convergence concepts, which is motivated from the integration procedures of $S$, Sion and gauge integration:

\begin{definition} [Compatibility with Disjoint Collections]

We say that, for $A \in \mathcal{H}$, $\Gamma(A)$ is compatible with disjoint collection of sets if: given an arbitrary countable disjoint collection of sets in $\mathcal{H}$ contained in $A$, $P$, there exists $\gamma_{0} \in \Gamma(A)$ such that for each $\gamma \geq \gamma_{0}$, and for each $\sigma_{\gamma_{i}} \in \sigma_{\gamma}$, there exists a $\alpha = \alpha(\sigma_{\gamma_{i}})$ in $P$ such that:

\[\sigma_{\gamma_{i}} \subseteq \alpha.\]

We then say $\Gamma$ is compatible with disjoint collection of sets if each $\Gamma(A)$, $A \in \mathcal{H}$, is compatible with disjoint collection of sets.
    
\end{definition}

We now give the main uniform convergence theorem:

\begin{theorem} [Quasi-Uniform Convergence Theorem] \label{quasiuniformconvergencetheorem}

Let $\{f_{i}\}_{i \in I}$ be a net of $X$ valued functions converging quasi-uniformly to a function $f$ with respect to $\mu: \mathcal{H} \rightarrow Y$, finitely additive and $\mathcal{H}$  a $\sigma$-algebra\footnote{By looking at the proof below, it may be seen that this hypothesis can be weakened, but we assume it for simplicity.}, all of which are net Riemann integrable with respect to $\mu$ in a (fixed) integration structure given by the topological vector spaces $X,Y,Z$. Suppose that:

\begin{enumerate} [(i)]
    \item $\mu$ has finite semivariation,
    \item $f$ and $\{f_{i}\}_{i \in I}$ are subset integrable,
    \item The second component (i.e, the sets on which we sum $\mu$) of the Szabó-Száz-Fleishcer nets from $\Gamma(A)$ consists of sets (from $\mathcal{H}$) that are disjoint, or have intersections that are $\mu$-null sets, and contained in $A$, and the collection of points $\tau_{\gamma}$ is such that $\tau_{\gamma_{i}} \in \sigma_{\gamma_{i}}$ for each $i \in I_{\gamma}$ and $\gamma \in \Gamma(A)$ given.
    \item $\Gamma$ is compatible with disjoint collection of sets.
\end{enumerate}

Then, the net Riemann integral of $f$ exists for each $A \subseteq \Omega$ ,such that $A \in \mathcal{H}$, and

\[\int_{A}fd\mu = \lim_{i \in I}\int_{A}f_{i}d\mu.\]
    
\end{theorem}

We now modify the proof of the corresponding result in \cite{millingtonproduct} to account for the differences for the net Riemann integral and also correct some missteps. Nevertheless, the proof is quite standard. As above, we assume $\Gamma(A)$ upwards directed for each $A \in \mathcal{H}$.

\begin{proof}
    Let $W$ be a neighborhood of $0$ in $Z$. By standard results for topological groups (see chapter 5 of \cite{bourbaki2013general}), we may choose a symmetric neighborhood of $0$ in $Z$, which depends on $W$, such that:

    \[W_{1} + W_{1}+ W_{1}+ W_{1}+ W_{1} \subseteq W.\]

    Now, as $\mu$ has finite semivariation, we may choose $U$ neighborhood of $0$ in $X$ such that, for every finite disjoint collection of sets $F$, 

    \[\sum_{\alpha \in F}U\mu(\alpha) \subseteq W_{1}.\]

    Since $f_{i}$ converges to $f$ quasi-uniformly with respect to $\mu$, there exists a countable collection of disjoint sets from $\mathcal{H}$, $P_{0}$, and an $i_{0} \in I$ such that for each finite collection of disjoint sets from $\mathcal{H}$ finer than $P_{0}$, $F$,

    \begin{equation} \label{2}
        \sum_{\alpha \in F}g[\alpha]\mu(\alpha) \subseteq W_{1}, \ \text{for all} \ g \in \{f\} \cup \{f_{i}: i \geq i_{0}\},
    \end{equation}

    where $g[\alpha]$ is the image set of $\alpha$ by $g$ and,
    
    For all $i \geq i_{0}$:

    \begin{equation} \label{3}
        f(\omega) \subseteq f_{i} + U, \ f_{i} \subseteq f(\omega) + U \ \text{for all} \ \omega \in \Omega \setminus \bigcup_{\alpha \in P_{0}}\alpha.
    \end{equation}

   Now, let $A \in \mathcal{H}$. By hypothesis $(ii)$, each $\int_{A}f_{i}d\mu$ exists for all $i \in I$. Now, by hypothesis $(iv)$, there exists $\gamma_{0} \in \Gamma(A)$ such that, for each $\gamma \geq \gamma_{0}$, and for each $\sigma_{\gamma_{j}} \in \sigma_{\gamma}$, there exists an $\alpha = \alpha(\sigma_{\gamma_{j}})$ in $P_{0} \cup S \setminus \bigcup_{\alpha \in P_{0}}\alpha$ such that:

\begin{equation} \label{1}
    \sigma_{\gamma_{j}} \subseteq \alpha.
\end{equation}

Using that each $f_{i}$ is integrable on $A$, we get that there exists $\gamma_{0}^{(i)}$ in $\Gamma(A)$ such that for each $\gamma \geq \gamma^{(i)}_{0}$:

\[\sum_{j \in I_{\gamma}}f_{i}(\tau_{\gamma_{j}})\mu(\sigma_{\gamma_{j}}) \in \int_{A}f_{i}d\mu + W_{1}.\]

Now, for $i \geq i_{0}$, as $\Gamma(A)$ is upwards directed, there exists $\gamma^{(i)} \geq \gamma_{0}$ and $\gamma^{(i)} \geq \gamma_{0}^{(i)}$. Therefore, we get that, using (\ref{1}):

\[\sum_{j \in I_{\gamma^{(i)}}}f_{i}(\tau_{\gamma^{(i)}_{j}})\mu(\sigma_{\gamma^{(i)}_{j}}) - \sum_{j \in I_{\gamma^{(i)}}}f(\tau_{\gamma^{(i)}_{j}})\mu(\sigma_{\gamma^{(i)}_{j}})\]

\[= \sum_{j \in I_{\gamma^{(i)}}, \sigma_{\gamma^{(i)}} \subseteq \bigcup_{\alpha \in P_{0}}\alpha}f(\tau_{\gamma^{(i)}_{j}})\mu(\sigma_{\gamma^{(i)}_{j}}) - \sum_{j \in I_{\gamma^{(i)}}, \sigma_{\gamma^{(i)}} \subseteq \bigcup_{\alpha \in P_{0}}\alpha}f_{i}(\tau_{\gamma^{(i)}_{j}})\mu(\sigma_{\gamma^{(i)}_{j}}) \]

\[ + \sum_{j \in I_{\gamma^{(i)}}, \sigma_{\gamma^{(i)}} \subseteq S\setminus \bigcup_{\alpha \in P_{0}}\alpha}(f(\tau_{\gamma^{(i)}_{j}})-f_{i}(\tau_{\gamma^{(i)}_{j}}))\mu(\sigma_{\gamma^{(i)}_{j}}).\]

As the summation are all begin done with respect to finite disjoint collection of sets, using (\ref{2}) in the first two terms, and (\ref{3}) in the last one, we get that:

\[\sum_{j \in I_{\gamma^{(i)}}}f_{i}(\tau_{\gamma^{(i)}_{j}})\mu(\sigma_{\gamma^{(i)}_{j}}) - \sum_{j \in I_{\gamma^{(i)}}}f(\tau_{\gamma^{(i)}_{j}})\mu(\sigma_{\gamma^{(i)}_{j}}) \in W_{1} + W_{1} + W_{1} + W_{1},\]

for each $i \geq i_{0}$.

From this and a fixed $i \geq i_{0}$ (say, $i=i_{0}$), we get from this last equation, and from:

\[\sum_{j \in I_{\gamma^{(i)}}}f_{i}(\tau_{\gamma^{(i)}_{j}})\mu(\sigma_{\gamma^{(i)}_{j}}) \in \int_{A}f_{i}d\mu + W_{1},\]

that $\{\sum_{j \in I_{\gamma}}f(\tau_{\gamma_{j}})\mu(\sigma_{\gamma_{j}})\}_{\gamma \in \Gamma(A)}$ is a Cauchy net in $Z$, and as it is complete, we get that $f$ is integrable. 

Using again the previous inequalities with the same choices we get that for all $i \geq i_{0}$:

\[\int_{A}fd\mu - \int_{A}f_{i}d\mu \in W,\]

which finishes the proof. 
\end{proof}

A similar statement can be made for uniform convergence, but this time supposing $\mathcal{H}$ only a ring:

\begin{theorem} [Uniform Convergence Theorem]
\label{uniformconvergencetheoremtopological}

Let $\{f_{i}\}_{i \in I}$ be a net of $X$ valued functions converging uniformly to a function $f$ with respect to $\mu: \mathcal{H} \rightarrow Y$, finitely additive and $\mathcal{H}$ a ring, all of which are net Riemann integrable with respect to $\mu$ in a (fixed) integration structure given by the topological vector spaces $X,Y,Z$. Suppose that:

\begin{enumerate} [(i)]
    \item $\mu$ has finite semivariation,
    \item $f$ and $\{f_{i}\}_{i \in I}$ are subset integrable,
    \item The second component (i.e, the sets on which we sum $\mu$) of the Szabó-Száz-Fleishcer nets from $\Gamma(A)$ consists of sets (from $\mathcal{H}$) that are disjoint, or have intersections that are $\mu$-null sets, and contained in $A$, and the collection of points $\tau_{\gamma}$ is such that $\tau_{\gamma_{i}} \in \sigma_{\gamma_{i}}$ for each $i \in I_{\gamma}$ and $\gamma \in \Gamma(A)$ given.
    \item $\Gamma$ is compatible with disjoint collection of sets.
\end{enumerate}

Then, the net Riemann integral of $f$ exists for each $A \subseteq \Omega$ ,such that $A \in \mathcal{H}$, and

\[\int_{A}fd\mu = \lim_{i \in I}\int_{A}f_{i}d\mu.\]

\end{theorem}

The proof is quite similar, but much simpler, to the one for the quasi-uniform convergence, so we omit it. 

As in the last section, we point out that there is a direct statement of this result for the $S^{*}$-partition (or even $S^{*}$-integral) integral. But, as in the case of topological groups, this integral is contained in the net Riemann integral by a previous result, so we don't specify it. For further (topological) convergence results for the $S^{*}$ or its special case, the Birkhoff integral, see: \cite{rodriguezdobrakov}, \cite{fernandez2009birkhoff}, \cite{cascales2005birkhoff}, \cite{naralenkov2014lusin}, \cite{rodriguez2009pointwise}, \cite{rodriguez2009convergence}, \cite{balcerzak2014convergence}, \cite{balcerzak2008convergence}, \cite{potyrala2007some}, \cite{memetaj2010some}, \cite{kadetskolmogorovintegral},\cite{caponetti2017integration}, \cite{duboisconvergence} and \cite{goguadzebook}.

One immediate consequence of Theorem \ref{quasiuniformconvergencetheorem} is the following result on another example of an (Lebesgue) integration theory contained in the net Riemann integral paradigm, with the same notation as above:

\begin{corollary}
    Let $f$ be a function Ohba-integrable with respect to $\mu$, supposed of finite semivariation and finitely additive (on a $\sigma$-algebra). Then, if the hypothesis of Theorem \ref{quasiuniformconvergencetheorem} are satisfied, $f$ is net Riemann integrable and the two integrals agree. 
\end{corollary}

The integral of \cite{ohbaintegral} is a Lebesgue type integral defined for functions and (finitely additive) set functions, both with values in groups, connected by a (group valued) bilinear product. The definition of the integral is done trough an uniform approximation by simple functions, thus giving the result above. Thus, Theorems \ref{quasiuniformconvergencetheorem} and \ref{uniformconvergencetheoremtopological} can be a valuable tools to prove equivalences of the net Riemann integral with Lebesgue-type integrals that are defined, or can be reduced to, quasi-uniform/uniform convergence.

For a final note for this section we note that, although there are further convergence theorems (of topological type) for general types of Riemann integrals by \cite{sionsemigroup}, \cite{fleischerinterchange} and \cite{fleischersemigroup}, we leave their formulation and proof for the net Riemann integral for a further work. We only note that, in particular, the articles of \cite{fleischerinterchange} and \cite{fleischersemigroup} concern a type of convergence he calls "convergence in measure", but which differs from the usual convergence in measure when we consider set functions with values in spaces more general than the real line.

\subsection{A Convergence Theorem of General Type.}

The convergence theorems for the net Riemann integral in the previous sections give conditions for the interchange of limit operations in the case of topological or order convergence of the integrals and integrands. Nevertheless, these two cases get separated in all the convergence theorems above. This is not really a surprise from the point of view of convergence structures: interchanging limits operations is a difficult step in general, specially in net convergence spaces, where asking this for every net is equivalent to the topologization of the convergence (see \cite{convergencestructuresvanderwalt}). 

To advance the study of this type of theorem, we will propose a version of the convergence theorem of \cite{szaznetintegralconvergence} for lattice normed spaces with an ideal infinite element, which unites the topological and order convergence components of each proof above. This is a first step in obtaining more general convergence results for Riemann integration on general net convergence structures that do not specify topological or/and order properties of these spaces, or only do so minimally. 

Thus, in this section, we consider $X, Y, Z$ complete lattice normed spaces with lattice norms valued in $\mathcal{X}^{+}, \mathcal{Y}^{+}, \mathcal{Z}^{+}$ respectively\footnote{We shall not distinguish them by indexes, but this should not cause problems as all the notions are straightforwardly separated with respect to the spaces.}, and all of the latter being (positive cones of) Riesz spaces equipped with $(o)$-convergence, for which they are (order) complete. We assume these lattice normed spaces are connected by a fixed integration structure. 

We now consider a simple analogue of the supremum completion of Riesz spaces (see \cite{azouzi1} and \cite{azouzi2} for existence and related issues). We assume the existence and adjoin ideal elements $\infty_{X}, \infty_{Y}, \infty_{Z}$ to $\mathcal{X}^{+}, \mathcal{Y}^{+}, \mathcal{Z}^{+}$ respectively, in a way that each subsets of these Riesz spaces is bounded above by the appropriate (for each space) ideal elements in such a way that:

\[\overline{\mathcal{X}}^{+} := \mathcal{X}^{+} \cup \{\infty_{X}\}, \ \overline{\mathcal{Y}}^{+} := \mathcal{Y}^{+} \cup \{\infty_{Y}\}, \ \overline{\mathcal{Z}}^{+} := \mathcal{Z}^{+} \cup \{\infty_{Z}\},\]

are commutative monoids equipped with an operation of multiplication by positive real numbers (see \cite{azouzi2019} or \cite{donner} for more details). Notice that this procedure implies that $\sup$ and $\inf$, and therefore $\limsup$ and $\liminf$, always exists for every subset of these spaces with ideal ("infinite") elements, a fact we shall use without stating below. Equivalently, these operations on, say, $\mathcal{X}^{+}$, can always be applied in giving (existing) elements in $\overline{\mathcal{X}}^{+}$. Also, we shall employ the fact that $\limsup$ in $\overline{\mathcal{X}}^{+}$ this case is sublinear in each of the Riesz spaces adjoined with the ideal element (see also the "abstract" $\limsup$ operation in \cite{boccutosambucini2023}).

We will need to deal (only) with convergence to $0$ in such extended spaces, and we define it in the following way:

\begin{definition} [Extended Convergence] \label{extendedconvergence}

We say that a net $\{x_{i}\}_{i \in I}$ in the positive cone of a Riesz space equipped with ideal element as above, say $\overline{\mathcal{X}}^{+}$, converges to $0 \in \mathcal{X}^{+}$ if and only if, it is a quasi-subnet of a net in $\mathcal{X}^{+}$ that $(o)$-converges to $0$ in $\mathcal{X}^{+}$.
\end{definition}

In particular, we then may assume that, for some $i \geq i_{0}$, $i_{0} \in I$, in the definition above, the net $\{x_{i}\}_{i \geq i_{0}}$ is a net in $\mathcal{X}^{+}$.

The next proposition is a direct consequence of this definition:

\begin{proposition}
    Let $\{x_{i}\}_{i \in I}$ be a net of finite elements in the positive cone of a Riesz space equipped with ideal (infinite) element as above, say $\overline{\mathcal{X}}^{+}$. Then, if:

    \[\limsup_{i \in I}x_{i} = 0,\]

    with $0$ the (additive) unit in $\mathcal{X}^{+}$, the net $\{x_{i}\}_{i \in I}$ is a quasi-subnet of an $(o)$-Cauchy net (i.e, Cauchy in order) in $\mathcal{X}^{+}$.
\end{proposition}

Now, suppose that there exists a bilinear product $\star: \mathcal{X}^{+} \times \mathcal{Y}^{+} \rightarrow \mathcal{Z}^{+}$, in the sense of Definition \ref{bilinearproduct}, that connects each of the Riesz spaces in the context of the lattice normed spaces $X, Y$ and $Z$. We extend it, with notation $\overline{\star}: \overline{\mathcal{X}}^{+} \times \overline{\mathcal{Y}}^{+} \rightarrow \overline{\mathcal{Z}}^{+}$, which we shall suppress in the computations below for simplicity, in the following form:

\begin{enumerate} [(i)]
    \item $x \overline{\star} y = x \star y$ if $x \in \mathcal{X}^{+}$ and $y \in \mathcal{Y}^{+}$.
    \item $x \overline{\star} \infty_{Y} = \infty_{Z}$ and $\infty_{X} \overline{\star} y$ for each $x \in \overline{\mathcal{X}}^{+}$ and $y \in \overline{\mathcal{Y}}^{+}$.
    \item $0_{X} \overline{\star} \infty_{Y} = \infty_{X} \overline{\star} 0_{Y} = 0_{Z}$.
\end{enumerate}

And we assume that this bilinear product is continuous in each argument for (nets in) the convergence in Definition \ref{extendedconvergence}.

We now define an essential concept using all the conventions and definitions fixed in this section:

\begin{definition} [Conjugated Seminorms]

Let $\mathcal{F}$ and $\mathcal{M}$ be the spaces of all functions $f: \Omega \rightarrow X$ and set functions $\mu: \mathcal{H} \rightarrow Y$ respectively. We say that two extended-valued lattice norms (i.e, a sublinear function that takes $0$ to $0$):

\[p: \mathcal{F} \rightarrow \overline{\mathcal{X}}^{+},\]

\[q: \mathcal{M} \rightarrow \overline{\mathcal{Y}}^{+},\]

are conjugated if, for all $A \in \mathcal{H}$,:

\[\limsup_{\gamma \in \Gamma(A)}||S_{\gamma}(f,\mu)|| \leq p(f) \overline{\star} q(\mu),\]

holds for each $f \in \mathcal{F}$ and $\mu \in \mathcal{M}$ such that $p(f) < \infty_{X}$ and $q(\mu) < \infty_{Y}$.
\end{definition}

We shall denote by $\mathcal{F}_{p}$ and $\mathcal{M}_{q}$ the set of all functions in $\mathcal{F}$ with $p(f) < \infty_{X}$ and set functions $\mu \in \mathcal{M}$ with $q(\mu) < \infty_{Y}$ respectively. 

Now, we have the main theorem, a generalization of \cite[Theorem 3.8, p. 59]{szaznetintegralconvergence}:

\begin{theorem} [Lattice Norm Convergence Theorem] \label{latticenormconvergencetheorem}

Let $\{f_{i}\}_{i \in I}$ be a net of functions defined on $\mathcal{F}$, $\mu \in \mathcal{M}_{q}$, $p,q$ conjugated seminorms and $f$ a function in $\mathcal{F}_{p}$ such that:

\begin{enumerate} [(i)]
    \item $f_{i}$ is net Riemann integrable on $A \in \mathcal{H}$ with respect to $\mu$ for every $i \in I$.
    \item $\lim_{i}p(f_{i}-f) = 0$ in the sense of Definition \ref{extendedconvergence}.
    
\end{enumerate}

Then, $f$ is net Riemann integrable on $A \in \mathcal{H}$. Furthermore, if the lattice norm of $Z$ is order continuous, then:

\[\int_{A}fd\mu = \lim_{i \in I}\int_{A}f_{n}d\mu,\]

where this convergence takes place in the lattice normed space $Z$.
\end{theorem}

The proof of this result is quite standard and mimics essentially that of \cite{szaznetintegralconvergence}, but considering the conventions and structures above.

\begin{proof}
    Let $\gamma_{1}, \gamma_{2} \in \Gamma(A)$ be arbitrary. We shall use a Cauchy net type argument to prove the (net Riemann) integrability of $f$. In fact, by using the triangle inequality (sublinearity) of the lattice norm of $Z$, we have that, for each $i \in I$:

    \[||S_{\gamma_{1}}(f,\mu) - S_{\gamma_{2}}(f,\mu)|| \leq ||S_{\gamma_{1}}(f,\mu) - S_{\gamma_{1}}(f_{i},\mu)|| + ||S_{\gamma_{2}}(f,\mu) - S_{\gamma_{2}}(f_{i},\mu)|| + ||S_{\gamma_{1}}(f_{i},\mu) - S_{\gamma_{2}}(f_{i},\mu)||.\]

    As each $f_{i}$ is integrable on $A$, we have that $\{S_{\gamma}(f_{i},\mu)-S_{\gamma^{*}}(f_{i},\mu)\}_{(\gamma,\gamma^{*}) \in \Gamma(A) \times \Gamma(A)}$ is a Cauchy net in $Z$, therefore it $(o)$-converges to $0$ in $\mathcal{Z}^{+}$ and by Proposition \ref{orderconvergenceboundednets}, for every $i \in I$:

    \[\limsup_{\gamma,\gamma^{*}}||S_{\gamma}(f_{i},\mu)-S_{\gamma^{*}}(f_{i},\mu)|| = 0.\]

    Using this fact and that, by the last inequality and the fact that $\limsup$ of a quantity in $\mathcal{Z}^{+}$ always exists in $\overline{\mathcal{Z}}^{+}$, we get:

    \[\limsup_{\gamma_{1},\gamma_{2}}||S_{\gamma_{1}}(f,\mu) - S_{\gamma_{2}}(f,\mu)|| \leq 2\limsup_{\gamma_{1}}||S_{\gamma_{1}}(f,\mu) - S_{\gamma_{1}}(f_{i},\mu)||,\]

    where both $\limsup$ takes value, at this point of the proof, in $\overline{\mathcal{Z}}^{+}$. Now, using that $p$ and $q$ are conjugated norms and the last inequality, we have that, for $i \geq i_{0}$ such that $p(f_{i}-f) < \infty_{X}$, in which $i_{0}$ exists by Definition \ref{extendedconvergence},:

    \[\limsup_{\gamma_{1},\gamma_{2}}||S_{\gamma_{1}}(f,\mu) - S_{\gamma_{2}}(f,\mu)|| \leq p(f_{i}-f)q(\mu),\]

    and by using hypothesis $(ii)$ and the fact that the extended bilinear product $\overline{\star}$ is continuous for the extended convergence (Definition \ref{extendedconvergence}) in each argument (and that a quasi-subnet of a convergent net is convergent by axiom $(ii)$ of Definition \ref{netconvergencestructure}),

    \[\limsup_{\gamma_{1},\gamma_{2}}||S_{\gamma_{1}}(f,\mu) - S_{\gamma_{2}}(f,\mu)|| = 0\]

    Which shows that $\{S_{\gamma}(f,\mu)-S_{\gamma^{*}}(f,\mu)\}_{(\gamma,\gamma^{*}) \in \Gamma(A) \times \Gamma(A)}$ is a (order) Cauchy net in $Z$, which by completeness gives the net Riemann integrability of $f$ on $A \in \mathcal{H}$.

    Now, assuming the lattice norm of $Z$ is order continuous, we have, by integrability of $f$ and each $f_{i}$, $i \in I$, and by Proposition \ref{orderconvergenceboundednets} in the second equality:

    \[||\int_{A}fd\mu - \int_{A}f_{i}d\mu|| = (o)-\lim_{\gamma}||S_{\gamma}(f,\mu)-S_{\gamma}(f_{i},\mu)|| = \limsup_{\gamma}||S_{\gamma}(f,\mu)-S_{\gamma}(f_{i},\mu)||,\]

    and choosing again $i \geq i_{0}$ such that $p(f_{i}-f) \leq \infty_{Z}$, in which $i_{0}$ exists by Definition \ref{extendedconvergence}:

    \begin{equation}\label{lastone}
     \limsup_{\gamma}||S_{\gamma}(f,\mu)-S_{\gamma}(f_{i},\mu)|| \leq p(f_{i}-f)q(\mu),
    \end{equation}

    from which, by hypothesis $(ii)$ again, and the fact that the extended bilinear product $\overline{\star}$ is continuous for the extended convergence (Definition \ref{extendedconvergence}) in each argument (and that a quasi-subnet of a convergent net is convergent by axiom $(ii)$ of Definition \ref{netconvergencestructure}), we can conclude that, using that order convergence is locally solid:

    \[(o)-\lim_{i}||\int_{A}f_{i}d\mu - \int_{A}fd\mu|| = 0,\]

    where the convergence occurs in $\mathcal{Z}^{+}$, as the right-hand side of Equation \ref{lastone} is an element of $\mathcal{Z}^{+}$. Therefore, we get convergence in the sense of lattice normed spaces on $Z$, which finishes the proof. 
\end{proof}

It is readily seen that this theorem is generalizable to $Z$ equipped with a (possibly uncountable) collection of lattice norms.

It is also to be noted that an immediate consequence of this last theorem is the following example of another (Lebesgue) integration theory contained in the net Riemann integral paradigm, with the same notation as above:

\begin{corollary}
    Let $f$ be a function Kusraev-Malyugin integrable with respect to $\mu: \mathcal{H} \rightarrow Y$, supposed of finite variation and finitely additive. Then, $f$ is net Riemann integrable and the two integrals agree. 
\end{corollary}

The integral of \cite{kusraevmalyugin} is a Lebesgue type integral defined for real valued-functions and (finitely additive) set functions with values in a lattice normed space $Y$. The definition of the integral is done trough the classical Lebesgue approximation by simple functions. Choosing the lattice norm on the space of integrable functions, and the notion of finite variation, detailed in \cite{kusraevmalyugin}, or more accessibly in \cite{kusraevmalyuginenglisharticle}, and using the last theorem, the corollary is straightforwardly proven. We shall not detail it for questions of space. Nevertheless, this shows the utility of Theorem \ref{latticenormconvergencetheorem} even in case we don't have a directly usable topology or order in the spaces considered. 

To study the relation of all the Riemmanian constructions so far, and other Lebesgue integrals, we shall present the general concept of Lebesgue-type integration in more depth.

\section{Lebesgue Type Integrals in Net Convergence Structures.}

Taking the cues from the Lebesgue integrals of last sections, we see that, in general, all the Lebesgue type procedures for integration cited previously, and to be cited later as well, follows the same methodology:

\begin{enumerate} [(i)]
    \item Choose a space of functions $L$ that contains a specific class of functions and define the integral for this latter class in some sense. For the simple functions we define the integral, generally, as a linear combination of the constants and measure of the sets in the definition.
    \item Choose now a class of functions that can be approximated, in the sequential or net sense, by the functions in $L$. Prove that the limit of the integrals of such functions is independent of the chosen approximation.
    \item Define, then, using the last point and some sort of completeness, the integral of a function $f$ as the limit of the integrals of the approximating functions on $L$.
\end{enumerate}

The convergence properties given by a topology, order, or other type of (net/filter) convergence structure appears in a very essential way in point (ii) - most of the time to produce uniqueness of the integral for each approximating sequence. In this sense, this strategy for obtaining an integration theory is dependent on the details of the convergence involved, being difficult to generalize to arbitrary convergence structures, something that can be done, as shown above, for the Riemmanian case. 

Nevertheless, a kind of general Lebesgue integral can be propose if we let the uniqueness portion of the procedure be a part of the definition of this integral - something that can be checked in every case of interest.

This general integral is based on an extension of the sequential definition given in Section 3.2 of \cite{boccutoxenofon}, and also in the ideas of Boccuto and Sambucini in \cite{boccutosambuciniconvergencegroups} and \cite{boccutosambucini2023} and of Kusraev and Malyugin in the lattice-normed case of \cite{kusraevmalyugin}.

Therefore, we consider, in this section, the following setting:

\begin{enumerate}
    \item $(X, \eta_{1}),(Y, \eta_{2}),(Z,\eta_{3})$ three vector spaces equipped with net convergence structures, and connected by a product as in Definition \ref{bilinearproduct},
    \item $(\Omega, \mathcal{H})$ an abstract paved space, 
    \item $\mu: \mathcal{H} \rightarrow Y^{+}$ an arbitrary finitely additive set function.
\end{enumerate}

Now, consider a space of functions $F$, consisting of all functions $f: \Omega \rightarrow X$ satisfying some fixed property (e.g, being simple, bounded, etc) such that it contains the class of simple functions $\mathcal{S}$  defined on $(\Omega, \mathcal{H})$, and with the property that $f \mathbbm{1}_{A} \in F$, for each $A \in \mathcal{H}$. 

Suppose now that there exists a (well defined) functional denoted by:

\[\int_{\Omega}^{L}(\cdot) \ d\mu: F \rightarrow Z,\]

such that:

\begin{enumerate}
    \item $\int^{L}_{\Omega} (\cdot) \ d\mu$ is linear on $F$ and,
    \item $\int^{L}_{\Omega} (\cdot) \ d\mu$ is such that, for each simple function $g = \sum_{i=1}^{n}a_{i}\mathbbm{1}_{A}$ in $\mathcal{S}$, 
    \[\int^{L}_{\Omega}gd\mu = \sum_{i=1}^{n}a_{i}\mu(A_{i}),\]
\end{enumerate}

Now put $\int^{L}_{A}fd\mu = \int^{L}_{\Omega} f\mathbbm{1}_{A}d\mu$.

\begin{definition} [Abstract Lebesgue Integral] \label{lebesgueintegral}

A function $f: \Omega \rightarrow X$ is said to be Abstractly Lebesgue integrable, or just Lebesgue integrable in case no confusion arises, if there is a net $\{f_{i}\}_{i \in I}$ of functions in $F$ such that $f_{i} \rightarrow f$ in some convergence defined on the function space $L$ with respect to $(X, \eta_{1})$, which we call the approximation procedure, and such that the limit:

\[\lim_{(I, \eta_{3})}\int^{L}_{A}f_{i}d\mu, \]

exists in $Z$ for every $A \in \mathcal{H}$, and is independent of the approximating net $\{f_{i}\}_{i \in I}$. In this case, we denote by the set function produced for $f$, and for each $A$ in the limit above, as $I_{f}(\cdot)$, and call:

\[I_{f}(A) =  \lim_{(I, \eta_{3})}\int^{L}_{A}f_{i}d\mu,\]

the Abstract Lebesgue integral of $f$ on $A$.
\end{definition}

The first thing to notice is that this integral is readily seen to be an homomorphism (with the pointwise addition operation) on the space of Lebesgue integrable functions.  Other properties of this integral seem to require special hypothesis, such as the uniform integrability property of the net approximation of functions as in \cite{boccutoxenofon} or some other restrictive property of the approximation procedure. In fact, by imposing some reasonable order-based properties on $X,Y$ and $Z$ we may obtain explicit convergence theorems: in the setting of \cite{lvaluedintegrationvanderwalt}, for example, all their convergence theorems are valid if we suppose $Z$ equipped with a partially ordered convergence structure that is locally solid and order continuous (for the second terminology, see \cite{vanderwaldlocallysolid}).

We also comment that not only the sequential (general) Lebesgue integrals of \cite{boccutoxenofon} and \cite{boccutosambuciniconvergencegroups}, as well as \cite{ballveconvergencespaces}, is contained in our definition, but also most examples of the Lebesgue type integrals used in the literature, which we shall review in the next sections. In particular, taking $L$ to be the space of elementary functions, and approximation procedure of almost uniform convergence, we get the Pavlakos Integral. Also, by analogous constructions, the recently defined integrals of \cite{lvaluedintegrationvanderwalt}, \cite{orderintegrals}, defined on the basis of a sequential (monotone) approximation by simple functions, are also contained in our general definition.

In the Riemmanian context, it is difficult (if even possible) to get the general equivalence of the main definition of this section and the net Riemann integral. General equivalence results in this regard may be obtainable for Lebesgue integration procedures that satisfy one, or more, of the monotone/uniform or/and dominated convergences proved in the previous sections. But, even in this context, a case by case basis by considering each integration procedure individually seems more reasonable. 

With these comments we stop the development of this integral at this point, and refer the reader to the references cited above for more details of this type of Lebesgue integration procedure. We also cite that some general convergence theorems for this integration procedure is expected in future works of the present author.

\subsection{Lebesgue Integrals in Riesz Spaces and Riemmanian Equivalences.} \label{pavlakosintegralsection}

We now expound some aspects of a very specific type of Lebesgue integral for functions and measures with values in Riesz spaces. For a brief and incomplete survey of such constructions, we mention the uniform net limit type integrals of \cite{Berg}, \cite{pavlakosintegration}, \cite{pavlakosrepresentation}, \cite{kapposlagebraicintegral}, the sequential integrals of \cite{wrightintegralmeasure}, \cite{wrightmeasurepartiallyorderedspaces1972}, \cite{siposintegrationpariallyordered},  \cite{potocky1}, \cite{potocky2}, \cite{potocky3}, \cite{malicky1}, \cite{malicky2}, \cite{malicky3}, \cite{sobolevshcherbindifferentiation}, \cite{sobolevschcherbinintegration}, \cite{cristescubook}, \cite{cristescu1}, \cite{cristescu2}, \cite{isakovsubmeasure}, \cite{poroshkin}, \cite{haluskacompletevectorlattices}, \cite{andreuriesz} \cite{boccutorieszabstract}, \cite{integraldconvergenceboccuto}, \cite{boccutodifferential}, \cite{boccutocandeloro2009}, \cite{boccutoxenofon}, \cite{boccutosambucini2023}, \cite{kusraevmalyugin}, \cite{kusraevtasoev1}, \cite{groblermartingale}, \cite{vanrooijzuijlen1}, \cite{vanrooijzuijlen2}, \cite{orderintegrals}, \cite{jeujiang}, \cite{lvaluedintegrationvanderwalt} the theses of \cite{pavlakosthesis}, \cite{jeurnink1982integration}, \cite{groenewegen1982spaces}, \cite{vanzuijlenmaster}, containing abundant information on the structures that will be discussed here, as well as the related Daniell type integrals, not covered here in any detail, of \cite{mchsaneorder}, \cite{matthesdaniell}, \cite{kapposmallios}, \cite{mallios}, \cite{sistodaniell}, \cite{leontiadis1}, \cite{leontiadis2}, \cite{leontiadis3}, \cite{alfsen}, \cite{Higgsintegral}, \cite{wilhelm1}, \cite{wilhelm2}, \cite{anderheiden}, \cite{shamaev}, \cite{vonkomerova}, \cite{congostiglesias}, \cite{riecandaniell1}, \cite{riecandaniell2}, \cite{riecandaniell3}, \cite{groblerdaniell}, \cite{vrabelevrabelova}, \cite{isakov}, \cite{markwardtintegral}, and \cite{westerbaan2019lattice}.

Some of the constructions given above are valid, and in fact presented in the works of the original authors, for lattice ordered groups or partially ordered spaces more generally, but to maintain certain uniformity, and also apply the convergence theorems in the previous sections, we will put some restrictions on the $X,Y,Z$ and their convergence structures. 

More specifically, we will restrict ourselves in this section to the following setting:

\begin{enumerate}
    \item Riesz spaces $X,Y,Z$ such that $Z$ is Dedekind complete and weakly $\sigma$-distributive. In relation to the measure theoretic objects, $(\Omega, \mathcal{H})$ will always denote a measurable space, i.e, $\mathcal{H}$ is a $\sigma$-algebra of subsets of $\Omega$, and $\mu: \mathcal{H} \rightarrow Y^{+}$ will be $\sigma$-additive in the (partially ordered) net convergence structure of $Y$ in the sense of Definition \ref{sigmaadditivemeasurenetconvergence}. We also suppose that $\mu$ is a regular integrator.

    \item In relation to convergence, we shall equip $Z$ with the net convergence structure of $(D)$-convergence or order convergence (each case will be explicitly specified), and $X,Y$ with order convergence in the sense of Definition \ref{usualorderconvergence} (which, in this case, is equivalent to order convergence in the sense of Definition \ref{pavlakosorderconvergence}).
\end{enumerate}

Further hypothesis of any sort will be pointed out when necessary. 

With these observations, we pass to a brief presentation the integration theory of Pavlakos \cite{pavlakosintegration}, which contains the integral of \cite{kapposlagebraicintegral} and \cite{Berg}.

To develop his integral, \cite{pavlakosintegration} supposes that $X$ is of countable type with respect to order convergence, i.e $X$ is fist countable with respect to the net convergence structure given by order convergence (see \cite{convergencestructuresvanderwalt}). Therefore, we will also suppose throughout this (sub)section, besides 1. and 2. above, that:

3. $X$ is first countable with respect to the net convergence structure of order convergence.

Now, we define a list of functions spaces specified by Pavlakos in different sections in \cite{pavlakosintegration}. To obtain clarity and brevity, we will define the function spaces in question already with the hypothesis put onto them by \cite{pavlakosintegration} for the purpose of integration, and with the same notation.

\begin{definition} [Function Spaces]

For $X,Y,Z$ and $(\Omega, \mathcal{H})$ fixed as above, we define:

\begin{enumerate}[(i)]
    \item $F(\Omega,X)$, the space of all functions $f:\Omega \rightarrow X$ equipped with addition and multiplication by (real) scalars pointwise, and with the pointwise partial order inherited from $X$. In this case, this function space  becomes a Riesz Space. 

    \item $E(\Omega,X)$, the (Riesz subspace) of $F(\Omega,X)$ consisting of all $\mathcal{H}$-elementary functions, i.e: the space of all functions $f$ of the form

    \[f(\omega) = \sum_{n=1}^{\infty}a_{i}\mathbbm{1}(\omega)_{A_{i}},\]

    where $\{a_{n}\}_{n=1}^{\infty}$ is a sequence of elements of $X$ and $\{A_{n}\}_{n \in \mathbb{N}}$ is a sequence of disjoint sets from $\mathcal{H}$.

    \item $M(\Omega,X)$, the (Riesz subspace) of $F(\Omega,X)$ consisting of all functions $f$ such that there exists a sequence $\{f_{n}\}_{n \in \mathbb{N}} \in E(\Omega,X)$ such that

    \[f_{n} \rightarrow f,\]

     uniformly almost everywhere in order as defined in the previous section, or $(u)-\lim_{n \rightarrow \infty}f_{n}=f$ a.e.-$\mu$ on $\Omega$ in \cite{pavlakosintegration} notation and terminology. This is called the space of $(\mathcal{H},\mu)$-measurable functions. 
\end{enumerate}

\end{definition}

We now define the integral in question by steps, as is usual for Lebesgue type integration procedures. For a non-negative function $f \in E(\Omega,X)$ of the form 

\[f = \sum_{n=1}^{\infty}a_{i}\mathbbm{1}_{A_{i}},\]

with the same notation as in the previous definition, we say that it is Pavlakos integrable if the following (unconditional series) limit:

\[(o)-lim_{n \rightarrow \infty} \sum_{i=1}^{n}a_{i}\mu(A_{i}),\]

exists in $Z$. Then, in this case we use the notation:

\[\int_{\Omega}^{P}fd\mu = (o)-lim_{n \rightarrow \infty} \sum_{i=1}^{n}a_{i}\mu(A_{i}) = \sum_{n=1}^{\infty}a_{i}\mu(A_{i}).\]

By Lemmas 1.2 and 3.1 of \cite{pavlakosintegration}, this quantity is well defined in the sense of not depending on the rearrangement of the series and the representation of $f$ as an elementary function (which is, in general, non-unique).  

For a general $f \in E(\Omega,X)$, $f$ is Pavlakos integrable if $f^{+}$ and $f^{-}$ are integrable. In this case, we define:

\[\int_{\Omega}^{P}fd\mu = \int_{\Omega}^{P}f^{+}d\mu - \int_{\Omega}^{P}f^{-}d\mu,\]

with the integral in a set $A \in \mathcal{H}$ being defined as:

\[\int_{A}^{P}fd\mu = \int_{\Omega}^{P}f\mathbbm{1}_{A}d\mu.\]

By Theorem 3.5 and Corollary 3.7 of \cite{pavlakosintegration}, the defined integral on $E(\Omega,T)$ is a linear, positive and isotone operator and it is absolute, in the sense that $f$ is Pavlakos integrable if and only if $|f|$ is Pavlakos integrable. 

We now enounce Theorems 3.11 and Corollary 3.12 of \cite{pavlakosintegration}, respectively, for the definition of the integral on $M(\Omega,X)$:

\begin{theorem} \label{existence1}
    Let $\{f_{n}\}_{n \in \mathbb{N}}$ be an increasing sequence of $E(\Omega,T)$ Pavlakos integrable functions.

    Let $f \in M(\Omega,X)$ such that:

    \[(u)-\lim_{n \rightarrow \infty}f_{n} = f \ \mu-\text{a.e. on} \ \Omega.\]

    Then, there exists the order limit $(o)-\lim_{n \rightarrow \infty}\int_{\Omega}f_{n}d\mu$.
\end{theorem}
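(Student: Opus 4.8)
The plan is to exploit the monotonicity of the sequence together with the Dedekind completeness of $Z$: an increasing sequence in $Z$ that is bounded above has a supremum, and by the monotone convergence property (MCP) this supremum is precisely its order limit. Thus the whole task reduces to producing a single upper bound in $Z$ for the sequence $\{\int_{\Omega}f_{n}d\mu\}_{n \in \mathbb{N}}$, which is increasing by the isotonicity of the Pavlakos integral on $E(\Omega,X)$ (recalled just above from Theorem~3.5 of \cite{pavlakosintegration}) applied to $f_{n}\le f_{n+1}$.

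First I would record the data of the uniform almost everywhere convergence: there is a $\mu$-null set $N \in \mathcal{H}$ and a non-negative, non-increasing sequence $\{u_{n}\}_{n \in \mathbb{N}} \subseteq X^{+}$ with $u_{n}\downarrow 0$ such that $|f_{n}(x)-f(x)| \leq u_{n}$ for every $x \in \Omega \setminus N$. Since each $f_{n}$ is elementary, integration against $\mu$ ignores $N$: writing $g_{n}=f_{n}\mathbbm{1}_{\Omega \setminus N}$ one has $\int_{\Omega}f_{n}d\mu = \int_{\Omega}g_{n}d\mu$, because every term $a_{k}\mu(A_{k}\cap N)$ vanishes (subsets of null sets are null, by the earlier proposition, so $\mu(A_{k}\cap N)=0$). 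Hence all estimates may be carried out on $\Omega \setminus N$ without affecting the integrals.

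Next I would produce the bound, using only the uniform convergence (not the monotonicity). For each $i\in\mathbb{N}$ and each $x \in \Omega \setminus N$ we have $f_{i}(x) \leq f(x)+u_{i} \leq f(x)+u_{1}$ and $f(x) \leq f_{1}(x)+u_{1}$, whence $f_{i}(x) \leq f_{1}(x)+2u_{1}$ on $\Omega \setminus N$. The function $f_{1}+2u_{1}\mathbbm{1}_{\Omega \setminus N}$ is elementary and Pavlakos integrable, since linearity together with $\mu$ being a regular integrator gives $\int_{\Omega}u_{1}\mathbbm{1}_{\Omega \setminus N}d\mu = u_{1}\mu(\Omega)$. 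Applying isotonicity and the null-set reduction yields
\[
\int_{\Omega}f_{i}d\mu \leq \int_{\Omega}f_{1}d\mu + 2u_{1}\mu(\Omega), \qquad \text{for all } i \in \mathbb{N},
\]
so $\{\int_{\Omega}f_{n}d\mu\}_{n}$ is bounded above in $Z$. Being also increasing, its supremum exists by Dedekind completeness of $Z$, and the MCP (valid for order convergence on Dedekind complete Riesz spaces) identifies this supremum with the order limit $(o)\text{-}\lim_{n\to\infty}\int_{\Omega}f_{n}d\mu$, which is exactly the asserted existence.

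The main obstacle I anticipate is the bookkeeping around the null set: one must verify that discarding $N$ genuinely leaves every integral unchanged (the vanishing of $a_{k}\mu(A_{k}\cap N)$ and the resulting series identity for the countable elementary representation) and that the constant majorant $2u_{1}$ is integrated against the full mass $\mu(\Omega)$ rather than some uncontrolled quantity --- this is precisely where the regular-integrator hypothesis on $\mu$ is essential. Everything else (isotonicity, monotone convergence via MCP) is already available from the established structure; I note that weak $\sigma$-distributivity of $Z$ is not required for the mere existence of the order limit, although it underlies the subsequent compatibility with $(D)$-convergence.
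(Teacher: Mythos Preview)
The paper does not supply its own proof of this result; it is quoted as Theorem~3.11 of Pavlakos \cite{pavlakosintegration} and used as input for the definition of the integral on $M(\Omega,X)$, so there is nothing in the paper to compare your argument against directly. Your argument is correct and is the natural one: isotonicity of the Pavlakos integral on $E(\Omega,X)$ makes $\{\int_\Omega f_n\,d\mu\}_n$ increasing, and the chain $f_i \le f+u_1 \le f_1+2u_1$ on $\Omega\setminus N$, combined with the null-set reduction you describe for elementary functions, yields the uniform upper bound $\int_\Omega f_1\,d\mu + 2u_1\mu(\Omega)$ in $Z$; Dedekind completeness and the MCP then deliver the order limit. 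One minor remark: you do not actually need the regular-integrator hypothesis here, since for the Pavlakos integral on elementary functions the identity $\int_\Omega c\,\mathbbm{1}_A\,d\mu = c\,\mu(A)$ is the very definition rather than a consequence of regularity; that hypothesis in Section~5 is there to align the Net Riemann integral with the Pavlakos one in the later comparison results, not to make the latter behave well on indicators.
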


\begin{lemma} \label{existence2}
    Let $\{f_{n}\}_{n \in \mathbb{N}}$ and $\{b_{n}\}_{n \in \mathbb{N}}$ be increasing sequences of Pavlakos integrable functions on $E(\Omega,X)$ such that:

    \[(u)-\lim_{n\rightarrow \infty}f_{n} = (u)-\lim_{n\rightarrow \infty}g_{n},  \mu-\text{a.e. on} \ \Omega.\]

    Then, 

    \[(o)-\lim_{n \rightarrow \infty}\int^{P}_{\Omega}f_{n}d\mu = (o)-\lim_{n \rightarrow \infty}\int^{P}_{\Omega}g_{n}d\mu.\]
\end{lemma}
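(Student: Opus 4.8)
The plan is to prove the two inequalities $(o)\text{-}\lim_{n}\int^{P}_{\Omega}f_{n}d\mu \le (o)\text{-}\lim_{n}\int^{P}_{\Omega}g_{n}d\mu$ and its reverse; since the hypothesis is symmetric in the two sequences it suffices to establish one of them. Write $A = (o)\text{-}\lim_{n}\int^{P}_{\Omega}f_{n}d\mu$ and $B = (o)\text{-}\lim_{n}\int^{P}_{\Omega}g_{n}d\mu$, both of which exist by Theorem \ref{existence1}, the common uniform a.e. limit $h := (u)\text{-}\lim_{n}f_{n} = (u)\text{-}\lim_{n}g_{n}$ belonging to $M(\Omega,X)$ by definition of that space. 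I would fix once and for all a null set $N \in \mathcal{H}$ (the union of the two null sets coming from the two uniform convergences, still null since $\mu \ge 0$ is additive) together with non-increasing regulators $\{u_{n}\}$, $\{v_{n}\}$ in $X$ with $u_{n}\downarrow 0$, $v_{n}\downarrow 0$ such that $|f_{n}(x)-h(x)|\le u_{n}$ and $|g_{n}(x)-h(x)|\le v_{n}$ for all $x\in\Omega\setminus N$ and all $n$.

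The heart of the argument is a single pointwise comparison holding off $N$. First I would observe that, since $\{f_{m}(x)\}_{m}$ is increasing and $f_{m}(x)\le f_{k}(x)\le h(x)+u_{k}$ for every $k\ge m$, taking the infimum over $k$ and using $u_{k}\downarrow 0$ gives $f_{m}(x)\le h(x)$ for all $x\in\Omega\setminus N$. Combining this with $h(x)\le g_{n}(x)+v_{n}$ on $\Omega\setminus N$ yields, for all $m,n$, the inequality $f_{m}\le g_{n}+v_{n}\mathbbm{1}_{\Omega}$ holding $\mu$-a.e. Integrating, and using that the Pavlakos elementary integral is isotone, is unchanged by modification on the null set $N$ (the integral of any elementary function supported on $N$ vanishes, as $\mu$ is zero on subsets of $N$), and sends the constant $v_{n}\mathbbm{1}_{\Omega}$ to $v_{n}\mu(\Omega)$ (here $\mu(\Omega)\in Y^{+}$ is well defined, as $\mathcal{H}$ is a $\sigma$-algebra), I obtain
\[
\int^{P}_{\Omega}f_{m}d\mu \;\le\; \int^{P}_{\Omega}g_{n}d\mu + v_{n}\mu(\Omega), \qquad \text{for all } m,n\in\mathbb{N}.
\]

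To finish, I would fix $m$ and let $n\to\infty$. Since $v_{n}\downarrow 0$ in $X$ and $\mu(\Omega)$ is fixed, the continuity and isotonicity of the partial product map in Definition \ref{productinrieszspace} give $v_{n}\mu(\Omega)\downarrow 0$ in $Z$, while $\int^{P}_{\Omega}g_{n}d\mu \to B$; hence the right-hand side order-converges to $B$. As the left-hand side is the constant net $\int^{P}_{\Omega}f_{m}d\mu$, the inequality-preservation part of axiom (i) of the Boccuto-Candeloro structure on $Z$ yields $\int^{P}_{\Omega}f_{m}d\mu \le B$ for every $m$. Letting now $m\to\infty$ and using $\int^{P}_{\Omega}f_{m}d\mu \to A$ with the same axiom gives $A\le B$; the symmetric argument gives $B\le A$, whence $A=B$. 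The two points I expect to need the most care are the bookkeeping of the null set $N$ so that the a.e. inequalities genuinely survive integration, and the justification of the two limit passages purely through the order-limit axioms (no $\varepsilon$-arguments being available); both are controlled by the monotone regulators $u_n,v_n$ and the continuity of the product.
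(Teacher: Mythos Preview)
The paper does not supply its own proof of this lemma: it is stated as Corollary 3.12 of Pavlakos \cite{pavlakosintegration} and used to justify that the Pavlakos integral on $M(\Omega,X)$ is well defined. Your argument is the standard one for results of this type and is essentially what one finds in Pavlakos's setting: fix $m$, compare $f_{m}$ pointwise a.e. with $g_{n}+v_{n}$, integrate, and pass to the limit first in $n$ and then in $m$. The two steps you flag as delicate are handled correctly: the null-set bookkeeping reduces to the observation that for $f\in E(\Omega,X)$ one has $\int^{P}_{\Omega} f\,d\mu = \int^{P}_{\Omega} f\mathbbm{1}_{\Omega\setminus N}\,d\mu$ because each $\mu(A_{i}\cap N)=0$, and the limit passage $\int^{P}_{\Omega}g_{n}d\mu + v_{n}\mu(\Omega)\to B$ follows from axiom (i) of Definition \ref{boccutocandeloronetconvergencestructure} together with the fact that the diagonal $\{(n,n)\}$ is cofinal in $\mathbb{N}\times\mathbb{N}$ (so axiom (vi) applies). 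One small point worth making explicit is that $f_{m}(x)\le h(x)$ on $\Omega\setminus N$ uses only that $u_{k}\downarrow 0$ and the Archimedean hypothesis on $X$, which is assumed throughout the paper; with that in hand your proof is complete.
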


We then have the main integral of \cite{pavlakosintegration}:

\begin{definition} [Pavlakos Integral]

A function $f \in M(\Omega,X)$ is said to be $(\mathcal{H},\mu)$-Pavlakos integrable on $\Omega$ if there exists an increasing sequence $\{f_{n}\}_{n \in \mathbb{N}}$ in $E(\Omega,X)$ of Pavlakos integrable functions such that $(u)-\lim_{n \rightarrow \infty}f_{n}=f$ $\mu$-a.e. on $\Omega$.

In this case, we the integral of $f$ on $\Omega$ is the element of $Z$ defined by the equality

\[\int^{P}_{\Omega}fd\mu = (o)-\lim_{n \rightarrow \infty}\int^{P}_{\Omega}f_{n}d\mu\]
\end{definition}

It is a consequence of Theorem \ref{existence1} and Lemma \ref{existence2} that this integral is well defined. The definition of this integral on a set $A \in \mathcal{H}$ and its properties are as same as before. For more basic properties, we refer the reader to the original article of \cite{pavlakosintegration}. 

The last special property of this integral that we will need is the following Theorem 3.8 of \cite{pavlakosintegration}:

\begin{theorem} [$\sigma$-Additivity of the Pavlakos Integral] \label{sigmaadditivtypavlakosintegral}

Let $f$ be a non-negative Pavlakos integrable function, then the set function $\nu: \mathcal{H} \rightarrow Z^{+}$ defined by:

\[\nu(A) = \int_{A}^{P}fd\mu, \]

is $\sigma$-additive on $\mathcal{H}$.
    
\end{theorem}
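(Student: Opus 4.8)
The plan is to verify the unconditional-series condition of Definition~\ref{sigmaadditivemeasurenetconvergence} directly, treating the elementary case first and then reducing the general case to it by monotone approximation. First I would record two structural facts: $\nu$ is non-negative (by positivity of the Pavlakos integral, since $f,\mu\ge 0$) and finitely additive, because for disjoint $A,B\in\mathcal{H}$ one has $f\mathbbm{1}_{A\cup B}=f\mathbbm{1}_A+f\mathbbm{1}_B$, so linearity of the integral gives $\nu(A\cup B)=\nu(A)+\nu(B)$. Consequently, for a disjoint family $\{A_n\}_{n\in\mathbb{N}}$ with $A:=\bigcup_n A_n\in\mathcal{H}$, the partial sums $\sum_{n=1}^{N}\nu(A_n)=\nu(\bigcup_{n\le N}A_n)$ form an increasing net of non-negative elements, and by Proposition~\ref{unconditionalconvergencegeneral} for order convergence (respectively Proposition~\ref{unconditionalconvergence(D)} for $(D)$-convergence, both applicable since $Z$ is Dedekind complete) it suffices to identify $\nu(A)$ with the supremum of these partial sums in order to obtain the required unconditional convergence $\sum_n\nu(A_n)=\nu(A)$.

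For the elementary case, write $f=\sum_{i=1}^{\infty}a_i\mathbbm{1}_{B_i}$ with the $B_i\in\mathcal{H}$ disjoint and, since $f\ge 0$, each $a_i\in X^{+}$, so that $\nu(A)=\sum_{i=1}^{\infty}a_i\,\mu(B_i\cap A)$. As the sets $\{B_i\cap A_n\}_n$ are disjoint with union $B_i\cap A$, the $\sigma$-additivity of $\mu$ gives $\mu(B_i\cap A)=\sum_n\mu(B_i\cap A_n)$ unconditionally in $Y$, and applying the continuous partial product map $\bullet_{a_i}\colon Y\to Z$ of Definition~\ref{productinrieszspace} together with bilinearity lets me pass $a_i$ through this series to get $a_i\,\mu(B_i\cap A)=\sum_n a_i\,\mu(B_i\cap A_n)$. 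Substituting, $\nu(A)$ becomes the iterated non-negative double series $\sum_i\sum_n a_i\,\mu(B_i\cap A_n)$, whereas $\sum_n\nu(A_n)=\sum_n\sum_i a_i\,\mu(B_i\cap A_n)$, so the claim is exactly a Tonelli-type interchange of summation order. I would justify it by the sup-characterization of non-negative summable families in the Dedekind complete space $Z$: each iterated sum equals
\[
\sup_{k,N}\sum_{i\le k}\sum_{n\le N}a_i\,\mu(B_i\cap A_n)=\sup_{F\in\mathcal{F}(\mathbb{N}\times\mathbb{N})}\sum_{(i,n)\in F}a_i\,\mu(B_i\cap A_n),
\]
using Proposition~\ref{unconditionalconvergencegeneral} and the commutation of finite sums with suprema; this expression is symmetric in the two indices, and the supremum exists because it is bounded above by $\nu(A)$. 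Hence $\nu(A)=\sum_n\nu(A_n)$ unconditionally. I expect this interchange of the two infinite summations — one from the representation of $f$, one from the partition $\{A_n\}$ — to be the main obstacle, and it rests entirely on the summability machinery (unordered sums realized as suprema of finite partial sums) developed earlier, combined with continuity of the product.

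For general non-negative $f\in M(\Omega,X)$, I would choose an increasing sequence $\{f_m\}$ of Pavlakos-integrable elementary functions with $(u)\text{-}\lim_m f_m=f$ $\mu$-a.e.; replacing $f_m$ by $f_m\vee 0$ (still elementary, integrable, increasing, and converging uniformly a.e.\ to $f$ since $f\ge 0$) I may assume each $f_m\ge 0$. Setting $\nu_m(A)=\int_A^{P}f_m\,d\mu$, each $\nu_m$ is $\sigma$-additive by the elementary case, and since $f_m\mathbbm{1}_A\uparrow f\mathbbm{1}_A$ uniformly a.e., the definition of the Pavlakos integral (well posed by Lemma~\ref{existence2}) yields $\nu(A)=\sup_m\nu_m(A)$. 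Then, using finite additivity and the elementary-case identity for each $\nu_m$,
\[
\nu(A)=\sup_m\sup_N\nu_m\!\Big(\bigcup_{n\le N}A_n\Big)=\sup_N\sup_m\nu_m\!\Big(\bigcup_{n\le N}A_n\Big)=\sup_N\nu\!\Big(\bigcup_{n\le N}A_n\Big)=\sup_N\sum_{n=1}^{N}\nu(A_n),
\]
where the interchange of the two suprema is automatic in the Dedekind complete lattice $Z$. By Proposition~\ref{unconditionalconvergencegeneral} this is precisely the unconditional convergence demanded by Definition~\ref{sigmaadditivemeasurenetconvergence}, finishing the argument. As an alternative to the elementary case, one could instead reduce $\sigma$-additivity to continuity from above at $\emptyset$ (via the Monotone Continuity proposition, valid since $\mathcal{H}$ is a $\sigma$-algebra and hence a ring) and prove $\inf_n\nu(A_n)=0$ for $A_n\downarrow\emptyset$ by the splitting $\nu(A_n)=\sum_{i\le k}a_i\,\mu(B_i\cap A_n)+R_k$ with tail $R_k\downarrow 0$ (Proposition~\ref{tailofseries}) and continuity from above of $\mu$ on the finite head.
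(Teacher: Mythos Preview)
The paper does not supply its own proof of this theorem: it is quoted verbatim as ``Theorem 3.8 of \cite{pavlakosintegration}'' and used as a black box. There is therefore nothing in the paper to compare your argument against.

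Your proof is correct. A couple of remarks. First, in Pavlakos' original numbering Theorem~3.8 sits between Corollary~3.7 and Theorem~3.11, i.e.\ \emph{before} the integral is extended to $M(\Omega,X)$; so strictly speaking the cited result is the elementary case only. The present paper places the statement after the general definition and phrases it for arbitrary non-negative Pavlakos integrable $f$, and in fact only invokes it for elementary $f$ in Proposition~\ref{equivalencefirstelement}. Your elementary case therefore already covers everything the paper actually uses, and your second step (interchanging the two suprema $\sup_m$ and $\sup_N$) is a clean way to push the result to all of $M(\Omega,X)$.

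Second, two small points worth making explicit when you write this up: (i) to know that $f_m\mathbbm{1}_A$ is again Pavlakos integrable you need $f_m\ge 0$, so that $\sum_i a_i\,\mu(B_i\cap A)$ is termwise dominated by the convergent series $\sum_i a_i\,\mu(B_i)$ and hence exists by Dedekind completeness --- your replacement of $f_m$ by $f_m\vee 0$ is exactly what secures this; (ii) the identity $\nu(A)=\sup_m\nu_m(A)$ uses that $f_m\mathbbm{1}_A$ is an increasing sequence of integrable elementary functions with $(u)$-limit $f\mathbbm{1}_A$ $\mu$-a.e., so Lemma~\ref{existence2} applies with the same null set as for $f$ itself. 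Both are routine, but they are the only places where something could silently go wrong.
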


We now begin to present the connections of this integral with the previous ones of Riemann type. To obtain such a comparison, consider the following result:

\begin{proposition} \label{equivalencefirstelement}
    Let $f: \Omega \rightarrow X^{+}$ be a non-negative elementary function of the form:

\[f = \sum_{n=1}^{\infty}a_{i}\mathbbm{1}_{A_{i}},\]

which is Pavlakos integrable on $\Omega$ with:

\[\int_{\Omega}^{P}fd\mu = (o)-\lim_{n \rightarrow \infty}\sum_{i=1}^{n} a_{i}\mu(A_{i}) = \sum_{n=1}^{\infty}a_{n}\mu(A_{n}).\]

Then, for every $P \in \mathcal{P}(\bigcup_{n=1}^{\infty}A_{n})$ of the form $P = \{B_{m}\}_{m=1}^{\infty}$ such that $P \geq P_{0} = \bigcup_{n=1}^{\infty}\{A_{n}\} $, then:

\[\sum_{m=1}^{\infty}b_{i}\mu(B_{i}) = \sum_{n=1}^{\infty}a_{i}\mu(A_{i}),\]

where $b_{i}$ is the value of $f$ at $B_{i}$, where all the series are unconditionally convergent.
    
\end{proposition}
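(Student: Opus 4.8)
The plan is to exploit that $P$ refines $P_0 = \{A_n\}_n$, reduce the claimed equality to a per-block computation via the $\sigma$-additivity of $\mu$ together with the continuity of the product, and then assemble the blocks by a Tonelli-type regrouping of non-negative unconditionally summable families in the Dedekind complete space $Z$. First I would record the combinatorial structure forced by $P \geq P_0$. Since $P$ refines $P_0$, each $B_m$ is contained in a unique $A_{n(m)}$; as the $A_n$ are pairwise disjoint and $f = \sum_i a_i \mathbbm{1}_{A_i}$ equals $a_{n(m)}$ on $A_{n(m)}$, its value $b_m$ on $B_m$ is exactly $a_{n(m)}$. For fixed $n$ the subfamily $\mathcal{B}_n = \{B_m : B_m \subseteq A_n\}$ is a countable, pairwise disjoint family in $\mathcal{H}$ whose union is $A_n$, since points of $A_n$ can be covered only by blocks lying inside $A_n$. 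Finally, from $f \geq 0$ and disjointness we get $a_n \geq 0$, hence $b_m \geq 0$; combined with $\mu \geq 0$ and isotonicity of the product on the positive cones (Definition \ref{productinrieszspace} (i)), every term $b_m\mu(B_m)$ and $a_n\mu(A_n)$ lies in $Z^{+}$.

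Next I would carry out the per-block equality. Fix $n$. By $\sigma$-additivity of $\mu$ (Definition \ref{sigmaadditivemeasurenetconvergence}) applied to the decomposition $A_n = \bigsqcup_{B_m \in \mathcal{B}_n} B_m$, the unordered sums $\sum_{B_m \in J}\mu(B_m)$ converge to $\mu(A_n)$ along $J \in \mathcal{F}(\mathbb{N})$. Applying the partial product map $\bullet_{a_n} \colon Y \to Z$, which is continuous by Definition \ref{productinrieszspace} (iii), and using bilinearity to pull $a_n$ inside the finite sums, the net $\{\sum_{B_m \in J} a_n\mu(B_m)\}_J$ converges to $a_n\mu(A_n)$. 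Since $a_n\mu(B_m) = b_m\mu(B_m)$ for $B_m \in \mathcal{B}_n$, this is precisely the statement that
\[\sum_{B_m \subseteq A_n} b_m\mu(B_m) = a_n\mu(A_n),\]
an unconditionally convergent series in $Z$.

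It remains to sum over $n$ and identify $\sum_m b_m\mu(B_m)$ with $\sum_n a_n\mu(A_n)$. Because every term is non-negative and $Z$ is Dedekind complete, I would invoke Proposition \ref{unconditionalconvergencegeneral} (and Proposition \ref{unconditionalconvergence(D)} in the $(D)$-convergence case) to write each convergent non-negative series as the supremum of its finite unordered partial sums, and then establish the two inequalities between
\[\bigvee_{J \in \mathcal{F}(\mathbb{N})}\sum_{B_m \in J} b_m\mu(B_m) \quad\text{and}\quad \bigvee_{K \in \mathcal{F}(\mathbb{N})}\sum_{n \in K} a_n\mu(A_n).\]
For ``$\leq$'': any finite $J$ meets only finitely many blocks $\mathcal{B}_n$, so $\sum_{B_m \in J} b_m\mu(B_m) \leq \sum_{n \in K} a_n\mu(A_n)$ for the finite set $K$ of indices it touches, using the per-block equality above. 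For ``$\geq$'': any finite family of finite selections $S_n \subseteq \mathcal{B}_n$ ($n \in K$) assembles into a single finite $J = \bigcup_{n\in K} S_n$; since finite sums distribute over suprema in a Riesz space, taking suprema over the $S_n$ recovers $\sum_{n\in K} a_n\mu(A_n)$, which is then $\leq \bigvee_J \sum_{B_m \in J} b_m\mu(B_m)$. The existence of $\sum_n a_n\mu(A_n)$ assumed in the hypothesis bounds the ungrouped family and so guarantees its summability; together the two suprema coincide, yielding the desired equality.

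The main obstacle is the last step: the Tonelli-type regrouping of a doubly-indexed non-negative unconditionally summable family in an abstract Dedekind complete Riesz space, where $\varepsilon$-arguments are unavailable and one must argue entirely through suprema and the distributivity of finite sums over suprema. Non-negativity (the hypothesis $f \geq 0$) is exactly what makes this possible, by licensing the supremum characterization of the sums; the rearrangement-independence of the Pavlakos value (Lemmas 1.2 and 3.1 of \cite{pavlakosintegration}) is what makes the comparison with $\sum_n a_n\mu(A_n)$ well posed in the first place.
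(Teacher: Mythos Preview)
Your argument is correct, but it takes a different route from the paper. The paper's proof is a one-liner built on Theorem~\ref{sigmaadditivtypavlakosintegral}: since $f\geq 0$ is Pavlakos integrable, the set function $\nu(A)=\int_A^P f\,d\mu$ is $\sigma$-additive; applying this to the disjoint decomposition $\bigcup_m B_m = \bigcup_n A_n$ gives $\sum_m \int_{B_m}^P f\,d\mu = \sum_n a_n\mu(A_n)$, and each $\int_{B_m}^P f\,d\mu = b_m\mu(B_m)$ because $f$ is constant on $B_m$. You instead avoid the black box and work from first principles: $\sigma$-additivity of $\mu$ plus continuity of $\bullet_{a_n}$ to get the per-block identity $\sum_{B_m\subseteq A_n} b_m\mu(B_m)=a_n\mu(A_n)$, followed by a Tonelli-type regrouping via the supremum characterization of non-negative unconditional sums in a Dedekind complete $Z$. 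Your approach is more self-contained and makes explicit exactly which structural facts are used (bilinearity, continuity of the product, distributivity of finite sums over suprema), at the cost of a longer argument with a combinatorial step that the paper offloads to Pavlakos' already-proved $\sigma$-additivity of the indefinite integral.
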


\begin{proof}
    Let $P,P_{0}$ be as in the hypothesis of the proposition. Then, by definition:

    \[\bigcup_{m=1}^{\infty}B_{m} = \bigcup_{n=1}^{\infty}A_{n}.\]

    Therefore, as $f$ is non-negative and Pavlakos Integrable on $\Omega$, by  Theorem \ref{sigmaadditivtypavlakosintegral}, we have that:

    \[\int_{\bigcup_{m=1}^{\infty}B_{m}}^{P}fd\mu = \sum_{m=1}^{\infty}\int_{B_{m}}^{P}fd\mu =  \sum_{n=1}^{\infty}a_{n}\mu(A_{n}). \]

    But, as $P$ refines $P_{0}$, for each $B_{i} \in P$ there exists (a unique) $A_{j} \in P_{0}$ such that $B_{i} \subseteq A_{j}$. Therefore, $f$ restricted to $B_{i}$, $f|_{B_{i}}$, is such that:

    \[f|_{B_{i}} := b_{i} = a_{j},\]

    which implies that $f$ is constant on each element of $P$ and thus from the definition of the Pavlakos integral we get that:

    \[\sum_{m=1}^{\infty}\int_{B_{m}}^{P}fd\mu = \sum_{m=1}^{\infty}b_{m}\mu(B_{m}) ,\]

    from which we get:

    \[\sum_{m=1}^{\infty}b_{m}\mu(B_{m}) = \sum_{n=1}^{\infty}a_{n}\mu(A_{n}),\]

    which finishes the proof.

\end{proof}

From this proposition, we get:

\begin{corollary} \label{equivalencesecondresult}
    Let $f: \Omega \rightarrow X^{+}$ be a non-negative elementary function of the form:

\[f = \sum_{n=1}^{\infty}a_{i}\mathbbm{1}_{A_{i}},\]

which is Pavlakos integrable on $\Omega$ with:

\[\int_{\Omega}^{P}fd\mu = (o)-\lim_{n \rightarrow \infty}\sum_{i=1}^{\infty} a_{i}\mu(A_{i}) = \sum_{n=1}^{\infty}a_{n}\mu(A_{n}).\]

Then, $f$ is $S^{*}$-partition on $\Omega$ and:

\[\int_{\Omega}^{S^{*}}fd\mu = \sum_{n=1}^{\infty}a_{n}\mu(A_{n}),\]

where all the series are unconditionally convergent.

\end{corollary}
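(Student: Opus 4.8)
The plan is to reduce everything to Proposition \ref{equivalencefirstelement}, which already carries out the essential computation via the $\sigma$-additivity of the Pavlakos integral, and then to observe that the induced net of Riemann-type sums is eventually constant, so that convergence is automatic. Throughout, write $s := \sum_{n=1}^{\infty}a_{n}\mu(A_{n})$, which exists in $Z$ by the Pavlakos integrability hypothesis.

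First I would fix a reference partition of $\Omega$. Put $N = \Omega \setminus \bigcup_{n=1}^{\infty}A_{n}$, which lies in $\mathcal{H}$ since $\mathcal{H}$ is a $\sigma$-algebra, and set $P_{0}' = \{A_{n}: n \in \mathbb{N}\} \cup \{N\}$ (dropping the block $N$ when it is empty, which is harmless as $\mu(\emptyset)=0$). This is a countable disjoint covering of $\Omega$, hence an element of $\mathcal{P}(\Omega)$, and it refines the covering $P_{0} = \bigcup_{n}\{A_{n}\}$ of $\bigcup_{n}A_{n}$ appearing in Proposition \ref{equivalencefirstelement}. Next I would fix an arbitrary choice function $\delta$ and an arbitrary $P \in \mathcal{P}(\Omega)$ with $P \geq P_{0}'$, and evaluate the inner unconditional series $\sum_{\alpha \in P}f(\delta_{\alpha})\mu(\alpha)$. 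Since $P$ refines $P_{0}'$, every block $\beta \in P$ is contained either in some $A_{j}$ or in $N$; for $\beta \subseteq N$ one has $f \equiv 0$, so $f(\delta_{\beta})\mu(\beta)$ is exactly the zero of $Z$, and in particular its value does not depend on the chosen point $\delta_{\beta}$. The surviving blocks $A_{P} := \{\beta \in P : \beta \subseteq A_{j} \text{ for some } j\}$ form a partition of $\bigcup_{n}A_{n}$ refining $P_{0}$, on each of which $f$ is constant (equal to the relevant $a_{j}$), so $f(\delta_{\beta})$ is again independent of $\delta$. Applying Proposition \ref{equivalencefirstelement} to $A_{P}$ gives that $\sum_{\beta \in A_{P}}f(\delta_{\beta})\mu(\beta)$ exists and equals $s$; since the $N$-blocks contribute nothing, the full series exists and $\sum_{\alpha \in P}f(\delta_{\alpha})\mu(\alpha) = s$ for every $P \geq P_{0}'$, independently of $\delta$.

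Finally I would conclude the convergence. The previous step shows the net $\{\sum_{\alpha \in P}f(\delta_{\alpha})\mu(\alpha)\}_{P \in \mathcal{P}(\Omega)}$ is eventually constant, equal to $s$ from $P_{0}'$ onward, uniformly in $\delta$. Hence in the $(D)$-convergence fixed by the conventions of this section one may take any $(D)$-sequence and set $P_{0}(\varphi)=P_{0}'$ for every $\varphi \in \mathbb{N}^{\mathbb{N}}$, since $|\sum_{\alpha \in P}f(\delta_{\alpha})\mu(\alpha) - s| = 0 \leq \bigvee_{i=1}^{\infty}a_{i\varphi(i)}$ holds trivially; the same eventual constancy yields order convergence. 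Therefore $f$ is $S^{*}$-partition integrable on $\Omega$ with $\int_{\Omega}^{S^{*}}fd\mu = s$, as claimed. The only point requiring a little care is the passage from $\bigcup_{n}A_{n}$ to all of $\Omega$, namely checking that the $N$-blocks genuinely drop out of the unconditional series; this is immediate, since for each finite $J \subseteq P$ the partial sum over $J$ coincides with the partial sum over $J \cap A_{P}$, so the limits along $\mathcal{F}(P)$ and $\mathcal{F}(A_{P})$ agree and equal $s$.
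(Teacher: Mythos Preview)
Your proof is correct and follows essentially the same approach as the paper: both reduce to Proposition \ref{equivalencefirstelement} to show the net of Riemann-type sums is eventually constant and equal to $s$, then invoke an arbitrary $(D)$-sequence to conclude. The only difference is cosmetic: the paper writes ``without loss of generality, suppose that $\Omega = \bigcup_{n=1}^{\infty}A_{n}$'' and works directly over $P_{0}$, whereas you carry the complement $N$ along explicitly and check that its blocks contribute zero---your version is arguably cleaner since it makes the WLOG step transparent.
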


\begin{proof}
    Without loss of generality, suppose that $\Omega = \bigcup_{n=1}^{\infty}A_{n}$, and take $P_{0} = \bigcup_{n=1}^{\infty}\{A_{n}\}$. Let $P \in \mathcal{P}(\Omega)$ such that $P \geq P_{0}$. By construction (and the same argument as in the proof of the previous proposition), for each choice function $\delta$, $f(\delta_{\alpha})$, $\alpha \in P$, is a constant, and:

    \[\sum_{\alpha \in P}f(\delta_{\alpha})\mu(\alpha) := \sum_{m=1}^{\infty}b_{m,P}\mu(B_{m,P}),\]

    where $P = \bigcup_{m=1}^{\infty}\{B_{m,P}\}$, the subindex $m,P$ indicates that the sets corresponds to the partition $P$, and for which the existence of the (unconditional) series in the right-hand side follows from Proposition \ref{equivalencefirstelement}. In fact, from the same proposition, we get that:
    
    \[\sum_{\alpha \in P}f(\delta_{\alpha})\mu(\alpha) = \sum_{m=1}^{\infty}b_{m,P}\mu(B_{m,P}) = \sum_{n=1}^{\infty}a_{n}\mu(A_{n}).\]

    Now, take an arbitrary $(D)$-sequence $\{a_{ij}\}$ in $Z$ and for each $\varphi \in \mathbb{N}^{\mathbb{N}}$ take, independent of $\varphi$, $P_{0}$ as above. By the preceding argument, for each $P \geq P_{0}$, $P \in \mathcal{P}(\Omega)$,

    \[|\sum_{\alpha \in P}f(\delta_{\alpha})\mu(\alpha) - \sum_{n=1}^{\infty}a_{n}\mu(A_{n})| \leq \bigvee_{i=1}^{\infty}a_{i\varphi(i)},\]

    which shows that $f$ is $S^{*}$-partition integral with the same value as the Pavlakos integral of $f$.    
\end{proof}

We now get to the main comparison result:

\begin{theorem} [Pavlakos Integrability implies $S^{*}$-partition Integrability - Non-negative Case] \label{equivalence}

Let $f: \Omega \rightarrow X$ be a non-negative Pavlakos integrable function on $\Omega$. Then, $f$ is $S^{*}$-partition integrable on $\Omega$ and the two integrals agree. 
\end{theorem}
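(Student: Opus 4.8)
The plan is to reduce the general non-negative case to the elementary case already settled in Corollary \ref{equivalencesecondresult}, and then to pass to the limit using the uniform convergence theorem for the $S^{*}$-partition integral, Theorem \ref{uniformconvergencetheorem2null}.

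First I would unfold the definition of Pavlakos integrability: since $f$ is a non-negative Pavlakos integrable function, there is an increasing sequence $\{f_{n}\}_{n \in \mathbb{N}}$ of Pavlakos integrable elementary functions in $E(\Omega,X)$ with $(u)-\lim_{n \to \infty} f_{n} = f$ $\mu$-a.e. on $\Omega$ and $\int^{P}_{\Omega}fd\mu = (o)-\lim_{n} \int^{P}_{\Omega}f_{n}d\mu$. The functions $f_{n}$ need not be non-negative, whereas non-negativity is required both by Corollary \ref{equivalencesecondresult} and by Theorem \ref{uniformconvergencetheorem2null}. To remedy this I would pass to the positive parts $g_{n} = f_{n} \vee 0$. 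These are again elementary and non-negative; the sequence $\{g_{n}\}$ remains increasing because $x \mapsto x \vee 0$ is isotone; and since $f = f \vee 0$ together with the lattice inequality $|f_{n} \vee 0 - f \vee 0| \leq |f_{n}-f|$ in the Riesz space $X$, the regulator witnessing $(u)$-convergence of $f_{n}$ to $f$ also witnesses $g_{n} \to f$ uniformly almost everywhere in order. Each $g_{n}$ is Pavlakos integrable because $g_{n} = f_{n}^{+}$ and integrability of a general elementary function is defined precisely through integrability of its positive and negative parts. Finally, by the independence of the Pavlakos integral from the chosen approximating sequence (Lemma \ref{existence2}, applied to $\{f_{n}\}$ and $\{g_{n}\}$, which share the $(u)$-limit $f$), I still have $\int^{P}_{\Omega}fd\mu = (o)-\lim_{n} \int^{P}_{\Omega}g_{n}d\mu$.

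Next I would invoke Corollary \ref{equivalencesecondresult} on each $g_{n}$: being a non-negative Pavlakos integrable elementary function, $g_{n}$ is $S^{*}$-partition integrable on $\Omega$ with $\int^{S^{*}}_{\Omega}g_{n}d\mu = \int^{P}_{\Omega}g_{n}d\mu$. Thus $\{g_{n}\}$ is a sequence of non-negative $S^{*}$-partition integrable functions converging uniformly almost everywhere in order to the non-negative function $f$, and the standing hypotheses of this section ($Z$ Dedekind complete and weakly $\sigma$-distributive, $\mu: \mathcal{H} \to Y^{+}$ $\sigma$-additive) are exactly those of Theorem \ref{uniformconvergencetheorem2null}. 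Applying that theorem yields that $f$ is $S^{*}$-partition integrable on $\Omega$ and $\int^{S^{*}}_{\Omega}g_{n}d\mu \xrightarrow[]{o} \int^{S^{*}}_{\Omega}fd\mu$.

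To finish, I would compare the two order limits of the common sequence $\{\int^{S^{*}}_{\Omega}g_{n}d\mu\}_{n} = \{\int^{P}_{\Omega}g_{n}d\mu\}_{n}$: one converges to $\int^{S^{*}}_{\Omega}fd\mu$ and the other to $\int^{P}_{\Omega}fd\mu$, so by uniqueness of order limits (the standing Hausdorff assumption on the convergence structure) the two integrals coincide. The main obstacle I anticipate lies entirely in the first paragraph, namely verifying that the approximating sequence may be taken non-negative without losing elementariness, monotonicity, Pavlakos integrability, or the uniform a.e. convergence, and that the value of the Pavlakos integral is unaffected by this replacement; once the sequence is normalized to be non-negative, the remaining two steps are direct citations of the elementary comparison and the uniform convergence theorem.
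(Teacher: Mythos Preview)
Your proposal is correct and follows essentially the same route as the paper: apply Corollary \ref{equivalencesecondresult} to the approximating elementary functions and then invoke Theorem \ref{uniformconvergencetheorem2null}. The only difference is that the paper simply asserts the approximating sequence may be taken non-negative, whereas you carefully justify this by replacing $f_n$ with $g_n = f_n \vee 0$ and checking that elementariness, monotonicity, Pavlakos integrability, uniform a.e.\ convergence, and the value of the limit integral are all preserved; this extra care is a welcome clarification of a point the paper glosses over.
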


\begin{proof}
    By the definition of the Pavlakos integrability of $f$, let $\{f_{n}\}_{n \in \mathbb{N}}$ be a sequence of non-negative increasing functions, in $E(\Omega,X)$, of Pavlakos integrable functions such that $(u)-\lim_{n \rightarrow \infty}f_{n}=f$ $\mu$-a.e. on $\Omega$.

    By Corollary \ref{equivalencesecondresult}, we get that each $f_{n}$ is $S^{*}$-partition integrable and:

    \[\int_{\Omega}^{P}fd\mu = \int_{\Omega}^{s^{*}}fd\mu.\]

    On the other hand, by the almost everywhere uniform convergence Theorem for the $S^{*}$-partition integral, Theorem \ref{uniformconvergencetheorem2null}, we have that $f$ is $S^{*}$-partition integrable on $\Omega$:

    \[(o)-\lim_{n \rightarrow \infty}\int_{\Omega}^{S^{*}}f_{n}d\mu = \int_{\Omega}^{S^{*}}fd\mu.\]

    As we have that, by definition:

    \[(o)-\lim_{n \rightarrow \infty}\int_{\Omega}^{P}f_{n}d\mu = \int_{\Omega}^{P}f d\mu,\]

    it follows that:

    \[\int_{\Omega}^{P}fd\mu = \int_{\Omega}^{S^{*}}fd\mu,\]

    which finishes the proof.
\end{proof}

For the general case, we simply write the decomposition of a Pavlakos integrable $f$ on its negative and positive parts, both non-negative and Pavlakos integrable, use Theorem \ref{equivalence}, and then we get:

\begin{corollary} [Pavlakos Integrability implies $S^{*}$-partition Integrability - General Case]

Let $f: \Omega \rightarrow X$ be a  Pavlakos integrable function on $\Omega$. Then, $f$ is $S^{*}$-partition integrable on $\Omega$ and the two integrals agree. 
\end{corollary}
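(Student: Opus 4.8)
The plan is to reduce the general case to the non-negative case already established in Theorem \ref{equivalence} by means of the Jordan-type decomposition of a Pavlakos integrable function into its positive and negative parts. First I would recall that, by the absoluteness of the Pavlakos integral on $E(\Omega,X)$ (noted in the text after the definition of the Pavlakos integral, via Theorem 3.5 and Corollary 3.7 of \cite{pavlakosintegration}) and its extension to $M(\Omega,X)$, a function $f$ is Pavlakos integrable if and only if both $f^{+}$ and $f^{-}$ are Pavlakos integrable. Since $f = f^{+} - f^{-}$ with $f^{+}, f^{-} \geq 0$, each of $f^{+}$ and $f^{-}$ is a non-negative Pavlakos integrable function.

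Next I would apply Theorem \ref{equivalence} to each of $f^{+}$ and $f^{-}$ separately. This yields that both $f^{+}$ and $f^{-}$ are $S^{*}$-partition integrable on $\Omega$, with
\[
\int_{\Omega}^{S^{*}} f^{+} d\mu = \int_{\Omega}^{P} f^{+} d\mu, \qquad \int_{\Omega}^{S^{*}} f^{-} d\mu = \int_{\Omega}^{P} f^{-} d\mu.
\]
Then I would invoke the additivity of the $S^{*}$-integral, namely property (i) of Proposition \ref{Properties oftheS*}, which guarantees that the difference (equivalently, the sum $f^{+} + (-f^{-})$, using linearity that follows from additivity and homogeneity) of two $S^{*}$-partition integrable functions is again $S^{*}$-partition integrable, with the integral of the combination equal to the corresponding combination of integrals. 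Since $f = f^{+} - f^{-}$, this gives $S^{*}$-partition integrability of $f$ together with
\[
\int_{\Omega}^{S^{*}} f d\mu = \int_{\Omega}^{S^{*}} f^{+} d\mu - \int_{\Omega}^{S^{*}} f^{-} d\mu = \int_{\Omega}^{P} f^{+} d\mu - \int_{\Omega}^{P} f^{-} d\mu = \int_{\Omega}^{P} f d\mu,
\]
where the last equality is exactly the definition of the Pavlakos integral of a general $f$ in terms of its positive and negative parts.

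The only subtlety — and the point I would be most careful about — is verifying that Proposition \ref{Properties oftheS*} genuinely delivers linearity over subtraction in the present $S^{*}$-partition setting, since that proposition is stated for additivity and the homogeneity that yields $\int^{S^{*}}(-g)d\mu = -\int^{S^{*}}g d\mu$ must be confirmed for the scalar $-1$ acting through the bilinear product. This follows from the bilinearity of $\bullet$ in its first argument (Definition \ref{productinrieszspace}(ii)), which ensures $(-g)(\delta_{\alpha})\mu(\alpha) = -\big(g(\delta_{\alpha})\mu(\alpha)\big)$ at the level of each Riemann-type term, and hence passes to the limit of the defining series. Once this is noted, the argument is entirely routine, as the text itself indicates by describing it as a simple reduction to the non-negative case.
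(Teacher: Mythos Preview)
Your proposal is correct and follows exactly the approach the paper takes: decompose $f$ into its non-negative parts $f^{+}$ and $f^{-}$, apply Theorem \ref{equivalence} to each, and combine via linearity. Your additional care about homogeneity for the scalar $-1$ through the bilinear product is a nice explicit justification of a step the paper leaves implicit.
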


Related arguments can be made for the Sion integral in the place of the $S^{*}$-partition integral, but they are similar, so we will not state them. 

One interesting aspect of the discussion from now on is, then, to connect net-approximation (Lebesgue) type integrals to the Pavlakos' one in an attempt to relate them to the Riemmanian approach. As real (Borel) measurable functions $f: \Omega \rightarrow \mathbb{R}$ are strongly measurable in the sense of Böchner (see \cite{dinculeanustochastic}), and thus can be approximated uniformly by a sequence of increasing elementary functions (see again \cite{dinculeanustochastic} and \cite{monteirodissertação} for a study of how uniform convergence in the usual sense, for real functions, is equivalent to ours), we may use the monotone convergence theorem of Maitland-Wright proved in \cite{wrightintegralmeasure} and the device of decomposing a function into negative and positive parts to prove:

\begin{proposition} [Maitland-Wright Integrable Function is Pavlakos Integrable]
    Let $f: \Omega \rightarrow \mathbb{R}$ be a Maitland-Wright integrable function on $\Omega$, then it is Pavlakos integrable on $\Omega$ and the two integrals agree.
\end{proposition}

As a direct corollary, we have:

\begin{proposition} [Maitland-Wright Integrable Function is $S^{*}$ Integrable]
    Let $f: \Omega \rightarrow \mathbb{R}$ be a Maitland-Wright integrable function on $\Omega$, then it is $S^{*}$ integrable on $\Omega$ and the two integrals agree.
\end{proposition}

For more details on the Maitland-Wright integral, see \cite{wrightintegralmeasure} and \cite{wrightmeasurepartiallyorderedspaces1972}, and \cite{pavlakosintegration}. A similar result can be obtained for the integral of Kusraev and Tasoev constructed in \cite{kusraevtasoev1}.

Thus, the results above helps to connect the Riemmanian approaches to integration more closely with the sequential/net-based Lebesgue integrals in the most well known approaches to integration in partially ordered spaces, specially for real functions and Riesz-valued measures and Riesz-valued functions and measures with uniform approximations. A further interesting aspect would be to make analogous Riemmanian constructions of the integral of \cite{jeujiang}, which can take values in the Dedekind completion of a given Riesz space.

\subsection{Lebesgue Integrals in Topological Vector Spaces and Riemmanian Equivalences.} \label{topologicallebesgueintegralssection}

On this section, we will study some aspects of the general Lebesgue integral, previously defined, but now in the setting of, not necessarily partially ordered, topological vector spaces. The literature on the subject contains numerous publications, most of them based on the classical integrals of \cite{birkhofforiginalarticle}, \cite{bochneroriginalarticle} and \cite{pettisoriginalarticle}. A great bibliography in this context is provided in the book \cite{henstockmagnum}.

Nevertheless, there is a less know part of the subject which begins with the construction of the remarkable bilinear integrals of \cite{dobrakov1} and \cite{bartle1956general} (see also \cite{panchapageneralizedpettis},  \cite{panchapagesandistinguishing} and \cite{panchapagesanbook}), and contains reverberations in the recent literature, specially in view of applications in mathematical physics (see \cite{jefferies}). The period after the articles of Dobrakov and Bartle just cited contained different generalizations, and in different directions (again, non-exhaustively, but not covered by \cite{henstockmagnum}):

\begin{enumerate} [(i)]
    \item For (real) scalar functions integrated with respect to (convexly bounded) vector measures with values in a topological vector space, we have the recent works of \cite{labuda}, \cite{labudaconvexly} and \cite{drewlabuda}, which generalized the earlier defined integrals  by \cite{Turpin2}, \cite{role}, \cite{butkovic1979integration}, \cite{butkovic} and \cite{thomas} (based on \cite{thomas1972radon}).

    \item A student of Dobrakov, Jan Haluska, beginning in \cite{haluska1} and \cite{haluska2}, developed a generalization of the Dobrakov procedure for locally convex bornological topological vector spaces. This integral contains structures of product type and various convergence theorems, which can be gauged from \cite{haluskafubini} and \cite{haluskaintegrable}. Connections with the $S^{*}$-partition integral were made in \cite{haluskakolmogorov}, which will be commented below. Related integrals are considered in \cite{dunfordbilinearvectorintegral}, \cite{sivasankara1981vector}, \cite{chakraborty1}, \cite{chakraborty2}, \cite{charkraborty3}, \cite{chakraborty4}, \cite{chivukulasastry}, \cite{debievebilinear} and \cite{kappelerthesis}.

    \item Related bilinear vector integrals, specially based on tensor products, were developed by B. Jefferies, S. Okada, B. Ricker and I. Kluvanek, whose works are summarized in the books \cite{jefferieskac}, \cite{kluvanekintegration} and \cite{jefferies}. An important contribution of the integration procedure of these authors is to have a smaller space of integrable functions, but without supposing finite semivariation of the (operator valued) measures involved. This has important applications in mathematical physics as emphasized by \cite{jefferies}.
    
    \item The "integration school" of Bombal, Rodriguez-Salinas and Jimenez-Guerra in Spain developed similar procedures in bornological (locally convex) spaces with the publications of \cite{bombalintegralbornological}, \cite{bombalintegralradonnikodym}, \cite{rodriguezsalazar}, \cite{salinas1}, \cite{salinas2}, \cite{salinas3}, \cite{salinas4}, \cite{ballvebornological}, \cite{ballvefubini}, \cite{fidel}, \cite{fidelgeneral1}, \cite{fidelgeneral2}, \cite{bravo}, \cite{santiagohidalgo}. Basically, this is a type of integral in locally convex spaces which satisfy an Egorov approximation procedure.
    \item With a view towards stochastic integration, \cite{brooksdinculeanufirst} developed a bilinear integration theory based in a family of Banach-valued set functions, possibly finitely additive, which lead \cite{dinculeanustochastic} to a very general and applicable integration theory. A more recent development is included in the thesis of \cite{kozinski2005abstract}. 
    \item Following the ideas of \cite{drew3} in integration theory, W. Marik (in Germany) and J-C. Massé (in Canada) independently developed a bilinear integration theory for semigroup-valued measures and functions based on convergence in (a family of) submeasure(s). The initial developments are contained in the works \cite{marik1979integration} and \cite{masse}, and later in a reduced version in \cite{marikmonoidwertige} and \cite{massesemigroup}. A special case of this theory is the integration of \cite{heinich} and \cite{ohbaintegral}, already cited above, in groups based on uniform approximations. A very interesting development of this theory for conoids is given in the thesis of \cite{abreu2006integration}, and in Banach spaces by \cite{agafonova}. An independent axiomatic development of these types of submeasure based integrals with values in semigroups has also been developed by \cite{isakov}, as well as in the unpublished integration material in \cite{lechthesis}.
    
    \item Based on initial ideas of \cite{schafke1} and \cite{schafke2}, a very general integration procedure for lattice normed spaces, and even partially ordered semigroup-valued measures without any restriction on the semivariations, have been developed by \cite{schafkefubini}, \cite{schafkequasinormierten}, \cite{schafkeriemann}, \cite{schafkebook}, with ramifications in absolute Riemann integrability in \cite{gunzelrbook}, \cite{gunzler}, \cite{gunzlerimproper}, \cite{diazcarrilloaustralia}, \cite{diazcarrillohandbook}, \cite{diazcarrillocanada}. The most general form of these results are contained in the works of \cite{volkmerweber}, \cite{weberfreie1}, \cite{weberfreie2}, \cite{markwardtintegral}, \cite{markwardt1981teilweise} and \cite{hoffmannsemigroup}.
\end{enumerate}

In view of this great variety, it would take a more voluminously work to deal with all these integration procedures. We will privilege those that are more related to Riemmanian procedures constructed in the previous sections, and in order of complexity we shall construct the following scheme:

\begin{itemize}
    \item First, we will show that the Bartle-Dunford-Schwartz of \cite{drewlabuda} is contained in the net Riemann integral in the form of the $S^{*}$-partition integral in (sequentially complete) Hausdorff topological vector spaces.
    \item Then, we will briefly expose the results of \cite{haluskakolmogorov}, which shows that the bornological integral of Haluska, and therefore the special case of Dobrakov Banach-valued integral, is a special case of the $S^{*}$-partition integral in bornological locally convex topological vector spaces.
    \item Lastly, we construct some elements of the integration theory of \cite{massesemigroup}, showing it contains the related integral of \cite{marikmonoidwertige}, as well as the previous Lebesgue ones cited so far in this list, and prove partial results of the connection of this integration procedure with the net Riemann integral.
\end{itemize}

To not increase the technicalities involved, we will deal with the case of topological vector spaces, leaving (uniform) semigroups for a further work. Also, most of these theories have convergence theorems of dominated and Vitali type, but unless necessary, we will not formulate them explicitly. For every non-defined term concerning topological vector spaces, bornologies and convergences in them, see \cite{beerbornology}, \cite{schaefer}, \cite{radyno}, \cite{jarchow} and \cite{hogbenelend}.

\subsubsection{The Bartle-Dunford-Schwartz (BDS) in Topological Vector Spaces.} \label{bdsintegralsection}

In this section, we follow mainly the ideas and concepts of \cite{labudaconvexly} and \cite{drewlabuda}.

We fix, for this whole section, $(X, \tau)$ a Hausdorff topological vector space with a topology $\tau$ that is sequentially complete and has $\mathcal{U}$ any base of balanced zero-neighborhoods in $X$ and $Fs(X)$ any base of continuous $F$-seminorms which define its topology. Fix also $\mathcal{H}$  a $\sigma$-algebra of subsets of a (non-empty) set $\Omega$.

The main definition for measures we shall use is the following:

\begin{definition} [Convexly Bounded Vector Measures]

We say that a vector measure (that is, a $\sigma$-additive set function) $\mu: \mathcal{H} \rightarrow X$ is convexly bounded if the convex hull of its image is bounded\footnote{In the bornological, or Von Neumann, sense.}.
\end{definition}

An equivalent definition which we shall use employs the continuity properties of simple-integral operators:

\begin{proposition}
    A vector measure $\mu: \mathcal{H} \rightarrow X$ is convexly bounded if and only if the operator:

    \[\int_{\Omega}\cdot \ d\mu: (S_{\infty}, ||\cdot||_{\infty}) \rightarrow (X,\tau),\]

    given by:

    \[\int_{\Omega}sd\mu = \sum_{i=1}^{n}s_{i}\mu(A_{i}),\]

    is continuous, where $(S_{\infty}, ||\cdot||_{\infty})$ is the (Banach) space of simple functions equipped with the supremum norm.
\end{proposition}

The proof follows directly from the relation between continuity of operators and bornologies between two topological vector spaces. 

In this case, we get the connection of finite semivariation defined previously:

\begin{proposition} [Convex Boundedness Implies Finite Semivariation] \label{finitesemivariationconvexlybounded}
    If a vector measure $\mu: \mathcal{H} \rightarrow X$ is convexly bounded, then it has finite semivariation in the sense of Definition \ref{finitesemivariationmillington}.
\end{proposition}

With these properties in mind, we now define a sequence of objects in view of the BDS integral of \cite{Labudaexh}. For now on in this section, $\mu$ will be a convexly bounded vector measure.

In this case, let, for $d(\cdot)$ an $F$-seminorm in $Fs(X)$ and $A \in \mathcal{H}$, 

\[d(\mu)(A) = \sup\{d(\mu(B)): B \in \mathcal{H}, B \subseteq A\},\]

the submeasure-majorant associated to $\mu$ and $d$. It's readily seen that the submeasure-majorant is an order continuous (real) $\sigma$-subadditive set function defined on $\mathcal{H}$.

We then say that a real function $f: \Omega \rightarrow \mathbb{R}$ is $\mu$-measurable if it is equal, except on a $\mu$-null set\footnote{That is, an $A \in \mathcal{H}$ such that $\mu(B) = 0$ in $Y$ for every $B \subseteq A$ such that $B \in \mathcal{H}$.}, to a (Borel) $\mathcal{H}$-measurable function $g$. We denote the (quotient space by $\mu$-null sets of the) space of all such $\mu$-measurable functions by $L^{0}(\mu)$. We equip this space with the Hausdorff vector topology of convergence in (sub)measure $\mu$ (see \cite{labudaconvexly} for details).

In this case, we now have the main definition from \cite{drewlabuda}:

\begin{definition} [Bartle-Dunford-Schwartz Integral]

A function $f \in L^{0}(\mu)$ is said to be Bartle-Dunford-Schwartz (BDS) integrable if there exists a sequence $\{f_{n}\}_{n \in \mathbb{N}}$ of simple functions such that $f_{n} \rightarrow f$ $\mu$-a.e. and, for each $A \in \mathcal{H}$, $\lim_{n \rightarrow \infty}\int_{A}fd\mu$ exists in $X$, where each $\int_{A}f_{n}d\mu$ is defined additively.

Then, by definition, 

\[\int^{(BDS)}_{A}fd\mu = \lim_{n \rightarrow \infty}\int_{A}fd\mu.\]
    
\end{definition}

This integral has the usual additivity properties in the space of integrable functions, and also good continuity properties (see sections $3$ and $4$ of \cite{drewlabuda}). 

\begin{observation} [The BDS integral Ignores $\mu$-null sets] \label{ignoresnullsetsBDS}

By the definition of the $BDS$-integral, we have that, if $A \in \mathcal{H}$ is a null set, then for every $f \in L^{0}(\mu)$,

\[\int_{A}^{(BDS)}fd\mu = \int_{\Omega}^{(BDS)}f\mathbbm{1}_{A}d\mu = 0.\]

In this case, if $f$ is $BDS$-integrable and $A$ is the $\mu$-null set on which the convergence of $\{f_{n}\}_{n \in \mathbb{N}}$ occurs outside it, we have (see also more details in \cite{drewlabuda}):

\[\int_{B}^{(BDS)}fd\mu = \int_{B}^{(BDS)}f\mathbbm{1}_{A^{c}}d\mu,\]

for every $B \in \mathcal{H}$. As the same property of integration on null sets is valid for the $S^{*}$-integral by the results already stated and used in Section \ref{convergenceoutsidenullset}, we may suppose without loss of generality that $f$ is, itself, Borel-measurable to prove the equivalence of the two integrals. 
    
\end{observation}

Considering the above observation, to prove that the BDS integral is a special case of a type of the net Riemann integral, we shall use the following simplification of the decomposition of (strongly) measurable functions given in \cite[Lemma 5.1, p. 201]{musialtopics}:

\begin{proposition} \label{musialdecomposition}
    If $f: \Omega \rightarrow \mathbb{R}$ is a Borel measurable function, then there exits a bounded (Borel) measurable function $g: \Omega \rightarrow \mathbb{R}$ and a(n) (elementary and Borel) measurable function $h: \Omega \rightarrow \mathbb{R}$ of the form $\sum_{i=1}^{\infty}x_{n}\mathbbm{1}_{A_{i}}$, with pairwise disjoint $A_{i} \in \mathcal{H}$ such that:

    \[f = g + h.\]
\end{proposition}

As is well known that each bounded real $\mu$-measurable function is a uniform limit of simple functions defined on $\mathcal{H}$ (see chapter 1 of \cite{dinculeanustochastic}) and that $\mu$ is of finite semivariation by Proposition \ref{finitesemivariationconvexlybounded}, we get immediately by Theorem \ref{quasiuniformconvergencetheorem}, Observation \ref{ignoresnullsetsBDS} and the fact that the $S^{*}$-partition integral equals de BDS-integral on simple functions that:

\begin{proposition} \label{bdsbounded}
    Let $g: \Omega \rightarrow \mathbb{R}$ a bounded $\mu$-measurable function. Then, $g$ is $S^{*}$-partition integrable and its integral equals the $BDS$-integral of $g$\footnote{Which is always BDS-integrable by construction.}.
\end{proposition}

Now, we need the next result which is contained in \cite[Remarks 3.3. (3), p. 627]{drewlabuda}:

\begin{proposition} \label{bdselementary}
    Let $h: \Omega \rightarrow \mathbb{R}$ be a(n) (elementary) $\mu$-measurable function  of the form $\sum_{i=1}^{\infty}x_{n}\mathbbm{1}_{A_{i}}$, with pairwise disjoint $A_{i} \in \mathcal{H}$ which is BDS-integrable. Then, 

    \[\int_{A}^{(BDS)}hd\mu= \sum_{i=1}^{\infty}x_{i}\mu(A \cap A_{i}),\]

    for each $A \in \mathcal{H}$ where the series is unconditionally summable in $X$.
\end{proposition}

Combining these results, we get:

\begin{theorem} [BDS integrability implies $S^{*}$-partition integrability] \label{labudadrewriemann}
    Let $f \in L^{0}(\mu)$ be a BDS-integrable function. Then, $f$ is $S^{*}$-partition integrable and:

    \[\int_{A}^{S^{*}}fd\mu = \int_{A}^{(BDS)}fd\mu, \]

    for each $A \in \mathcal{H}$.
\end{theorem}

\begin{proof}
    As $f$ is $\mu$-measurable, by Observation \ref{ignoresnullsetsBDS}, we may assume that $f$ is (Borel) measurable. Therefore, by Proposition \ref{musialdecomposition}, there exits a bounded measurable function $g: \Omega \rightarrow \mathbb{R}$ and a(n) (elementary) measurable function $h: \Omega \rightarrow \mathbb{R}$ of the form $\sum_{i=1}^{\infty}x_{n}\mathbbm{1}_{A_{i}}$, with pairwise disjoint $A_{i} \in \mathcal{H}$ such that:

    \[f = g + h,\]

   As $f$ is BDS-integrable and $g$ is bounded, and therefore $BDS$-integrable by Proposition \ref{bdsbounded} with BDS-integral equal to its $S^{*}$-partition integral, it follows that $h$ is BDS-integrable and, by Proposition \ref{bdselementary},

    \[\int_{A}^{(BDS)}hd\mu= \sum_{i=1}^{\infty}x_{i}\mu(A \cap A_{i}),\]

    for $A \in \mathcal{H}$ and with the right-hand series unconditionally summable in $X$.

    By the same techniques as in the Pavlakos integral of elementary functions in Proposition \ref{equivalencefirstelement}, we get, using the unconditional summability of the above series, that $h$ is $S^{*}$-partition integrable and:

    \[\int_{A}^{S^{*}}hd\mu= \sum_{i=1}^{\infty}x_{i}\mu(A \cap A_{i}).\]

    Combining with the results on $g$ and using the additivity of each integral, we get the result. 
\end{proof}

Therefore, by Proposition \ref{sandsionintegrals}, we get that the BDS-integral is a special case of the $S^{*}$-partition and Sion integrals, and therefore a special case of the Net Riemann integral. All the results above can be developed for $\delta$-rings in the place of $\sigma$-algebras, as detailed in \cite{thomas} for the BDS-integral with the use of Theorem \ref{uniformconvergencetheoremtopological} accounting for the $\delta$-ring. One consequence is that the (stochastic) integral of \cite{schwartz2006semi}, as well as the integrals of \cite{masaniniemi1}, \cite{masaniniemi2} and \cite{masaniniemi3}, are contained as special cases of the Riemmanian definition of integration given by the net Riemann integral.

We now study the next point in relation to \cite{haluska1} bornological integral.

\subsubsection{The Haluska and Rodriguez-Salazar integrals in Bornological Locally Convex Spaces.}

For this section, we mainly follow the article \cite{haluskakolmogorov} for the basic elements in establishing the bornological integral. 

In this case, we let $X,Y$ be two complete bornological locally convex topological vector spaces with respect to the field of real numbers, with bornologies $\mathcal{B}_{X}$ and $\mathcal{B}_{Y}$. Let also be $\mathcal{U}$ and $\mathcal{W}$ the set of all Banach disks in $X$ and $Y$ respectively, in such a way that $X$ and $Y$ are inductive limits of (separable) Banach spaces $X_{U}$ and $Y_{W}$,

\[X = inj\lim_{U \in \mathcal{U}}X_{U},\]

\[Y = inj\lim_{W \in \mathcal{W}}Y_{W},\]

We equip both spaces with the net convergence structure of Mackey convergence on each Banach disk (see \cite{convergencestructuresvanderwalt} for details of the net convergence structures associated to bornologies). 

We also denote $p_{U}: X \rightarrow [0, \infty]$ the Minkowski functional associated to each member $U \in \mathcal{U}$ (similarly for $\mathcal{W}$), and by $L(X,Y)$ the space of all continuous linear operators between $X$ and $Y$.

Further, for this section, let $\mathcal{H}$ be a $\delta$-ring of subsets of $\Omega$ and $\sigma(\mathcal{H})$ the $\sigma$-algebra generated by $\mathcal{H}$.

In this case, we may define a central notion for the theory of bilinear integral of Dobrakov type:

\begin{definition} [Bilinear Semivariation and Sets of Finite Semivariation]

Let $\mu: \mathcal{H} \rightarrow L(X,Y)$ be a finitely additive operator valued measure. For $(U,W) \in \mathcal{U} \times \mathcal{W}$, we denote by $\hat{\mu}_{U,W}$ the $(U,W)$-semivariation of $\mu$, that is, the order continuous (extended real) submeasure (defined on $\mathcal{H}$) given by:

\[\hat{\mu}_{U,W}(A) = \sup\{p_{W}(\sum_{i=1}^{n}\mu(A \cap A_{i})x_{i}): \{x_{i}\}_{i=1}^{n} \in U; \{A_{i}\}_{i=1}^{n} \ \text{disjoint in} \ \mathcal{H}, n \in \mathbb{N}\}.\]

We then denote by $\mathcal{H}_{U,W}$ the largest $\delta$-ring of sets from $\mathcal{H}$ which are of finite $\hat{\mu}_{U,W}$ semivariation. Also, $\hat{\mu}_{\mathcal{U},\mathcal{W}}$ will be the set of all semivariations $\hat{\mu}_{U,W}$.

\end{definition}

We now need some analogous constructions ,in the bornological case, of operator valued measures that are $\sigma$-additive in the strong operator topology of a Banach space. For that, we need some further notions related to operators between bornological spaces. 

In this context, let $\Phi$ be the class of all functions defined on $\mathcal{U}$, and with values in $\mathcal{W}$, equipped with the partial order given by: for $\phi,\zeta \in \Phi$, we say that $\phi \leq \zeta$ if and only if $\phi(U) \subseteq \zeta(U)$ for each $U \in \mathcal{U}$. We also suppose that $\Phi \subseteq L(X,Y)$.

We then have the next measure related structures from \cite{haluskakolmogorov}:

\begin{definition} [Countable Additivity and Bornologies]
    For $\mu: \mathcal{H} \rightarrow L(X,Y)$  a finitely additive operator valued measure, we say that:

    \begin{enumerate} [(i)]
        \item  For $(U,W) \in \mathcal{U} \times \mathcal{W}$, $\mu$ is of $\sigma$-finite $(U,W)$-semivariation if there exists a sequence $\{A_{n}\}_{n \in \mathbb{N}}$ in $\mathcal{H}_{(U,W)}$ such that $\Omega = \bigcup_{n=1}^{\infty}A_{n}$.
        \item For $\phi \in \Phi$, $\mu$ is of $\sigma_{\phi}$-finite $(\mathcal{U},\mathcal{W})$-semivariation if for every $U \in \mathcal{U}$, $\mu$ is of $\sigma$-finite $(\mathcal{U}, \phi(\mathcal{U}))$-semivariation.
        \item $\mu$ is of finite $\sigma_{\phi}$-finite $(\mathcal{U},\mathcal{W})$-semivariation if there exists a $\phi \in \Phi$ such that for every $U \in \mathcal{U}$ the operator valued measure $\mu$ is of $\sigma_{\phi}$-finite $(\mathcal{U},\mathcal{W})$-semivariation.
        \item $\nu: \sigma(\mathcal{H}) \rightarrow Y$ finite additive is $(W,\sigma)$-additive, for $W \in \mathcal{W}$, if $\nu$ is countable additive in $Y_{W}$. We then say that $\nu$ is $(\mathcal{W},\sigma)$-additive if there exists $W \in \mathcal{W}$ such that $\nu$ is $(W,\sigma)$-additive. 
        \item Let $\phi \in \Phi$. We say that $\mu$ is $\sigma_{\phi}$-additive if $m$ is of finite $\sigma_{\phi}$-finite $(\mathcal{U}, \mathcal{W})$-semivariation and, for every $A \in \mathcal{H}_{U,\phi(U)}$, the finite additive set function $\mu(A \cap \cdot)x: \sigma(\mathcal{H}) \rightarrow Y$ is a $(\phi(U),\sigma)$-additive for every $x \in X_{U}$, $U \in \mathcal{U}$. We then say that $\mu$ is $\sigma_{\Phi}$-additive if there exists $\phi \in \Phi$ such that $\mu$ is $\sigma_{\phi}$-additive.
        \item Let $W \in \mathcal{W}$. A sequence of $(W, \sigma)$-additive set functions $\nu_{n}: \sigma(\mathcal{H}) \rightarrow Y$, $n \in \mathbb{N}$, is said to be uniformly $(W,\sigma)$-additive on $\sigma(\mathcal{H})$ if for every $\varepsilon > 0$, $A \in \sigma(\mathcal{H})$ such that $p_{W}(\nu_{n}(A)) < \infty$, $A_{i} \in \sigma(\mathcal{H})$ disjoint, there exists $k_{0} \in \mathbb{N}$ such that for every $k \geq k_{0}$, 

        \[p_{W}(\nu_{n}(\bigcup_{i=k+1}^{\infty}(A_{i} \cap E))) \leq \varepsilon,\]

       uniformly for every $n \in \mathbb{N}$.
    \end{enumerate}
\end{definition}

The last step to define the integral is to define some structures related to a class of "measurable" functions in this abstract (operator valued) setting. We again collect the relevant definitions from \cite{haluskakolmogorov}:

\begin{definition} [Measurable Functions and Bornologies]

For $A \in \mathcal{H}$ and $\mu: \mathcal{H} \rightarrow L(X,Y)$  a finitely additive operator valued measure, we say that:

\begin{enumerate} [(i)]
    \item For $R \in \mathcal{U}$, a sequence of functions $f_{n}: \Omega \rightarrow X$, $n \in \mathbb{N}$, $(R,A)$-converges $\hat{\mu}_{U,W}$-a.e. to a function $f: \Omega \rightarrow X$ if:

    \[\lim_{n \rightarrow \infty}p_{R}(f_{n}(\omega)-f(\omega)) = 0,\]

    for every $\omega \in A \setminus N$, where $N$ is a $\hat{\mu}_{U,W}$-null set. 
    \item  A sequence of functions $f_{n}: \Omega \rightarrow X$, $n \in \mathbb{N}$, $(\mathcal{U},A)$-converges $\hat{\mu}_{\mathcal{U},\mathcal{W}}$-a.e. to a function $f: \Omega \rightarrow X$ if there exists $R \in \mathcal{U}$ and $(U,W) \in \mathcal{U} \times \mathcal{W}$ such that the sequence $f_{n}$ $(R,A)$-converges $\hat{\mu}_{U,W}$-a.e. to f. In this case, we write $f = \mathcal{U}-\lim_{n \rightarrow \infty}f_{n}$ $\hat{\mu}_{U,W}$-a.e.
    \item We say that a function $f: \Omega \rightarrow X$ is $\mathcal{U}$-measurable function if there exists a $R \in \mathcal{U}$ such that for every $R \subseteq U \in \mathcal{U}$ and $\delta > 0$

    \[\{\omega \in \Omega: p_{U}(f(\omega)) \geq \delta\} \in \sigma(\mathcal{H}).\]

    We denote by $\mathbb{M}_{\mathcal{U}}$ the set of all $\mathcal{U}$-measurable functions.
    \item A function $f: \Omega \rightarrow X$ is $\mathcal{H}$-simple if it has a finite range and $f^{-1}(x) \in \mathcal{H}$ for each $x \in X \setminus \{0\}$. We denote the space of all $\mathcal{H}$-simple functions by $\mathcal{S}$. On the other hand, we say that a function $f: \Omega \rightarrow X$ is $\mathcal{H}_{U,W}$ simple, for $(U,W) \in \mathcal{U} \times \mathcal{W}$ if:

    \[f = \sum_{i=1}^{n}x_{i}\mathbbm{1}_{A_{i}},\]

    where $\{x_{i}\}_{i=1}^{n} \in X_{U}$, $\{A_{i}\}_{i=1}^{n}$ is a disjoint sequence of sets in $\mathcal{H}_{U,W}$ and $n \in \mathbb{N}$. We denote the space of these functions by $\mathcal{S}_{U,W}$. Then, a function is said to be $\mathcal{H}_{\mathcal{U},\mathcal{W}}$ simple if there exists $(U, W) \in \mathcal{U} \times \mathcal{W}$ such that $f \in \mathcal{S}_{U,W}$. The space of all $\mathcal{H}_{\mathcal{U},\mathcal{W}}$ simple functions will be denoted by $\mathcal{S}_{\mathcal{U}, \mathcal{W}}$.
\end{enumerate}
    
\end{definition}

We now give the main definition of the integral first presented in \cite{haluska1}, which is a generalization of the Banach space operator-valued measure of \cite{dobrakov1}:

\begin{definition} [Haluska Integral]

Let $\mu: \mathcal{H} \rightarrow L(X,Y)$ be a $\sigma_{\phi}$-additive operator valued measure. A function $f \in \mathbb{M}_{\mathcal{U}}$ is said to be $\mathcal{H}_{\mathcal{U},\mathcal{W}}$-integrable, or Haluska integrable, and we write $f \in \mathcal{I}_{\mathcal{U},\mathcal{W}}$, if:

\begin{enumerate} [(i)]
    \item There exists a sequence $f_{n} \in \mathcal{S}_{\mathcal{U},\mathcal{W}}$, $n \in \mathbb{N}$, of (simple) functions such that:

    \[\mathcal{U}-\lim_{n \rightarrow \infty}f_{n} =f \ \hat{\mu}_{\mathcal{U},\mathcal{W}}-a.e.,\]

    \item The integrals (defined, as usually, additively) $\int_{A}f_{n}d\mu$, $n \in \mathbb{N}$, are uniformly $(\mathcal{W},\sigma)$-additive measures on $\sigma(\mathcal{H})$.
\end{enumerate}

Then, the (Haluska) integral of the function $f \in \mathcal{I}_{\mathcal{U},\mathcal{W}}$ on a set $A \in \sigma(\mathcal{H})$ is defined by the equality:

\[\int_{A}^{(H)}fd\mu = \mathcal{W}-\lim_{n \rightarrow \infty}\int_{A}f_{n}d\mu.\]
    
\end{definition}

The $S^{*}$-partition integral of Definition \ref{kolmogorovS*integralconvergencestructure} takes the following form in this context:

\begin{definition} [$S^{*}$-Partition Integral and Bornologies]

Let $(U,W) \in \mathcal{U} \times \mathcal{W}$. A function $f: \Omega \rightarrow X$ is $S^{*}_{U,W}$-partition integrable on $A \in \mathcal{H}$ if, for $f_{U}: \Omega \rightarrow X_{U}$, there exists $y \in Y_{W}$ such that for every $\varepsilon > 0$ there exists a $P_{0} \in \mathcal{P}(A)$ consisting of $\mathcal{H}_{U,W}$ sets such that, for every $P \in \mathcal{P}(A)$ consisting of $\mathcal{H}_{U,W}$ sets and which satisfies $P \geq P_{0}$, the series $\sum_{\alpha \in P}\mu(\alpha)f_{U}(\delta_{\alpha})$ is unconditionally convergent in $Y_{W}$ for every choice function $\delta$ and:

\[p_{W}(\sum_{\alpha \in P}\mu(\alpha)f_{U}(\delta_{\alpha}) - y) \leq \varepsilon.\]
    
\end{definition}

We then have the main result of the article \cite{haluskakolmogorov}, which gives the equivalence between the $S^{*}$-partition integral above and the Haluska integral, and which generalizes the corresponding result of \cite{dobrakov7} for the Banach-valued case:

\begin{theorem} [$S^{*}$-Partition Integral is Equivalent to the Haluska Integral]

A $\mathcal{H}_{\mathcal{U},\mathcal{W}}$ measurable function $f: \Omega \rightarrow X$ is Haluska integrable on $\sigma(\mathcal{H})$ if and only if it is $S^{*}$-partition integrable on $\sigma(\mathcal{H})$ and, in this case,

\[\int_{A}^{(H)}fd\mu = \int_{A}^{S^{*}}fd\mu.\]
\end{theorem}

Using the class of Dobrakov measurable functions, which is a special class of the class of measurable functions defined above, we get that, from the last theorem, $X,Y$ being Banach spaces,

\begin{corollary} [$S^{*}$-Partition Integral is Equivalent to the Dobrakov Integral] \label{dobrakovandsion}

A Dobrakov measurable function $f: \Omega \rightarrow X$ is Dobrakov integrable on $\sigma(\mathcal{H})$ if and only if it is $S^{*}$-partition integrable on $\sigma(\mathcal{H})$ and, in this case,

\[\int_{A}^{(D)}fd\mu = \int_{A}^{S^{*}}fd\mu,\]

where $\int_{A}^{(D)}fd\mu$ is de Dobrakov integral of $f$ on $A$.

\end{corollary}

These two equivalences results gives interesting further properties of the net Riemann integral, and its special (topological) case of the $S^{*}$-partition integral, in the context of bilinear integration in locally convex spaces, specially Banach and bornological ones. For further interesting convergence theorems results, we refer the reader to the articles cited above. 

We should also comment that, as noticed by \cite{haluska2}, his integral contains the Egorov type integral of the Spanish school constructed by \cite{rodriguezsalazar}, thus giving another interesting integration procedure related to the net Riemann integral. Interestingly, we also may have proved this result directly by looking at the constructions in \cite{rodriguezsalazar} or \cite{fidelgeneral2} and noticing that it reduces to approximating functions by countable valued functions in the regime of quasi-uniform convergence (always supposing finite semivariation of the operator valued-measures involved), thus the equivalence can be proven using Theorem \ref{quasiuniformconvergencetheorem}. We don't pursue this construction for brevity.

\subsubsection{The Submeasure Integral of Massé.}

We now study the last point in the integration equivalences. This is the integral of \cite{massesemigroup}, which contains as special case the integral of \cite{marikmonoidwertige}. We mainly follow the first cited article, and we also use the submeasure terminology originally due to \cite{drew1}.

For this section, unless stated to the contrary, we consider $X,Y,Z$ Hausdorff topological vector spaces each of which has the net convergence structure induced by its topology, which we assume are complete and generated by a base of balanced zero-neighborhoods, denoted in each case by $\mathcal{U}_{X},\mathcal{U}_{Y}, \mathcal{U}_{Z}$. Also, $Fs(X)$ well denote any base of continuous $F$-seminorms which define the topology of $X$. In this case, with a bilinear product, we fix an integration structure given by Definition \ref{integrationstructure}.

Also, in this section, $\mathcal{H}$ will be an algebra of subsets of $\Omega$, and we denote by $\Delta$ the operation of symmetric differences on sets. In this case, we define:

\[\mathbb{S} = \{\xi_{i}\}_{i \in I},\]

to be a non-empty collection of (extended) real valued submeasures defined on $\mathcal{H}$. In this case, we equip $(2^{\Omega}, \Delta, \cap)$ with the topology induced by $\mathbb{S}$ for which it is a topological ring (with operations of symmetric difference and intersection). We call this topology $\tau_{\mathbb{S}}$. Then, $\mathcal{H}(\mathbb{S})$ will denote the sub-ring of this last topological ring with the subspace topology.

We now define an essential concept due to \cite{massesemigroup}, which generalizes the idea of convergence in measure:

\begin{definition} [Convergence in a Family of Submeasures]

We say that a net of functions $\{f_{l}\}_{l \in L}$, each defined on $\Omega$ and with values in $X$, converges in submeasure to $f: \Omega \rightarrow X$ if and only if:

\[\xi_{i}(\{\omega \in \Omega: ||f_{l}(\omega)-f(\omega)||_{d}\}) \rightarrow 0,\]

for each $d \in Fs(X)$ and $i \in I$, where $||\cdot||_{d}$ is the $F$-norm induced on $X$ by $d$. The (vector\footnote{See section 1 of \cite{massesemigroup}.}) topology induced by this convergence in $X^{\Omega}$, which has the base of neighborhoods of the form $\{W_{K}(U,\varepsilon)\}_{U \in \mathcal{U}_{X}, K \subseteq \mathcal{F}(\mathbb{N}), \varepsilon >0}$, where:

\[W_{K}(U,\varepsilon) =\{g \in X^{\Omega}: \xi_{k}(\{\omega \in \Omega: f - g \notin U\}) \leq \varepsilon,\ k \in K\}, \]

is called the topology of convergence in $\mathbb{S}$-submeasure. 
    
\end{definition}

We now define a structure that consists, in a generalized sense, on the sets for which we may apply the family $\mathbb{S}$ (one example would be the sets of finite semivariation in the previous section):

\begin{definition} [$\mathbb{S}_{\mu}$-Integrable Sets]

For a finitely additive set function $\mu: \mathcal{H} \rightarrow Y$ and the family of submeasures $\mathbb{S}$, we suppose the existence of a sub-ring of $\mathcal{H}$, which we denote by $\mathcal{H}^{\mu}_{\mathbb{S}}$, and call the family of $\mathbb{S}_{\mu}$-integrable sets.  
\end{definition}

Associated to these structures, we have the main two hypothesis of \cite{massesemigroup}, which are imposed in order to give some regularity to the integration procedure, and we assume for the rest of the section. Basically, they ensure the to-be-defined integration procedure is continuous in the space of simple functions with uniform convergence:

\begin{definition} [Massé's Hypothesis]

We suppose that the integration structure of this section satisfies the following two hypothesis:

($M_{1}$) The integration operator acting on simple functions, based on $\mathcal{H}^{\mu}_{\mathbb{S}}$, 

\[\int_{\Omega}\cdot \ d\mu: (S^{\mathcal{H}^{\mu}_{\mathbb{S}}}_{\infty}, \tau_{\infty}) \rightarrow (Z,\tau),\]

    given by:

    \[\int_{\Omega}sd\mu = \sum_{i=1}^{n}s_{i}\mu(A_{i}),\]

    is continuous, where $(S^{\mathcal{H}^{\mu}_{\mathbb{S}}}_{\infty}, \tau_{\infty})$ is the space of simple functions, based on $\mathcal{H}^{\mu}_{\mathbb{S}}$, with the topology of uniform convergence induced on $X^{\Omega}$ denoted by $\tau_{\infty}$.

($M_{2}$) For every $x \in \mathcal{X}$, 

\[\lim_{\mathcal{H}^{\mu}_{\mathbb{S}}, E \rightarrow \emptyset}x\mu(E) = 0,\]

where the limit is taken in  $\mathcal{H}^{\mu}_{\mathbb{S}}$ equipped with the convergence in $\mathbb{S}$-submeasure topology induced by indicator function convergence. 
\end{definition}

We now give the main integral of \cite{massesemigroup}:

\begin{definition} [Massé Integral]

A function $f: \Omega \rightarrow X$ is said to be Massé-integrable with respect to a finitely additive set function $\mu: \Omega \rightarrow Y$ if there exists a net of $X$-valued simple functions based on $\mathcal{H}^{\mu}_{\mathbb{S}}$, denoted by $\{f_{l}\}_{l \in L}$, such that $f_{l} \rightarrow f$ in $\mathbb{S}$-submeasure and such that the net of additively-defined integrals $\{\int_{A}^{(M)}f_{l}d\mu\}_{l \in L}$ is Cauchy in $Z$ uniformly for $E \in \mathcal{H}$ . 

In this case, we define the Massé-integral of $f$ on $A \in \mathcal{H}$ by:

\[\int_{A}^{(M)}fd\mu = \lim_{l \in L}\int_{A}^{(M)}f_{l}d\mu.\]
    
\end{definition}

We refer the reader to section 2 of \cite{massesemigroup} for the facts that this integral is well-defined, is an homomorphism in the space of integrable functions, and also for Vitali type convergence theorems.

For the equivalence with some version of the Net Riemann integral, matters are more complicated than the results above. The first thing to notice in this regard is that the hypothesis $(M1)$ and $(M2)$ are not necessary to define the net Riemann integral, or the $S^{*}$-integral, which is contained in it, in this case. In fact, we may find Sion/$S^{*}$-integrable functions, with respect to a finitely additive measure of non-finite semivariation, which are not Massé integrable (see \cite{marikmonoidwertige}). On the other hand, both may exist and not be equal, as the following example indicate:

\begin{example} [Sion Integrable, but not Massé Integrable, Function]

Let $\Omega = \mathbb{N}$, $\mathcal{H}$ the algebra of cofinite sets in $\Omega$, and take $\mu(A)$ (the finitely, but not $\sigma$,additive set function) to be $0$ when $A$ is finite, and $1$ when $A^{c}$ is finite. Then, it is straightforward to see that the function $f = \mathbbm{1}_{\mathbb{N}}$ is Sion integrable, with integral $0$, and Massé integrable, with integral $1$.
\end{example}

In fact, this example gives an indication of a much wider phenomena, as can be seen from the next proposition, which is a consequence of Theorems \ref{sandsionintegrals} and \ref{sigmaadditives*integral}:

\begin{proposition}
    Let $f$ be a Massé and $S^{*}$/Sion-integrable function with respect to a finitely additive measure $\mu: \mathcal{H} \rightarrow Y$ such that $\nu_{f}: \mathcal{H} \rightarrow Y$, the set function induced by the Massé integral of $\mathcal{H}$, is not $\sigma$-additive on $\mathcal{H}$. Then, the Massé and $S^{*}$/Sion-integrals of $f$ are not equal.
\end{proposition}

In other words, from the fact that the $S^{*}$/Sion-partition integrals always project a measure intro its $\sigma$-additive part we may, for example, picking a purely finitely-additive set function, always produce functions and set functions for which these integrals disagree. 

Therefore, we need further hypothesis, as in the sections above, to prove some equivalence results concerning these integrals, or some other interesting versions of the net Riemann integral. Nevertheless, we couldn't find a single general condition on the Sion or $S^{*}$-integrals, for example, that gives equivalence with Massé's one,  something that is difficult given the radically different "natures" of these two integrals, as argued by Massé in the epilogue of his thesis (see \cite{masse}), and in the penultimate paragraph of Section 2 of \cite{massesemigroup}, as well as by the example and result above. 

One possible approach for the resolution of this problem  may be to reduce it to a question of which type of convergence in submeasure theorem for the Net Riemann integral, or its special cases, is valid. As said in our section about convergence theorems, this is left for a future work, and we therefore leave the general version of the equivalence as an open problem. 

The second approach, without asking the $\sigma$-additivity of $\mu$, may be to look for integrals in which the projection property into the $\sigma$-additive part is not automatic, and restrict the class of functions for which some type of convergence theorem is valid. In fact, by $(M1)$ we know that every function which is a uniform limit of a net of simple functions based $\mathcal{H}^{\mu}_{\mathbb{S}}$ is Massé integrable. To follow this approach, we need the next result, which we state for uniform spaces (see \cite{pageuniform}, or the other topology references above, for non-defined terms):

\begin{lemma} [Uniform Approximation in Totally Bounded Uniform Spaces]
    Let $f: \Omega \rightarrow X$ be a map from a paved algebra $(\Omega, \mathcal{H})$ to a totally bounded uniform space $(X, \mathcal{U})$. Then, $f$ is the uniform limit of a net $\{f_{i}\}_{i \in I}$ of simple functions $f_{i}: \Omega \rightarrow X$. If $f^{-1}(U[\cdot]) \in \mathcal{H}$ for every $U \in \mathcal{U}$ \footnote{Or a (sub)base for this uniformity, which is equivalent.}, then we can choose the net $\{f_{i}\}_{i \in I}$ to be measurable with respect to $\mathcal{H}$.
\end{lemma}

\begin{proof}
    As $X$ is totally bounded, for each $U \in \mathcal{U}$ there exists a finite collection of points $\{x_{k}\}$, $k = 1, \dots, n_{U}$, $n_{U} \in \mathbb{N}$ - which depends on the entourage - such that:

    \[X = \bigcup_{k=1}^{n_{U}}U(x_{k}).\]

    Now, let $A_{k}^{U} = f^{-1}(U(x_{k}) - \bigcup_{j=1}^{k-1}U(x_{j}))$, for $k=2, \dots, n_{U}$, and $A_{1}^{U} = f^{-1}(U(x_{1}))$. Then, for each (fixed) $U \in \mathcal{U}$, the sets $A_{k}^{U}$ are disjoint and $\Omega = \bigcup_{k=1}^{n_{U}}A_{k}^{U}$. Therefore, define the simple function given by:

    \[f_{U}(\omega) = \sum_{k=1}^{n_{U}}x_{k} \mathbbm{1}_{A_{k}^{U}}(\omega),\]

    which, by definition, have $f(\omega) \in U(f_{U}(\omega))$, for each $U \in \mathcal{U}$, and $\omega \in \Omega$. Therefore, by definition, we have the uniform convergence $f_{U} \rightarrow f$, when $\mathcal{U}$ with the subset (downwards!) directed order, and therefore $\{f_{U}\}_{U \in \mathcal{U}}$ being a well defined net. Also, when $f$ satisfies the preimage condition $f^{-1}(U) \in \mathcal{H}$ for every $U \in \mathcal{U}$, then the support sets of each $f_{U}$ are in $\mathcal{H}$, and therefore all of the approximation maps are measurable with respect to the algebra $\mathcal{H}$. 
\end{proof}

Therefore, from this last result, and the fact that the $S$-Kolmogorov integral (see Example \ref{KolmogorovSintegralconvergencestructures}) agrees with the Massé integral on simple functions and satisfy the hypothesis of the (uniform convergence) Theorem \ref{uniformconvergencetheoremtopological}, we have:

\begin{corollary} [Massé and $S$-Kolmogorov Integrals] 
\label{masseandskolmogorov}

Suppose that $f: \Omega \rightarrow X$ is a totally bounded function such that $f^{-1}(O) \in \mathcal{H}^{\mu}_{\mathbb{S}}$ for each open set $O \subseteq X$. Then, $f$ is $S$-Kolmogorov and Massé integrable, and the two integrals agree. 
    
\end{corollary}

For a set of minimal, but strong, hypothesis, this seems the most general result we have without supposing further hypothesis on the additivity of $\mu$.

If we allow $\mu$ to be $\sigma$-additive, then is is readily seen that we can change the integral in Corollary \ref{masseandskolmogorov} to be the $S^{*}$/Sion integral and get the same result. Also, in this case, we now study a different set of restrictive hypothesis that make the referenced integrals equal . For that, we first need \cite[Lemma 6.11, p. 30]{masse}, and then \cite[Theorem 9.7, p. 60]{masse}, combined with \cite[Theorem 6.4-6.6, p. 27]{masse}:

\begin{theorem} \label{massetheoremthesis}
    Let $\mathbb{S}$ be a non-empty family of (extended real) submeasures in a $\sigma$-algebra $\mathcal{H}$, such that it possesses a control submeasure\footnote{For this definition, see \cite{drewcontrol}.}. Suppose also that $X$ and $Z$ are metrizable. Then, a function $f:\Omega \rightarrow X$ is Massé integrable, with respect to the family $\mathbb{S}$, if and only if there is a sequence of simple functions based on $\mathcal{H}^{\mu}_{\mathbb{S}}$ such that:

    \begin{enumerate} [(i)]
        \item There exists a countable collection of subsets of $\Omega$, all elements of $\mathcal{H}$, $\{F_{k}\}_{k \in \mathbb{N}}$, such that $\Omega \setminus \bigcup_{k \in \mathbb{N}}F_{k}$ is a $\mu$-null set and $f_{n} \rightarrow f$ uniformly on each $F_{k}$;
        \item $\{\int_{E}^{(M)}f_{n}d\mu\}_{n \in \mathbb{N}}$ is Cauchy in $Z$ uniformly for $E \in \mathcal{H}$.
    \end{enumerate}
\end{theorem}

In this case, we shall modify the proof of the main theorems of \cite{haluskakolmogorov} and \cite{dobrakov7} to prove the following:

\begin{theorem} \label{sionandmasseintegrals}
    Assume the hypothesis of Theorem \ref{massetheoremthesis}, and let $f: \Omega \rightarrow X$ be Massé integrable on each $A \in \mathcal{H}$ with respect to $\mu: \mathcal{H} \rightarrow Y$, which is $\sigma$-additive. Suppose also $Y$ is metrizable and the semivariation of $\mu$ is finite on each $\mathcal{H}$. Then, $f$ is $S^{*}$-integrable on each $A \in \mathcal{H}$, and the two integrals agree. 
\end{theorem}

\begin{proof}

We begin by noting that, by hypothesis, we may, using Theorem \ref{massetheoremthesis}, find a countable collection of subsets of partition of $\Omega$, based on sets from $\mathcal{H}$, $\{F_{k}\}_{k \in \mathbb{N}}$, such that $\Omega \setminus \bigcup_{k \in \mathbb{N}}F_{k}$ is a $\mu$-null set and $f_{n} \rightarrow f$ uniformly on each $F_{k}$, which are each of finite semivariation and may be assumed, without loss of generality, to be increasing. Also, by the fact that the Massé integral ignores null sets in the sense of Observation \ref{ignoresnullsetsBDS} (see \cite{massesemigroup}), we may reduce the problem to the integrability and equality of integrals of $f\mathbbm{1}_{ \Omega \setminus (\bigcup_{k \in \mathbb{N}}F_{k})^{c}}$, which we shall denote by $g$. 

By the uniform convergence theorem given in Theorem \ref{uniformconvergencetheoremtopological}, which is applicable as we assume $(M1)$, $f$ is $S^{*}$ and $S$-integrable on each $F_{k+1} \setminus F_{k}$, as they are uniform limits of simple functions on these sets and the set function $\mu$ is $\sigma$-additive (therefore, each simple function is $S^{*}$ integrable), and the integral equals the Massé integral by the same argument before Corollary \ref{masseandskolmogorov}. Now using these two facts we may, for $\varepsilon > 0$ and $k \in \mathbb{N}$ both fixed, find a finite partition $P_{0} \in \mathcal{P}^{f}(F_{k+1} \setminus F_{k})$ such that for each countable partition $P_{0}^{(k)} \in \mathcal{P}(F_{k+1} \setminus F_{k})$ that refines $P_{0}$, and any choice function $\delta$, the series $\sum_{\alpha \in P_{0}^{(k)}}g(\delta_{\alpha})\mu(\alpha)$ is unconditionally summable and:

\begin{equation} \label{eqestrela}
 |\sum_{\alpha \in P_{0}^{(k)}}g(\delta_{\alpha})\mu(\alpha) - \int^{(M)}_{A \cap F_{k+1}\setminus F_{k}}gd\mu| \leq \frac{\varepsilon}{2^{k+1}},
\end{equation}

for each $A \in \mathcal{H}$ and $|\cdot|$ a fixed element of $Fs(Z)$.

Now, put $P_{0}^{F} = \bigcup_{k \in \mathbb{N}}P_{0}^{(k)}$, which is an element of $\mathcal{P}(F)$. Take now $P \geq P_{0}^{F}$ in $\mathcal{P}(F)$, and notice that $P$ is of the form:

\[P = \bigcup_{k \in \mathbb{N}}P^{(k)},\]

for $P^{(k)} \in \mathcal{P}(F_{k+1}\setminus F_{k})$ and $P^{(k)} \geq P_{0}^{(k)}$. We now show that:

\[\sum_{\alpha \in P}g(\delta_{\alpha})\mu(\alpha) = \sum_{k \in \mathbb{N}, \alpha^{(k)} \in P^{(k)}} g(\delta_{\alpha^{(k)}})\mu(\alpha^{(k)}),\]

is unconditionally summable. In fact, it is sufficient to stablish that, by Antosik interchange theorem (see \cite[Theorem 1, Section 8.5, p. 98]{matricesglidinghumpswartz}):

\begin{enumerate} [(i)]
    \item For each $k \in \mathbb{N}$, the series $\sum_{\alpha \in P^{(k)}}g(\delta_{\alpha})\mu(\alpha)$ is unconditionally summable in $Z$. This holds because $g$ is $S^{*}$ integrable on each $F_{k+1} \setminus F_{k}$ by the argument above.
    \item For any sequence of subsets $I^{(k)} \subseteq \mathbb{N}$, $k \in \mathbb{N}$, the series:

    \[\sum_{k=1}^{\infty}\sum_{j \in I^{(k)}}g(\alpha^{(k)}_{j})\mu(\alpha^{(k)}_{j}),\]

    is unconditionally summable in $Z$.
\end{enumerate}

For the second point, note that, for each fixed $I^{(k)}$ (as above) and by \ref{eqestrela},

\[|\sum_{j \in I^{(k)}}g(\alpha^{(k)}_{j})\mu(\alpha^{(k)}_{j}) - \int^{(M)}_{\bigcup_{j \in I^{(k)}}\alpha^{(k)}_{j} \cap F_{k+1}\setminus F_{k}}gd\mu| \leq \frac{\varepsilon}{2^{k+1}},\]

for each $k \in \mathbb{N}$ and $|\cdot| \in Fs(Z)$ fixed as before. Also, by using this inequality and the countable additivity of the Massé integral on $\mathcal{H}$ (as $\mu$ is countably additive - see Proposition $2.6$ of \cite{massesemigroup}), we get that

\[\sum_{k \in \mathbb{N}}\int^{(M)}_{\bigcup_{j \in I^{(k)}}\alpha^{(k)}_{j} \cap F_{k+1}\setminus F_{k}}gd\mu,\]

is unconditionally summable in $Z$. By this result and using again inequality \ref{eqestrela}, we get that:

\[|\sum_{\alpha \in P}g(\delta_{\alpha})\mu(\alpha) - \int_{A}gd\mu| \leq \varepsilon,\]

with the same notation as before. Therefore, by definition, as $|\cdot| \in Fs(Z)$ was arbitrary, $g$ is $S^{*}$ integrable on $A \in \mathcal{H}$ and this integral equals the Massé integral of $f$ on $A$.
\end{proof}

With this result, we finish the main points of equivalence regarding Lebesgue type integration and the net Riemann integral in topological vector spaces. We now pass to a discussion of another (the last) type of integration.

\section{A Saks Type Integration Procedure in Net Convergence Structures.} \label{saksintegralsection}

Now, we pass to, as indicated in the previous section, a brief discussion of the last type of integral to be defined in this work, a kind of Saks type integral as defined originally by \cite{saks} and \cite{gould}, but also more recently by \cite{fleischerchange} and  \cite{schcherbinsakstyunp}, all in topological or partially ordered cases. Basically, the idea is to integrate functions by first integrating in an upwards directed class of sets, and then take the limit of integration in sets in this class as it "expands" (as it is (upwards) directed by inclusion, therefore forming a net). This concept is a classical procedure in classical Lebesgue integration, where we may define the integral in an unbounded set as the limit of integrals on approximating (upwards) bounded sets. 

An interesting aspect of this integration procedure is that, by a Riemmanian definition, we may integrate with respect to non-additive set functions and get, by the set function induced by this process, a non-additive set function. As seen in Section \ref{sectionnetriemannintegral}, this is not the case for the net Riemann integral, which causes problems when studying the relation between it and integration procedures of Lebesgue type against non-additive set functions, such as the ones contained in the classical reference by \cite{papnull}. 

Considering the same integration structures and notation as in Section \ref{sectionnetriemannintegral}, we may directly define the main integral of this section:

\begin{definition} [Saks Integral] \label{saksintegral}

Let $f: \Omega \rightarrow X$  and  $\mu: \mathcal{H} \rightarrow Y$ be arbitrary function and set function respectively. Also, suppose that, for each $A \in \mathcal{H}$, it is given an non-empty upwards directed by inclusion class of sets $\mathcal{C}_{f,A}$, which also depends on $f$, such that:

\[\mathcal{C}_{A} \subseteq \mathcal{H},\]

and $f$ is net Riemann integrable on each $B \in \mathcal{C}_{f,A}$. Then, we say that $f$ is Saks integrable on $A$ if the net of net Riemann integrals $\{\int_{B}fd\mu\}_{B \in \mathcal{C}_{f,A}}$ has a limit in $Z$ and, in this case, we define:

\[\int_{A}^{Sa}fd\mu = \lim_{(\mathcal{C}_{f,A}, \eta_{3})}\int_{B}fd\mu,\]

and call this the Saks integral of $f$ on $A$.
    
\end{definition}

As we suppose that the space $Z$ is Hausdorff in its net convergence structure, the integral just presented is well defined (taking into account that $\mathcal{G}_{f,A}$ is directed).

Contrary to Corollary \ref{linearityofnetriemannintegral} and Theorem \ref{propertiesnetriemannintegral}, the Saks integral will not be, in general, linear. Nevertheless, some general properties can be established in this general setting. 

Denote by $L_{S}(\mu)$ the space of all functions $f: \Omega \rightarrow X$ Saks integrable on each $A \in \mathcal{H}$, with respect to the (fixed) set function $\mu: \Omega \rightarrow Y$ in the context of a (fixed) integration structure. 

\begin{theorem} [Properties of the Saks integral] \label{propertiessaksintegral}

The Saks integral has the following properties compatible with a partially ordered integral structure in Riesz spaces $X, Y, Z$:

\begin{enumerate} [(i)]
    \item Let $f \in L_{S}(\mu)$ such that $f \geq 0$ and $\mu \geq 0$. Then, 

    \[\int^{Sa}_{A}f d\mu \geq 0,\]

    for each $A \in \mathcal{H}$, i.e, the Saks integral is a positive map on the space of integrable functions.
    \item Let $\mu \geq 0 $ and $f,g \in L_{NR}(\mu)$ such that:

    \[g \geq f.\]

    Then,

    \[\int^{Sa}_{A}gd\mu \geq \int^{Sa}_{A}fd\mu,\]

     for each $A \in \mathcal{H}$, i.e, the Saks integral is an isotone map on $L_{NR}(\mu)$ equipped with the partial ordering inherited from $X$.

    \item Let $\mu \geq 0$ and $f \in L_{Sa}(\mu)$ such that $|f| \in L_{NR}(\mu)$, then:

    \[|\int^{Sa}_{A}fd\mu| \leq \int^{Sa}_{A}|f|d\mu,\]

     for each $A \in \mathcal{H}$.
\end{enumerate}
    
\end{theorem}

Its also easy to see that the Saks integral is more general than the net Riemann integral: if $f$ is net Riemann integrable on $A \in \mathcal{H}$, taking $\mathcal{G}_{f,A} = \{A\}$ produces the following:

\begin{proposition} 
    If $f: \Omega \rightarrow X$ is net Riemann integrable on $A \in \mathcal{H}$, it is Saks integrable on $A$ and the two integrals agree.
\end{proposition}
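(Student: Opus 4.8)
The plan is to exhibit the simplest admissible choice of the directing class $\mathcal{C}_{f,A}$ and to observe that, for this choice, the net defining the Saks integral collapses to a constant net, whose limit is granted directly by the axioms of the Boccuto-Candeloro net convergence structure on $Z$.

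First, I would take $\mathcal{C}_{f,A} = \{A\}$. This is a legitimate choice in the Saks construction: the class is non-empty, it is trivially upwards directed by inclusion (its single element $A$ serves as an upper bound for the only pair $(A,A)$), and, by hypothesis, $f$ is Fleischer Net Riemann integrable on the unique member $A$ of the class. Hence all the structural requirements in the definition of the Saks integral are met. Next, I would identify the net whose limit defines that integral: indexing by the singleton directed set $\{A\}$, the net $\{\int_{B} f d\mu\}_{B \in \mathcal{C}_{f,A}}$ has exactly one entry, namely $\int_{A} f d\mu$, and is therefore a constant net in $Z$ with value $\int_{A} f d\mu$. By the constant-net axiom (item $(v)$ of Definition \ref{boccutocandeloronetconvergencestructure}), this constant net converges in $(Z, \eta_{3})$ to $\int_{A} f d\mu$. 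Consequently the limit required in the definition of the Saks integral exists, $f$ is Saks integrable on $A$, and

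\[
\int_{A}^{Sa} f d\mu = \lim_{(\mathcal{C}_{f,A}, \eta_{3})} \int_{B} f d\mu = \int_{A} f d\mu,
\]

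which is the asserted equality; uniqueness of the value is guaranteed by the standing Hausdorff assumption on $\eta_{3}$.

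There is, in truth, no genuine obstacle in this argument: the entire content is the recognition that the Saks net degenerates to a constant net when the directing class is taken to be the singleton $\{A\}$, together with the elementary fact that constant nets converge to their common value in any Boccuto-Candeloro net convergence structure. The only point meriting a word of care is purely definitional — confirming that a singleton is admissible as the directed index class $\mathcal{C}_{f,A}$ — which is immediate from the stipulation that $\mathcal{C}_{f,A}$ need only be non-empty and upwards directed.
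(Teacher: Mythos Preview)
Your proof is correct and follows essentially the same approach as the paper: choose $\mathcal{C}_{f,A}=\{A\}$, observe that the resulting net is constant, and invoke axiom $(v)$ of Definition~\ref{boccutocandeloronetconvergencestructure}. The paper's argument is identical in substance, only more tersely stated.
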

\begin{proof}
    By the choice indicated above, i.e  $\mathcal{G}_{f,A} = \{A\}$ (which is trivially upwards directed by inclusion), it follows that the net $\{\int_{B}fd\mu\}_{B \in \mathcal{G}_{f,A}}$ is constant and, by Definition \ref{netconvergencestructure}, its limit exists and equals the net Riemann integral $\int_{A}fd\mu$, which finishes the proof.
\end{proof}

Also, by ways of comparison, we point out that \cite{fleischerchange} defines a similar integral (which we call the Fleischer integral) based on the Kolmogorov S-integral with $\mathcal{H}$ being a ring and $\mathcal{G}_{f,A}$ being an ideal of sets of this ring and independent of $A$ (i.e, he takes it dependent only on $f$, and the only domain of integration being $\Omega$). In this case, we have:

\begin{proposition}
    Suppose that $\mathcal{H}$ is a ring of subsets of $\Omega$. For each $f: \Omega \rightarrow X$ and $A \in \mathcal{H}$, take $\mathcal{G}_{f,A}$ to be an ideal of sets. Then, if for each $B \in \mathcal{G}_{f,A}$, $f$ is S-integrable (See \ref{KolmogorovSintegralconvergencestructures}) on $B$, the Saks integral exists and coincides with the Fleischer integral.  
\end{proposition}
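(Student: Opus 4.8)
The plan is to show that, under the stated choice of base structures, the net defining the Saks integral is term-by-term identical to the net defining the Fleischer integral, so that the two limits must coincide whenever either exists. First I would fix $A \in \mathcal{H}$ (for the Fleischer integral specifically $A = \Omega$) together with the ideal $\mathcal{G}_{f,A} \subseteq \mathcal{H}$, and record that, being an ideal of a ring, $\mathcal{G}_{f,A}$ is closed under finite unions and hence upwards directed by inclusion. This is exactly the directed index set over which both integrals take their limit in the Boccuto-Candeloro net convergence structure $\eta_3$ of $Z$.

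The key step is a reduction at the level of the base integral attached to each $B \in \mathcal{G}_{f,A}$. Here I would use that the Kolmogorov S-integral of Example \ref{KolmogorovSintegralconvergencestructures} is itself a Net Riemann integral, namely the one arising from the Fleischer-Szab\'o-Sz\'az net on $\Gamma = (\mathcal{P}^{f}(B) \times \mathcal{F}(B), \leq)$ of finite coverings of $B$ ordered by refinement. Taking the set-dependent net $\Gamma_B$ of Observation \ref{fleischerintegrationdomains} to be precisely this Kolmogorov structure on $B$, the Fleischer Net Riemann integral $\int_{B} f d\mu$ is, by construction, the same refinement limit of Riemann sums as the Kolmogorov S-integral $\int_{B}^{S} f d\mu$. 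Since $f$ is assumed Kolmogorov integrable on each $B \in \mathcal{G}_{f,A}$, this limit exists for every such $B$, whence $\int_{B} f d\mu = \int_{B}^{S} f d\mu$ for all $B \in \mathcal{G}_{f,A}$.

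With the base integrals identified, the net $\{\int_{B} f d\mu\}_{B \in \mathcal{G}_{f,A}}$ that defines the Saks integral and the net $\{\int_{B}^{S} f d\mu\}_{B \in \mathcal{G}_{f,A}}$ that defines the Fleischer integral are one and the same net: same directed index set, same values in $(Z, \eta_3)$. Consequently the Saks limit $\lim_{(\mathcal{G}_{f,A}, \eta_3)} \int_{B} f d\mu$ exists if and only if the Fleischer integral does, and, by the standing Hausdorff assumption on $\eta_3$, the common limit is unique. In particular, whenever the Fleischer integral exists the Saks integral exists as well, and the two agree.

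The main obstacle I anticipate is the reduction step, that is, verifying cleanly that the Fleischer Net Riemann integral on $B$ coincides with the Kolmogorov S-integral on $B$. The delicacy lies in matching the set-dependent family $\{\Gamma_B\}$ of Observation \ref{fleischerintegrationdomains} with the covering structure of Example \ref{KolmogorovSintegralconvergencestructures} on each $B$, and in using the ring hypothesis on $\mathcal{H}$ (with $B$ expressible as a disjoint union of members of $\mathcal{H}$) to guarantee that $\Gamma_B$ is non-empty and directed, so that the restricted Kolmogorov integral is well defined. Once this identification is secured, the remainder is the purely formal comparison of two identical nets carried out above.
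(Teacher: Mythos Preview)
Your approach is correct and is essentially what the paper intends: the proposition is stated there without proof, as an immediate definitional comparison, and your argument unpacks exactly that comparison. The paper's Fleischer integral is, by definition, the limit over the ideal $\mathcal{G}_{f,A}$ of Kolmogorov $S$-integrals on the members $B$; once you specialise the Fleischer Net Riemann structure $\Gamma_B$ to the Kolmogorov covering net of Example \ref{KolmogorovSintegralconvergencestructures}, the Saks net $\{\int_B f\,d\mu\}_{B\in\mathcal{G}_{f,A}}$ is literally the same net, so the two integrals exist together and agree. Your observation that an ideal is closed under finite unions, hence upwards directed, is the only structural check needed, and you have it.

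One small point of phrasing: the proposition as written says ``the Saks integral exists and coincides with the Fleischer integral,'' which on a strict reading asserts existence outright. Your proof (correctly) yields only the biconditional --- the Saks limit exists iff the Fleischer limit does --- since the hypothesis merely guarantees the base integrals $\int_B^{S} f\,d\mu$ exist, not that the net over $\mathcal{G}_{f,A}$ converges. This is a deficiency in the paper's statement rather than in your argument; you might simply note in your write-up that existence is to be read conditionally on the Fleischer integral existing.
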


As \cite{shcherbinsakstype} integral is the same as Fleischer's but in the context of Riesz spaces and order convergence, the result above also covers his integration procedure.

As indicated in the beginning of the section, non-additive integration procedures are of special interest in the context of Saks integration. In this regard, by using some elements of the work of \cite{boccutoriecanconcave}, we may obtain that the Saks integral contains also the Choquet integral of Riesz space-valued functions (\cite{boccutoriecanchoquetsymmetric}), and the Sipos integral for the case of non-negative functions. In fact, let $\mathcal{H}$ be a $\sigma$-algebra and $C: \mathcal{H} \rightarrow Y$ be a capacity (for this notion, see \cite{boccutoriecanchoquetsymmetric}). Fix $f: \Omega \rightarrow X = \overline{\mathbb{R}}^{+}$ a function such that:

\[\Sigma^{f}_{t} = \{\omega \in \Omega: f(\omega) \geq t\} \in \mathcal{H}, \forall \ t \in \mathbb{R},\]

and define $u_{f}(t) = u(t) = C(\Sigma^{f}_{t})$. Note that $u: \mathbb{R} \rightarrow Y$ is a monotone function. In this context, we shall need the next result (see Section 3 of \cite{boccutoriecanchoquetsymmetric}):

\begin{proposition}
    Let $g: [a,b] \rightarrow Y$ be a monotone function with values in $Y$ that is supposed to be Dedekind complete and $[a,b] \subseteq \mathbb{R}$ a finite interval. Then, $u$ is Kolmogorov $S$- integrable with respect to the Lebesgue measure. 
\end{proposition}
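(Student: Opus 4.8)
The plan is to realize the Kolmogorov $S$-integral of Example \ref{KolmogorovSintegralconvergencestructures} on $\Omega = [a,b]$, with $\mathcal{H}$ the ring generated by subintervals, Lebesgue measure $\lambda$ as integrator and ordinary scalar multiplication $Y \times \mathbb{R} \to Y$ as the product, and then to control the resulting refinement net of Riemann sums by upper and lower sums built from the endpoint values of $g$ (this proves the claim for $g$; the function $u$ of the preceding paragraph, being monotone, is the intended instance once restricted to a finite interval). Since the integral is linear by Theorem \ref{propertiesnetriemannintegral}(i), I may assume without loss of generality that $g$ is non-decreasing, replacing $g$ by $-g$ otherwise. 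For a finite partition $P = \{[x_{i-1},x_i]\}_i$ of $[a,b]$ with tags $\tau_i \in [x_{i-1},x_i]$, monotonicity gives $g(x_{i-1}) \leq g(\tau_i) \leq g(x_i)$, and since $\lambda_i := \lambda([x_{i-1},x_i]) \geq 0$ the isotonicity of the product yields
\[L(P) := \sum_i g(x_{i-1})\,\lambda_i \;\leq\; S_P(g,\lambda) \;\leq\; \sum_i g(x_i)\,\lambda_i =: U(P),\]
where $L(P)$ and $U(P)$ depend only on $P$ and not on the chosen tags.

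First I would record two elementary facts. The sums are order bounded, since $g(a)(b-a) \leq S_P \leq g(b)(b-a)$ for every $P$, so the net $\{S_P\}$ is bounded in $Y$. Moreover the nets $\{L(P)\}$ and $\{U(P)\}$ are respectively non-decreasing and non-increasing along the refinement order, by the standard observation that inserting a partition point can only raise a lower sum and lower an upper sum (using monotonicity of $g$ on each piece). Because $Y$ is Dedekind complete (Definition \ref{dedekindcomplete}), the quantities $\underline{s} := \sup_P L(P)$ and $\bar{s} := \inf_P U(P)$ exist and satisfy $\underline{s} \leq \bar{s}$.

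The core estimate is the telescoping bound
\[0 \leq U(P) - L(P) = \sum_i \bigl(g(x_i) - g(x_{i-1})\bigr)\lambda_i \leq \mathrm{mesh}(P)\,\bigl(g(b)-g(a)\bigr),\]
valid because each factor $g(x_i) - g(x_{i-1}) \geq 0$ while $\sum_i (g(x_i)-g(x_{i-1})) = g(b) - g(a)$ telescopes. Since for every $n$ the partitions of mesh below $1/n$ are cofinal in the refinement-directed set, choosing such a refinement $P'$ gives $\bar{s} \leq U(P') \leq L(P') + \tfrac1n(g(b)-g(a)) \leq \underline{s} + \tfrac1n(g(b)-g(a))$, whence $\bar{s} - \underline{s} \leq \tfrac1n(g(b)-g(a))$ for all $n$; the Archimedean hypothesis on $Y$ then forces $\bar{s} - \underline{s} \leq 0$, so $\underline{s} = \bar{s} =: s$.

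It remains to pass from the bracketing sums to $S_P$. As $\{L(P)\}$ increases to $s$ and $\{U(P)\}$ decreases to $s$ in order, and $L(P) \leq S_P \leq U(P)$ holds at every index of the net independently of the tags, the squeeze axiom for order convergence (axiom (iii) of Definition \ref{boccutocandeloronetconvergencestructure}, equivalently the defining property of Definition \ref{usualorderconvergence}) gives $S_P \xrightarrow[]{o} s$; equivalently, via Proposition \ref{orderconvergenceboundednets}, the bounded net $\{S_P\}$ satisfies $\liminf_P S_P = \limsup_P S_P = s$. Since this conclusion is insensitive to the choice function selecting the tags, $g$ is Kolmogorov $S$-integrable with $\int_{[a,b]}^{S} g\,d\lambda = s$. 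The main obstacle is exactly the third step: upgrading the mesh estimate to the order identity $\sup_P L(P) = \inf_P U(P)$ requires both the cofinality of fine partitions in the refinement net and an appeal to the Archimedean property, since no $\varepsilon$-argument is available in $Y$; the remaining steps are the order-theoretic transcription of the classical monotone-function argument.
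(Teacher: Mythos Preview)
Your argument is correct and is exactly the classical Darboux upper/lower sum argument, transcribed cleanly to the Riesz space setting: bracket the tagged Riemann sums by tag-independent lower and upper sums, use refinement monotonicity of $L(P)$ and $U(P)$ together with Dedekind completeness to form $\underline{s}$ and $\bar{s}$, collapse the gap via the telescoping mesh estimate and the Archimedean axiom, and finish by squeezing. One small point worth making explicit when you lift to the directed set $\Gamma$ of Example \ref{KolmogorovSintegralconvergencestructures}: the nets $\{L(P)\}$ and $\{U(P)\}$ must be pushed forward along the projection $(P,\tau)\mapsto P$ so that all three nets in the squeeze axiom (iii) of Definition \ref{boccutocandeloronetconvergencestructure} share the same index set; you have this in mind but it is the one place a careful reader might pause.

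The paper itself does not prove this proposition at all: it simply refers to Section 3 of \cite{boccutoriecanchoquetsymmetric}. So there is no internal argument to compare against, and your self-contained proof is a genuine addition. The approach you give is the natural one and is in the same spirit as the cited source, which likewise builds the Riemann--Stieltjes type integral of a monotone $Y$-valued function via Darboux-type sums in a Dedekind complete Riesz space.
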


Now, for each $f$ take, in the notation above, $\Omega = \overline{\mathbb{R}}^{+}$ with $\mathcal{H}$ the $\sigma$-algebra of Lebesgue measurable sets on $[a,b]$, and

\[\mathcal{C}_{f,\overline{\mathbb{R}}^{+}} = \mathcal{C}_{\overline{\mathbb{R}}^{+}}= \{[0,a] \subseteq \mathbb{R}^{+}: a \in \mathbb{R}\}.\]

Then, this collection is upwards directed by inclusion.

Considering the Net Riemann integral to be the Kolmogorov S-integral with respect to the Lebesgue measure as in the last proposition, we get that the net of integrals $\{\int_{0}^{a}u(t)dt\}_{[0,a] \in \mathcal{C}_{\overline{\mathbb{R}}^{+}}}$ is well defined an increasing. In case the limit exists, i.e the Saks Integral of $u$ exists on the positive extended real line, it is given by:

\[\int_{\overline{\mathbb{R}}^{+}}^{Sa}u(t)dt = \sup_{[0,a] \in \mathcal{C}_{\overline{\mathbb{R}}^{+}}}\int_{0}^{a}u(t)dt= \sup_{a>0} \int_{0}^{a}u(t)dt,\]

where the two last integrals are in the sense of the Kolmogorov S-integral, and which produced exactly the Choquet integral of $f$ (fixed above) in Definition 3.3 of \cite{boccutoriecanchoquetsymmetric} with respect to $C$. Therefore, we get, by using also Theorem 3.6 of \cite{boccutoriecanchoquetsymmetric}:

\begin{proposition} [Saks includes Choquet and (Non-Negative) Sipos]
    Let $f: \Omega \rightarrow X = \overline{\mathbb{R}}^{+}$ be a function with the properties above. Then, $f$ is Choquet integrable with respect to $C$ if and only if it is Sipos integrable with respect to $C$ and both integrals are equal to:

    \[\int_{\overline{\mathbb{R}}^{+}}^{Sa}u(t)dt,\]

    the Saks integral of $u$ over the extended real line with respect to the Lebesgue measure, formed by nets of Kolmogorov S-integrable functions. 
\end{proposition}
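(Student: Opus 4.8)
The plan is to leverage the identification already carried out in the discussion preceding the statement, which reduces the Saks integral of $u$ to the Choquet integral of $f$, and then to invoke the equivalence between Choquet and Sipos integrability furnished by the cited reference.

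First I would recall that the collection $\mathcal{C}_{\overline{\mathbb{R}}^{+}} = \{[0,a]: a \in \mathbb{R}\}$ is upwards directed by inclusion and that, since $u$ is monotone and $Y$ is Dedekind complete, the restriction of $u$ to each $[0,a]$ is Kolmogorov $S$-integrable with respect to Lebesgue measure by the proposition cited just above. The net $\{\int_0^a u(t)\,dt\}_{[0,a]}$ is therefore well defined and increasing in $Y$, so by the MCP for order convergence on a Dedekind complete Riesz space its limit (the Saks integral) exists precisely when the net is bounded above, in which case
\[
\int_{\overline{\mathbb{R}}^{+}}^{Sa} u(t)\,dt = \sup_{a>0}\int_0^a u(t)\,dt.
\]
By Definition 3.3 of \cite{boccutoriecanchoquetsymmetric}, this supremum is by definition the Choquet integral of $f$ with respect to the capacity $C$. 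Hence $f$ is Choquet integrable with respect to $C$ if and only if this Saks integral exists, and when either holds the two quantities coincide.

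The remaining step, which supplies the Sipos half of the statement, is to apply Theorem 3.6 of \cite{boccutoriecanchoquetsymmetric}. For a non-negative $f$ the negative part $f^{-}$ vanishes, so the symmetric (Sipos) treatment of the negative contribution is trivial and the Sipos integral collapses to the ordinary Choquet value; Theorem 3.6 makes this precise in the Riesz-space-valued setting, asserting that for non-negative functions Choquet integrability and Sipos integrability are equivalent and that the two integrals agree. Combining this equivalence with the identification of the previous paragraph yields the full chain of equalities together with the stated biconditional.

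The main obstacle I anticipate is bookkeeping rather than conceptual. One must verify that the existence condition implicit in Saks integrability — the order limit of the increasing net $\{\int_0^a u(t)\,dt\}$, equivalently its boundedness above in the Dedekind complete space $Y$ — matches exactly the existence hypothesis attached to the Choquet integral in Definition 3.3 of \cite{boccutoriecanchoquetsymmetric}, and similarly that the standing hypotheses of Theorem 3.6 (Dedekind completeness of $Y$, monotonicity of $u$, and measurability of the superlevel sets $\Sigma^{f}_{t}$) are all already in force under the properties assumed on $f$. Once these compatibility checks are made, no further analysis is required, since both nontrivial implications are carried entirely by the two cited results.
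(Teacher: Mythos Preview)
Your proposal is correct and follows essentially the same approach as the paper: the paper does not give a separate proof environment for this proposition but rather derives it from the discussion immediately preceding the statement, identifying $\sup_{a>0}\int_0^a u(t)\,dt$ with the Choquet integral via Definition~3.3 of \cite{boccutoriecanchoquetsymmetric} and then invoking Theorem~3.6 of the same reference for the Choquet--Sipos equivalence. Your write-up matches this route step for step, with the added (and welcome) care about the compatibility of existence hypotheses.
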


Some convergence theorems for this integral in a more restricted setting can be found in the article \cite{shcherbinsakstype} and \cite{gould}.

With these comments, we stop the development of this last integral, and consider some conclusions and open problems based on the integration theories so far defined. 

\section{Conclusion: Classification Schemes and (other) Open Problems.}

The various integrals presented above all possesses strengths and weaknesses, specially with respect to which classes of functions and measures are admitted as possible ingredients of the procedure. As Kolmogorov in \cite{kolmogorovuntersuchungen} argued, a general definition of an integration procedure is unlikely to contain all existing ones, for various reasons. Nonetheless, the development above provides a reason to expect that each integration procedure we can find falls within one of the following four categories:

\begin{enumerate} 
    \item Saks type-Integration,
    \item Abstract Lebesgue Integration,
    \item Monotone Type, modeled on Choquet and Sipos, Integration,
    \item Weak Type Integration, after  \cite{pettisoriginalarticle} famous model. 
\end{enumerate}

Given the variety and diversity of integration procedures exhibited in the present work, such a classification may be premature, but can generate a way of organizing all the massive existing literature about integration theories. In this respect, an open problem, which is more of a research program carried forward already by such works as \cite{sambuciniboccutocomparison}, \cite{henstockmagnum}, \cite{sionsemigroup} and others, is:

\begin{itemize}
\item Is there a general relationship between the general procedures above ? More generally, can a "taxonomy" of integration procedures be produced ?
\end{itemize}

An affirmative response to both problems seems unlikely, specially as some integration procedures are not even comparable in principle, but even this is a useful response to the general question: is there a "most general" procedure in each class ? And, if so, what are its limits ? Possibly, the main classes of integration procedures presented in this work are the most general in some sense. 

A related issue is the one of convergence: general (net or filter) convergence structures provides integration theory wide generality, but further restrictions need to be made to obtain stronger convergence and other type of theorems for this integrals. 

This provides the second (in our view, interesting) problem:

\begin{itemize}
    \item Is it possible to obtain (uniform) convergence theorems for the integrals above without requiring some specific convergence and lattice norms, only working with general properties of the filters/net involved, maybe with some minor additional condition to guarantee good continuity behaviors of the convergence ?
\end{itemize}

This is basically a question of the continuity of a (linear and isotone, sometimes) functional on spaces with general convergences and with specific properties generated by integration. That seems to require additional study of general convergence structures and many-valued topologies as is \cite{hohle}, \cite{butzmann}, \cite{gahler1}, \cite{gahler2} and references and further works therein. This seems to be related to the connection between integration in general convergence structures and Daniell integration in this context - continuity properties here are the key to obtain possible connections.  

With respect to measures and functions, the Riemmanian constructions above, not so much the Lebesgue ones, admit interval functions integration in the sense of  \cite{henstockmagnum}. This provides, in particular, a non-linear version of the integration procedures above. The domain of the measures produces another challenge:

\begin{itemize}
    \item Considering more general domains, such as orthoalgebras, difference posets, (D)-lattices and other structures presented in \cite{pulmannova}, which results above are preserved and, more interestingly, which convergence properties of measures and integrals are possible to prove ? That is, developing a non-commutative integration in the Riemmanian sense is an open problem in general.
\end{itemize}

More can be said about a host of other problems (product measures and Fubini theorems, for example) relating to general integration theory and its applications other areas, but we finish the article here, noting that much more can be said about this subject: both in its own right and regarding its relation to other areas of modern mathematics.

\section{Appendix: Uniform Approximation of Measurable Functions with Values in Riesz Spaces.}

Using the results of Section \ref{pavlakosintegralsection}, some interesting integrability results can be achieved for classes of (Riesz space-valued) functions that are uniform order limits of sequences of simple and/or countably valued functions and measurable in some way. There appear to be only limited results in this area, with main references being Section 4 of \cite{pavlakosintegration} and Proposition 2.1 of \cite{potocky2}. To not interrupt the main flux of the text, we prove new results in a wider setting in this appendix. 

In fact, we propose a generalization of the results of these two references to specific types of locally solid topological Riesz spaces. In this case, let $X$ be Riesz space with a topology induced by a (possibly uncountable) collection of Riesz $F$-norms $\{p_{l}\}_{l \in L}$ in such a way that the sets:

\[\bigcap_{i=1}^{n}\{y: p_{l_{i}}(x-y) \leq \varepsilon\},\]

which depends on $x \in X$, $n \in \mathbb{N}$, $\varepsilon > 0$ and $\{l_{i}\}_{i=1}^{n}$, forms a base of neighborhoods of a (Hausdorff) topology of $X$. Its easy to show that such sets are solid, in such a way that the induced topology is indeed a Riesz space locally solid topology (see \cite[p.~296]{zaanen2}).

Now, we define the concept of a strong order unit for the previously defined class of Riesz space (see \cite{pavlakosintegration}):

\begin{definition} [Strong Order Unit]

We say that a Riesz $F$-norm (on a Riesz space $X$) $p: X \rightarrow [0, \infty)$ has a strong order unit $u > 0$ if $p(x) \leq k$ implies $|x| \leq ku$, for each $k \in \mathbb{R}_{+}$ and $x \in X$. We then say that a collection of Riesz $F$-norms $\{p_{l}\}_{l \in L}$ has a strong order unit if every element has it. 
\end{definition}

With the help of these concepts we can now state and prove the following result (compare with the two references above):

\begin{proposition} [Approximation by Countably-Valued Functions]

Let $(X, \leq, \tau)$ be a locally solid Riesz space with topology $\tau$ induced by a collection $\{p_{l}\}_{l \in L}$ of Riesz $F$-norms. Suppose that $X$ is first countable and has a strong order unit with respect to its collection of Riesz $F$-norms. Then, a Borel-measurable (with respect to the class of open sets in the order topology of $X$) function $f: \Omega \rightarrow X$ is a order uniform limit of countably valued functions.
\end{proposition}

\begin{proof}
    As $X$ is first countable, it is separable and we can find a countable collection $C = \{c_{k}\}_{k \in \mathbb{N}} \subseteq X$ such that:

    \[f(\Omega) \subseteq \overline{C} = X.\]

Then, denote by $C_{n,k}$ the subbasis element of the metrizable topology of $X$ consisting of the closed ball with radius $\frac{1}{n}$ of the element $c_{k}$ of $C$, with respect to the Riesz $F$-seminorm that induces its metrizable topology (by the Birkhoff-Kakutani theorem), which we denote by $p^{*}$. In this case, we make a disjoint sequence of elements by taking:

\[B_{n,1} = C_{n,1}, \ B_{n,k} = C_{n,k} - \bigcup_{i < k}C_{n,i}, k \geq 2;\]

in such a way that $\Omega = \bigcup_{k \in \mathbb{N}}B_{n,k}$ for every $n \in \mathbb{N}$. 

Now, define the following sequence of countably valued functions defined on $\Omega$ with values in $X$:

\[f_{n}(t) = \sum_{k=1}^{\infty}c_{k}\mathbbm{1}(t)_{C_{n,k}},\]

for which is valid that:

\[p^{*}(f(\omega)-f_{n}(\omega)) \leq \frac{1}{n},\]

for any $(\omega, n) \in \Omega \times \mathbb{N}$.

Hence, by the existence of a strong order unit $u$,

\[|f(\omega)-f_{n}(\omega)| \leq \frac{1}{n}u,\]

for any $(\omega, n) \in \Omega \times \mathbb{N}$. This shows that $f_{n}$ converges order uniformly to $f$ on $\Omega$.
\end{proof}

With the same proof, we may stablish the metrizable case:

\begin{proposition} [Approximation by Countably-Valued Functions - II]

Let $(X, \leq, \tau)$ be a metrizable locally solid Riesz space with topology $\tau$. Suppose that $X$ is separable  and has a strong order unit. Then, a Borel-measurable function $f: \Omega \rightarrow X$ is a order uniform limit of countably valued functions.
\end{proposition}

As a final note, it would be interesting to know the maximal (i.e, sufficient and necessary) conditions imposed into the Riesz space $X$ such that interesting classes of (measurable) functions are uniform limits, or even quasi-uniform limits, of nets of elementary/simple functions.

\section*{Acknowledgement}

This study was funded in part by the Coordination for the Improvement of Higher Education Personnel - CAPES - Funding Code 001. The author would like to thank Professors Francesco Russo (ENSTA - Paris), Christophe Gallesco (UNICAMP), David Fremlin (Exeter), Lech Drewnowski (Poznan) and Alberto Castejón (Vigo) for interesting conversations and remarks regarding integration theory.

\bibliography{biblio}

@article{kusraevtasoev1,
 author = {Kusraev, A. G. and Tasoev, B. B.},
 title = {Kantorovich-Wright integration and representation of vector lattices},
 fjournal = {Journal of Mathematical Analysis and Applications},
 journal = {J. Math. Anal. Appl.},
 issn = {0022-247X},
 volume = {455},
 number = {1},
 pages = {554--568},
 year = {2017},
 language = {English},
 doi = {10.1016/j.jmaa.2017.05.059},
 keywords = {28B05},
 zbMATH = {6756252},
 Zbl = {1369.28011}
}

@misc{kapposlagebraicintegral,
 author = {Kappos, D. A.},
 title = {Generalized stochastic integral},
 year = {1970},
 language = {English},
 howpublished = {Probab. {Struct}. algebriques, {Actes} {Colloque} internat. 186, {Clermont}- {Ferrand} 1969, 223-232 (1970).},
 keywords = {60H05},
 zbMATH = {3366245},
 Zbl = {0231.60043}
}

@article{spaltenstein,
 author = {Spaltenstein, N.},
 title = {A definition of integrals},
 fjournal = {Journal of Mathematical Analysis and Applications},
 journal = {J. Math. Anal. Appl.},
 issn = {0022-247X},
 volume = {195},
 number = {3},
 pages = {835--871},
 year = {1995},
 language = {English},
 doi = {10.1006/jmaa.1995.1392},
 keywords = {26A39,28A25},
 zbMATH = {861800},
 Zbl = {0846.26007}
}

@phdthesis{markwardt1981teilweise,
  title={Eine teilweise geordnete Halbstruktur als Ausgangspunkt zur Entwicklung einer Analysis: Inaugural-Dissertation zur Erlangung des akademischen Grades Doktor eines Wissenschaftszweiges (doctor rerum naturalium)},
  author={Markwardt, Klaus},
  year={1981},
  school={Wilhelm-Pieck-Universit{\"a}t Rostock}
}

@misc{kappelerthesis,
 author = {Kappeler, Thomas},
 title = {Bilineare {Integration}},
 year = {1981},
 language = {English},
 howpublished = {Fachbereich {Mathematik} der {Eidgen{\"o}ssischen} {Technischen} {Hochschule} {Z{\"u}rich}. 188 {S}. (1981).},
 keywords = {28B05,46G10,46B22,46E27,46A50},
 zbMATH = {3869659},
 Zbl = {0546.28006}
}

@article{schcherbinsakstyunp,
 author = {Shcherbin, V. M.},
 title = {Definition of an integral for functions which are bounded in the sense of a semi-ordering},
 fjournal = {Trudy Matematicheskogo Fakul'teta. Novaya Seriya. Voronezhski{\u{\i}} Gosudarstvenny{\u{\i}} Universitet},
 journal = {Tr. Mat. Fak., Voronezh. Gos. Univ. (N.S.)},
 volume = {3},
 pages = {76--80},
 year = {1998},
 language = {Russian},
 keywords = {46B40,28B15,46G99,46E05,28C05},
 zbMATH = {1547347},
 Zbl = {0958.46005}
}

@article{shcherbinsakstype,
 author = {Shcherbin, V. M.},
 title = {Integration of mappings with values in regular {{\(K\)}}-spaces},
 fjournal = {Soviet Mathematics. Doklady},
 journal = {Sov. Math., Dokl.},
 issn = {0197-6788},
 volume = {44},
 number = {1},
 pages = {243--246},
 year = {1991},
 language = {English},
 keywords = {28B15},
 zbMATH = {220097},
 Zbl = {0793.28010}
}

@article{boccutoriecanchoquetsymmetric,
 author = {Boccuto, Antonio and Rie{\v{c}}an, Beloslav},
 title = {The symmetric {Choquet} integral with respect to {Riesz}-space-valued capacities.},
 fjournal = {Czechoslovak Mathematical Journal},
 journal = {Czech. Math. J.},
 issn = {0011-4642},
 volume = {58},
 number = {2},
 pages = {289--310},
 year = {2008},
 language = {English},
 doi = {10.1007/s10587-008-0017-8},
 keywords = {28B05},
 url = {https://eudml.org/doc/31211},
 zbMATH = {5551506},
 Zbl = {1174.28012}
}

@article{boccutoriecanconcave,
 author = {Boccuto, Antonio and Rie{\v{c}}an, Beloslav},
 title = {The concave integral with respect to {Riezs} space-valued capacities},
 fjournal = {Mathematica Slovaca},
 journal = {Math. Slovaca},
 issn = {0139-9918},
 volume = {59},
 number = {6},
 pages = {647--660},
 year = {2009},
 language = {English},
 doi = {10.2478/s12175-009-0153-0},
 keywords = {28B05,28B15},
 zbMATH = {5638477},
 Zbl = {1212.28034}
}

@book{saks,
 author = {Saks, Stanis{\l}aw},
 title = {Theory of the integral. {English} translation by {L}. {C}. {Young}. {With} two additional notes by {Stefan} {Banach}},
 edition = {Reprint of the 2nd revised ed. 1937},
 year = {1964},
 publisher = {Mineola, NY: Dover Publications},
 language = {English},
 keywords = {28-01},
 zbMATH = {5796333},
 Zbl = {1196.28001}
}

@article{gould,
 author = {Gould, G. G.},
 title = {Integration over vector-valued measures},
 fjournal = {Proceedings of the London Mathematical Society. Third Series},
 journal = {Proc. Lond. Math. Soc. (3)},
 issn = {0024-6115},
 volume = {15},
 pages = {193--225},
 year = {1965},
 language = {English},
 doi = {10.1112/plms/s3-15.1.193},
 keywords = {28B05,46G10},
 zbMATH = {3224992},
 Zbl = {0138.38403}
}

@article{boccutosambuciniconvergencegroups,
 author = {Boccuto, Antonio and Sambucini, Anna Rita},
 title = {Abstract integration in convergence groups},
 fjournal = {Atti del Seminario Matematico e Fisico dell'Universit{\`a} di Modena},
 journal = {Atti Semin. Mat. Fis. Univ. Modena},
 issn = {0041-8986},
 volume = {46},
 number = {2},
 pages = {315--333},
 year = {1998},
 language = {English},
 keywords = {28B10},
 zbMATH = {1239568},
 Zbl = {0918.28011}
}

@book{gut,
 author = {Gut, Allan},
 title = {Probability: a graduate course},
 edition = {2nd ed.},
 fseries = {Springer Texts in Statistics},
 series = {Springer Texts Stat.},
 issn = {1431-875X},
 isbn = {978-1-4614-4707-8; 978-1-4614-4708-5},
 year = {2013},
 publisher = {New York, NY: Springer},
 language = {English},
 doi = {10.1007/978-1-4614-4708-5},
 keywords = {60-01,60A05,60F05,60G05},
 zbMATH = {6053908},
 Zbl = {1267.60001}
}

@book{rao2018measure,
  title={Measure theory and integration},
  author={Rao, Malempati Madhusudana},
  year={2018},
  publisher={CRC Press}
}

@book{prerieszspaces,
 author = {Kalauch, Anke and van Gaans, Onno},
 title = {Pre-Riesz spaces},
 fseries = {De Gruyter Expositions in Mathematics},
 series = {De Gruyter Expo. Math.},
 issn = {0938-6572},
 volume = {66},
 isbn = {978-3-11-047539-5; 978-3-11-047629-3},
 year = {2019},
 publisher = {Berlin: De Gruyter},
 language = {English},
 doi = {10.1515/9783110476293},
 keywords = {46-02,46A40,46B40,06F20},
 zbMATH = {6944287},
 Zbl = {1430.46002}
}

@article{runs,
 author = {Kenyon, H. and Morse, A. P.},
 title = {Runs},
 fjournal = {Pacific Journal of Mathematics},
 journal = {Pac. J. Math.},
 issn = {1945-5844},
 volume = {8},
 pages = {811--824},
 year = {1958},
 language = {English},
 doi = {10.2140/pjm.1958.8.811},
 zbMATH = {3138604},
 Zbl = {0084.37803}
}

@article{birkhofforiginalarticle,
 author = {Birkhoff, Garrett.},
 title = {Integration of functions with values in a {Banach} space.},
 fjournal = {Transactions of the American Mathematical Society},
 journal = {Trans. Am. Math. Soc.},
 issn = {0002-9947},
 volume = {38},
 pages = {357--378},
 year = {1935},
 language = {English},
 zbMATH = {2531862},
 JFM = {61.0234.01}
}

@article{dobrakov7,
 author = {Dobrakov, Ivan},
 title = {On integration in {Banach} spaces. {VII}},
 fjournal = {Czechoslovak Mathematical Journal},
 journal = {Czech. Math. J.},
 issn = {0011-4642},
 volume = {38},
 number = {3},
 pages = {434--449},
 year = {1988},
 language = {English},
 doi = {10.21136/CMJ.1988.102239},
 keywords = {28B05,46G10,28B10},
 url = {https://eudml.org/doc/13718},
 zbMATH = {4103392},
 Zbl = {0674.28003}
}

@article{panchapageneralizedpettis,
 author = {Dobrakov, I. and Panchapagesan, T. V.},
 title = {A generalized {Pettis} measurability criterion and integration of vector functions},
 fjournal = {Studia Mathematica},
 journal = {Stud. Math.},
 issn = {0039-3223},
 volume = {164},
 number = {3},
 pages = {205--229},
 year = {2004},
 language = {English},
 doi = {10.4064/sm164-3-1},
 keywords = {28B05,46E40,46G10},
 zbMATH = {2107144},
 Zbl = {1058.28008}
}

@article{musialhandbookpettis,
  title={Pettis integral},
  author={Musial, Kazimierz},
  journal={Handbook of measure theory},
  volume={1},
  pages={531--586},
  year={2002},
  publisher={North-Holland Amsterdam}
}

@article{kadetskolmogorovintegral,
 author = {Kadets, V. M. and Tseytlin, L. M.},
 title = {On ``integration'' of non-integrable vector-valued functions},
 fjournal = {Matematicheskaya Fizika, Analiz, Geometriya},
 journal = {Mat. Fiz. Anal. Geom.},
 issn = {1027-1767},
 volume = {7},
 number = {1},
 pages = {49--65},
 year = {2000},
 language = {English},
 keywords = {28B05,46G10,26A42},
 zbMATH = {1525616},
 Zbl = {0974.28007}
}

@misc{azouzi2,
 author = {Azouzi, Youssef and Nasri, Youssef},
 title = {The sup-completion of a {Dedekind} complete vector lattice {II}},
 year = {2024},
 howpublished = {Preprint, {arXiv}:2403.17944 [math.{FA}] (2024)},
 keywords = {47B60},
 url = {https://arxiv.org/abs/2403.17944},
 arXiv = {arXiv:2403.17944}
}

@book{donner,
 author = {Donner, Klaus},
 title = {Extension of positive operators and {Korovkin} theorems},
 fseries = {Lecture Notes in Mathematics},
 series = {Lect. Notes Math.},
 issn = {0075-8434},
 volume = {904},
 year = {1982},
 publisher = {Springer, Cham},
 language = {English},
 doi = {10.1007/BFb0094133},
 keywords = {47B60,47-02,41A65,46B42,47A20,41A46},
 zbMATH = {3755300},
 Zbl = {0481.47024}
}

@article{azouzi2019,
 author = {Azouzi, Youssef},
 title = {Completeness for vector lattices},
 fjournal = {Journal of Mathematical Analysis and Applications},
 journal = {J. Math. Anal. Appl.},
 issn = {0022-247X},
 volume = {472},
 number = {1},
 pages = {216--230},
 year = {2019},
 language = {English},
 doi = {10.1016/j.jmaa.2018.11.019},
 keywords = {46A40},
 zbMATH = {7056426},
 Zbl = {1421.46002}
}

@article{azouzi1,
 author = {Azouzi, Youssef and Nasri, Youssef},
 title = {The sup-completion of a {Dedekind} complete vector lattice},
 fjournal = {Journal of Mathematical Analysis and Applications},
 journal = {J. Math. Anal. Appl.},
 issn = {0022-247X},
 volume = {506},
 number = {2},
 pages = {20},
 note = {Id/No 125651},
 year = {2022},
 language = {English},
 doi = {10.1016/j.jmaa.2021.125651},
 keywords = {46A40},
 zbMATH = {7412840},
 Zbl = {1478.46005}
}

@Book{sionsemigroup,
 Author = {Sion, Maurice},
 Title = {A theory of semigroup valued measures},
 FSeries = {Lecture Notes in Mathematics},
 Series = {Lect. Notes Math.},
 ISSN = {0075-8434},
 Volume = {355},
 Year = {1973},
 Publisher = {Springer, Cham},
 Language = {English},
 DOI = {10.1007/BFb0060133},
 Keywords = {28B05,28-02,28A50,46G10,46G15},
 zbMATH = {3487758},
 Zbl = {0312.28016}
}

@Article{Impens,
 Author = {Impens, C.},
 Title = {An extension theorem for measures with values in a non-unitary n- semigroup},
 FJournal = {Bulletin de la Soci{\'e}t{\'e} Mathe{\'m}atique de Belgique. S{\'e}rie B},
 Journal = {Bull. Soc. Math. Belg., S{\'e}r. B},
 ISSN = {0037-9476},
 Volume = {30},
 Pages = {155--166},
 Year = {1978},
 Language = {English},
 Keywords = {28B10},
 zbMATH = {3663465},
 Zbl = {0426.28009}
}

@phdthesis{marik1979integration,
  title={Integration in topologischen Gruppen},
  author={Marik, Wolfgang},
  year={1979},
  school={Fachbereich Mathematik, Univ. Osnabrück}
}

@article{schafkefubini,
 author = {Sch{\"a}fke, F. W.},
 title = {Der ''{Satz} von {Fubini}'' in der allgemeinen {Integrationstheorie}},
 fjournal = {Journal f{\"u}r die Reine und Angewandte Mathematik},
 journal = {J. Reine Angew. Math.},
 issn = {0075-4102},
 volume = {256},
 pages = {58--70},
 year = {1972},
 language = {German},
 doi = {10.1515/crll.1972.256.58},
 keywords = {28A25,28A35},
 url = {https://eudml.org/doc/151248},
 zbMATH = {3380957},
 Zbl = {0241.28009}
}

@misc{gunzelrbook,
 author = {G{\"u}nzler, Hans},
 title = {Integration},
 year = {1985},
 language = {German},
 howpublished = {Mannheim-Wien-Z{\"u}rich: {Bibliographisches} {Institut}. {B}.{I}.- {Wissenschaftsverlag}. 392. {S}. {DM} 48.00 (1985).},
 keywords = {28B05,28-01},
 zbMATH = {3906794},
 Zbl = {0568.28009}
}

@article{gunzler,
 author = {Guenzler, Hans},
 title = {Abstract absolute {Riemannian} integration},
 fjournal = {Atti della Accademia Nazionale dei Lincei. Serie Ottava. Rendiconti. Classe di Scienze Fisiche, Matematiche e Naturali},
 journal = {Atti Accad. Naz. Lincei, VIII. Ser., Rend., Cl. Sci. Fis. Mat. Nat.},
 issn = {0392-7881},
 volume = {69},
 pages = {7--21},
 year = {1980},
 language = {English},
 keywords = {28B10,28A25},
 zbMATH = {3832272},
 Zbl = {0525.28007}
}

@article{schafkeriemann,
 author = {Schaefke, F. W.},
 title = {Lokale {Integralnormen} und verallgemeinerte uneigentliche {Riemann}- {Stieltjes}-{Integrale}},
 fjournal = {Journal f{\"u}r die Reine und Angewandte Mathematik},
 journal = {J. Reine Angew. Math.},
 issn = {0075-4102},
 volume = {289},
 pages = {118--134},
 year = {1977},
 language = {German},
 doi = {10.1515/crll.1977.289.118},
 keywords = {28C20,26A42},
 url = {https://eudml.org/doc/151823},
 zbMATH = {3526981},
 Zbl = {0337.28011}
}

@article{schafke2,
 author = {Sch{\"a}fke, F. W.},
 title = {Integrationstheorie. {II}},
 fjournal = {Journal f{\"u}r die Reine und Angewandte Mathematik},
 journal = {J. Reine Angew. Math.},
 issn = {0075-4102},
 volume = {248},
 pages = {147--171},
 year = {1971},
 language = {German},
 doi = {10.1515/crll.1971.248.147},
 keywords = {28C99},
 url = {https://eudml.org/doc/151115},
 zbMATH = {3338646},
 Zbl = {0213.34301}
}

@article{schafke1,
 author = {Sch{\"a}fke, F. W.},
 title = {Integrationstheorie. {I}},
 fjournal = {Journal f{\"u}r die Reine und Angewandte Mathematik},
 journal = {J. Reine Angew. Math.},
 issn = {0075-4102},
 volume = {244},
 pages = {154--176},
 year = {1970},
 language = {English},
 doi = {10.1515/crll.1970.244.154},
 url = {https://eudml.org/doc/151047},
 zbMATH = {3318783},
 Zbl = {0201.06403}
}

@article{schafkequasinormierten,
 author = {Sch{\"a}fke, F. W.},
 title = {Integrationstheorie und quasinormierte {Gruppen}. ({Integration} theory and quasi-normed groups)},
 fjournal = {Journal f{\"u}r die Reine und Angewandte Mathematik},
 journal = {J. Reine Angew. Math.},
 issn = {0075-4102},
 volume = {253},
 pages = {117--137},
 year = {1972},
 language = {German},
 doi = {10.1515/crll.1972.253.117},
 keywords = {28A25},
 url = {https://eudml.org/doc/151204},
 zbMATH = {3368898},
 Zbl = {0233.28009}
}

@book{beerbornology,
 author = {Beer, Gerald},
 title = {Bornologies and {Lipschitz} analysis},
 isbn = {978-0-367-49787-3; 978-0-367-49821-4; 978-1-003-04737-7},
 year = {2023},
 publisher = {Boca Raton, FL: CRC Press/Science Publishers},
 language = {English},
 doi = {10.1201/9781003047377},
 keywords = {46-02,46A17,54C30,26A16},
 zbMATH = {7681041},
 Zbl = {1547.46001}
}

@misc{jarchow,
 author = {Jarchow, Hans},
 title = {Locally convex spaces},
 year = {1981},
 language = {English},
 howpublished = {Mathematische {Leitf{\"a}den}. {Stuttgart}: {B}. {G}. {Teubner}. 548 p. {DM} 98.00 (1981).},
 keywords = {46-02,46A03,46A13,46A11,46-01,46M40,46A20,46A08,46A04,46M05,46A30,46A22,47B10,47L30},
 zbMATH = {3730702},
 Zbl = {0466.46001}
}

@book{hogbenelend,
 author = {Hogbe-Nlend, Henri},
 title = {Bornologies and functional analysis. {Introductory} course on the theory of duality topology-bornology and its use in functional analysis. {Translated} from the {French} by {V}. {B}. {Moscatelli}},
 fseries = {North-Holland Mathematics Studies},
 series = {North-Holland Math. Stud.},
 issn = {0304-0208},
 volume = {26},
 year = {1977},
 publisher = {Elsevier, Amsterdam},
 language = {English},
 keywords = {46A08,46A03,46M99,46-02,46F05,46F10,35D05,35A05},
 zbMATH = {3560151},
 Zbl = {0359.46004}
}

@misc{radyno,
 author = {Radyno, Ya. V.},
 title = {Linear equations and bornology. ({Linejnye} uravneniya i bornologiya).},
 year = {1982},
 language = {Russian},
 howpublished = {Minsk: {Izdatel}'stvo {Belorusskogo} {G{\'o}sudarstvennogo} {Universiteta} im. {V}. {I}. {Lenina}. 200 p. {R}. 1.40 (1982).},
 keywords = {46-02,47-02,46A08,46H30,46E40,46H05,47A60,47A10,47D03,47E05,34G10,47F05,47Gxx,45D05},
 zbMATH = {3847997},
 Zbl = {0534.46004}
}

@article{heinich,
 author = {Heinich, H.},
 title = {Int{\'e}gration pour une mesure {\`a} valeurs dans un groupe topologique. ({Integration} with respect to a measure in a topological group)},
 fjournal = {Annales de l'Institut Henri Poincar{\'e}. Nouvelle S{\'e}rie. Section B. Calcul des Probabilit{\'e}s et Statistique},
 journal = {Ann. Inst. Henri Poincar{\'e}, Nouv. S{\'e}r., Sect. B},
 issn = {0020-2347},
 volume = {7},
 pages = {177--192},
 year = {1971},
 language = {French},
 keywords = {28C10,43A05},
 url = {https://eudml.org/doc/76935},
 zbMATH = {3353652},
 Zbl = {0223.28012}
}

@article{weberfreie2,
 author = {Weber, H.},
 title = {{{\(\mathbb{R}\)}}-freie {Integrationstheorie}. {II}},
 fjournal = {Journal f{\"u}r die Reine und Angewandte Mathematik},
 journal = {J. Reine Angew. Math.},
 issn = {0075-4102},
 volume = {290},
 pages = {30--54},
 year = {1977},
 language = {German},
 doi = {10.1515/crll.1977.289.30},
 keywords = {28A25,46E30},
 url = {https://eudml.org/doc/151817},
 zbMATH = {3531665},
 Zbl = {0341.28007}
}

@article{weberfreie1,
 author = {Weber, H.},
 title = {{{\(\mathbb{R}\)}}-freie {Integrationstheorie}. {I}},
 fjournal = {Journal f{\"u}r die Reine und Angewandte Mathematik},
 journal = {J. Reine Angew. Math.},
 issn = {0075-4102},
 volume = {289},
 pages = {30--54},
 year = {1977},
 language = {German},
 doi = {10.1515/crll.1977.289.30},
 keywords = {28A25},
 url = {https://eudml.org/doc/151817},
 zbMATH = {3526977},
 Zbl = {0337.28007}
}

@article{hoffmannsemigroup,
 author = {Hoffmann, Dieter},
 title = {Generalized integrals by integral norms with values in pre-ordered semigroups},
 fjournal = {Journal f{\"u}r die Reine und Angewandte Mathematik},
 journal = {J. Reine Angew. Math.},
 issn = {0075-4102},
 volume = {295},
 pages = {187--201},
 year = {1977},
 language = {German},
 doi = {10.1515/crll.1977.295.187},
 keywords = {28B15,28B05},
 url = {https://eudml.org/doc/151925},
 zbMATH = {3558142},
 Zbl = {0358.28005}
}

@book{schafkebook,
 author = {Hoffmann, Dieter and Sch{\"a}fke, Friedrich-Wilhelm},
 title = {Integrale},
 isbn = {3-411-15431-4},
 year = {1992},
 publisher = {Mannheim: B. I. Wissenschaftsverlag},
 language = {German},
 keywords = {28-02,26-02,26A42},
 zbMATH = {54173},
 Zbl = {0764.28001}
}

@article{diazcarrilloaustralia,
 author = {D{\'{\i}}az Carrillo, M. and G{\"u}nzler, H.},
 title = {Local integral metrics and {Daniell}-{Loomis} integrals},
 fjournal = {Bulletin of the Australian Mathematical Society},
 journal = {Bull. Aust. Math. Soc.},
 issn = {0004-9727},
 volume = {48},
 number = {3},
 pages = {411--426},
 year = {1993},
 language = {English},
 doi = {10.1017/S0004972700015872},
 keywords = {28C05},
 zbMATH = {515455},
 Zbl = {0799.28007}
}

@article{diazcarrillocanada,
 author = {Diaz Carrillo, Manuel and Mu{\~n}oz Rivas, Pilar},
 title = {Positive linear functionals and improper integration},
 fjournal = {Annales des Sciences Math{\'e}matiques du Qu{\'e}bec},
 journal = {Ann. Sci. Math. Qu{\'e}.},
 issn = {0707-9109},
 volume = {18},
 number = {2},
 pages = {149--156},
 year = {1994},
 language = {English},
 keywords = {28C05,28A25},
 zbMATH = {734673},
 Zbl = {0819.28010}
}

@article{gunzlerimproper,
 author = {G{\"u}nzler, Hans},
 title = {Improper integrals and local integrals.},
 fjournal = {Revue Roumaine de Math{\'e}matiques Pures et Appliqu{\'e}es},
 journal = {Rev. Roum. Math. Pures Appl.},
 issn = {0035-3965},
 volume = {46},
 number = {5},
 pages = {639--651},
 year = {2001},
 language = {English},
 keywords = {46G12,28C05},
 zbMATH = {2037784},
 Zbl = {1037.46044}
}

@incollection{diazcarrillohandbook,
 author = {D{\'{\i}}az Carrillo, M.},
 title = {Daniell integral and related topics.},
 booktitle = {Handbook of measure theory. Vol. I and II},
 isbn = {0-444-50263-7},
 pages = {503--530},
 year = {2002},
 publisher = {Amsterdam: North-Holland},
 language = {English},
 keywords = {28C05,28-02,28A12},
 zbMATH = {1857847},
 Zbl = {1096.28008}
}

@article{thomas1972radon,
  title={On Radon maps with values in arbitrary topological vector spaces, and their integral extensions},
  author={Thomas, GEF},
  journal={unpublished paper},
  year={1972}
}

@phdthesis{masse,
  title={Théorie des Sous-Mesures: Application a L'Integration Vectorielle},
  author={Massé, Jean-Claude},
  year={1974},
  school={Department de Mathematiques, Université de Montréal}
}

@Article{trombettalimit,
 Author = {Trombetta, Alessandro},
 Title = {Limit theorems in integration theory for semigroup valued functions},
 FJournal = {Ricerche di Matematica},
 Journal = {Ric. Mat.},
 ISSN = {0035-5038},
 Volume = {53},
 Number = {1},
 Pages = {7--27},
 Year = {2004},
 Language = {English},
 Keywords = {28B10,28B20,28A25},
 zbMATH = {5177518},
 Zbl = {1230.28007}
}

@book{schwabikgouju,
 author = {Schwabik, {\v{S}}tefan and Ye, Guoju},
 title = {Topics in {Banach} space integration},
 fseries = {Series in Real Analysis},
 series = {Ser. Real Anal.},
 issn = {1793-1134},
 volume = {10},
 isbn = {981-256-428-4},
 year = {2005},
 publisher = {Hackensack, NJ: World Scientific},
 language = {English},
 keywords = {28B05,28-02,46G10,46G12,26A39},
 zbMATH = {2230027},
 Zbl = {1088.28008}
}

@article{hayesstieltjes,
 author = {Hayes, Stanley E.},
 title = {Generalized {Stieltjes} integrals},
 fjournal = {Advances in Mathematics},
 journal = {Adv. Math.},
 issn = {0001-8708},
 volume = {12},
 pages = {402--415},
 year = {1974},
 language = {English},
 doi = {10.1016/S0001-8708(74)80009-5},
 keywords = {26A42,28A10},
 zbMATH = {3432678},
 Zbl = {0275.26010}
}

@article{federsonpeculiar,
 author = {Federson, M.},
 title = {Some peculiarities of the {Henstock} and {Kurzweil} integrals of {Banach} space-valued functions},
 fjournal = {Real Analysis Exchange},
 journal = {Real Anal. Exch.},
 issn = {0147-1937},
 volume = {29},
 number = {1},
 pages = {439--460},
 year = {2004},
 language = {English},
 doi = {10.14321/realanalexch.29.1.0439},
 keywords = {28B05,26A39,26A42,46G10},
 url = {semanticscholar.org/paper/c5a03caaac25ffecb85c82cd22df8329d4500acb},
 zbMATH = {2141362},
 Zbl = {1069.28006}
}

@article{serieshenstock,
 author = {Swartz, Charles},
 title = {Gauge integrals and series.},
 fjournal = {Mathematica Bohemica},
 journal = {Math. Bohem.},
 issn = {0862-7959},
 volume = {129},
 number = {3},
 pages = {325--332},
 year = {2004},
 language = {English},
 doi = {10.21136/MB.2004.134147},
 keywords = {28B05,46G10,26A39},
 url = {https://eudml.org/doc/232538},
 zbMATH = {5006455},
 Zbl = {1080.28008}
}

@incollection{schwartz2006semi,
  title={Les semi-martingales formelles},
  author={Schwartz, Laurent},
  booktitle={S{\'e}minaire de Probabilit{\'e}s XV 1979/80: Avec table g{\'e}n{\'e}rale des expos{\'e}s de 1966/67 {\`a} 1978/79},
  pages={413--489},
  year={1981},
  publisher={Springer}
}

@Article{volkmerweber,
 Author = {Volkmer, Hans and Weber, Hans},
 Title = {A unified treatment of vector integration},
 FJournal = {Commentarii Mathematici Universitatis Sancti Pauli},
 Journal = {Comment. Math. Univ. St. Pauli},
 ISSN = {0010-258X},
 Volume = {31},
 Pages = {33--48},
 Year = {1982},
 Language = {English},
 Keywords = {46G10,28B05,28A20},
 zbMATH = {3770352},
 Zbl = {0489.46039}
}

@phdthesis{abreu2006integration,
  title={Integration on uniform type conoids},
  author={Abreu, Teresa Paula Amaral},
  year={2006},
  school={Universidade de Vigo}
}

@phdthesis{brunker,
  title={Topics in the Algebra of Axiomatic Infinite Sums},
  author={Brunker, Donald Molyneux Stuart},
  year={1980},
  school={University of Waterloo}
}

@article{masaniniemi3,
 author = {Masani, P. R. and Niemi, H.},
 title = {The integration of {Banach} space valued measures and the {Tonelli}-{Fubini} theorems. {III}: {Vectorial} extensions of product measures and the slicing, {Fubini} and {Tonelli} theorems},
 fjournal = {Ricerche di Matematica},
 journal = {Ric. Mat.},
 issn = {0035-5038},
 volume = {41},
 number = {2},
 pages = {195--282},
 year = {1992},
 language = {English},
 keywords = {28B05,46G10,28A25},
 zbMATH = {411634},
 Zbl = {0774.28006}
}

@misc{butkovic,
 author = {Butkovic, Davor},
 title = {On vector measures},
 year = {1979},
 language = {English},
 howpublished = {Ber. {Math}.-{Stat}. {Sekt}. {Forschungszent}. {Graz} 112, 10 {S}. (1979).},
 keywords = {28B05,46G10},
 zbMATH = {3654410},
 Zbl = {0421.28006}
}

@article{butkovic1979integration,
  title={On integration with respect to measures with values in arbitrary topological vector spaces},
  author={Butkovi{\'c}, D},
  journal={Publications des s{\'e}minaires de math{\'e}matiques et informatique de Rennes},
  number={1},
  pages={1--10},
  year={1979}
}

@article{debievebilinear,
 author = {Debieve, Camille},
 title = {Integration of vector valued functions with respect to vector valued measures},
 fjournal = {Revue Roumaine de Math{\'e}matiques Pures et Appliqu{\'e}es},
 journal = {Rev. Roum. Math. Pures Appl.},
 issn = {0035-3965},
 volume = {26},
 pages = {943--957},
 year = {1981},
 language = {English},
 keywords = {46G10,28B05},
 zbMATH = {3725084},
 Zbl = {0463.46038}
}

@book{sivasankara1981vector,
  title={Vector integrals and products of vector measures},
  author={Sivasankara, Sastry A},
  year={1981},
  publisher={The University of Nebraska-Lincoln}
}

@article{chakraborty4,
 author = {Roy, S. K. and Chakraborty, N. D.},
 title = {Integration of vector-valued functions with respect to an operator-valued measure},
 fjournal = {Czechoslovak Mathematical Journal},
 journal = {Czech. Math. J.},
 issn = {0011-4642},
 volume = {36},
 pages = {198--209},
 year = {1986},
 language = {English},
 doi = {10.21136/CMJ.1986.102084},
 keywords = {28B05,46G10,46A03},
 url = {https://eudml.org/doc/13574},
 zbMATH = {3987605},
 Zbl = {0611.28004}
}

@article{charkraborty3,
 author = {Chakraborty, N. D. and Basu, Santwana},
 title = {On some properties of the space of tensor integrable functions},
 fjournal = {Analysis Mathematica},
 journal = {Anal. Math.},
 issn = {0133-3852},
 volume = {33},
 number = {1},
 pages = {1--16},
 year = {2007},
 language = {English},
 doi = {10.1007/s10474-007-0104-3},
 keywords = {46G10,28B05},
 zbMATH = {5281489},
 Zbl = {1136.46032}
}

@article{chakraborty2,
 author = {Chakraborty, N. D. and Basu, Santwana},
 title = {Spaces of {{\(p\)}}-tensor integrable functions and related {Banach} space properties},
 fjournal = {Real Analysis Exchange},
 journal = {Real Anal. Exch.},
 issn = {0147-1937},
 volume = {34},
 number = {1},
 pages = {87--104},
 year = {2009},
 language = {English},
 doi = {10.14321/realanalexch.34.1.0087},
 keywords = {46G10,28B05,46B99},
 zbMATH = {5578216},
 Zbl = {1188.46028}
}

@article{chakraborty1,
 author = {Chakraborty, N. D. and Basu, Santwana},
 title = {Integration of vector-valued functions with respect to vector measures defined on {{\({{\delta}} \)}}-rings},
 fjournal = {Illinois Journal of Mathematics},
 journal = {Ill. J. Math.},
 issn = {0019-2082},
 volume = {55},
 number = {2},
 pages = {495--508},
 year = {2011},
 language = {English},
 keywords = {46G10,28B05,46B99},
 zbMATH = {6185661},
 Zbl = {1297.46032}
}

@article{chivukulasastry,
 author = {Rao Chivukula, R. and Sastry, A. S.},
 title = {Product vector measures via {Bartle} integrals},
 fjournal = {Journal of Mathematical Analysis and Applications},
 journal = {J. Math. Anal. Appl.},
 issn = {0022-247X},
 volume = {96},
 pages = {180--195},
 year = {1983},
 language = {English},
 doi = {10.1016/0022-247X(83)90035-5},
 keywords = {28B05,28A35,46G10},
 zbMATH = {3877464},
 Zbl = {0551.28009}
}

@article{masaniniemi2,
 author = {Masani, P. R. and Niemi, H.},
 title = {The integration theory of {Banach} space valued measures and the {Tonelli}- {Fubini} theorems. {II}: {Pettis} integration},
 fjournal = {Advances in Mathematics},
 journal = {Adv. Math.},
 issn = {0001-8708},
 volume = {75},
 number = {2},
 pages = {121--167},
 year = {1989},
 language = {English},
 doi = {10.1016/0001-8708(89)90035-2},
 keywords = {28B05,46G10,28A25},
 zbMATH = {274891},
 Zbl = {0770.28007}
}

@article{masaniniemi1,
 author = {Masani, P. R. and Niemi, H.},
 title = {The integration theory of {Banach} space valued measures and the {Tonelli}- {Fubini} theorems. {I}: {Scalar}-valued measures on {{\(\delta\)}}-rings},
 fjournal = {Advances in Mathematics},
 journal = {Adv. Math.},
 issn = {0001-8708},
 volume = {73},
 number = {2},
 pages = {204--241},
 year = {1989},
 language = {English},
 doi = {10.1016/0001-8708(89)90069-8},
 keywords = {28A25,28B05,46G10},
 zbMATH = {4120453},
 Zbl = {0684.28002}
}

@article{musialtopics,
 author = {Musia{\l}, Kaziemierz},
 title = {Topics in the theory of {Pettis} integration},
 fjournal = {Rendiconti dell'Istituto di Matematica dell'Universit{\`a} di Trieste},
 journal = {Rend. Ist. Mat. Univ. Trieste},
 issn = {0049-4704},
 volume = {23},
 pages = {177--262},
 year = {1991},
 language = {English},
 keywords = {46G10,28B05,46G12},
 zbMATH = {474681},
 Zbl = {0798.46042}
}

@Book{kelley,
 Author = {Kelley, John L.},
 Title = {General topology.},
 Edition = {Reprint of the 1955 original published by van {Nostrand}},
 ISBN = {978-0-486-81544-2},
 Year = {2017},
 Publisher = {Mineola, NY: Dover Publications},
 Language = {English},
 Keywords = {54-01,54E15,54E35,03-01,03E30,03E25,03E20,54A05,54A20,54B05,54B15,54B10,54C05,54C10,54C35,54Dxx},
 zbMATH = {6696602},
 Zbl = {1358.54001}
}

@Book{pageuniform,
 Author = {Page, Warren},
 Title = {Topological uniform structures.},
 Edition = {Revised reprint of the 1978 original},
 ISBN = {0-486-65808-2},
 Year = {1988},
 Publisher = {New York: Dover Publications, Inc.},
 Language = {English},
 Keywords = {46-01,54E15,46Axx,43-01,46Jxx,46Hxx,22Axx,28C10},
 zbMATH = {52067},
 Zbl = {0734.46001}
}

@Misc{millingtonriesz,
 Author = {Millington, Hugh G. R.},
 Title = {Riesz integral representation theory},
 Year = {2007},
 HowPublished = {Preprint, {arXiv}:0706.4281 [math.{RT}] (2007)},
 Keywords = {47B38,28B10},
 URL = {https://arxiv.org/abs/0706.4281},
 arXiv = {arXiv:0706.4281}
}

@Article{massesemigroup,
 Author = {Masse, J. Claude},
 Title = {Int{\'e}gration dans les semi-groupes},
 FJournal = {Studia Mathematica},
 Journal = {Stud. Math.},
 ISSN = {0039-3223},
 Volume = {66},
 Pages = {57--80},
 Year = {1979},
 Language = {French},
 DOI = {10.4064/sm-66-1-57-80},
 Keywords = {28A25,28B05,46G10},
 URL = {https://eudml.org/doc/218274},
 zbMATH = {3547602},
 Zbl = {0352.28002}
}

@Book{dudleynorvaisa,
 Author = {Dudley, R. M. and Norvai{\v{s}}a, R.},
 Title = {Concrete functional calculus},
 FSeries = {Springer Monographs in Mathematics},
 Series = {Springer Monogr. Math.},
 ISSN = {1439-7382},
 ISBN = {978-1-4419-6949-1; 978-1-4419-6950-7},
 Year = {2011},
 Publisher = {New York, NY: Springer},
 Language = {English},
 DOI = {10.1007/978-1-4419-6950-7},
 Keywords = {46-02,28-02,60G17,47H30,46G10},
 zbMATH = {5817027},
 Zbl = {1218.46003}
}

@Article{millingtonruns,
 Author = {Millington, Hugh G. R.},
 Title = {Runs and integration},
 FJournal = {Real Analysis Exchange},
 Journal = {Real Anal. Exch.},
 ISSN = {0147-1937},
 Volume = {30},
 Number = {1},
 Pages = {115--122},
 Year = {2005},
 Language = {English},
 DOI = {10.14321/realanalexch.30.1.0115},
 Keywords = {26A39,28B10,54A20,26A42},
 zbMATH = {2160503},
 Zbl = {1065.26014}
}

@Article{hebisch,
 Author = {Hebisch, Udo},
 Title = {Eine algebraische Theorie unendlicher Summen mit Anwendungen auf Halbgruppen und Halbringe},
 FJournal = {Bayreuther Mathematische Schriften},
 Journal = {Bayreuther Math. Schr.},
 ISSN = {0172-1062},
 Volume = {40},
 Pages = {21--152},
 Year = {1992},
 Language = {German},
 Keywords = {08A65,08A55,08A70,16Y60,20M14,68R10},
 zbMATH = {57366},
 Zbl = {0747.08005}
}

@Article{drewcontrol,
 Author = {Drewnowski, L.},
 Title = {On control submeasures and measures},
 FJournal = {Studia Mathematica},
 Journal = {Stud. Math.},
 ISSN = {0039-3223},
 Volume = {50},
 Pages = {203--224},
 Year = {1974},
 Language = {English},
 DOI = {10.4064/sm-50-3-203-224},
 Keywords = {28B05,28A10,46G10},
 URL = {https://eudml.org/doc/217889},
 zbMATH = {3447233},
 Zbl = {0285.28015}
}

@Book{schaefer,
 Author = {Schaefer, Helmut H. and Wolff, M. P.},
 Title = {Topological vector spaces.},
 Edition = {2nd ed.},
 FSeries = {Graduate Texts in Mathematics},
 Series = {Grad. Texts Math.},
 ISSN = {0072-5285},
 Volume = {3},
 ISBN = {0-387-98726-6},
 Year = {1999},
 Publisher = {New York, NY: Springer},
 Language = {English},
 Keywords = {46A03,46-02,46A04,46A11,46A25,46L05,46A40,46A20,46A30,46A32,46B42,46A55,47B60,46F05},
 zbMATH = {1321178},
 Zbl = {0983.46002}
}

@Misc{Berg,
 Author = {Berg, Lothar},
 Title = {Introduction of integrals in ordered abelian semigroups with zero element},
 Year = {1978},
 Language = {English},
 HowPublished = {Topology and {Measure}, {Part} 1, {Proc}. {Conf}., {Zinnowitz}/{DDR} 1974, 13-22 (1978).},
 Keywords = {28B10,28B15,28C10,06F05},
 zbMATH = {3649050},
 Zbl = {0417.28005}
}

@Article{markwardtintegral,
 Author = {Markwardt, Klaus},
 Title = {Ma{{\ss}} und {Integral} auf einheitlicher algebraischer {Grundlage}. ({Unified} algebraic foundation for measure and integral)},
 FJournal = {Rostocker Mathematisches Kolloquium},
 Journal = {Rostocker Math. Kolloq.},
 ISSN = {0138-3248},
 Volume = {29},
 Pages = {41--56},
 Year = {1986},
 Language = {German},
 Keywords = {28A12,28A60,28C99,06A12},
 zbMATH = {3975272},
 Zbl = {0604.28002}
}

@Misc{Higgsintegral,
 Author = {Higgs, Denis},
 Title = {Axiomatic infinite sums - an algebraic approach to integration theory},
 Year = {1980},
 Language = {English},
 HowPublished = {Integration, topology, and geometry in linear spaces, {Proc}. {Conf}., {Chapel} {Hill}/{N}.{C}. 1979, {Contemp}. {Math}. 2, 205-212 (1980).},
 Keywords = {28B99,46A99},
 zbMATH = {3882679},
 Zbl = {0554.28011}
}

@Book{panchapagesanbook,
 Author = {Panchapagesan, T. V.},
 Title = {The {Bartle}-{Dunford}-{Schwartz} integral. {Integration} with respect to a sigma-additive vector measure},
 FSeries = {Monografie Matematyczne. Instytut Matematyczny PAN. New Series},
 Series = {Monogr. Mat., Inst. Mat. PAN, N.S.},
 ISSN = {0077-0507},
 Volume = {69},
 ISBN = {978-3-7643-8601-6},
 Year = {2008},
 Publisher = {Basel: Birkh{\"a}user},
 Language = {English},
 Keywords = {28-02,28B05,46-02,46E10,46G10},
 zbMATH = {5203419},
 Zbl = {1161.28001}
}

@Article{labudaconvexly,
 Author = {Labuda, Iwo},
 Title = {On functions that are {{\(BDS\)}}-integrable over convexly bounded vector measures},
 FJournal = {Functiones et Approximatio. Commentarii Mathematici},
 Journal = {Funct. Approximatio, Comment. Math.},
 ISSN = {0208-6573},
 Volume = {50},
 Number = {1},
 Pages = {151--159},
 Year = {2014},
 Language = {English},
 DOI = {10.7169/facm/2014.50.1.4},
 Keywords = {28B05,46G10,46A40},
 zbMATH = {6295032},
 Zbl = {1292.28018}
}

@misc{cristescubook,
 author = {Cristescu, Romulus},
 title = {Ordered vector spaces and linear operators. {Translated} from the {Romanian} by {Silviu} {Teleman}},
 year = {1976},
 language = {English},
 howpublished = {Bucuresti: {Editura} {Academiei}; {Tunbridge} {Wells}, {Kent}: {Abacus} {Press}. 339 p. {Lei} 30.00 (1976).},
 keywords = {46A40,46-02,46A03,47B60,46G10},
 zbMATH = {3503902},
 Zbl = {0322.46010}
}

@incollection{cristescu1,
 author = {Cristescu, Romulus},
 title = {On integration in ordered vector spaces and on some linear operators},
 booktitle = {Proceedings of the 2nd international conference in functional analysis and approximation theory, Acquafredda di Maratea (Potenza), September 14- 19, 1992},
 pages = {289--299},
 year = {1994},
 publisher = {Palermo: Circolo Matem{\'a}tico di Palermo},
 language = {English},
 keywords = {46G10,46A40,47B60},
 zbMATH = {729049},
 Zbl = {0878.46033}
}

@article{integraldconvergenceboccuto,
 author = {Boccuto, Antonio},
 title = {Integration in {Riesz} spaces with respect to {{\((D)\)}}-convergence},
 fjournal = {Tatra Mountains Mathematical Publications},
 journal = {Tatra Mt. Math. Publ.},
 issn = {1210-3195},
 volume = {10},
 pages = {33--54},
 year = {1997},
 language = {English},
 keywords = {28B05,46G10},
 zbMATH = {1293532},
 Zbl = {0918.28010}
}

@incollection{cristescu2,
 author = {Cristescu, Romulus},
 title = {On some linear operators and on some vector integrals},
 booktitle = {Ordered Structures in Functional Analysis, Vol. 4},
 pages = {10-43},
 year = {2001},
 publisher = {Editura Academiei Romane},
 language = {English},
}

@article{haluskacompletevectorlattices,
 author = {Halu{\v{s}}ka, J{\'a}n},
 title = {On integration in complete vector lattices},
 fjournal = {Tatra Mountains Mathematical Publications},
 journal = {Tatra Mt. Math. Publ.},
 issn = {1210-3195},
 volume = {3},
 pages = {201--212},
 year = {1993},
 language = {English},
 keywords = {46G10,47B65,46A40,28B05},
 zbMATH = {605266},
 Zbl = {0813.46035}
}

@article{vrabelovariecanlast,
 author = {Rie{\v{c}}an, Beloslav and Vr{\'a}belov{\'a}, Marta},
 title = {The {Kurzweil} construction of an integral in ordered spaces},
 fjournal = {Czechoslovak Mathematical Journal},
 journal = {Czech. Math. J.},
 issn = {0011-4642},
 volume = {48},
 number = {3},
 pages = {565--574},
 year = {1998},
 language = {English},
 doi = {10.1023/A:1022483929348},
 keywords = {28B15,26A39},
 url = {https://eudml.org/doc/30437},
 zbMATH = {1528759},
 Zbl = {0953.28007}
}

@misc{henstockfirstbook,
 author = {Henstock, Ralph},
 title = {Theory of integration},
 year = {1963},
 language = {English},
 howpublished = {Butterworths {Mathematical} {Texts}. {London}: {Butterworth} \& {Co}. ix, 169 p. (1963).},
 keywords = {26A42,26A39,26-02,28-02},
 zbMATH = {3247316},
 Zbl = {0154.05001}
}

@misc{henstockgreek,
 author = {Henstock, Ralph},
 title = {Additivity and the {Lebesgue} limit theorems},
 year = {1974},
 language = {English},
 howpublished = {C. {Carath{\'e}odory}, {Proc}. int. {Symp}., {Athens} 1973, 223-241 (1974).},
 keywords = {28A25,26A42,26A39,28A20},
 zbMATH = {3575876},
 Zbl = {0369.28003}
}

@article{millingtonproduct,
 author = {Millington, H.},
 title = {Products of group-valued measures},
 fjournal = {Studia Mathematica},
 journal = {Stud. Math.},
 issn = {0039-3223},
 volume = {54},
 pages = {7--27},
 year = {1975},
 language = {English},
 doi = {10.4064/sm-54-1-7-27},
 keywords = {28A35,28B05,46M05,28A10},
 url = {https://eudml.org/doc/217991},
 zbMATH = {3508823},
 Zbl = {0325.28010}
}

@article{haluskafubini,
 author = {Halu{\v{s}}ka, J{\'a}n and Hutn{\'{\i}}k, Ondrej},
 title = {The general {Fubini} theorem in complete bornological locally convex spaces},
 fjournal = {Banach Journal of Mathematical Analysis},
 journal = {Banach J. Math. Anal.},
 issn = {2662-2033},
 volume = {4},
 number = {2},
 pages = {53--74},
 year = {2010},
 language = {English},
 doi = {10.15352/bjma/1297117241},
 keywords = {46G10,28B05},
 url = {https://eudml.org/doc/228978},
 zbMATH = {5702413},
 Zbl = {1198.46037}
}

@article{ballvefubini,
 author = {Ballv{\'e}, Maria E. and Jim{\'e}nez Guerra, P.},
 title = {Fubini theorems for bornological measures},
 fjournal = {Mathematica Slovaca},
 journal = {Math. Slovaca},
 issn = {0139-9918},
 volume = {43},
 number = {2},
 pages = {137--148},
 year = {1993},
 language = {English},
 keywords = {28B05,46G10,28A35},
 url = {https://eudml.org/doc/32339},
 zbMATH = {404553},
 Zbl = {0874.28014}
}

@article{ballvebornological,
 author = {Ballv{\'e}, M. E.},
 title = {L{{\({}^{\alpha}\)}} spaces and bornological integration},
 fjournal = {Collectanea Mathematica},
 journal = {Collect. Math.},
 issn = {0010-0757},
 volume = {39},
 number = {1},
 pages = {21--29},
 year = {1988},
 language = {Spanish},
 keywords = {46E40,46E30,46A17},
 url = {https://eudml.org/doc/39084},
 zbMATH = {4180108},
 Zbl = {0716.46037}
}

@article{bombalintegralradonnikodym,
 author = {Bombal Gordon, F.},
 title = {The {Radon}-{Nikodym} theorem in bornological spaces},
 fjournal = {Revista de la Real Academia de Ciencias Exactas, F{\'{\i}}sicas y Naturales de Madrid},
 journal = {Rev. R. Acad. Cienc. Exactas F{\'{\i}}s. Nat. Madr.},
 issn = {0034-0596},
 volume = {75},
 pages = {139--154},
 year = {1981},
 language = {Spanish},
 keywords = {28B05,28A15,46A08,46G10},
 url = {https://eudml.org/doc/42914},
 zbMATH = {3787125},
 Zbl = {0499.28005}
}

@article{bombalintegralbornological,
 author = {Bombal Gordon, F.},
 title = {Measure and integration in bornological spaces},
 fjournal = {Revista de la Real Academia de Ciencias Exactas, F{\'{\i}}sicas y Naturales de Madrid},
 journal = {Rev. R. Acad. Cienc. Exactas F{\'{\i}}s. Nat. Madr.},
 issn = {0034-0596},
 volume = {75},
 pages = {115--137},
 year = {1981},
 language = {Spanish},
 keywords = {28B05,46G10,46A08},
 url = {https://eudml.org/doc/42912},
 zbMATH = {3787124},
 Zbl = {0499.28004}
}

@article{haluskakolmogorov,
 author = {Ducho{\v{n}}, Miloslav and Halu{\v{s}}ka, J{\'a}n},
 title = {On a generalized {Kolmogoroff} integral in complete bornological locally convex spaces},
 fjournal = {Tatra Mountains Mathematical Publications},
 journal = {Tatra Mt. Math. Publ.},
 issn = {1210-3195},
 volume = {30},
 pages = {163--173},
 year = {2005},
 language = {English},
 keywords = {46G10},
 zbMATH = {5005299},
 Zbl = {1150.46326}
}

@article{haluska2,
 author = {Halu{\v{s}}ka, J{\'a}n},
 title = {On integration in complete bornological locally convex spaces},
 fjournal = {Czechoslovak Mathematical Journal},
 journal = {Czech. Math. J.},
 issn = {0011-4642},
 volume = {47},
 number = {2},
 pages = {205--219},
 year = {1997},
 language = {English},
 doi = {10.1023/A:1022861410890},
 keywords = {46G10,06F20,28B05,46A17},
 url = {https://eudml.org/doc/30359},
 zbMATH = {1364426},
 Zbl = {0926.46037}
}

@misc{haluska1,
 author = {Halu{\v{s}}ka, J{\'a}n},
 title = {On integration in locally convex spaces},
 year = {1988},
 language = {English},
 howpublished = {Measure theory, {Proc}. 1st {Winter} {Sch}., {Liptovsk{\'y}} {J{\'a}n}/{Czech}. 1988, 30-36 (1988).},
 keywords = {46G10,28B05},
 zbMATH = {4177890},
 Zbl = {0715.46022}
}

@Article{haluskaintegrable,
 Author = {Halu{\v{s}}ka, J{\'a}n and Hutn{\'{\i}}k, Ondrej},
 Title = {On integrable functions in complete bornological locally convex spaces},
 FJournal = {Mediterranean Journal of Mathematics},
 Journal = {Mediterr. J. Math.},
 ISSN = {1660-5446},
 Volume = {9},
 Number = {1},
 Pages = {165--186},
 Year = {2012},
 Language = {English},
 DOI = {10.1007/s00009-011-0114-z},
 Keywords = {46G10,28B05},
 zbMATH = {6033149},
 Zbl = {1246.46043}
}

@Article{panchapagesandistinguishing,
 Author = {Panchapagesan, T. V.},
 Title = {On the distinguishing features of the {Dobrakov} integral},
 FJournal = {Divulgaciones Matem{\'a}ticas},
 Journal = {Divulg. Mat.},
 ISSN = {1315-2068},
 Volume = {3},
 Number = {1},
 Pages = {79--114},
 Year = {1995},
 Language = {English},
 Keywords = {28B05,46G10},
 URL = {https://eudml.org/doc/47824},
 zbMATH = {877790},
 Zbl = {0883.28011}
}

@Article{Higgs,
 Author = {Higgs, Denis},
 Title = {{{\(\Sigma\)}}-groups as convergence groups},
 FJournal = {Mathematische Nachrichten},
 Journal = {Math. Nachr.},
 ISSN = {0025-584X},
 Volume = {137},
 Pages = {101--112},
 Year = {1988},
 Language = {English},
 DOI = {10.1002/mana.19881370110},
 Keywords = {54A20,22A05,54H10},
 zbMATH = {4078211},
 Zbl = {0659.54002}
}

@book{dinculeanu2014vector,
  title={Vector measures},
  author={Dinculeanu, Nicolae},
  year={2014},
  publisher={Elsevier}
}

@book{bourbaki2013general,
  title={General Topology: Chapters 1--4},
  author={Bourbaki, Nicolas},
  volume={18},
  year={2013},
  publisher={Springer Science \& Business Media}
}

@article{bartle1956general,
  title={A general bilinear vector integral},
  author={Bartle, Robert},
  journal={Studia Mathematica},
  volume={15},
  number={3},
  pages={337--352},
  year={1956},
  publisher={Polska Akademia Nauk. Instytut Matematyczny PAN}
}

@phdthesis{rodriguezsalazar,
  title={Integración vectorial en espacios
localmente convexos},
  author={Rodríguez Salazar, S},
  year={1985},
  school={Universidad Complutense de Madrid}
}

@article{balcerzak2008convergence,
  title={Convergence theorems for the Birkhoff integral},
  author={Balcerzak, Marek and Potyra{\l}a, Monika},
  journal={Czechoslovak Mathematical Journal},
  volume={58},
  number={4},
  pages={1207--1219},
  year={2008},
  publisher={Springer}
}

@article{caponetti2017integration,
  title={On the integration of Riemann-measurable vector-valued functions},
  author={Caponetti, Diana and Marraffa, Valeria and Naralenkov, Kirill},
  journal={Monatshefte f{\"u}r Mathematik},
  volume={182},
  number={3},
  pages={513--536},
  year={2017},
  publisher={Springer}
}

@article{memetaj2010some,
  title={Some convergence theorems for bk-integral in locally convex spaces},
  author={Memetaj, Sokol Bush},
  journal={Tatra Mt. Math. Publ},
  volume={46},
  number={1},
  pages={29--40},
  year={2010}
}

@article{potyrala2007some,
  title={Some remarks about Birkhoff and Riemann-Lebesgue integrability of vector valued functions},
  author={Potyrala, Monika},
  journal={Tatra Mt. Math. Publ},
  volume={35},
  number={97},
  pages={97--106},
  year={2007}
}

@phdthesis{santiagohidalgo,
  title={Integración vectorial de tipo débil en espacios localmente convexos},
  author={Hidalgo Alonso, S},
  year={1994},
  school={UNED. Universidad Nacional de Educación a Distancia}
}

@article{balcerzak2014convergence,
  title={A convergence theorem for the Birkhoff integral},
  author={Balcerzak, Marek and Musia{\l}, Kazimierz},
  journal={Functiones et Approximatio Commentarii Mathematici},
  volume={50},
  number={1},
  pages={161--168},
  year={2014},
  publisher={Adam Mickiewicz University, Faculty of Mathematics and Computer Science}
}

@article{rodriguez2009convergence,
  title={Convergence theorems for the Birkhoff integral},
  author={Rodriguez, Jose},
  journal={Houston J. Math},
  volume={35},
  number={2},
  pages={541--551},
  year={2009}
}

@article{rodriguez2009pointwise,
  title={Pointwise limits of Birkhoff integrable functions},
  author={Rodriguez, Jose},
  journal={Proceedings of the American Mathematical Society},
  volume={137},
  number={1},
  pages={235--245},
  year={2009}
}

@article{naralenkov2014lusin,
  title={A Lusin type measurability property for vector-valued functions},
  author={Naralenkov, KM},
  journal={Journal of Mathematical Analysis and Applications},
  volume={417},
  number={1},
  pages={293--307},
  year={2014},
  publisher={Elsevier}
}

@article{cascales2005birkhoff,
  title={The Birkhoff integral and the property of Bourgain},
  author={Cascales, B and Rodr{\'\i}guez, J},
  journal={Mathematische Annalen},
  volume={331},
  number={2},
  pages={259--279},
  year={2005},
  publisher={Springer}
}

@article{fernandez2009birkhoff,
  title={On Birkhoff integrability for scalar functions and vector measures},
  author={Fernandez, A and Mayoral, F and Naranjo, F and Rodriguez, J},
  journal={Monatshefte f{\"u}r Mathematik},
  volume={157},
  number={2},
  pages={131--142},
  year={2009},
  publisher={Springer}
}

@Article{deleanu,
 Author = {Deleanu, Aristide},
 Title = {Sur l'int{\'e}gration des fonctions d'ensemble},
 FJournal = {Rendiconti del Seminario Matematico della Universit{\`a} di Padova},
 Journal = {Rend. Semin. Mat. Univ. Padova},
 ISSN = {0041-8994},
 Volume = {27},
 Pages = {27--36},
 Year = {1957},
 Language = {French},
 URL = {https://eudml.org/doc/106976},
 zbMATH = {3127668},
 Zbl = {0078.04501}
}

@article{drew1,
 author = {Drewnowski, L.},
 title = {Topological rings of sets, continuous set functions, integration. {I}},
 fjournal = {Bulletin de l'Acad{\'e}mie Polonaise des Sciences, S{\'e}rie des Sciences Math{\'e}matiques, Astronomiques et Physiques},
 journal = {Bull. Acad. Pol. Sci., S{\'e}r. Sci. Math. Astron. Phys.},
 issn = {0001-4117},
 volume = {20},
 pages = {269--276},
 year = {1972},
 language = {English},
 keywords = {28B05,46G10,28A10},
 zbMATH = {3394477},
 Zbl = {0249.28004}
}

@article{halilovicmultilinear,
 author = {Halilovi{\'c}, A.},
 title = {Multilinear {Stieltjes} integrals in {Banach} spaces},
 fjournal = {Radovi Matemati{\v{c}}ki},
 journal = {Rad. Mat.},
 issn = {0352-6100},
 volume = {7},
 number = {2},
 pages = {305--316},
 year = {1991},
 language = {English},
 keywords = {28C20,46G12},
 zbMATH = {34410},
 Zbl = {0761.28008}
}

@phdthesis{lechthesis,
    author = {Drewnowski, Lech},
    title = {O pewnych zagadnieniach z teorii przestrzeni funkcji całkowalnych},
    school = {Uniwersytet im. Adama Mickiewicza w Poznaniu} ,
    year = {1971}
}

@book{duboisdefinition2,
  title={Autres approches de l'int{\'e}grale g{\'e}n{\'e}ralis{\'e}e de Riemann},
  author={M'Khalfi, Abdessalam},
  year={1989},
  publisher={Universit{\'e} de Sherbrooke}
}

@article{duboisdefinition1,
 author = {Dubois, J. and M'Khalfi, A.},
 title = {Une int{\'e}grale de type {Riemann} g{\'e}n{\'e}ralis{\'e}e contenant l'int{\'e}grale de {Birkhoff}. ({A} generalized {Riemann} type integral containing the {Birkhoff} integral)},
 fjournal = {Annales des Sciences Math{\'e}matiques du Qu{\'e}bec},
 journal = {Ann. Sci. Math. Qu{\'e}.},
 issn = {0707-9109},
 volume = {12},
 number = {1},
 pages = {31--55},
 year = {1988},
 language = {French},
 keywords = {26A42,26A39,28A25},
 zbMATH = {4101506},
 Zbl = {0673.26002}
}

@article{duboisconvergence,
 author = {Dubois, J. and M'Khalfi, A.},
 title = {Propri{\'e}t{\'e}s et th{\'e}or{\`e}mes de convergence pour une int{\'e}grale de type {Riemann} g{\'e}n{\'e}ralis{\'e}e. ({Properties} and convergence theorems of a generalized {Riemann} type integral)},
 fjournal = {Annales des Sciences Math{\'e}matiques du Qu{\'e}bec},
 journal = {Ann. Sci. Math. Qu{\'e}.},
 issn = {0707-9109},
 volume = {12},
 number = {2},
 pages = {189--210},
 year = {1988},
 language = {French},
 keywords = {26A42,26A39,28A25},
 zbMATH = {4101507},
 Zbl = {0673.26003}
}

@article{halilovicnewest,
 author = {Halilovi{\'c}, Armin},
 title = {On the existence of the {Henstock}-{Kurzweil}-{Stieltjes} multilinear integral},
 fjournal = {International Journal of Pure and Applied Mathematics},
 journal = {Int. J. Pure Appl. Math.},
 issn = {1311-8080},
 volume = {55},
 number = {4},
 pages = {581--594},
 year = {2009},
 language = {English},
 keywords = {46G10,28B05},
 zbMATH = {5621015},
 Zbl = {1189.46038}
}

@phdthesis{ferreira2004vector,
  title={Vector-valued and group-valued gauge integration via Kurzweil-Henstock's approach},
  author={Ferreira, Bevan},
  year={2004},
  school={Carleton University}
}

@article{delangheimpens,
 author = {Delanghe, R. and Impens, C.},
 title = {N-semigroups as codomain of measures},
 fjournal = {Semigroup Forum},
 journal = {Semigroup Forum},
 issn = {0037-1912},
 volume = {17},
 pages = {379--386},
 year = {1979},
 language = {English},
 doi = {10.1007/BF02194337},
 keywords = {28B10},
 url = {https://eudml.org/doc/134332},
 zbMATH = {3663466},
 Zbl = {0426.28010}
}

@book{adasch,
 author = {Adasch, Norbert and Ernst, Bruno and Keim, Dieter},
 title = {Topological vector spaces. {The} theory without convexity conditions},
 fseries = {Lecture Notes in Mathematics},
 series = {Lect. Notes Math.},
 issn = {0075-8434},
 volume = {639},
 year = {1978},
 publisher = {Springer, Cham},
 language = {English},
 keywords = {46A19,46A30,46A08,46A04,46-02,46A32},
 zbMATH = {3616924},
 Zbl = {0397.46005}
}

@phdthesis{halilovicthesis,
    author = {Halilovic, Armin},
    title = {Stieltjesov Integral u Banachovim Prostorima},
    school = {Univeristy of Zagreb} ,
    year = {1990}
}

@phdthesis{fidel,
  title={Producto de medidas valoradas en espacios localmente convexos},
  author={José Fernandez y Fernandez-Arroyo, F.},
  year={1987},
  school={UNED. Universidad Nacional de Educación a Distancia}
}

@phdthesis{bravo,
  title={Tópicos en integración bilineal vectorial},
  author={Bravo de La Parra, R.},
  year={1987},
  school={UNED. Universidad Nacional de Educación a Distancia}
}

@Book{henstockmagnum,
 Author = {Henstock, Ralph},
 Title = {The general theory of integration},
 FSeries = {Oxford Mathematical Monographs},
 Series = {Oxford Math. Monogr.},
 ISBN = {0-19-853566-X},
 Year = {1991},
 Publisher = {Oxford: Clarendon Press},
 Language = {English},
 Keywords = {26A42,26A39,26-02,28-02},
 zbMATH = {50078},
 Zbl = {0745.26006}
}

@Article{vonkomerova,
 Author = {Vonkomerova, Marta},
 Title = {On the extension of positive operators},
 FJournal = {Mathematica Slovaca},
 Journal = {Math. Slovaca},
 ISSN = {0139-9918},
 Volume = {31},
 Pages = {251--262},
 Year = {1981},
 Language = {English},
 Keywords = {46A22,28B10,46G10,46A40,06F15},
 URL = {https://eudml.org/doc/32204},
 zbMATH = {3716173},
 Zbl = {0457.46004}
}

@Article{wrightextension,
 Author = {Maitland-Wright, J. D.},
 Title = {The measure extension problem for vector lattices},
 FJournal = {Annales de l'Institut Fourier},
 Journal = {Ann. Inst. Fourier},
 ISSN = {0373-0956},
 Volume = {21},
 Number = {4},
 Pages = {65--85},
 Year = {1971},
 Language = {English},
 DOI = {10.5802/aif.393},
 Keywords = {46A40,28A05},
 URL = {https://eudml.org/doc/74062},
 zbMATH = {3342398},
 Zbl = {0215.48101}
}

@Book{boccutoxenofon,
 Author = {Boccuto, Antonio and Dimitriou, Xenofon},
 Title = {Convergence theorems for lattice group-valued measures. {With} a foreword by {Christos} {P}. {Kitsos}},
 ISBN = {978-1-68108-009-3; 978-1-68108-010-9},
 Year = {2015},
 Publisher = {Sharjah: Bentham Science Publishers},
 Language = {English},
 DOI = {10.2174/97816810800931150101},
 Keywords = {28-02,28B10,46-02,46G10},
 zbMATH = {6614075},
 Zbl = {1342.28001}
}

@Article{drew3,
 Author = {Drewnowski, L.},
 Title = {Topological rings of sets, continuous set functions, integration. {III}},
 FJournal = {Bulletin de l'Acad{\'e}mie Polonaise des Sciences, S{\'e}rie des Sciences Math{\'e}matiques, Astronomiques et Physiques},
 Journal = {Bull. Acad. Pol. Sci., S{\'e}r. Sci. Math. Astron. Phys.},
 ISSN = {0001-4117},
 Volume = {20},
 Pages = {439--445},
 Year = {1972},
 Language = {English},
 Keywords = {28B05,46G10,28A10},
 zbMATH = {3394479},
 Zbl = {0249.28006}
}

@article{agafonova,
 author = {Agafonova, L. V.},
 title = {{\"U}ber ein bilineares {Vektorintegral}},
 fjournal = {Funktsional'ny{\u{\i}} Analiz},
 journal = {Funkts. Anal.},
 volume = {6},
 pages = {3--14},
 year = {1976},
 language = {Russian},
 keywords = {28B05},
 zbMATH = {3661663},
 Zbl = {0425.28006}
}

@article{saeksgoldsteinintegral,
 author = {Saeks, R. and Goldstein, R. A.},
 title = {Cauchy integrals and spectral measures},
 fjournal = {Indiana University Mathematics Journal},
 journal = {Indiana Univ. Math. J.},
 issn = {0022-2518},
 volume = {22},
 pages = {367--378},
 year = {1972},
 language = {English},
 doi = {10.1512/iumj.1972.22.22030},
 keywords = {28B05,46G10},
 zbMATH = {3412981},
 Zbl = {0262.28007}
}

@book{jefferieskac,
 author = {Jefferies, Brian},
 title = {Evolution processes and the {Feynman}-{Kac} formula},
 fseries = {Mathematics and its Applications (Dordrecht)},
 series = {Math. Appl., Dordr.},
 issn = {0921-3791},
 volume = {353},
 isbn = {0-7923-3843-X},
 year = {1996},
 publisher = {Dordrecht: Kluwer Academic Publishers},
 language = {English},
 keywords = {60Hxx,60-02,28A05,46G10,81Q30},
 zbMATH = {857920},
 Zbl = {0844.60027}
}

@book{kluvanekintegration,
 author = {Kluv{\'a}nek, Igor},
 title = {Integration structures},
 fseries = {Proceedings of the Centre for Mathematical Analysis, Australian National University},
 series = {Proc. Cent. Math. Anal. Aust. Natl. Univ.},
 volume = {18},
 isbn = {0-7315-0428-3},
 year = {1988},
 publisher = {Canberra: Australian National University, Centre for Mathematical Analysis},
 language = {English},
 keywords = {46-02,47D06,28-02,47-02,46E35,47B15,28C99,46G12,81S40,46E30},
 zbMATH = {194303},
 Zbl = {0704.46002}
}

@InCollection{groblerdaniell,
 Author = {Grobler, Jacobus J.},
 Title = {101 years of vector lattice theory: a vector lattice-valued {Daniell} integral},
 BookTitle = {Positivity and its applications. Proceedings from the conference Positivity X, Pretoria, South Africa, July 8--12, 2019},
 ISBN = {978-3-030-70973-0; 978-3-030-70976-1; 978-3-030-70974-7},
 Pages = {173--192},
 Year = {2021},
 Publisher = {Cham: Birkh{\"a}user},
 Language = {English},
 DOI = {10.1007/978-3-030-70974-7_8},
 Keywords = {28B15,28B05,28C05,46A40,46G12},
 zbMATH = {7402428},
 Zbl = {1475.28012}
}

@article{potocky1,
 author = {Potocky, Rastislav},
 title = {On the integration of functions with values in complete vector lattices},
 fjournal = {Acta Facultatis Rerum Naturalium Universitatis Comenianae. Mathematica},
 journal = {Acta Fac. Rer. Nat. Univ. Comenianae, Math.},
 volume = {26},
 pages = {83--91},
 year = {1972},
 language = {English},
 keywords = {28B05},
 zbMATH = {3386004},
 Zbl = {0244.28005}
}

@article{potocky2,
 author = {Potocky, Rastislav},
 title = {On random variables having values in a vector lattice},
 fjournal = {Mathematica Slovaca},
 journal = {Math. Slovaca},
 issn = {0139-9918},
 volume = {27},
 pages = {267--276},
 year = {1977},
 language = {English},
 keywords = {28B05,46A40,28A10},
 url = {https://eudml.org/doc/32173},
 zbMATH = {3579121},
 Zbl = {0372.28012}
}

@article{potocky3,
 author = {Potock{\'y}, Rastislav},
 title = {On the expected value of vector lattice valued random variables},
 fjournal = {Mathematica Slovaca},
 journal = {Math. Slovaca},
 issn = {0139-9918},
 volume = {36},
 pages = {401--405},
 year = {1986},
 language = {English},
 keywords = {60A99,28B05,46A40,44A30},
 url = {https://eudml.org/doc/32214},
 zbMATH = {4007314},
 Zbl = {0621.60002}
}

@article{malicky1,
 author = {Mali{\v{c}}k{\'y}, Peter},
 title = {The monotone limit convergence theorem for elementary functions with values in a vector lattice},
 fjournal = {Commentationes Mathematicae Universitatis Carolinae},
 journal = {Commentat. Math. Univ. Carol.},
 issn = {0010-2628},
 volume = {27},
 pages = {53--67},
 year = {1986},
 language = {English},
 keywords = {28B15},
 url = {https://eudml.org/doc/17438},
 zbMATH = {3983534},
 Zbl = {0608.28004}
}

@article{malicky2,
 author = {Mali{\v{c}}k{\'y}, Peter},
 title = {Random variables with values in a vector lattice (mean value and conditional mean value operators)},
 fjournal = {Acta Mathematica Universitatis Comenianae},
 journal = {Acta Math. Univ. Comenianae},
 issn = {0231-6986},
 volume = {52-53},
 pages = {249--263},
 year = {1987},
 language = {English},
 keywords = {28B05,46G10},
 zbMATH = {4095816},
 Zbl = {0669.28006}
}

@article{leontiadis1,
 author = {Leontiadis, Th.},
 title = {Fortsetzung eines verallgemeinerten {Daniell}-{Integrals} auf einem o- topologischen {{\(\sigma\)}}-{Verband} mit {Werten} in einer {Verbandsgruppe}. ({Extension} of a generalized {Daniell} integral on an o-topological {{\(\sigma\)}}-lattice with values in an {{\(\ell\)}}-group)},
 fjournal = {Bulletin of the Greek Mathematical Society},
 journal = {Bull. Greek Math. Soc.},
 issn = {0072-7466},
 volume = {27},
 pages = {61--71},
 year = {1986},
 language = {German},
 keywords = {28C05,28A60,06F30},
 zbMATH = {4057856},
 Zbl = {0648.28008}
}

@article{leontiadis2,
 author = {Leontiadis, Th.},
 title = {Convergence theorems for a full {Daniell} integral},
 fjournal = {Bulletin of the Greek Mathematical Society},
 journal = {Bull. Greek Math. Soc.},
 issn = {0072-7466},
 volume = {30},
 pages = {49--51},
 year = {1989},
 language = {German},
 keywords = {28C05,28B15,28B10},
 url = {https://eudml.org/doc/237806},
 zbMATH = {31281},
 Zbl = {0743.28007}
}

@article{wilhelm2,
 author = {Wilhelm, M.},
 title = {Integral extension procedures in weakly {{\(\sigma\)}}-complete lattice-ordered groups. {II}},
 fjournal = {Studia Mathematica},
 journal = {Stud. Math.},
 issn = {0039-3223},
 volume = {89},
 number = {3},
 pages = {231--239},
 year = {1988},
 language = {English},
 doi = {10.4064/sm-89-3-231-239},
 keywords = {28B15,06F15,20F60},
 url = {https://eudml.org/doc/218829},
 zbMATH = {4065345},
 Zbl = {0653.28003}
}

@article{wilhelm1,
 author = {Wilhelm, M.},
 title = {Integral extension procedures in weakly {{\(\sigma\)}}-complete lattice-ordered groups. {I}},
 fjournal = {Studia Mathematica},
 journal = {Stud. Math.},
 issn = {0039-3223},
 volume = {77},
 pages = {423--435},
 year = {1984},
 language = {English},
 doi = {10.4064/sm-77-5-423-435},
 keywords = {28B15,06F20},
 url = {https://eudml.org/doc/218545},
 zbMATH = {3882678},
 Zbl = {0554.28010}
}

@article{alfsen,
 author = {Alfsen, Erik M.},
 title = {On a general theory of integration based on order},
 fjournal = {Mathematica Scandinavica},
 journal = {Math. Scand.},
 issn = {0025-5521},
 volume = {6},
 pages = {67--79},
 year = {1958},
 language = {English},
 doi = {10.7146/math.scand.a-10537},
 keywords = {28C05},
 url = {https://eudml.org/doc/165678},
 zbMATH = {3136768},
 Zbl = {0083.27401}
}

@article{leontiadis3,
 author = {Leontiadis, Th.},
 title = {Convergence propositions for a generalized {Daniell}-integral},
 fjournal = {Applicable Analysis},
 journal = {Appl. Anal.},
 issn = {0003-6811},
 volume = {52},
 number = {1-4},
 pages = {225--230},
 year = {1994},
 language = {English},
 doi = {10.1080/00036819408840234},
 keywords = {28B10},
 zbMATH = {4194064},
 Zbl = {0724.28004}
}

@article{sistodaniell,
 author = {Sisto, Irene},
 title = {Integration in {Dedekind} {{\(\sigma\)}}-complete {Riesz} spaces},
 fjournal = {Atti della Accademia delle Scienze di Torino. Classe di Scienze Fisiche, Matematiche e Naturali},
 journal = {Atti Accad. Sci. Torino, Cl. Sci. Fis. Mat. Nat.},
 issn = {0001-4419},
 volume = {127},
 number = {5-6},
 pages = {199--213},
 year = {1993},
 language = {Italian},
 keywords = {28C05,46G12,46A40},
 zbMATH = {868078},
 Zbl = {0861.28010}
}

@article{mallios,
 author = {Mallios, Anastasios},
 title = {Isotone halblineare {Abbildungen} bei {Verb{\"a}nden}},
 fjournal = {Bulletin Soci{\'e}t{\'e} Math{\'e}matique de Gr{\`e}ce. Nouvelle S{\'e}rie},
 journal = {Bull. Soc. Math. Gr{\`e}ce, N. S{\'e}r.},
 issn = {0072-7466},
 volume = {1},
 number = {2},
 pages = {1--45},
 year = {1960},
 language = {Greek},
 keywords = {06B05,28B99,28E99},
 url = {www.hms.gr/apothema/?s=sap&i=11},
 zbMATH = {3181232},
 Zbl = {0112.01902}
}

@article{grimeisenerben1,
 author = {Erben, Wolfgang and Grimeisen, Gerhard},
 title = {Integration by means of {Riemann} sums in {Banach} spaces. {I}},
 fjournal = {Zeitschrift f{\"u}r Analysis und ihre Anwendungen},
 journal = {Z. Anal. Anwend.},
 issn = {0232-2064},
 volume = {9},
 number = {6},
 pages = {481--501},
 year = {1990},
 language = {English},
 doi = {10.4171/ZAA/419},
 keywords = {28B05,46G10},
 zbMATH = {12278},
 Zbl = {0756.28006}
}

@misc{erberthesis,
 author = {Erben, Wolfgang},
 title = {Integration mittels {Riemann}-{Summen} in lokalkonvexen {R{\"a}umen}. ({Dissertation})},
 year = {1981},
 language = {English},
 howpublished = {Fakult{\"a}t {Mathematik} und {Informatik} der {Universit{\"a}t} {Stuttgart}. 111 {S}. (1981).},
 keywords = {46G10,28B05,46B15},
 zbMATH = {3741995},
 Zbl = {0473.46032}
}

@incollection{robdera2025,
 author = {Robdera, Mangatiana A.},
 title = {More on unified approach to integration},
 booktitle = {Selected topics on generalized integration},
 isbn = {978-981-98-1219-6; 978-981-98-1221-9},
 pages = {263--290},
 year = {2025},
 publisher = {Singapore: World Scientific},
 language = {English},
 doi = {10.1142/9789819812202_0013},
 keywords = {26A42,28A99,46E10,60H05},
 zbMATH = {8065874}
}

@article{grimeisenerben2,
 author = {Erben, Wolfgang and Grimeisen, Gerhard},
 title = {Integration by means of {Riemann} sums in {Banach} spaces. {II}},
 fjournal = {Zeitschrift f{\"u}r Analysis und ihre Anwendungen},
 journal = {Z. Anal. Anwend.},
 issn = {0232-2064},
 volume = {10},
 number = {1},
 pages = {11--26},
 year = {1991},
 language = {English},
 doi = {10.4171/ZAA/427},
 keywords = {28B05,46G10,28C15},
 zbMATH = {56216},
 Zbl = {0756.28005}
}

@article{kapposmallios,
 author = {Kappos, D. A. and Mallios, A.},
 title = {Der {Integralbegriff} in 0-topologischen {Verb{\"a}nden}},
 fjournal = {Bulletin Soci{\'e}t{\'e} Math{\'e}matique de Gr{\`e}ce. Nouvelle S{\'e}rie},
 journal = {Bull. Soc. Math. Gr{\`e}ce, N. S{\'e}r.},
 issn = {0072-7466},
 volume = {1},
 number = {1},
 pages = {98--105},
 year = {1960},
 language = {German},
 zbMATH = {3182329},
 Zbl = {0112.28502}
}

@article{andreuriesz,
 author = {Andreu, F. and Maz{\'o}n, J. M. and Segura de Le{\'o}n, S.},
 title = {A {Radon}-{Nikod{\'y}m} theorem for {Riesz} space valued measures},
 fjournal = {Quaestiones Mathematicae},
 journal = {Quaest. Math.},
 issn = {1607-3606},
 volume = {12},
 number = {2},
 pages = {149--167},
 year = {1989},
 language = {English},
 doi = {10.1080/16073606.1989.9632172},
 keywords = {47B60,28B15},
 zbMATH = {4131093},
 Zbl = {0691.47033}
}

@article{westerbaan2019lattice,
  title={Lattice valuations: A generalisation of measure and integral},
  author={Westerbaan, Abraham A},
  journal={arXiv preprint arXiv:1903.06044},
  year={2019}
}

@article{boccutocandeloro2009,
 author = {Boccuto, A. and Candeloro, D.},
 title = {Integral and ideals in {Riesz} spaces},
 fjournal = {Information Sciences},
 journal = {Inf. Sci.},
 issn = {0020-0255},
 volume = {179},
 number = {17},
 pages = {2891--2902},
 year = {2009},
 language = {English},
 doi = {10.1016/j.ins.2008.11.001},
 keywords = {28B05},
 zbMATH = {5610232},
 Zbl = {1185.28016}
}

@article{groblermartingale,
 author = {Grobler, Jacobus J. and Labuschagne, Coenraad C. A.},
 title = {The {It{\^o}} integral for martingales in vector lattices},
 fjournal = {Journal of Mathematical Analysis and Applications},
 journal = {J. Math. Anal. Appl.},
 issn = {0022-247X},
 volume = {450},
 number = {2},
 pages = {1245--1274},
 year = {2017},
 language = {English},
 doi = {10.1016/j.jmaa.2017.01.081},
 keywords = {60H05,60G44},
 zbMATH = {6791049},
 Zbl = {1383.60045}
}

@article{vanrooijzuijlen2,
 author = {van Rooij, A. C. M. and van Zuijlen, W. B.},
 title = {Bochner integrals in ordered vector spaces},
 fjournal = {Positivity},
 journal = {Positivity},
 issn = {1385-1292},
 volume = {21},
 number = {3},
 pages = {1089--1113},
 year = {2017},
 language = {English},
 doi = {10.1007/s11117-016-0454-9},
 keywords = {28B05,28B15},
 zbMATH = {6816287},
 Zbl = {1377.28011}
}

@article{vanrooijzuijlen1,
 author = {van Rooij, A. C. M. and van Zuijlen, Willem B.},
 title = {Integrals for functions with values in a partially ordered vector space},
 fjournal = {Positivity},
 journal = {Positivity},
 issn = {1385-1292},
 volume = {20},
 number = {4},
 pages = {877--916},
 year = {2016},
 language = {English},
 doi = {10.1007/s11117-015-0392-y},
 keywords = {28B05,28B15},
 zbMATH = {6657276},
 Zbl = {1351.28026}
}

@article{shamaev,
 author = {Shamaev, I. I.},
 title = {Countable extension of measures and {{\(\sigma\)}}-integrals with values in vector lattices},
 fjournal = {Mathematical Notes},
 journal = {Math. Notes},
 issn = {0001-4346},
 volume = {39},
 pages = {414--418},
 year = {1986},
 language = {English},
 doi = {10.1007/BF01156683},
 keywords = {28B15,06F20},
 zbMATH = {3993898},
 Zbl = {0614.28004}
}

@misc{poroshkin,
 author = {Poroshkin, A. G. and Bazhenov, I. I.},
 title = {A method of integration with respect to monotone set functions},
 year = {1982},
 language = {Russian},
 howpublished = {Ordered spaces and operator equations, {Interuniv}. {Collect}. sci. {Works}, {Syktyvkar} 1982, 28-41 (1982).},
 keywords = {28A25,28A10},
 zbMATH = {3966554},
 Zbl = {0599.28007}
}

@article{matthesdaniell,
 author = {Matthes, Klaus},
 title = {{\"U}ber die {Ausdehnung} positiver linearer {Abbildungen}},
 fjournal = {Mathematische Nachrichten},
 journal = {Math. Nachr.},
 issn = {0025-584X},
 volume = {23},
 pages = {223--257},
 year = {1961},
 language = {German},
 doi = {10.1002/mana.1961.3210230404},
 zbMATH = {3169715},
 Zbl = {0104.08801}
}

@misc{isakovsubmeasure,
 author = {Aleksyuk, V. N. and Isakov, V. N.},
 title = {Integration with respect to a generalized submeasure},
 year = {1982},
 language = {Russian},
 howpublished = {Ordered spaces and operator equations, {Interuniv}. {Collect}. sci. {Works}, {Syktyvkar} 1982, 112-119 (1982).},
 keywords = {28A25,28A10},
 zbMATH = {3966553},
 Zbl = {0599.28006}
}

@article{riecandaniell3,
 author = {Riecan, Beloslav},
 title = {On measures and integrals with values in ordered groups},
 fjournal = {Mathematica Slovaca},
 journal = {Math. Slovaca},
 issn = {0139-9918},
 volume = {33},
 pages = {153--163},
 year = {1983},
 language = {English},
 keywords = {28B10,28B15},
 url = {https://eudml.org/doc/34156},
 zbMATH = {3822086},
 Zbl = {0519.28004}
}

@article{riecandaniell2,
 author = {Riecan, Beloslav},
 title = {A simplified proof of the {Daniell} integral extension theorem in ordered spaces},
 fjournal = {Mathematica Slovaca},
 journal = {Math. Slovaca},
 issn = {0139-9918},
 volume = {32},
 pages = {75--79},
 year = {1982},
 language = {English},
 keywords = {47B60,06F20,46A40,28C05},
 url = {https://eudml.org/doc/31773},
 zbMATH = {3794796},
 Zbl = {0504.47039}
}

@article{riecandaniell1,
 author = {Riecan, Beloslav},
 title = {An extension of the {Daniell} integration scheme},
 fjournal = {Matematick{\'y} {\v{C}}asopis, Slovenskej Akad{\'e}mie Vied},
 journal = {Mat. {\v{C}}as., Slovensk. Akad. Vied},
 volume = {25},
 pages = {211--219},
 year = {1975},
 language = {English},
 keywords = {28A10},
 url = {https://eudml.org/doc/29559},
 zbMATH = {3482640},
 Zbl = {0308.28005}
}

@article{vrabelevrabelova,
 author = {Vr{\'a}bel, Peter and Vr{\'a}belov{\'a}, Marta},
 title = {The partially ordered metric semigroup valued lower integral},
 fjournal = {Acta Universitatis Matthiae Belii. Series Mathematics},
 journal = {Acta Univ. M. Belii, Ser. Math.},
 issn = {1338-712X},
 volume = {19},
 pages = {63--67},
 year = {2011},
 language = {English},
 keywords = {28C05,03E72,06F05,28E10},
 url = {actamath.savbb.sk/./acta1909.shtml},
 zbMATH = {6043760},
 Zbl = {1241.28009}
}

@article{popescu1,
 author = {Popescu, Valentin},
 title = {Sur quelques 'extensions' des integrales de {Riemann} et de {Lebesgue}},
 fjournal = {Academia Republicii Populare Rom{\^{\i}}ne, Studii {\textcommabelow{s}}i Cercet{\u{a}}ri Matematice},
 journal = {Acad. Repub. Popul. Rom{\^{\i}}ne, Studii Cerc. Mat.},
 volume = {14},
 pages = {641--652},
 year = {1963},
 language = {Romanian},
 zbMATH = {3227899},
 Zbl = {0141.24404}
}

@article{siposintegrationpariallyordered,
 author = {Sipos, Jan},
 title = {Integration in partially ordered linear spaces},
 fjournal = {Mathematica Slovaca},
 journal = {Math. Slovaca},
 issn = {0139-9918},
 volume = {31},
 pages = {39--51},
 year = {1981},
 language = {English},
 keywords = {28B15,28B05,46G10},
 url = {https://eudml.org/doc/34100},
 zbMATH = {3756801},
 Zbl = {0482.28018}
}

@mastersthesis{vanzuijlenmaster,
title={Integration of functions
with values in a Riesz space},
  author={van Zuijlen, Willem},
  year={2012},
  school={ Radboud Universiteit Nijmegen, Nijmegen}
}

@phdthesis{congostiglesias,
title={Mesures i probabilitats en estructures ordenades
},
  author={Congost Iglesias, Maria Assumpta },
  year={1980},
  school={Universitat de Barcelona, Spain}
}

@unpublished{anderheiden,
title={Integration in Vektorverbänden nach Daniell (Diplomarbeit)
},
  author={an der Heiden, Ulf},
  note={Univ. Göttingen, Germany},
  year={1970}

}

@phdthesis{jeurnink1982integration,
  title={Integration of functions with values in a Banach lattice},
  author={Jeurnink, Gerardus Albertus Maria},
  year={1982},
  school={ Katholieke Universiteit Nijmegen, Nijmegen}
}

@phdthesis{groenewegen1982spaces,
  title={On spaces of Banach lattice valued functions and measures},
  author={Groenewegen, Gelinus Lambertus Maria},
  year={1982},
  school={Katholieke Universiteit Nijmegen, Nijmegen}
}

@article{malicky3,
 author = {Mali{\v{c}}k{\'y}, Peter},
 title = {On the conditional mean value of vector lattice valued random variables},
 fjournal = {Acta Mathematica Universitatis Comenianae},
 journal = {Acta Math. Univ. Comenianae},
 issn = {0231-6986},
 volume = {56-57},
 pages = {115--127},
 year = {1990},
 language = {English},
 keywords = {28B15,60B12},
 zbMATH = {4200529},
 Zbl = {0727.28009}
}

@phdthesis{pavlakosthesis,
 Author = {Pavlakos, Panaiotis},
 Title = {Measures and integrals in ordered topological groups},
 Year = {1972},
 Language = {Greek},
 school = {University of Athens, Greece},
}

@Article{ohbaintegral,
 Author = {Ohba, Sachio},
 Title = {Integration in topological groups},
 FJournal = {Reports of Faculty of Engineering. Kanagawa University},
 Journal = {Rep. Fac. Eng., Kanagawa Univ.},
 ISSN = {1343-0777},
 Volume = {20},
 Pages = {1--5},
 Year = {1982},
 Language = {English},
 Keywords = {28B10},
 zbMATH = {3766087},
 Zbl = {0487.28009}
}

@article{mcgillitnegrationinvectorlattices,
 author = {McGill, P.},
 title = {Integration in vector lattices},
 fjournal = {Journal of the London Mathematical Society. Second Series},
 journal = {J. Lond. Math. Soc., II. Ser.},
 issn = {0024-6107},
 volume = {11},
 pages = {347--360},
 year = {1975},
 language = {English},
 doi = {10.1112/jlms/s2-11.3.347},
 keywords = {28B05,46G10,26A39},
 zbMATH = {3484321},
 Zbl = {0309.28003}
}

@article{derivationbasis2,
 author = {Thomson, B. S.},
 title = {Derivation bases on the real line. {II}},
 fjournal = {Real Analysis Exchange},
 journal = {Real Anal. Exch.},
 issn = {0147-1937},
 volume = {8},
 pages = {278--442},
 year = {1983},
 language = {English},
 keywords = {26-02,26A24,26A42,26A39,26A45},
 zbMATH = {3832256},
 Zbl = {0525.26003}
}

@article{detivationbasis1,
 author = {Thomson, B. S.},
 title = {Derivation bases on the real line. {I}},
 fjournal = {Real Analysis Exchange},
 journal = {Real Anal. Exch.},
 issn = {0147-1937},
 volume = {8},
 pages = {67--207},
 year = {1983},
 language = {English},
 keywords = {26-02,26A24,26A42,26A39,26A45},
 zbMATH = {3832255},
 Zbl = {0525.26002}
}

@article{priolanetsandfilters,
 author = {Costantini, Camillo and Priola, Enrico},
 title = {Filters, nets and cofinal types},
 fjournal = {Rendiconti dell'Istituto di Matematica dell'Universit{\`a} di Trieste},
 journal = {Rend. Ist. Mat. Univ. Trieste},
 issn = {0049-4704},
 volume = {33},
 number = {1-2},
 pages = {1--18},
 year = {2001},
 language = {English},
 keywords = {03E05,06A07,54A20},
 zbMATH = {1799301},
 Zbl = {0999.03042}
}

@article{keneddynetsandfilters,
 author = {Kennedy, Maurice},
 title = {The connection between nets and filters},
 fjournal = {Irish Mathematical Society Newsletter},
 journal = {Ir. Math. Soc. Newsl.},
 issn = {0790-1690},
 volume = {11},
 pages = {24--28},
 year = {1984},
 language = {English},
 keywords = {54A20},
 zbMATH = {3876164},
 Zbl = {0549.54002}
}

@article{bartlenetsandfilters,
 author = {Bartle, R. G.},
 title = {Relations between nets and indexed filter bases},
 fjournal = {Colloquium Mathematicum},
 journal = {Colloq. Math.},
 issn = {0010-1354},
 volume = {10},
 pages = {211--215},
 year = {1963},
 language = {English},
 doi = {10.4064/cm-10-2-211-215},
 url = {https://eudml.org/doc/209768},
 zbMATH = {3190884},
 Zbl = {0117.15805}
}

@incollection{skvortsov2025,
 author = {Skvortsov, V.},
 title = {Variational version of {Henstock} type integral and application in harmonic analysis},
 booktitle = {Selected topics on generalized integration},
 isbn = {978-981-98-1219-6; 978-981-98-1221-9},
 pages = {43--58},
 year = {2025},
 publisher = {Singapore: World Scientific},
 language = {English},
 doi = {10.1142/9789819812202_0003},
 keywords = {26A39,46B04,43-XX},
 zbMATH = {8065864}
}

@article{boccutoskvortsovabstract,
 author = {Boccuto, A. and Skvortsov, V. A.},
 title = {On {Kurzweil}-{Henstock} type integrals with respect to abstract derivation bases for {Riesz}-space-valued functions},
 fjournal = {Journal of Applied Functional Analysis},
 journal = {J. Appl. Funct. Anal.},
 issn = {1559-1948},
 volume = {1},
 number = {3},
 pages = {251--270},
 year = {2006},
 language = {English},
 keywords = {28B15},
 zbMATH = {5051705},
 Zbl = {1108.28009}
}

@article{boccutoskvortsovwalsh,
 author = {Boccuto, A. and Skvortsov, V. A.},
 title = {Henstock-Kurzweil type integration of {Riesz}-space-valued functions and applications to {Walsh} series},
 fjournal = {Real Analysis Exchange},
 journal = {Real Anal. Exch.},
 issn = {0147-1937},
 volume = {29},
 number = {1},
 pages = {419--438},
 year = {2004},
 language = {English},
 doi = {10.14321/realanalexch.29.1.0419},
 keywords = {28B05,28B15,26A39,42C10,42C25,46G10},
 zbMATH = {2141361},
 Zbl = {1067.28009}
}

@article{szazfundamentaltheorem,
 author = {Sz{\'a}z, {\'A}rp{\'a}d},
 title = {The fundamental theorem of calculus in an abstract setting},
 fjournal = {Tatra Mountains Mathematical Publications},
 journal = {Tatra Mt. Math. Publ.},
 issn = {1210-3195},
 volume = {2},
 pages = {167--174},
 year = {1993},
 language = {English},
 keywords = {28A15,28B05,26A39},
 zbMATH = {513253},
 Zbl = {0796.28004}
}

@Book{dinculeanustochastic,
 Author = {Dinculeanu, Nicolae},
 Title = {Vector integration and stochastic integration in {Banach} spaces},
 FSeries = {Pure and Applied Mathematics. A Wiley-Interscience Series of Texts, Monographs and Tracts},
 Series = {Pure Appl. Math., Wiley-Intersci. Ser. Texts Monogr. Tracts},
 ISBN = {0-471-37738-4},
 Year = {2000},
 Publisher = {New York, NY: Wiley},
 Language = {English},
 Keywords = {28B05,60H05,28-02,60-02,46G10},
 zbMATH = {1540658},
 Zbl = {0974.28006}
}

@Book{diestel,
 Author = {Diestel, J. and Uhl, J. J. jun.},
 Title = {Vector measures},
 FSeries = {Mathematical Surveys},
 Series = {Math. Surv.},
 ISSN = {0076-5376},
 Volume = {15},
 Year = {1977},
 Publisher = {American Mathematical Society (AMS), Providence, RI},
 Language = {English},
 Keywords = {46G10,28B05,28A15,28A20,46-02,46B10,46B99,46E15,46E30,46G05,47A65,47B06,47B10,47B99},
 zbMATH = {3576139},
 Zbl = {0369.46039}
}

@Book{kusraevdominated,
 Author = {Kusraev, A. G.},
 Title = {Mazhoriruemye operatory},
 ISBN = {5-02-002865-7},
 Year = {2003},
 Publisher = {Moskva: Nauka},
 Language = {Russian},
 Keywords = {47-02,47B65,47B60},
 zbMATH = {1962955},
 Zbl = {1045.47001}
}

@Article{boccutorieszabstract,
 Author = {Boccuto, Antonio},
 Title = {Abstract integration in {Riesz} spaces},
 FJournal = {Tatra Mountains Mathematical Publications},
 Journal = {Tatra Mt. Math. Publ.},
 ISSN = {1210-3195},
 Volume = {5},
 Pages = {107--124},
 Year = {1995},
 Language = {English},
 Keywords = {28B05,46G10},
 zbMATH = {908166},
 Zbl = {0857.28007}
}

@article{kolmogorovuntersuchungen,
 author = {Kolmogoroff, A.},
 title = {Untersuchungen {\"u}ber den {Integralbegriff}.},
 fjournal = {Mathematische Annalen},
 journal = {Math. Ann.},
 issn = {0025-5831},
 volume = {103},
 pages = {654--696},
 year = {1930},
 language = {German},
 doi = {10.1007/BF01455714},
 url = {https://eudml.org/doc/159441},
 zbMATH = {2566760},
 JFM = {56.0923.01}
}

@article{phillipsintegratonarticle,
 author = {Phillips, R. S.},
 title = {Integration in a convex linear topological space},
 fjournal = {Transactions of the American Mathematical Society},
 journal = {Trans. Am. Math. Soc.},
 issn = {0002-9947},
 volume = {47},
 pages = {114--145},
 year = {1940},
 language = {English},
 doi = {10.2307/1990004},
 zbMATH = {3036455},
 Zbl = {0022.31902}
}

@article{rickartintegralarticle,
 author = {Rickart, C. E.},
 title = {Integration in a convex topological space},
 fjournal = {Transactions of the American Mathematical Society},
 journal = {Trans. Am. Math. Soc.},
 issn = {0002-9947},
 volume = {52},
 pages = {498--521},
 year = {1942},
 language = {English},
 doi = {10.2307/1990177},
 zbMATH = {3097395},
 Zbl = {0060.28102}
}

@mastersthesis{monteirodissertação,
  title={Integral de Kurzweil para fun{\c{c}}oes a valores em um espa{\c{c}}o de Riesz—uma introdu{\c{c}}ao},
  author={Monteiro, Giselle Antunes},
  year={2007},
  school={Universidade de Sao Paulo, Sao Paulo}
}

@misc{vanderwaldlocallysolid,
 author = {Bilokopytov, E. and Conradie, J. and Troitsky, V. G. and van der Walt, J. H.},
 title = {Locally solid convergence structures},
 year = {2024},
 howpublished = {Preprint, {arXiv}:2404.15641 [math.{FA}] (2024)},
 keywords = {46A40,46A19,54A20},
 url = {https://arxiv.org/abs/2404.15641},
 arXiv = {arXiv:2404.15641}
}

@article{fleischertraynorequicontinuity,
 author = {Fleischer, Isidore and Traynor, Tim},
 title = {Equicontinuity and uniform boundedness for non-commutative groups},
 fjournal = {Journal of Mathematical Analysis and Applications},
 journal = {J. Math. Anal. Appl.},
 issn = {0022-247X},
 volume = {138},
 number = {2},
 pages = {349--357},
 year = {1989},
 language = {English},
 doi = {10.1016/0022-247X(89)90295-3},
 keywords = {47B37,28B10,46A08},
 zbMATH = {4146899},
 Zbl = {0699.47017}
}

@article{fleischerchange,
 author = {Fleischer, Isidore},
 title = {Change of variable in the semigroup valued refinement integral},
 fjournal = {Real Analysis Exchange},
 journal = {Real Anal. Exch.},
 issn = {0147-1937},
 volume = {15},
 number = {1},
 pages = {106--110},
 year = {1990},
 language = {English},
 keywords = {28B10,28B05},
 zbMATH = {31357},
 Zbl = {0739.28004}
}

@Article{fleischercontent,
 Author = {Fleischer, Isidore},
 Title = {The convergence content of the integral convergence theorem},
 FJournal = {Real Analysis Exchange},
 Journal = {Real Anal. Exch.},
 ISSN = {0147-1937},
 Volume = {21},
 Number = {2},
 Pages = {771--773},
 Year = {1995},
 Language = {English},
 Keywords = {26A39,26A42},
 zbMATH = {1095874},
 Zbl = {0879.26031}
}

@Misc{lucysshynthesis,
 Author = {Lucyshyn-Wright, Rory B. B.},
 Title = {Riesz-{Schwartz} extensive quantities and vector-valued integration in closed categories},
 Year = {2013},
 HowPublished = {Preprint, {arXiv}:1307.8088 [math.{FA}] (2013)},
 URL = {https://arxiv.org/abs/1307.8088},
 arXiv = {arXiv:1307.8088}
}

@Article{lucyshynintegralconvergencespaces,
 Author = {Lucyshyn-Wright, Rory B. B.},
 Title = {Algebraic theory of vector-valued integration},
 FJournal = {Advances in Mathematics},
 Journal = {Adv. Math.},
 ISSN = {0001-8708},
 Volume = {230},
 Number = {2},
 Pages = {552--576},
 Year = {2012},
 Language = {English},
 DOI = {10.1016/j.aim.2012.01.021},
 Keywords = {46G10,28B05,18C15},
 zbMATH = {6040340},
 Zbl = {1254.46046}
}

@Article{salinas4,
 Author = {Rodr{\'{\i}}guez-Salinas, Baltasar},
 Title = {Integration of functions with values in a locally convex space with respect to infinite measures. {IV}},
 FJournal = {Revista de la Real Academia de Ciencias Exactas, F{\'{\i}}sicas y Naturales de Madrid},
 Journal = {Rev. R. Acad. Cienc. Exactas F{\'{\i}}s. Nat. Madr.},
 ISSN = {0034-0596},
 Volume = {77},
 Pages = {543--567},
 Year = {1983},
 Language = {Spanish},
 Keywords = {28B05,46G10,28A51},
 zbMATH = {3861490},
 Zbl = {0542.28007}
}

@Article{salinas3,
 Author = {Rodr{\'{\i}}guez-Salinas, Baltasar},
 Title = {Integration of functions with values in a locally convex space with respect to infinite measures. {III}},
 FJournal = {Revista de la Real Academia de Ciencias Exactas, F{\'{\i}}sicas y Naturales de Madrid},
 Journal = {Rev. R. Acad. Cienc. Exactas F{\'{\i}}s. Nat. Madr.},
 ISSN = {0034-0596},
 Volume = {77},
 Pages = {529--541},
 Year = {1983},
 Language = {Spanish},
 Keywords = {28B05,46G10},
 zbMATH = {3861489},
 Zbl = {0542.28006}
}

@Article{salinas2,
 Author = {Rodr{\'{\i}}guez-Salinas, Baltasar},
 Title = {Integration of functions with values in a locally convex space with respect to infinite measures. {II}},
 FJournal = {Revista de la Real Academia de Ciencias Exactas, F{\'{\i}}sicas y Naturales de Madrid},
 Journal = {Rev. R. Acad. Cienc. Exactas F{\'{\i}}s. Nat. Madr.},
 ISSN = {0034-0596},
 Volume = {77},
 Pages = {61--78},
 Year = {1983},
 Language = {Spanish},
 Keywords = {28B05,46G10},
 URL = {https://eudml.org/doc/42843},
 zbMATH = {3861488},
 Zbl = {0542.28005}
}

@Article{salinas1,
 Author = {Rodr{\'{\i}}guez-Salinas, Baltasar},
 Title = {Integration of functions with values in a locally convex space with respect to infinite measures. {I}},
 FJournal = {Revista de la Real Academia de Ciencias Exactas, F{\'{\i}}sicas y Naturales de Madrid},
 Journal = {Rev. R. Acad. Cienc. Exactas F{\'{\i}}s. Nat. Madr.},
 ISSN = {0034-0596},
 Volume = {77},
 Pages = {49--60},
 Year = {1983},
 Language = {Spanish},
 Keywords = {28B05,46G10},
 URL = {https://eudml.org/doc/42842},
 zbMATH = {3861487},
 Zbl = {0542.28004}
}

@Article{pettischakraborty,
 Author = {Jaker Ali, Sk. and Chakraborty, N. D.},
 Title = {Pettis integration in locally convex spaces},
 FJournal = {Analysis Mathematica},
 Journal = {Anal. Math.},
 ISSN = {0133-3852},
 Volume = {23},
 Number = {4},
 Pages = {241--257},
 Year = {1997},
 Language = {English},
 DOI = {10.1007/BF02789840},
 Keywords = {46A03,46G12,46G10},
 zbMATH = {1149462},
 Zbl = {0892.46003}
}

@article{bochnerkyfanintegral,
 author = {Bochner, Salomon and Fan, Ky},
 title = {Distributive order preserving operations in partially ordered vector sets},
 fjournal = {Annals of Mathematics. Second Series},
 journal = {Ann. Math. (2)},
 issn = {0003-486X},
 volume = {48},
 pages = {168--179},
 year = {1947},
 language = {English},
 doi = {10.2307/1969223},
 zbMATH = {3045530},
 Zbl = {0029.13901}
}

@Article{bochneroriginalarticle,
 Author = {Bochner, Salomon},
 Title = {Integration von {Funktionen}, deren {Werte} die {Elemente} eines {Vektorraumes} sind},
 FJournal = {Fundamenta Mathematicae},
 Journal = {Fundam. Math.},
 ISSN = {0016-2736},
 Volume = {20},
 Pages = {262--276},
 Year = {1933},
 Language = {German},
 DOI = {10.4064/fm-20-1-262-176},
 URL = {https://eudml.org/doc/212635},
 zbMATH = {3009612},
 Zbl = {0007.10901}
}

@Article{pettisoriginalarticle,
 Author = {Pettis, B. J.},
 Title = {On integration in vector spaces},
 FJournal = {Transactions of the American Mathematical Society},
 Journal = {Trans. Am. Math. Soc.},
 ISSN = {0002-9947},
 Volume = {44},
 Pages = {277--304},
 Year = {1938},
 Language = {English},
 DOI = {10.2307/1989973},
 zbMATH = {3031614},
 Zbl = {0019.41603}
}

@article{brooksdinculeanufirst,
 author = {Brooks, James K. and Dinculeanu, Nicolae},
 title = {Lebesgue spaces for bilinear vector integration theory},
 fjournal = {Bulletin of the American Mathematical Society},
 journal = {Bull. Am. Math. Soc.},
 issn = {0002-9904},
 volume = {80},
 pages = {821--824},
 year = {1974},
 language = {English},
 doi = {10.1090/S0002-9904-1974-13525-1},
 keywords = {28B05,46G10,46E30},
 zbMATH = {3453946},
 Zbl = {0289.28009}
}

@book{boccutovrabelovariecan,
 author = {Boccuto, Antonio and Riecan, Beloslav and Vr{\'a}belov{\'a}, Marta},
 title = {Kurzweil-{Henstock} integral in {Riesz} spaces},
 isbn = {978-1-60805-405-3; 978-1-60805-003-1},
 year = {2012},
 publisher = {Sharjah: Bentham Science Publishers},
 language = {English},
 doi = {10.2174/97816080500311090101},
 keywords = {28-01,28B15},
 zbMATH = {7395522},
 Zbl = {1469.28001}
}

@article{pavlakosrepresentation,
 author = {Pavlakos, Panaiotis K.},
 title = {Integral representation theorems in partially ordered vector spaces},
 fjournal = {Journal of the Australian Mathematical Society. Series A},
 journal = {J. Aust. Math. Soc., Ser. A},
 issn = {0263-6115},
 volume = {51},
 number = {2},
 pages = {187--215},
 year = {1991},
 language = {English},
 keywords = {28B15,46G10,28B05},
 zbMATH = {32087},
 Zbl = {0753.28010}
}

@article{pavlakosintegration,
 author = {Pavlakos, Panaiotis K.},
 title = {On integration in partially ordered groups},
 fjournal = {Canadian Journal of Mathematics},
 journal = {Can. J. Math.},
 issn = {0008-414X},
 volume = {35},
 pages = {353--372},
 year = {1983},
 language = {English},
 doi = {10.4153/CJM-1983-020-3},
 keywords = {28B15,28B10},
 zbMATH = {3766088},
 Zbl = {0487.28010}
}

@article{tulceaordone,
 author = {Ionescu Tulcea, C. T.},
 title = {L'int{\'e}grale dans les espaces ordonn{\'e}s},
 fjournal = {Revista Universita{\textcommabelow{t}}ii C. I. Parhon Politehnica Bucure{\textcommabelow{s}}ti, Seria {\textcommabelow{S}}tin{\textcommabelow{t}}elor Naturii},
 journal = {Rev. Univ. C. I. Parhon Politehn. Bucure{\textcommabelow{s}}ti, Ser. {\textcommabelow{S}}ti. Natur.},
 volume = {1},
 pages = {9--14},
 year = {1953},
 language = {Romanian},
 zbMATH = {3085391},
 Zbl = {0053.25803}
}

@article{tulceaensemble,
 author = {Ionescu Tulcea, C. T.},
 title = {Fonctions d'ensemble et leurs int{\'e}grales},
 fjournal = {Academia Republicii Populare Rom{\^{\i}}ne, Studii {\textcommabelow{s}}i Cercet{\u{a}}ri Matematice},
 journal = {Acad. Repub. Popul. Rom{\^{\i}}ne, Studii Cerc. Mat.},
 volume = {5},
 pages = {74--142},
 year = {1954},
 language = {Romanian},
 zbMATH = {3092681},
 Zbl = {0058.04505}
}

@article{tulceaadditive,
 author = {Ionescu Tulcea, T. C.},
 title = {Int{\'e}grales additives},
 fjournal = {Comunic{\u{a}}rile Academiei Republicii Populare Rom{\^{\i}}ne},
 journal = {Comun. Acad. Republ. Popul. Rom{\^{\i}}ne},
 issn = {0567-6371},
 volume = {4},
 pages = {47--477},
 year = {1954},
 language = {Romanian},
 zbMATH = {3094353},
 Zbl = {0059.04304}
}

@article{sambuciniboccutocomparison,
 author = {Boccuto, Antonio and Sambucini, Anna Rita},
 title = {Comparison between different types of abstract integrals in {Riesz} spaces},
 fjournal = {Rendiconti del Circolo Matem{\`a}tico di Palermo. Serie II},
 journal = {Rend. Circ. Mat. Palermo (2)},
 issn = {0009-725X},
 volume = {46},
 number = {2},
 pages = {255--278},
 year = {1997},
 language = {English},
 doi = {10.1007/BF02977030},
 keywords = {28B15},
 zbMATH = {1057437},
 Zbl = {0881.28008}
}

@article{candelororiemann,
 author = {Candeloro, D.},
 title = {Riemann-{Stieltjes} integration in {Riesz} spaces},
 fjournal = {Rendiconti di Matematica e delle sue Applicazioni. Serie VII},
 journal = {Rend. Mat. Appl., VII. Ser.},
 issn = {1120-7183},
 volume = {16},
 number = {4},
 pages = {563--585},
 year = {1996},
 language = {English},
 keywords = {28B15,46G10,60H05},
 zbMATH = {1011447},
 Zbl = {0871.28011}
}

@Article{rodriguezdobrakov,
 Author = {Rodr{\'{\i}}guez, Jos{\'e}},
 Title = {On integration of vector functions with respect to vector measures.},
 FJournal = {Czechoslovak Mathematical Journal},
 Journal = {Czech. Math. J.},
 ISSN = {0011-4642},
 Volume = {56},
 Number = {3},
 Pages = {805--825},
 Year = {2006},
 Language = {English},
 DOI = {10.1007/s10587-006-0058-9},
 Keywords = {28B05,46G10},
 URL = {hdl.handle.net/10338.dmlcz/128108},
 zbMATH = {5505068},
 Zbl = {1164.28305}
}

@Book{butzmann,
 Author = {Beattie, Ronald and Butzmann, Heinz-Peter},
 Title = {Convergence structures and applications to functional analysis},
 ISBN = {978-1-4020-0566-4; 978-90-481-5994-9},
 Year = {2002},
 Publisher = {Berlin: Springer},
 Language = {English},
 Keywords = {46Axx,46-02},
 zbMATH = {5061405},
 Zbl = {1246.46003}
}

@misc{chernyavskii1,
 author = {Chernyavskij, I. Ya.},
 title = {{\"U}ber einige {Eigenschaften} des {Kolmogorovschen} {Integrals}},
 year = {1975},
 language = {Russian},
 howpublished = {Differ. {Uravn}. {Vychisl}. {Mat}. 5, {No}. 1, 68-84 (1975).},
 keywords = {28A10},
 zbMATH = {3561630},
 Zbl = {0361.28004}
}

@misc{chernyavskii2,
 author = {Chernyavski{\u{\i}}, I. Ya.},
 title = {{\"U}ber {Bedingungen} f{\"u}r die {Kongruenz} der {Integrale} von {Burkill} und {Kolmogorov}},
 year = {1965},
 language = {Russian},
 howpublished = {Volzh. {Mat}. {Sb}. 3, 339-342 (1965).},
 keywords = {28A25,26A39,28A20},
 zbMATH = {3419455},
 Zbl = {0266.28004}
}

@article{nielsen2025algebraization,
  title={Algebraization of infinite summation},
  author={Nielsen, Pace P},
  journal={arXiv preprint arXiv:2508.14290},
  year={2025}
}

@article{andres2025categories,
  title={Categories of sets with infinite addition},
  author={Andr{\'e}s-Mart{\'\i}nez, Pablo and Heunen, Chris},
  journal={Journal of Pure and Applied Algebra},
  volume={229},
  number={2},
  pages={107872},
  year={2025},
  publisher={Elsevier}
}

@article{katz1965infinite,
  title={Infinite sums in algebraic structures},
  author={Katz, Paul and Straus, Ernst},
  journal={Pacific Journal of Mathematics},
  volume={15},
  number={1},
  pages={181--190},
  year={1965},
  publisher={Mathematical Sciences Publishers}
}

@article{freni2023vector,
  title={On Vector Spaces with Formal Infinite Sums},
  author={Freni, Pietro},
  journal={arXiv preprint arXiv:2303.08000},
  year={2023}
}

@article{areshkinkoroleva,
 author = {Areshkin, G. Ya. and Koroleva, L. G.},
 title = {On the theory of the {Kolmogorov} integral},
 fjournal = {Funktsional'ny{\u{\i}} Analiz},
 journal = {Funkts. Anal.},
 volume = {21},
 pages = {12--22},
 year = {1983},
 language = {Russian},
 keywords = {28A25,28A20,28B10},
 zbMATH = {3910692},
 Zbl = {0571.28002}
}

@article{totalizations,
 author = {Trjitzinsky, W. J.},
 title = {Totalisations abstraites dans espaces vectoriels},
 fjournal = {Annali di Matematica Pura ed Applicata. Serie Quarta},
 journal = {Ann. Mat. Pura Appl. (4)},
 issn = {0373-3114},
 volume = {82},
 pages = {275--379},
 year = {1969},
 language = {French},
 doi = {10.1007/BF02410801},
 keywords = {28C99,28A15},
 zbMATH = {3331656},
 Zbl = {0208.32301}
}

@article{chernyavskii3,
 author = {Chernyavskij, I. Ya.},
 title = {Das multiplikative {Integral} in einer topologischen {Gruppe}},
 fjournal = {Izvestiya Vysshikh Uchebnykh Zavedeni{\u{\i}}, Matematika},
 journal = {Izv. Vyssh. Uchebn. Zaved., Mat.},
 issn = {0021-3446},
 volume = {1961},
 number = {5(24)},
 pages = {102--111},
 year = {1961},
 language = {Russian},
 keywords = {46G10,43A80},
 zbMATH = {3327658},
 Zbl = {0206.12701}
}

@incollection{skvortsovburkill,
 author = {Skvortsov, Valentin A. and Sworowska, Tatiana and Sworowski, Piotr},
 title = {On approximately continuous integrals (a survey)},
 booktitle = {Traditional and present-day topics in real analysis. Dedicated to Professor Jan Stanis{\l}aw Lipi\'nski on the occasion of his 90th birthday},
 isbn = {978-83-7525-971-1},
 pages = {233--252},
 year = {2013},
 publisher = {{{\L}}{\'o}d{\'z}: {{\L}}{\'o}d{\'z} University Press, University of {{\L}}{\'o}d{\'z}, Faculty of Mathematics {and} Computer Science},
 language = {English},
 keywords = {26A39,26A21,26A15},
 zbMATH = {6520397},
 Zbl = {1334.26012}
}

@article{sworowskiburkill,
 author = {Sworowski, Piotr},
 title = {On {Kubota}-type integrals},
 fjournal = {Tatra Mountains Mathematical Publications},
 journal = {Tatra Mt. Math. Publ.},
 issn = {1210-3195},
 volume = {34},
 number = {2},
 pages = {237--241},
 year = {2006},
 language = {English},
 keywords = {28A25},
 zbMATH = {5125177},
 Zbl = {1135.28005}
}

@article{boccutosambuciniburkill,
 author = {Boccuto, Antonio and Sambucini, Anna Rita},
 title = {The {Burkill}-{Cesari} integral for {Riesz} spaces},
 fjournal = {Rendiconti dell'Istituto di Matematica dell'Universit{\`a} di Trieste},
 journal = {Rend. Ist. Mat. Univ. Trieste},
 issn = {0049-4704},
 volume = {28},
 number = {1-2},
 pages = {33--47},
 year = {1996},
 language = {English},
 keywords = {28B15,49Q20},
 zbMATH = {1053909},
 Zbl = {0881.28007}
}

@misc{henstockburkillthesis,
 author = {Henstock, Ralph},
 title = {Interval {Functions} and their {Integrals}, by {Ralph} {Henstock}, {Ph}.{D}. thesis, 1948},
 year = {2017},
 howpublished = {Preprint, {arXiv}:1702.08486 [math.{CA}] (2017)},
 url = {https://arxiv.org/abs/1702.08486},
 arXiv = {arXiv:1702.08486}
}

@article{edwardswaymentvintegral,
 author = {Edwards, J. R. and Wayment, S. G.},
 title = {Extensions of the v-integral},
 fjournal = {Transactions of the American Mathematical Society},
 journal = {Trans. Am. Math. Soc.},
 issn = {0002-9947},
 volume = {191},
 pages = {165--184},
 year = {1974},
 language = {English},
 doi = {10.2307/1996988},
 keywords = {26A39,26A45},
 zbMATH = {3401225},
 Zbl = {0254.26016}
}

@article{popescu4,
 author = {Popescu, Valentin},
 title = {Sur une g{\'e}n{\'e}ralisation de la notion d'int{\'e}grale},
 fjournal = {Studii {\textcommabelow{s}}i Cercet{\u{a}}ri Matematice},
 journal = {Stud. Cercet. Mat.},
 issn = {0039-4068},
 volume = {22},
 pages = {773--802},
 year = {1970},
 language = {Romanian},
 zbMATH = {3321357},
 Zbl = {0202.33805}
}

@article{popescu3,
 author = {Popescu, Valentin},
 title = {L'int{\'e}gration par rapport {\`a} une mesure {\`a} valeurs dans un espace vectoriel topologique quelconque},
 fjournal = {Studii {\textcommabelow{s}}i Cercet{\u{a}}ri Matematice},
 journal = {Stud. Cercet. Mat.},
 issn = {0039-4068},
 volume = {18},
 pages = {1159--1180},
 year = {1966},
 language = {Romanian},
 keywords = {46-XX},
 zbMATH = {3248465},
 Zbl = {0154.39401}
}

@article{popescu2,
 author = {Popescu, Valentin},
 title = {Mesures a valeurs dans des espaces vectoriels topologiques quelconques},
 fjournal = {Studii {\textcommabelow{s}}i Cercet{\u{a}}ri Matematice},
 journal = {Stud. Cercet. Mat.},
 issn = {0039-4068},
 volume = {17},
 pages = {1515--1528},
 year = {1965},
 language = {Romanian},
 zbMATH = {3300226},
 Zbl = {0188.20701}
}

@article{popescurickart,
 author = {Popescu, Valentin},
 title = {Sur les relations entre la quasi-int{\'e}grale et l'int{\'e}grale de {Rickart}},
 fjournal = {Studii {\textcommabelow{s}}i Cercet{\u{a}}ri Matematice},
 journal = {Stud. Cercet. Mat.},
 issn = {0039-4068},
 volume = {22},
 pages = {879--890},
 year = {1970},
 language = {Romanian},
 zbMATH = {3321358},
 Zbl = {0202.33806}
}

@book{popovriesz,
 author = {Popov, Mikhail and Randrianantoanina, Beata},
 title = {Narrow operators on function spaces and vector lattices},
 fseries = {De Gruyter Studies in Mathematics},
 series = {De Gruyter Stud. Math.},
 issn = {0179-0986},
 volume = {45},
 isbn = {978-3-11-026303-9; 978-3-11-026334-3},
 year = {2013},
 publisher = {Berlin: de Gruyter},
 language = {English},
 keywords = {47-02,46-02,47B07,46E30,47B38,46B04,46B25,46B42,47B60},
 zbMATH = {6059087},
 Zbl = {1258.47002}
}

@article{wrightmeasurepartiallyorderedspaces1972,
 author = {Maitland-Wright, J. D.},
 title = {Measures with values in a partially ordered vector space},
 fjournal = {Proceedings of the London Mathematical Society. Third Series},
 journal = {Proc. Lond. Math. Soc. (3)},
 issn = {0024-6115},
 volume = {25},
 pages = {675--688},
 year = {1972},
 language = {English},
 doi = {10.1112/plms/s3-25.4.675},
 keywords = {46G10,28B15},
 zbMATH = {3384791},
 Zbl = {0243.46049}
}

@article{boccutodifferential,
 author = {Boccuto, Antonio},
 title = {Differential and integral caculus in {Riesz} spaces},
 fjournal = {Tatra Mountains Mathematical Publications},
 journal = {Tatra Mt. Math. Publ.},
 issn = {1210-3195},
 volume = {14},
 pages = {293--323},
 year = {1998},
 language = {English},
 keywords = {28A15,28B05,26A39},
 zbMATH = {1453789},
 Zbl = {0942.28008}
}

@article{jeujiang,
 author = {de Jeu, Marcel and Jiang, Xingni},
 title = {Riesz representation theorems for positive linear operators},
 fjournal = {Banach Journal of Mathematical Analysis},
 journal = {Banach J. Math. Anal.},
 issn = {2662-2033},
 volume = {16},
 number = {3},
 pages = {40},
 note = {Id/No 44},
 year = {2022},
 language = {English},
 doi = {10.1007/s43037-022-00177-7},
 keywords = {47B65,28B15},
 url = {hdl.handle.net/1887/3564164},
 zbMATH = {7544745},
 Zbl = {1507.47087}
}

@article{orderintegrals,
 author = {de Jeu, Marcel and Jiang, Xingni},
 title = {Order integrals},
 fjournal = {Positivity},
 journal = {Positivity},
 issn = {1385-1292},
 volume = {26},
 number = {2},
 pages = {39},
 note = {Id/No 32},
 year = {2022},
 language = {English},
 doi = {10.1007/s11117-022-00880-7},
 keywords = {28B15,46A40,46B40,47L99},
 zbMATH = {7502771},
 Zbl = {1493.28020}
}

@misc{vulikh,
 author = {Vulikh, B. Z.},
 title = {Introduction to the theory of partially ordered spaces. {Translated} from the {Russian} by {Leo} {F}. {Boron}, with the editorial collaboration of {Adrian} {C}. {Zaanen} and {Kiyoshi} {Is{\'e}ki}.},
 year = {1967},
 language = {English},
 howpublished = {Groningen: {Wolters}-{Noordhoff} {Scientific} {Publications} {Ltd}., xv, 387 p. (1967).},
 keywords = {46A40,46-01},
 zbMATH = {3297510},
 Zbl = {0186.44601}
}

@article{fidelgeneral1,
 author = {Fern{\'a}ndez y Fern{\'a}ndez-Arroyo, Fidel Jos{\'e}},
 title = {Integration in locally convex spaces},
 fjournal = {Revue Roumaine de Math{\'e}matiques Pures et Appliqu{\'e}es},
 journal = {Rev. Roum. Math. Pures Appl.},
 issn = {0035-3965},
 volume = {37},
 number = {1},
 pages = {43--58},
 year = {1992},
 language = {Spanish},
 keywords = {46G12},
 zbMATH = {63762},
 Zbl = {0803.46047}
}

@article{fidelgeneral2,
 author = {Fern{\'a}ndez y Fern{\'a}ndez-Arroyo, Fidel Jos{\'e} and L{\'o}pez Garc{\'{\i}}a, Mar{\'{\i}}a Luz and Mart{\'{\i}}n del Ama, Maria Victoria},
 title = {General theory of integration in locally convex spaces},
 fjournal = {Revue Roumaine de Math{\'e}matiques Pures et Appliqu{\'e}es},
 journal = {Rev. Roum. Math. Pures Appl.},
 issn = {0035-3965},
 volume = {40},
 number = {7-8},
 pages = {593--597},
 year = {1995},
 language = {Spanish},
 keywords = {28B05,46G10},
 zbMATH = {929636},
 Zbl = {0862.28010}
}

@article{boccutodimitriou2011,
 author = {Boccuto, Antonio and Dimitriou, Xenofon and Papanastassiou, Nikolaos},
 title = {Limit theorems in {{\((l)\)}}-groups with respect to {{\((D)\)}}-convergence},
 fjournal = {Real Analysis Exchange},
 journal = {Real Anal. Exch.},
 issn = {0147-1937},
 volume = {37},
 number = {2},
 pages = {249--278},
 year = {2012},
 language = {English},
 doi = {10.14321/realanalexch.37.2.0249},
 keywords = {28B15},
 url = {semanticscholar.org/paper/85296d54e17e40036d124756cec4ea7c43b8c636},
 zbMATH = {6193979},
 Zbl = {1276.28025}
}

@article{boccutocandelorovitalihahnsaks,
 author = {Boccuto, A. and Candeloro, D.},
 title = {Uniform {{\(s\)}}-boundedness and convergence results for measures with values in complete l-groups},
 fjournal = {Journal of Mathematical Analysis and Applications},
 journal = {J. Math. Anal. Appl.},
 issn = {0022-247X},
 volume = {265},
 number = {1},
 pages = {170--194},
 year = {2002},
 language = {English},
 doi = {10.1006/jmaa.2001.7715},
 keywords = {28B15},
 zbMATH = {1716527},
 Zbl = {1006.28012}
}

@incollection{fleischer1988convergence,
  title={Convergence completion of partially ordered groups},
  author={Fleischer, Isidore},
  booktitle={Generalized Functions, Convergence Structures, and Their Applications},
  pages={393--398},
  year={1988},
  publisher={Springer}
}

@article{leeridderintegral,
 author = {Tack-Wang, Lee},
 title = {On the connections between {Henstock}'s and {Ridder}'s approaches to generalized {Riemann} integrals},
 fjournal = {Journal of the London Mathematical Society. Second Series},
 journal = {J. Lond. Math. Soc., II. Ser.},
 issn = {0024-6107},
 volume = {5},
 pages = {337--346},
 year = {1972},
 language = {English},
 doi = {10.1112/jlms/s2-5.2.337},
 keywords = {26A39,26A42,28A25},
 zbMATH = {3379251},
 Zbl = {0239.26008}
}

@article{fremlin1994integration,
  title={On the integration of vector-valued functions},
  author={Fremlin, DH and Mendoza, Jose},
  journal={Illinois Journal of Mathematics},
  volume={38},
  number={1},
  pages={127--147},
  year={1994},
  publisher={Duke University Press}
}

@article{ballveconvergencespaces,
 author = {Ballv{\'e}, Maria E. and Jim{\'e}nez Guerra, P.},
 title = {On the integration in convergence spaces},
 fjournal = {Rendiconti di Matematica e delle sue Applicazioni. Serie VII},
 journal = {Rend. Mat. Appl., VII. Ser.},
 issn = {1120-7183},
 volume = {12},
 number = {4},
 pages = {891--900},
 year = {1992},
 language = {English},
 keywords = {28B05,46G10},
 zbMATH = {222487},
 Zbl = {0848.28005}
}

@book{kusraevmalyuginenglisharticle,
 author = {Gutman, A. E. and Emel'yanov, Eh. Yu. and Kusraev, A. G. and Losenkov, G. A. and Koptev, A. V. and Malyugin, S. A.},
 editor = {Kutateladze, S. S.},
 title = {Nonstandard analysis and vector lattices. {Updated} translation of the {Russian} original},
 fseries = {Mathematics and its Applications (Dordrecht)},
 series = {Math. Appl., Dordr.},
 issn = {0921-3791},
 volume = {525},
 isbn = {0-7923-6619-0},
 year = {2000},
 publisher = {Dordrecht: Kluwer Academic Publishers},
 language = {English},
 keywords = {46S20,46-02,46A40,03H05},
 zbMATH = {1765605},
 Zbl = {1001.46050}
}

@inproceedings{reichel2006nearness,
  title={Nearness and metrization},
  author={Reichel, HC},
  booktitle={Categorical Topology: Proceedings of the Conference Held at Mannheim, 21--25, July, 1975},
  pages={518--547},
  year={2006},
  organization={Springer}
}

@article{reichel1977distanzfunktionen,
  title={{\"U}ber Distanzfunktionen mit Werten in angeordneten Halbgruppen},
  author={Reichel, Hans-Christian and Ruppert, Wolfgang},
  journal={Monatshefte f{\"u}r Mathematik},
  volume={83},
  number={3},
  pages={223--251},
  year={1977},
  publisher={Springer}
}

@book{matricesglidinghumpswartz,
 author = {Swartz, C.},
 title = {Infinite matrices and the gliding hump},
 isbn = {981-02-2736-1},
 year = {1996},
 publisher = {Singapore: World Scientific},
 language = {English},
 keywords = {46-02,46A45,46A08,47B37,46A30},
 zbMATH = {954072},
 Zbl = {0923.46003}
}

@article{pfeffer1969integral,
  title={An Integral in Topological Spaces, I},
  author={Pfeffer, WF},
  journal={Journal of Mathematics and Mechanics},
  volume={18},
  number={10},
  pages={953--972},
  year={1969},
  publisher={JSTOR}
}

@article{pfeffer1970integral,
  title={An integral in topological spaces II},
  author={Pfeffer, WF},
  journal={Mathematica Scandinavica},
  volume={27},
  number={1},
  pages={77--104},
  year={1970},
  publisher={JSTOR}
}

@article{ahmed1986riemann,
  title={A Riemann integral in a locally compact Hausdorff space},
  author={Ahmed, SI and Pfeffer, WF},
  journal={Journal of the Australian Mathematical Society},
  volume={41},
  number={1},
  pages={115--137},
  year={1986},
  publisher={Cambridge University Press}
}

@book{pfeffer1993riemann,
  title={The Riemann approach to integration: local geometric theory},
  author={Pfeffer, Washek F},
  volume={109},
  year={1993},
  publisher={Cambridge University Press}
}

@book{reynolds1997generalized,
  title={The generalized McShane integral for vector-valued functions},
  author={Reynolds Jr, Richard R},
  year={1997},
  publisher={New Mexico State University}
}

@article{fremlin1994henstock,
  title={The Henstock and McShane integrals of vector-valued functions},
  author={Fremlin, DH},
  journal={Illinois Journal of Mathematics},
  volume={38},
  number={3},
  pages={471--479},
  year={1994},
  publisher={Duke University Press}
}

@article{fremlin1995generalized,
  title={The generalized McShane integral},
  author={Fremlin, DH},
  journal={Illinois Journal of Mathematics},
  volume={39},
  number={1},
  pages={39--67},
  year={1995},
  publisher={Duke University Press}
}

@misc{fremlintopologicalrieszspaces,
 author = {Fremlin, D. H.},
 title = {Topological {Riesz} spaces and measure theory},
 year = {1974},
 language = {English},
 howpublished = {London: {Cambridge} {University} {Press}. {XIV}, 266 p. {{\textsterling}} 5.90; \$ 17.50 (1974).},
 keywords = {46G99,46A40,46E30,28A10,28C20,28A60,46-02,28-02},
 zbMATH = {3429561},
 Zbl = {0273.46035}
}

@article{abstractboccutocandelororiecan,
 author = {Boccuto, A. and Candeloro, D. and Rie{\v{c}}an, B.},
 title = {Abstract generalized {Kurzweil}-{Henstock}-type integrals for {Riesz} space-valued functions},
 fjournal = {Real Analysis Exchange},
 journal = {Real Anal. Exch.},
 issn = {0147-1937},
 volume = {34},
 number = {1},
 pages = {171--194},
 year = {2009},
 language = {English},
 doi = {10.14321/realanalexch.34.1.0171},
 keywords = {28B15,28C15},
 zbMATH = {5578223},
 Zbl = {1179.28019}
}

@article{abstracttopologicalboccutoriecan,
 author = {Boccuto, Antonio and Rie{\v{c}}an, Beloslav},
 title = {The {Kurzweil}-{Henstock} integral for {Riesz} space-valued maps defined in abstracht topological spaces and convergence theorems},
 fjournal = {Panamerican Mathematical Journal},
 journal = {Panam. Math. J.},
 issn = {1064-9735},
 volume = {16},
 number = {4},
 pages = {63--79},
 year = {2006},
 language = {English},
 keywords = {28B15,28B05,28B10,46G10},
 zbMATH = {5300353},
 Zbl = {1155.28007}
}

@article{improperboccutoriecan,
 author = {Boccuto, A. and Rie{\v{c}}an, Beloslav},
 title = {Improper {Kurzweil}-{Henstock} integral for metric semigroup-valued functions},
 fjournal = {Atti del Seminario Matematico e Fisico dell' Universit{\`a} di Modena e Reggio Emilia},
 journal = {Atti Semin. Mat. Fis. Univ. Modena Reggio Emilia},
 issn = {1825-1269},
 volume = {54},
 number = {1-2},
 pages = {75--95},
 year = {2006},
 language = {English},
 keywords = {28B15,28B05,28B10,46G10},
 zbMATH = {5230846},
 Zbl = {1142.28307}
}

@article{slintegralboccutoriecan,
 author = {Boccuto, Antonio and Rie{\v{c}}an, Beloslav},
 title = {Convergence theorems for ({SL})-integral in {Riesz} space-context},
 fjournal = {Tatra Mountains Mathematical Publications},
 journal = {Tatra Mt. Math. Publ.},
 issn = {1210-3195},
 volume = {34},
 number = {2},
 pages = {201--222},
 year = {2006},
 language = {English},
 keywords = {28B15,28B05,28B10,46G10},
 zbMATH = {5125174},
 Zbl = {1135.28008}
}

@article{riecanvrabelovaoperator,
 author = {Rie{\v{c}}an, Beloslav and Vr{\'a}belov{\'a}, Marta},
 title = {On integration with respect to operator valued measures in {Riesz} spaces},
 fjournal = {Tatra Mountains Mathematical Publications},
 journal = {Tatra Mt. Math. Publ.},
 issn = {1210-3195},
 volume = {2},
 pages = {149--165},
 year = {1993},
 language = {English},
 keywords = {28B15,26A39},
 zbMATH = {513252},
 Zbl = {0797.28008}
}

@book{integralmeasureandordering,
 author = {Rie{\v{c}}an, Beloslav and Neubrunn, Tibor},
 title = {Integral, measure, and ordering},
 fseries = {Mathematics and its Applications (Dordrecht)},
 series = {Math. Appl., Dordr.},
 issn = {0921-3791},
 volume = {411},
 isbn = {0-7923-4566-5; 80-88683-18-1},
 year = {1997},
 publisher = {Dordrecht: Kluwer Academic Publishers; Bratislavia: Ister Science},
 language = {English},
 keywords = {28-01,06-01,28E10,03G12,81P10},
 zbMATH = {1097062},
 Zbl = {0916.28001}
}

@article{wrightintegralmeasure,
 author = {Wright, J. D. M.},
 title = {Stone-algebra-valued measures and integrals},
 fjournal = {Proceedings of the London Mathematical Society. Third Series},
 journal = {Proc. Lond. Math. Soc. (3)},
 issn = {0024-6115},
 volume = {19},
 pages = {107--122},
 year = {1969},
 language = {English},
 doi = {10.1112/plms/s3-19.1.107},
 zbMATH = {3297581},
 Zbl = {0186.46504}
}

@article{monteirouniform,
 author = {Monteiro, G. A. and Fernandez, R.},
 title = {Kurzweil integral for {Riesz} space-valued functions: uniform convergence theorem},
 fjournal = {Mathematica Slovaca},
 journal = {Math. Slovaca},
 issn = {0139-9918},
 volume = {62},
 number = {1},
 pages = {17--24},
 year = {2012},
 language = {English},
 doi = {10.2478/s12175-011-0067-5},
 keywords = {28B15},
 zbMATH = {6010543},
 Zbl = {1274.28028}
}

@article{boccutocandeloro2003a,
 author = {Boccuto, A. and Candeloro, D.},
 title = {Integral and ideals in {Riesz} spaces},
 fjournal = {Information Sciences},
 journal = {Inf. Sci.},
 issn = {0020-0255},
 volume = {179},
 number = {17},
 pages = {2891--2902},
 year = {2009},
 language = {English},
 doi = {10.1016/j.ins.2008.11.001},
 keywords = {28B05},
 zbMATH = {5610232},
 Zbl = {1185.28016}
}

@article{boccutosambucini2023,
 author = {Boccuto, Antonio and Sambucini, Anna Rita},
 title = {Abstract integration with respect to measures and applications to modular convergence in vector lattice setting},
 fjournal = {Results in Mathematics},
 journal = {Result. Math.},
 issn = {1422-6383},
 volume = {78},
 number = {1},
 pages = {41},
 note = {Id/No 4},
 year = {2023},
 language = {English},
 doi = {10.1007/s00025-022-01776-4},
 keywords = {41A35,28B05,46A19},
 zbMATH = {7618941},
 Zbl = {1527.41003}
}

@book{zaanenluxemburg1,
 author = {Luxemburg, W. A. J. and Zaanen, A. C.},
 title = {Riesz spaces. {Vol}. {I}},
 fseries = {North-Holland Mathematical Library},
 series = {North-Holland Math. Libr.},
 volume = {1},
 year = {1971},
 publisher = {Elsevier (North-Holland), Amsterdam},
 language = {English},
 keywords = {46-01,46Axx,46Bxx,46L05,46C05,46F10,47B15,46J05,47A10,47A05},
 zbMATH = {3365842},
 Zbl = {0231.46014}
}

@book{zaanen2,
 author = {Zaanen, A. C.},
 title = {Riesz spaces {II}},
 fseries = {North-Holland Mathematical Library},
 series = {North-Holland Math. Libr.},
 volume = {30},
 year = {1983},
 publisher = {Elsevier (North-Holland), Amsterdam},
 language = {English},
 keywords = {46-02,46A40,46B42,46E30,46E05,06F30,47B60},
 zbMATH = {3822417},
 Zbl = {0519.46001}
}

@article{papangelouorder,
 author = {Papangelou, Fredos},
 title = {Order convergence and topological completion of commutative lattice-groups},
 fjournal = {Mathematische Annalen},
 journal = {Math. Ann.},
 issn = {0025-5831},
 volume = {155},
 pages = {81--107},
 year = {1964},
 language = {English},
 doi = {10.1007/BF01344076},
 keywords = {06F99},
 url = {https://eudml.org/doc/161180},
 zbMATH = {3212074},
 Zbl = {0131.02601}
}

@misc{lvaluedintegrationvanderwalt,
 author = {Jiang, Xingni and van der Walt, Jan Harm and Wortel, Marten},
 title = {L-valued integration},
 year = {2024},
 howpublished = {Preprint, {arXiv}:2408.17306 [math.{FA}] (2024)},
 keywords = {46G10},
 url = {https://arxiv.org/abs/2408.17306},
 arXiv = {arXiv:2408.17306}
}

@article{filtermultiplicative,
 author = {Filter, Wolfgang},
 title = {Multiplicative structures in {Riesz} spaces},
 fjournal = {Atti del Seminario Matematico e Fisico dell'Universit{\`a} di Modena},
 journal = {Atti Semin. Mat. Fis. Univ. Modena},
 issn = {0041-8986},
 volume = {42},
 number = {2},
 pages = {403--422},
 year = {1994},
 language = {English},
 keywords = {46A40,46E27,28A33},
 zbMATH = {704209},
 Zbl = {0817.46006}
}

@article{szazdefiningnets,
 author = {Szab{\'o}, Gy{\"o}rgy and Sz{\'a}z, {\'A}.},
 title = {Defining nets for integration},
 fjournal = {Publicationes Mathematicae Debrecen},
 journal = {Publ. Math. Debr.},
 issn = {0033-3883},
 volume = {36},
 number = {1-4},
 pages = {237--252},
 year = {1989},
 language = {English},
 keywords = {26A39,26A42,28A25},
 zbMATH = {4152749},
 Zbl = {0703.26008}
}

@article{szaznetintegralconvergence,
 author = {Szab{\'o}, Gy{\"o}rgy and Sz{\'a}z, {\'A}.},
 title = {The net integral and a convergence theorem},
 fjournal = {Mathematische Nachrichten},
 journal = {Math. Nachr.},
 issn = {0025-584X},
 volume = {135},
 pages = {53--65},
 year = {1988},
 language = {English},
 doi = {10.1002/mana.19881350105},
 keywords = {28B05,46G10,26A42},
 zbMATH = {4089840},
 Zbl = {0666.28006}
}

@Article{convergencestructuresvanderwalt,
 Author = {O'Brien, M. and Troitsky, V. G. and van der Walt, J. H.},
 Title = {Net convergence structures with applications to vector lattices},
 FJournal = {Quaestiones Mathematicae},
 Journal = {Quaest. Math.},
 ISSN = {1607-3606},
 Volume = {46},
 Number = {2},
 Pages = {243--280},
 Year = {2023},
 Language = {English},
 DOI = {10.2989/16073606.2021.2012721},
 Keywords = {54A20,46A19,46A40,40A99},
 zbMATH = {7681680},
 Zbl = {1521.54001}
}

@article{gelfandstochastic,
 author = {Benth, Fred Espen and Galimberti, Luca},
 title = {Stochastic integrals and {Gelfand} integration in {Fr{\'e}chet} spaces},
 fjournal = {Infinite Dimensional Analysis, Quantum Probability and Related Topics},
 journal = {Infin. Dimens. Anal. Quantum Probab. Relat. Top.},
 issn = {0219-0257},
 volume = {25},
 number = {2},
 pages = {35},
 note = {Id/No 2250007},
 year = {2022},
 language = {English},
 doi = {10.1142/S0219025722500072},
 keywords = {60H05,60H15,46F25,60H07},
 url = {hdl.handle.net/10852/94561},
 zbMATH = {7544507},
 Zbl = {1494.60057}
}

@Book{mchsaneorder,
 Author = {McShane, Edward J.},
 Title = {Order-preserving maps and integration processes},
 FSeries = {Annals of Mathematics Studies},
 Series = {Ann. Math. Stud.},
 Volume = {31},
 Year = {1953},
 Publisher = {Princeton University Press, Princeton, NJ},
 Language = {English},
 DOI = {10.1515/9781400882304},
 zbMATH = {3081847},
 Zbl = {0051.29301}
}

@Book{kusraevmalyugin,
 Author = {Kusraev, A. G. and Malyugin, S. A.},
 Title = {Nekotorye voprosy teorii vektornykh mer},
 Year = {1988},
 Publisher = {Novosibirsk: Institut Matematiki SO AN SSSR},
 Language = {Russian},
 Keywords = {46G10,28B05,28-02,46-02,03H05},
 zbMATH = {50387},
 Zbl = {0760.46039}
}

@Article{thomas,
 Author = {Thomas, E. G. F.},
 Title = {Vector integration},
 FJournal = {Quaestiones Mathematicae},
 Journal = {Quaest. Math.},
 ISSN = {1607-3606},
 Volume = {35},
 Number = {4},
 Pages = {391--416},
 Year = {2012},
 Language = {English},
 DOI = {10.2989/16073606.2012.742230},
 Keywords = {28-02,28B05,46G10},
 zbMATH = {6228780},
 Zbl = {1274.28001}
}

@Article{dobrakov1,
 Author = {Dobrakov, Ivan},
 Title = {On integration in {Banach} spaces. {I}},
 FJournal = {Czechoslovak Mathematical Journal},
 Journal = {Czech. Math. J.},
 ISSN = {0011-4642},
 Volume = {20},
 Pages = {511--536},
 Year = {1970},
 Language = {English},
 DOI = {10.21136/CMJ.1970.100979},
 Keywords = {46G10,46B03,28B05},
 zbMATH = {3341570},
 Zbl = {0215.20103}
}

@Article{drewlabuda,
 Author = {Drewnowski, Lech and Labuda, Iwo},
 Title = {Bartle-{Dunford}-{Schwartz} integration},
 FJournal = {Journal of Mathematical Analysis and Applications},
 Journal = {J. Math. Anal. Appl.},
 ISSN = {0022-247X},
 Volume = {401},
 Number = {2},
 Pages = {620--640},
 Year = {2013},
 Language = {English},
 DOI = {10.1016/j.jmaa.2012.12.014},
 Keywords = {28B05,46E10,46G10},
 zbMATH = {6156271},
 Zbl = {1262.28003}
}

@book{pulmannova,
 author = {Dvure{\v{c}}enskij, Anatolij and Pulmannov{\'a}, Sylvia},
 title = {New trends in quantum structures},
 fseries = {Mathematics and its Applications (Dordrecht)},
 series = {Math. Appl., Dordr.},
 issn = {0921-3791},
 volume = {516},
 isbn = {0-7923-6471-6; 80-88683-23-8},
 year = {2000},
 publisher = {Dordrecht: Kluwer Academic Publishers; Bratislava: Ister Science},
 language = {English},
 keywords = {81P10,03G12,81-02,06D35,06F35,06C15,03E72,06F20,03G25,81P68},
 zbMATH = {1688754},
 Zbl = {0987.81005}
}

@book{hohle,
 author = {H{\"o}hle, Ulrich},
 title = {Many valued topology and its applications},
 isbn = {0-7923-7318-9},
 year = {2001},
 publisher = {Boston, MA: Kluwer Academic Publishers},
 language = {English},
 keywords = {54-02,60-02,54A40,54A20,54B30,18C20,06F05,06F07,06D22},
 zbMATH = {1624437},
 Zbl = {0969.54002}
}

\end{document}